\def\C{\mathbb{C}}
\def\R{\mathbb{R}}
\DeclareMathOperator*{\Bernoulli}{Bernoulli}
\DeclareMathOperator{\median}{median}
\DeclareMathOperator{\KL}{KL}
\DeclareMathOperator*{\argmin}{argmin}
\DeclareMathOperator*{\argmax}{argmax}
\DeclareMathOperator*{\dTV}{d_{TV}}
\theoremstyle{plain}
\newtheorem{theorem}{Theorem}[section]
\newtheorem{lemma}[theorem]{Lemma}
\newtheorem{corollary}[theorem]{Corollary}
\newtheorem{proposition}[theorem]{Proposition}
\theoremstyle{remark}
\newtheorem{remark}{Remark}
\title{Optimal estimation of the null distribution in large-scale inference}
\author{Subhodh Kotekal$^1$ and Chao Gao$^2$ \\ \textit{University of Chicago}}
\date{}
\begin{document}
    \maketitle
    \footnotetext[1]{Email: \texttt{skotekal@uchicago.edu}. The research of SK is supported in part by NSF
Grants ECCS-2216912 and DMS-1547396. }
	\footnotetext[2]{Email: \texttt{chaogao@uchicago.edu}. The research of CG is supported in part by NSF
Grants ECCS-2216912 and DMS-2310769, NSF Career Award DMS-1847590, and an Alfred Sloan fellowship. }
    \abstract{
    The advent of large-scale inference has spurred reexamination of conventional statistical thinking. In a series of highly original articles, Efron persuasively illustrated the danger for downstream inference in assuming the veracity of a posited null distribution. In a Gaussian model for \(n\) many \(z\)-scores with at most \(k < \frac{n}{2}\) nonnulls, Efron suggests estimating the parameters of an \emph{empirical null} \(N(\theta, \sigma^2)\) instead of assuming the \emph{theoretical null} \(N(0, 1)\). Looking to the robust statistics literature by viewing the nonnulls as outliers is unsatisfactory as the question of optimal rates is still open; even consistency is not known in the regime \(k \asymp n\) which is especially relevant to many large-scale inference applications. However, provably rate-optimal robust estimators have been developed in other models (e.g. Huber contamination) which appear quite close to Efron's proposal. Notably, the impossibility of consistency when \(k \asymp n\) in these other models may suggest the same major weakness afflicts Efron's popularly adopted recommendation. A sound evaluation thus requires a complete understanding of information-theoretic limits. We characterize the regime of \(k\) for which consistent estimation is possible, notably without imposing any assumptions at all on the nonnull effects. Unlike in other robust models, it is shown consistent estimation of the location parameter is possible if and only if \(\frac{n}{2} - k = \omega(\sqrt{n})\), and of the scale parameter in the entire regime \(k < \frac{n}{2}\). Furthermore, we establish sharp minimax rates and show estimators based on the empirical characteristic function are optimal by exploiting the Gaussian character of the data. \newline

    \noindent \textbf{Index terms.} Robust statistics, large-scale inference, functional estimation, unknown null distribution, empirical characteristic function
    }
    \section{Introduction}

    Consider the model 
    \begin{equation}\label{model:additive}
        X_j = \theta + \gamma_j + \sigma Z_j
    \end{equation}
    where \(\theta \in \R\) is the location parameter of interest, \(\gamma_1,...,\gamma_n \in \R\) are the unknown effects, \(Z_1,...,Z_n \overset{iid}{\sim} N(0, 1)\) are the noise variables, and \(\sigma > 0\) is the scale parameter. We are concerned with estimation of \(\theta\) and \(\sigma^2\) given observations from (\ref{model:additive}). We denote the joint distribution of \(\{X_j\}_{j=1}^{n}\) by \(P_{\theta, \gamma, \sigma}\) when the data are given by (\ref{model:additive}); the expectation with respect to \(P_{\theta, \gamma, \sigma}\) is denoted by \(E_{\theta, \gamma, \sigma}\). Of course, the parameters of interest are not identifiable in (\ref{model:additive}) as written. To ensure identifiability, we assume a limited number of \(\gamma_j\) are nonzero, that is, \(\sum_{j=1}^{n} \mathbbm{1}_{\{\gamma_j \neq 0\}} \leq k < \frac{n}{2}\). 

    The model (\ref{model:additive}) is a natural choice in the very typical setting in which a fraction of the data points exhibit perturbed values (``signal'' or ``contamination'' depending on the context) against an unknown background level. Indeed, in investigating this typical statistical setting by considering models similar to (\ref{model:additive}), the literatures of robust statistics and large-scale inference have witnessed vigorous research activity.
    
    \subsection{Robust statistics}\label{section:robust}
    An immediate connection between model (\ref{model:additive}) and robust statistics can be seen by simply conceptualizing \(X_j\) as an inlier if \(\gamma_j = 0\) and as an outlier otherwise. Of course, the robust statistics literature has not studied model (\ref{model:additive}) exclusively. To describe one foundational backdrop (among others), consider a dataset \(X_1,...,X_n\) in which 
    \begin{equation*}
        X_j \sim 
        \begin{cases}
            N(\theta, \sigma^2) &\textit{if } j \in \mathcal{I}, \\
            \textit{contamination} &\textit{if } j \in \mathcal{O}. 
        \end{cases}
    \end{equation*}
    Here, \(\mathcal{I}\) denotes the unknown set of inliers from which the location parameter is to be estimated, and \(\mathcal{O}\) with \(|\mathcal{O}| \leq k < \frac{n}{2}\) denotes the unknown set of outliers. With this backdrop in place, researchers stipulate various models for the contamination process and investigate its effects on statistical tasks.
    
    A popular choice with a long history in statistics is Huber's contamination model, proposed by Huber in a highly influential article \cite{huber_robust_1964}. A variant is given by 
    \begin{equation}\label{model:Huber_deterministic}
        X_j \overset{ind}{\sim} 
        \begin{cases}
            N(\theta, \sigma^2) &\textit{if } j \in \mathcal{I}, \\
            \delta_{\eta_j} &\textit{if } j \in \mathcal{O}. 
        \end{cases}
    \end{equation}
    Here, the sets \(\mathcal{I}\) and \(\mathcal{O}\) are unknown but fixed in advance to the generation of the data. The contamination points \(\eta_j\) are unknown and arbitrary, but also fixed in advance to the data generation. From a minimax perspective, this variant is closely related to the usual Bayes formulation of Huber's contamination model \cite{huber_robust_1964}, that is, 
    \begin{equation}\label{model:Huber_mixture}
        X_1,...,X_n \overset{iid}{\sim} (1-\varepsilon) N(\theta, \sigma^2) + \varepsilon Q,
    \end{equation}
    where \(\varepsilon = \frac{k}{n}\) is the contamination level and \(Q\) is an unknown, arbitrary probability distribution on \(\R\). That \(Q\) can be any probability distribution on \(\R\) is due to the fact that the \(\eta_j\) can be any values in \(\R\). In a minimax context, the conclusions about in-probability estimation rates in the frequentist version (\ref{model:Huber_deterministic}) and the Bayes version (\ref{model:Huber_mixture}) are the same \cite{chen_robust_2018,diakonikolas_robust_2019}. Sample median is perhaps the simplest and most famous robust estimator, and it was shown to be optimal in Huber's model by Chen et al. in \cite{chen_robust_2018}. In their article, the minimax rate of estimation was derived and applies to both Bayes (\ref{model:Huber_mixture}) and frequentist (\ref{model:Huber_deterministic}) contexts. In particular, they showed that if \(\varepsilon < \frac{1}{5}\), then \(|\median(X_1,...,X_n) - \theta|^2 \lesssim \sigma^2\left(\frac{1}{n} + \varepsilon^2\right) \asymp \sigma^2\left(\frac{1}{n} + \frac{k^2}{n^2}\right)\) with high probability. A matching lower bound (up to universal constants) was also obtained. 

    Returning to (\ref{model:additive}), the situation seems only slightly different from that in Huber's contamination model. Rather than outlier observations being set to arbitrary unknown values, outliers from (\ref{model:additive}) exhibit \textit{mean shifts}. Mean shift contamination has been extensively studied in the robust statistics literature, mostly in a regression context. To elaborate, consider the regression model for data \(\{(x_j, Y_j)\}_{j=1}^{n}\),
    \begin{equation}\label{model:mean_shift_regression}
            Y_j = \langle x_j, \beta\rangle + \gamma_j + \sigma Z_j,
    \end{equation}
    where \(\beta, x_1,...,x_n \in \R^p\) and \(Z_1,...,Z_n \overset{iid}{\sim} N(0, 1)\). The responses are contaminated by the nonzero mean shifts \(\gamma_j \in \R\), and there are at most \(k\) nonzero of them. Evidently, the location model (\ref{model:additive}) is a special case of (\ref{model:mean_shift_regression}). To the best of our knowledge, the first appearances of this mean shift regression model were in \cite{sardy_robust_2001,gannaz_robust_2007,mccann_robust_2007}. Gannaz \cite{gannaz_robust_2007}, noting \(\gamma = (\gamma_1,...,\gamma_n) \in \R^n\) is a \(k\)-sparse vector, suggested 
    \begin{equation}\label{estimator:ell_1}
        (\hat{\beta}, \hat{\gamma}) = \argmin_{\substack{\beta \in \R^p \\ \gamma \in \R^n}} \frac{1}{2}||Y - X\beta - \gamma||^2 + \rho ||\gamma||_1,
    \end{equation}
    where \(Y = (Y_1,...,Y_n) \in \R^n\) are the responses, \(X \in \R^{n \times p}\) is the design matrix, and \(\rho\) is a penalty hyperparameter to be tuned. Though the motivation for the \(\ell_1\) penalty from sparsity appears distant from robustness, it turns out \cite{sardy_robust_2001, gannaz_robust_2007,donoho_high_2016,antoniadis_wavelet_2007} that the \(\hat{\beta}\) solving (\ref{estimator:ell_1}) is also a robust M-estimator, namely it solves \(\min_{\beta \in \R^p} \sum_{j=1}^{n} \mathcal{L}_{\text{Huber}}(Y_j - \langle x_j, \beta \rangle ; \rho)\) where \(\mathcal{L}_{\text{Huber}}\) is the Huber loss function 
    \begin{equation*}
        \mathcal{L}_{\text{Huber}}(t ; \rho) = 
        \begin{cases}
            \frac{t^2}{2} &\textit{if } |t| \leq \rho, \\
            \rho|t| - \frac{\rho^2}{2} &\textit{if } |t| > \rho. 
        \end{cases}
    \end{equation*}
    Owen and She \cite{she_outlier_2011}, motivated from \(\hat{\beta}\)'s sensitivity to leverage points, suggest modifying (\ref{estimator:ell_1}). Since the \(\ell_1\) penalty in (\ref{estimator:ell_1}) corresponds to soft-thresholding in \(\hat{\gamma}\), they suggest replacing soft-thresholding by other thresholding functions, even those which may correspond to nonconvex penalties. Their article is methodological in nature and no theoretical guarantees are offered. In more modern contexts, the regression parameter \(\beta\) is also sparse. Consequently, a number of articles \cite{nguyen_robust_2013,dalalyan_outlier-robust_2019,collier_multidimensional_2019} (see also \cite{foygel_corrupted_2014} and references therein for a compressed sensing perspective) investigate a natural extension of (\ref{estimator:ell_1}) by placing an additional \(\ell_1\) penalty on \(\beta\). 
    
    Returning to our mean shift location model (\ref{model:additive}), Collier and Dalalyan's article \cite{collier_multidimensional_2019} study the estimator 
    \begin{equation*}
        (\hat{\theta}, \hat{\gamma}) = \argmin_{\substack{\mu \in \R \\ \gamma \in \R^n}} ||X - \mu\mathbf{1}_n - \gamma||^2 + \rho ||\gamma||_1,
    \end{equation*}
    where \(X = (X_1,...,X_n) \in \R^n\). Assuming \(\frac{k}{n} \leq c\) for a sufficiently small universal constant \(0 < c < \frac{1}{2}\) and \(\sigma = 1\), they show that for the choice \(\rho \asymp \sqrt{\log n}\) the estimator achieves \(|\hat{\theta} - \theta|^2 \lesssim \frac{1}{n} + \frac{k^2}{n^2} \log n\) with high probability, which is actually slower than the rate \(\frac{1}{n} + \frac{k^2}{n^2}\) achievable by sample median. The logarithmic factor is a consequence of a typical analysis for \(\ell_1\)-penalized estimators. Collier and Dalalyan suggest a modification by introducing individual penalities, replacing \(\rho ||\gamma||_1\) with \(\sum_{j=1}^{n} \rho_j |\gamma_j|\). An optimal choice of the \(\rho_j\) depends on unknown quantities, and so to circumvent this issue they suggest an iterative scheme and propose an iterative soft-thresholding estimator. They obtain the upper bound \(\frac{1}{n} + \frac{k^2}{n^2}\), matching the performance of sample median. 

    These results may give the impression that the Huber rate \(\frac{1}{n} + \frac{k^2}{n^2}\), which is minimax optimal under (\ref{model:Huber_deterministic}) or (\ref{model:Huber_mixture}), should also be the sharp rate in (\ref{model:additive}). This impression is strengthened when one expresses the observations as 
    \begin{equation}\label{model:mean_shift}
        X_j \overset{ind}{\sim}
        \begin{cases}
            N(\theta, \sigma^2) &\textit{if } j \in \mathcal{I}, \\
            N(\eta_j, \sigma^2) &\textit{if } j \in \mathcal{O},
        \end{cases}
    \end{equation}
    where \(\mathcal{O} = \{1 \leq j \leq n: \gamma_j \neq 0\}\), \(\mathcal{I} = \mathcal{O}^c\), and \(\eta_j = \theta + \gamma_j\). Since the shifts \(\gamma_j\) in (\ref{model:Huber_deterministic}) are completely arbitrary as in (\ref{model:mean_shift}), it is tempting to intuit that estimation ought to be as hard as in Huber's model. However, this intuition neglects the Gaussian character of the data. Consider the corresponding Bayes formulation (i.e. analogous to how (\ref{model:Huber_mixture}) corresponds to (\ref{model:Huber_deterministic})), 
    \begin{equation}\label{model:mean_shift_mixture}
        X_1,...,X_n \overset{iid}{\sim} (1-\varepsilon) N(\theta, \sigma^2) + \varepsilon (Q * N(0, \sigma^2)),
    \end{equation}
    where \(\varepsilon = \frac{k}{n}\) and \(*\) denotes convolution. Minimax rate conclusions are the same in (\ref{model:mean_shift_mixture}) and (\ref{model:mean_shift}) (in the way those in (\ref{model:Huber_mixture}) and (\ref{model:Huber_deterministic}) are the same as discussed earlier). A comparison between (\ref{model:mean_shift_mixture}) and the Bayes formulation of Huber's contamination model (\ref{model:Huber_mixture}) reveals the additional structure available in the mean shift model. In (\ref{model:Huber_mixture}), the distribution with weight \(\varepsilon\) can be any distribution \(Q\), whereas in (\ref{model:mean_shift_mixture}) it must be of the form \(Q * N(0, \sigma^2)\). Clearly the set of distributions of this convolutional form is a smaller set than the set of all distributions. 
    
    Therefore, if a rate-optimal estimator is to achieve a faster rate than that in Huber's model, it must make specific use of the Gaussian character of the data. The penalized regression estimators discussed earlier appear not to exploit this structure. A fundamental question arises; for which values of \(k\) is consistent estimation of \(\theta\) possible, and for which values is it impossible? More specifically, is the regime of consistency different from that in Huber's model? 

    \subsection{Large-scale inference}\label{section:LSI}
    In many modern scientific fields, the researcher is faced with the problem of simultaneously considering a very large number of hypothesis tests. Consider \(z\)-scores, \(X_1,...,X_n\), each of which represents a test statistic corresponding to a hypothesis test. Traditionally, it is assumed a \(z\)-score follows exactly the standard normal distribution if its corresponding hypothesis is null, and a shifted normal if its corresponding hypothesis is nonnull,  
    \begin{equation*}
        X_j \sim 
        \begin{cases}
            N(0, 1) &\textit{if } j \in \mathcal{H}_0, \\
            N(\gamma_j, 1) &\textit{if } j \in \mathcal{H}_0^c.
        \end{cases}
    \end{equation*}
    Here, \(\mathcal{H}_0\) denotes the set of null hypotheses and the nonzero \(\gamma_j\) are the nonnull effects. Assume \(|\mathcal{H}_0^c| \leq k < \frac{n}{2}\), which is usual in the relevant scientific settings. 
    
    Efron, in a sequence of highly original and persuasive articles \cite{efron_large-scale_2004,efron_correlation_2007,efron_microarrays_2008}, demonstrated this traditional assumption that the null \(z\)-scores exactly follow the standard normal distribution is not at all mild. Seemingly slight misspecification can substantially affect downstream inferential conclusions. As Efron points out, a variety of practical issues can cause misspecification of the null hypothesis. One major issue is correlation between \(z\)-scores \cite{efron_correlation_2007,owen_variance_2005,qiu_correlation_2005,qiu_effects_2005}. To illustrate in a stylized benchmark example, consider a one factor correlation model \cite{fan_estimating_2012,fan_estimation_2017,leek_general_2008,leek_capturing_2007}, where the \(z\)-scores are given by \(X_j = \gamma_j + \sqrt{\rho} W + \sqrt{1-\rho} Z_j\) with \(W,Z_1,...,Z_n \overset{iid}{\sim} N(0, 1)\). Here, \(W\) is the common factor inducing equicorrelation of level \(\rho\). Observe that, marginally, the null distribution is correctly specified as \(X_j \sim N(0, 1)\) if \(j \in \mathcal{H}_0\). However, due to the correlation, the ensemble of null \(z\)-scores does not behave like a standard normal distribution. Conditionally on \(W\), 
    \begin{equation*}
        X_j | W \overset{ind}{\sim} 
        \begin{cases}
            N(\sqrt{\rho} W, 1-\rho) &\textit{if } j \in \mathcal{H}_0, \\
            N(\sqrt{\rho}W + \gamma_j, 1-\rho) &\textit{if } j \in \mathcal{H}_0^c. 
        \end{cases}
    \end{equation*}
    It is more appropriate to treat the ensemble of null \(z\)-scores as being drawn from a shifted normal distribution with variance \(1-\rho\), rather than the centered normal distribution with unit variance. As Efron \cite{efron_correlation_2007,efron_microarrays_2008} illustrates with more sophisticated arguments, correlation can substantially shift and alter the width of the ensemble null \(z\)-scores, and subsequently can have disastrous effects on downstream inference if a standard normal null distribution is assumed (see also \cite{qiu_correlation_2005,fan_estimating_2012}). Correlation's effects and methodology to handle it were further studied in \cite{schwartzman_comment_2010,azriel_empirical_2015,sun_solving_2018,stephens_false_2017,heller_optimal_2021,nie_large-scale_2023}, but a complete theoretical picture has not yet been attained. 
    
    To address issues stemming from a misspecified theoretical null, Efron argues an \emph{empirical null} capturing the null \(z\)-score ensemble should be estimated from the data. Retaining Gaussianity, Efron \cite{efron_large-scale_2004} suggests considering
    \begin{equation}\label{model:large_scale}
        X_j \overset{ind}{\sim} 
        \begin{cases}
            N(\theta, \sigma^2) &\textit{if } j \in \mathcal{H}_0, \\
            N(\eta_j, \sigma^2) &\textit{if } j \in \mathcal{H}_0^c,
        \end{cases}
    \end{equation}
    where \(\theta \in \R\) and \(\sigma > 0\) are parameters to be estimated. The models (\ref{model:additive}) and (\ref{model:large_scale}) are equivalent with $\eta_j=\theta+\gamma_j$ and $\mathcal{H}_0=\{1\leq j\leq n:\gamma_j=0\}$. As in our discussion of robust statistics, the model (\ref{model:large_scale}) is closely related to a mixture formulation (\ref{model:mean_shift_mixture}), which has the name \emph{(Gaussian) two-groups model} in the context of large-scale inference. Indeed, minimax conclusions in the two models are the same. Further, \(Q\) now has the interpretation of a signal distribution rather than contamination. The two-groups model has been widely adopted for large-scale inference and tremendous methodological development has occurred \cite{stephens_false_2017,jin_estimating_2007,cai_optimal_2010,sun_oracle_2007,cai_simultaneous_2009}. 

    It is worth emphasizing the setting \(k \asymp n\) is especially relevant to modern large-scale inference. Indeed, it is not at all an extreme case that a constant fraction (e.g. \(0.1\%, 1\%, 10\%, 25\%\) etc.) of the \(z\)-scores are nonnull in various applications. To elaborate on an example application, a viewpoint in genomics which has been gaining traction, especially in the context of genome-wide association studies (GWAS), is that it is not so appropriate to suppose that genetic predisposition to a complex trait or disease is driven by a highly sparse set of genetic variants (e.g. single nucleotide polymorphisms (SNPs)) with large effects \cite{pritchard_gwas_2001,goldstein_nejm_2009,cantor_review_2010,nature_review_2019}. In many published GWAS, only a small fraction of the phenotypic variation has so far been explained by the few genetic variants identified by the studies. One proposed explanation to this ``missing heritability problem'' \cite{missing_heritability_2009,young_heritability_2019} is that complex traits involve many, many genetic variants which individually may not have large effect sizes (and thus go undetected) but collectively have large influence \cite{pritchard_gwas_2001,goldstein_nejm_2009,yang_height_2010,schizophrenia_polygenic_2009,wood_height_2014}. Hence, many traits of interest are said to be polygenic, and an analysis of data from a GWAS which assumes a very sparse set of nonnull effects is unsuitable; in other words, the regime \(k \asymp n\) is the appropriate one. In fact, these developments in genomics have majorly pushed theoreticians to consider dense regimes when investigating various tasks and are directly cited as motivation \cite{bradic_ejs_2018,bradic_jasa_2018,bradic_aos_2022,zheng_nonsparse_2021,janson_eigenprism_2017,chen_glm_2023,hall_dense_2014,verzelen_SNR_2018}. For an explicit example, the authors of \cite{hall_dense_2014} directly point to \cite{goldstein_nejm_2009,hirschorn_nejm_2009,kraft_nejm_2009}.

    More specifically on the topic of estimation of the null distribution, the seminal work \cite{jin_estimating_2007} in the statistics literature was motivated precisely by the regime \(k \asymp n\); the authors write on pages 495-496,
    \begin{quotation}
        ``In many applications, it is more approriate to model the setting as \emph{nonsparse}, that is, the proportion of nonnull effects does not tend to zero when the number of hypotheses grows to infinity. In such settings, Efron's (2004) approach does not perform well, and it is not hard to show that the estimators of the null are generally inconsistent\ldots because the relevant information about the null is highly distorted by the nonnull effects in both of them." 
    \end{quotation}
    As the authors of \cite{jin_estimating_2007} also point out, other methods for null estimation proposed up to that point in time were also not appropriate for the same reason \cite{meinshausen_null_2006,swanepoel_null_1999}. Thus, the regime \(k \asymp n\) is of great interest to the large-scale inference community.
        
    The limitations of the robust statistics literature are readily apparent as the best known estimators for \(\theta\) are not even consistent in this regime! The minimax rate in Huber's contamination model \(\frac{1}{n} + \frac{k^2}{n^2} \asymp 1\) is of constant order when \(k \asymp n\). Even in the mean shift setting, the best estimators in the literature are only shown to achieve Huber's rate, to the best of our knowledge. Moreover, special care should be taken to avoid making assumptions like \(\frac{k}{n} \leq c\) for some sufficiently small constant \(c\). Theory relevant to the practice of large-scale inference ought to address the case where \(k\) is quite close to \(\frac{n}{2}\).

    In the large-scale inference literature, arguably the most successful estimators for the empirical null parameters in the contexts of (\ref{model:mean_shift}) and (\ref{model:mean_shift_mixture}) are based on Fourier analysis. To the best of our knowledge, this suite of methods was first introduced and developed in \cite{jin_estimating_2007,cai_optimal_2010,jin_proportion_2008}. Fourier-analytic approaches have also seen success in other statistical topics such as deconvolution and estimation in stochastic processes \cite{meister_deconvolution_2009,belomestny_estimation_2015,belomestny_spectral_2006}. Also, a Fourier-based estimator was recently shown to nearly achieve the optimal estimation rate of a robust location estimation problem in a heavy-tailed formulation of robustness \cite{bahmani_nearly_2021}. Most directly related to our setting is the work of Cai and Jin \cite{cai_optimal_2010}. To roughly describe the estimators of Cai and Jin \cite{cai_optimal_2010}, observe the characteristic function of (\ref{model:mean_shift_mixture}) is 
    \begin{equation*}
       \psi(\omega) := e^{-\frac{\omega^2 \sigma^2}{2}} \left((1-\varepsilon) e^{i\omega \theta} + \varepsilon \hat{Q}(\omega)\right),
    \end{equation*}
    where \(\hat{Q}\) denotes the characteristic function of \(Q\). To enable analogous discussion, in the frequentist context of (\ref{model:large_scale}) we define the corresponding \(\hat{Q}\) through the equation 
    \begin{equation*}
        \frac{1}{n}\sum_{j=1}^{n} E(e^{i\omega X_j}) = e^{-\frac{\omega^2\sigma^2}{2}}\left(\frac{n-|\mathcal{O}|}{n} e^{i\omega \theta} + \frac{|\mathcal{O}|}{n} \hat{Q}(\omega)\right),
    \end{equation*}
    which directly yields \(\hat{Q}(\omega) = \frac{1}{|\mathcal{O}|}\sum_{j \in \mathcal{O}} e^{i\omega \eta_j}\). The following discussion moves freely between the contexts of (\ref{model:mean_shift_mixture}) and (\ref{model:large_scale}) using the same notation \(\hat{Q}\) due to the correspondence of the two contexts noted earlier. Cai and Jin's key observation is that for large frequencies \(\omega\), 
    \begin{equation*}
        \psi(\omega) \approx e^{-\frac{\omega^2\sigma^2}{2}} (1-\varepsilon) e^{i\omega \theta},
    \end{equation*}
    if it is assumed \(\hat{Q}\) decays fast enough (e.g. if \(Q\) is smooth enough). Hence, the null parameters can be extracted from the large frequency components of the characteristic function. Of course, the population characteristic function is not accessible, so it is estimated via the empirical characteristic function. Taking an appropriate \(\omega\) to balance a typical bias-variance tradeoff, estimators \(\hat{\theta}_{\text{CJ}}\) and \(\hat{\sigma}_{\text{CJ}}^2\) are formed. 
    
    Their result can be stated informally as follows. Fix \(\alpha > 2\) and \(\beta \in [0, 1/2)\). Let \(cn^{-\beta} \leq \varepsilon \leq c\) where \(0 < c < 1\) is a universal constant. Note \(c\) can be larger than \(\frac{1}{2}\). Assuming some moment conditions on \(Q\) as well as the decay conditions \(\limsup_{\omega \to \infty} |\hat{Q}(\omega)| |\omega|^\alpha < \infty\) and \(\limsup_{\omega \to \infty} |\hat{Q}'(\omega)||\omega|^{\alpha+1} < \infty\), they establish 
    \begin{align}
        E\left(|\hat{\theta}_{\text{CJ}} - \theta|^2\right) &\lesssim \frac{\varepsilon^2}{\log^{\alpha+1}(n)}, \label{eqn:cai_jin_mean}\\
        E\left(|\hat{\sigma}_{\text{CJ}}^2 - \sigma^2|^2\right) &\lesssim \frac{\varepsilon^2}{\log^{\alpha+2}(n)}. \label{eqn:cai_jin_var} 
    \end{align}
    Matching minimax lower bounds are also obtained. From these bounds, Cai and Jin \cite{cai_optimal_2010} have shown that consistent estimation is possible even in the practically salient case \(\varepsilon \asymp 1\). Furthermore, their result is quite appealing as it sharply captures the effect of the interaction between the smoothness of the signal distribution \(Q\) and the signal sparsity on optimal estimation rates. 

    Though impressive, their results leave some important, fundamental questions unresolved. First, they require a number of regularity assumptions on the signal distribution \(Q\) which are undesirable from a robust statistics perspective; nothing can be immediately said about the models (\ref{model:mean_shift_mixture}) or (\ref{model:additive}). Furthermore, note that \(\hat{Q}(\omega) = \frac{1}{|\mathcal{O}|} \sum_{j \in \mathcal{O}} e^{i\omega \eta_j}\) and so it can be seen that the condition \(\limsup_{\omega \to \infty} |\hat{Q}'(\omega)| |\omega|^{\alpha+1} < \infty\) requires \(\limsup_{\omega \to \infty} \left| \frac{1}{|\mathcal{O}|} \sum_{j \in \mathcal{O}} i\eta_j e^{i\omega \eta_j} \right||\omega|^{\alpha+1} < \infty\). The outliers \(\eta_j\) are effectively required to be bounded. Second, their minimax theory requires the null parameters \(\theta\) and \(\sigma^2\) to be bounded by fixed constants. Third, one might hope that plugging in \(\alpha = 0\) may give the correct rates of the problem without smoothness. This is not true. The bound (\ref{eqn:cai_jin_mean}) applies for any \(cn^{-\beta} \leq \varepsilon \leq c\), that is, it notably holds for the choice \(\varepsilon = \frac{1}{2}\). Plugging in \(\alpha = 0\) gives the rate \(\log^{-1}(n)\), asserting that consistent estimation is possible in (\ref{model:mean_shift_mixture}) even when \(\varepsilon = \frac{1}{2}\), which is clearly false since $\theta$ is not identifiable when \(\varepsilon=\frac{1}{2}\) without any assumptions on $Q$. Hence, the problem of establishing the optimal estimation rate without any assumptions on \(Q\) is still completely open. 

    \subsection{Main contribution}\label{section:main_contribution}

    Throughout the remainder of the paper, we focus on the frequentist model (\ref{model:additive}) though we remind the reader all our results also hold for the Bayes formulation (\ref{model:mean_shift_mixture}). The following discussion on our minimax theory is in the context of (\ref{model:additive}) with no assumption on the unknown null mean \(\theta \in \R\) nor on the unknown scale \(\sigma > 0\); we only assume \(\gamma \in \R^n\) is sparse \(||\gamma||_0 \leq k\) where \(1 \leq k < \frac{n}{2}\) is assumed for identifiability reasons as discussed earlier. In this setting, our first contribution is to obtain the sharp minimax rate of estimating $\theta$ with respect to square loss,  
    \begin{equation}\label{rate:theta}
        \epsilon^*(k, n)^2 \asymp 
        \begin{cases}
            \frac{\sigma^2}{n} &\textit{if } 1\leq k \leq \sqrt{n}, \\
            \frac{\sigma^2 k^2}{n^2} \log^{-1}\left(\frac{ek^2}{n}\right) &\textit{if } \sqrt{n} < k \leq \frac{n}{4}, \\
            \sigma^2 \log^{-1}\left(\frac{e(n-2k)^2}{n}\right) &\textit{if } \frac{n}{4} < k \leq \frac{n}{2}-\sqrt{n}, \\
            \sigma^2\log\left(\frac{en}{(n-2k)^2}\right)  &\textit{if } \frac{n}{2}-\sqrt{n}<k<\frac{n}{2}.
        \end{cases}
    \end{equation}
    The minimax rate turns out to exhibit scaling not only in \(k\) but also in \(n-2k\). Scaling of the form \(n-k\) has been observed in a sparse signal detection problem under correlation \cite{kotekal_minimax_2023} as well as a one-sided version of (\ref{model:additive}) where the shape constraints \(\gamma_i \geq 0\) are imposed \cite{carpentier_estimating_2021}. It is quite natural to see \(n-k\) as it denotes the number of ``null'' or ``uncontaminated'' samples. The term \(n-2k\) denotes the number of inliers (nulls) minus the number of outliers (nonnulls). Its appearance is not surprising since the problem becomes more difficult as \(k\) approaches \(\frac{n}{2}\), at which point \(\theta\) is no longer identifiable. 
    
    The fundamental question regarding when consistent estimation is possible can now be answered. It is quickly concluded that consistent estimation of \(\theta\) is possible if and only if \(n - 2k = \omega(\sqrt{n})\). In contrast, consistent estimation is possible in Huber's model (\ref{model:Huber_deterministic}) if and only if \(k = o(n)\). Importantly, \(\theta\) can be consistently estimated in (\ref{model:additive}) in the practically relevant setting where \(k\) is a constant fraction of \(n\) mentioned in Section \ref{section:LSI}.
    
    When \(n - 2k \lesssim \sqrt{n}\), consistent estimation of \(\theta\) is not possible. The minimax rate in this regime can be directly established from recently available results  in \cite{kotekal_sparsity_2023}. A kernel mode estimator turns out to be rate-optimal. Section \ref{section:inconsistent} discusses this regime further.
    
    We note that the minimax rate in the regime \(1 \leq k \leq cn\) for a small universal constant \(c \in (0, 1)\) had been independently developed by Carpentier and Verzelen\footnote{(A. Carpentier and N. Verzelen, personal communication, 2023)}. Though their result nicely pins down the rate in this regime, the remaining regime is unaddressed. Our result delivers a full characterization of the minimax rate by covering the practically important case where \(k\) can be a large fraction of \(n\). Broadly, both our upper and lower bound approaches are conceptually different from theirs. 
    
    \begin{remark}[The effect of the data's Gaussian character]\label{remark:gaussian_character}
        By comparing to Huber's contamination model (\ref{model:Huber_deterministic}), our result enables us to understand the effect of the data's Gaussian character. To elaborate, in both (\ref{model:additive}) and Huber's contamination model (\ref{model:Huber_deterministic}) the minimax rate is parametric for \(k \leq \sqrt{n}\). For \(\sqrt{n} \leq k \leq \frac{n}{5}\) (recall \cite{chen_robust_2018} assumes the contamination level is below \(1/5\)), there is a \(\log^{-1}(k^2/n)\) improvement. It turns out one can easily prove (see Appendix \ref{section:huber}), using the methods of \cite{chen_robust_2018}, that the minimax rate in Huber's model (\ref{model:Huber_deterministic}) is \(\log\left(en/(n-2k)\right)\) for \(\frac{n}{5} \leq k < \frac{n}{2}\). Hence, when \(n-2k = o(n)\) and \(n-2k = \omega(\sqrt{n})\) the rate in Huber's model is actually growing whereas \(\epsilon^*(k,n)^2 \asymp \log^{-1}\left(e(n-2k)^2/n\right)\) is vanishing. All of these improvements are attributable solely to the Gaussian character of the data.

        More specifically, the characteristic function of the standard Gaussian noise \(Z \sim N(0, 1)\) is known exactly. Consequently, a Fourier-based estimator can be employed which aims to remove the noise by dividing the empirical characterstic function by the noise distribution's characteristic function. Exact knowledge of the (standardized) noise distribution is critical to this so-called deconvolution approach (see Section \ref{section:known_var_cf} for a more detailed discussion).
    \end{remark}
    
    \begin{remark}[Comparison to a one-sided version]
        It is also interesting to compare the location estimation rate \(\epsilon^*(k, n)^2\) to the minimax rate obtained for the one-sided version of (\ref{model:additive}) in \cite{carpentier_estimating_2021} where \(\gamma_i \geq 0\), 
        \begin{equation*}
            \epsilon_{\text{one-sided}}^*(k,n)^2 \asymp 
            \begin{cases}
                \frac{\sigma^2}{n} &\textit{if } 1\leq k \leq \sqrt{n}, \\
                \frac{\sigma^2 k^2}{n^2} \log^{-3}\left(\frac{k^2}{n}\right) &\textit{if } \sqrt{n} < k < \frac{n}{2}, \\
                \frac{\sigma^2 \log^4\left(\frac{n}{n-k}\right)}{\log^3 n} &\textit{if } \frac{n}{2} \leq k \leq n-1. 
            \end{cases}
        \end{equation*}
        Note \(k\) can be as large as \(n-1\) since the shape constraint on \(\gamma\) ensures identifiability of \(\theta\). The situation in the one-sided case is quite different from the two-sided case we consider; the rate \(\epsilon^*(k,n)\) is not at all an obvious extension of \(\epsilon_{\text{one-sided}}^*(k, n)\). The estimators proposed in \cite{carpentier_estimating_2021} heavily rely on the shape constraint and thus are not suitable for (\ref{model:additive}). 
    \end{remark}

    Our second contribution is to establish the minimax rate for estimating the variance $\sigma^2$,
    \begin{equation}\label{rate:variance}
        \epsilon_{\text{var}}^*(k, n)^2 \asymp 
        \begin{cases}
            \frac{1}{n} &\textit{if } 1\leq k \leq \sqrt{n}, \\
            \frac{k^2}{n^2} \log^{-2}\left(1 + \frac{k}{\sqrt{n}}\right) &\textit{if } \sqrt{n}<k<\frac{n}{2}. 
        \end{cases}
    \end{equation}
    This rate should be compared to the minimax rate for variance estimation in Huber's contamination model (\ref{model:Huber_mixture}). The article \cite{chen_robust_2018} showed, under the assumption \(\frac{k}{n} \leq \frac{1}{5}\), the rate \(\frac{1}{n} + \frac{k^2}{n^2}\). Namely, it is exactly the same rate for estimating the location parameter \(\theta\). In contrast, we see \(\epsilon_{\text{var}}^*(k, n)^2\) is not only faster than the rate in Huber's model, but it also enjoys a logarithmic improvement over \(\epsilon^*(k, n)^2\).

    It also turns out that the minimax rate for variance estimation is exactly the same as if \(\theta\) were already known. In particular, our result is an extension of the minimax rate for variance estimation in the sparse Gaussian sequence model \cite{comminges_adaptive_2021} (i.e. \(\theta = 0\) is known). However, the estimator proposed in \cite{comminges_adaptive_2021} requires \(\frac{k}{n} \leq c\) for a small universal constant \(0 < c < \frac{1}{8}\). As discussed in Section \ref{section:LSI}, such a condition is undesirable. The estimator we propose establishes that the minimax rate proved in \cite{comminges_adaptive_2021} extends beyond this regime. Variance estimation was also considered in \cite{cai_optimal_2010}, but their result suffers from requiring various assumptions on the outliers which have been discussed in Section \ref{section:LSI}. 
    
    As a consequence of our results on parameter estimation, the minimax rate for estimating the null distribution in total variation follows directly. The rate for location estimation turns out to dominate, and so most of the conclusions for estimating \(\theta\) carry over to minimax estimation in total variation.

    A few high-level remarks about our approach are in order. For both location and variance estimation, we take inspiration from earlier work \cite{comminges_adaptive_2021,cai_optimal_2010,jin_estimating_2007,jin_proportion_2008} and adopt a Fourier-based approach in both the upper and lower bounds. In Section \ref{section:known_var_cf}, the location estimator's development is discussed in detail for the case of known variance. In Section \ref{section:unknown_var}, the estimator is generalized to handle unknown variance; rate-optimal variance estimation is also discussed. For location estimation, the lower bound is proved via a two-point testing argument by constructing two marginal distributions in the Bayes model (\ref{model:mean_shift_mixture}) which have characteristic functions which agree on a large interval containing zero. The construction is notably different from existing work \cite{comminges_adaptive_2021,cai_optimal_2010} in order to sharply capture the second order \(n-2k\) scaling in the minimax rate; previous constructions miss this scaling entirely. As mentioned earlier, the lower bound for variance estimation was known \cite{comminges_adaptive_2021}, but a matching upper bound for all \(k < \frac{n}{2}\) was absent from the literature. 
    
    Our Fourier-based upper bound for location estimation, though similar in examining frequency space, is very different from the estimator proposed by Cai and Jin \cite{cai_optimal_2010}. A detailed discussion is given in Section \ref{section:known_var_cf}. Briefly, as noted in Section \ref{section:LSI}, Cai and Jin localize \(\theta\) by inspecting the empirical characteristic function at a suitably large frequency; the assumptions they place on the outliers are crucial to their estimator's success. A big advantage of the characteristic function is that it transforms the unbounded quantity \(X_j\) to the bounded quantity \(e^{i\omega X_j}\). Specifically, it is very well-suited to dealing with unbounded outliers. However, the assumptions (e.g. bounded moments of outliers, bounded null parameters) of \cite{cai_optimal_2010} limit this advantage of the characteristic function. In contrast to \cite{cai_optimal_2010}, we make no assumptions on the outliers and instead consider an estimator which fits to the nuisance outliers in frequency space. Interestingly, the interval of frequencies considered in the upper bound is the same interval of frequencies used in the lower bound, thus pointing to the optimality of the estimator.

    The preceding methodological discussion is in the context where \(k\) is assumed known, which is an idealized assumption typically unsuitable in practice. In Section \ref{section:adapt_k}, adaptive estimators are developed through Lepski's method \cite{lepski_problem_1990, lepski_asymptotically_1991,lepski_asymptotically_1992} which can achieve the sharp minimax rates even when \(k\) is unknown. Aside from adaptation, it is interesting to consider non-Gaussian noise \(Z_i \overset{iid}{\sim} F\), where \(F\) is some known, symmetric distribution such as the Laplace distribution. As described in Section \ref{section:deconvolution}, the characteristic function approach to null estimation can be directly modified to accommodate non-Gaussian noise by simply using the characteristic function of \(F\) in place of \(N(0, 1)\). It turns out the upper bound achieved by this modified estimator depends on the large frequency decay of \(F\)'s characteristic function, which is a phenomenon noted earlier in the deconvolution literature \cite{meister_deconvolution_2009,fan_optimal_1991,carroll_optimal_1988}.

    \subsection{Notation}
    For \(a, b \in \R\) the notation \(a \lesssim b\) denotes the existence of a universal constant \(c > 0\) such that \(a \leq cb\). The notation \(a \gtrsim b\) is used to denote \(b \lesssim a\). Additionally \(a \asymp b\) denotes \(a \lesssim b\) and \(a \gtrsim b\). The symbol \(:=\) is frequently used when defining a quantity or object. Furthermore, we frequently use \(a \vee b := \max(a, b)\) and \(a \wedge b := \min(a, b)\). We generically use the notation \(\mathbbm{1}_A\) to denote the indicator function for an event \(A\). For a vector \(v \in \R^n\) and a subset \(S \subset [n]\), we sometimes use the notation \(v_S \in \R^{n}\) to denote the vector with coordinate \(j\) equal to \(v_j\) if \(j \in S\) and zero otherwise. Additionally, \(||v||_0 := \sum_{j=1}^{n} \mathbbm{1}_{\{v_j \neq 0\}}\), \(||v||_1 := \sum_{j=1}^{n} |v_j|\), and \(||v||^2 := \sum_{j=1}^{n} v_j^2\). For two probability measures \(P\) and \(Q\) on a measurable space \((\mathcal{X}, \mathcal{A})\), the total variation distance is defined as \(\dTV(P, Q) := \sup_{A \in \mathcal{A}} |P(A) - Q(A)|\). If \(P\) is absolutely continuous with respect to \(Q\), then the \(\chi^2\)-divergence is defined as \(\chi^2(P||Q) := \int_{\mathcal{X}} \left(\frac{dP}{dQ} - 1\right)^2 \, dQ\). We will frequently use the same notation for two probability densities \(p\) and \(q\). For sequences \(\{a_k\}_{k=1}^{\infty}\) and \(\{b_k\}_{k=1}^{\infty}\), the notation \(a_k = o(b_k)\) denotes \(\lim_{k \to \infty} \frac{a_k}{b_k} = 0\) and the notation \(a_k = \omega(b_k)\) is used to denote \(b_k = o(a_k)\). For a point \(x \in \R\), the symbol \(\delta_x\) denotes the probability measure which places full probability mass at the point \(x\). The symbol \(*\) denotes convolution and the same symbol will be used in the context of the convolution of probability measures as well as functions. Complex numbers are used throughout and so \(i \in \C\) denotes the imaginary unit satisfying \(i^2 = -1\). For \(z \in \C\), \(\bar{z}\) denotes the complex conjugate of \(z\). In other places, \(i\) may be used as an indexing variable; the context will be clear to the reader even if not explicitly stated. Furthermore, we will \(| \cdot |\) to denote modulus. In some places, it is used with a finite set, in which case it denotes cardinality. 

    \section{\texorpdfstring{A Fourier-based estimator}{A Fourier-based estimator}}\label{section:known_var_cf}
    In this section, we propose a Fourier-based location estimator for the regime \(k \leq \frac{n}{2} - C\sqrt{n}\). 
    According to the minimax rate formula (\ref{rate:theta}), this is the regime where consistent estimation is possible. A rate-optimal estimator in the other regime $k>\frac{n}{2} - C\sqrt{n}$ will be presented in Section \ref{section:kme}.
    For ease of presentation and to aid understanding, we first discuss the case where the variance is known and taken to be unit. For notational ease in this section, \(P_{\theta, \gamma}\) and \(E_{\theta, \gamma}\) will be used in place of \(P_{\theta, \gamma, 1}\) and \(E_{\theta, \gamma, 1}\) respectively.

    Inspired by the success of Fourier-based methods \cite{jin_estimating_2007,cai_optimal_2010}, we look to the empirical characteristic function. As discussed in Section \ref{section:LSI}, the central idea of \cite{cai_optimal_2010,jin_estimating_2007} is no longer applicable since the characteristic function of \(Q\) need not decay when no assumptions are imposed. Before defining our estimator, we briefly outline Cai and Jin's estimator \cite{cai_optimal_2010}. Their article works in the context of the Bayes model (\ref{model:mean_shift_mixture}) but is also applicable in the frequentist model (\ref{model:mean_shift}) by taking \(Q = \frac{1}{|\mathcal{O}|}\sum_{j \in \mathcal{O}} \delta_{\eta_j}\). Let \(\psi(\omega) = e^{-\frac{\omega^2\sigma^2}{2}} \left((1-\varepsilon) e^{i\omega \theta} + \varepsilon \hat{Q}(\omega)\right)\) and \(\psi_0(\omega) = e^{i\omega \theta - \frac{\omega^2\sigma^2}{2}}\) denote the characteristic functions of the marginal and the null distributions. To extract \(\theta\) from the characteristic function, Cai and Jin \cite{cai_optimal_2010,jin_estimating_2007} first define the function \(\mu : \R \to \R\) with
    \begin{align*}
        \mu(\omega;\xi) = \left.\frac{\Im(\overline{\xi(t)} \xi'(t))}{|\xi(t)|^2}\right|_{t = \omega}
    \end{align*}
    for \(\omega \in \R\) and for a differentiable function \(\xi : \R \to \C\). Here, \(\Im(z)\) denotes the imaginary part of the complex number \(z \in \C\). Importantly, it is shown that \(\mu(\omega;\psi_0) = \theta\) for all \(\omega \neq 0\). Of course, \(\psi_0\) is not available to the statistician. Taking the empirical characteristic function \(\hat{\psi}(\omega) := \frac{1}{n} \sum_{j=1}^{n} e^{i\omega X_j}\), Cai and Jin \cite{cai_optimal_2010} define the estimator \(\hat{\theta} = \mu(\omega^* ; \hat{\psi})\) with a specific choice of \(\omega^*\). As explained in \cite{cai_optimal_2010,jin_estimating_2007}, the idea is that plugging in the empirical characteristic function should yield \(\hat{\theta} = \mu(\omega^*, \hat{\psi}) \approx \mu(\omega^*, \psi)\). Since \(\theta = \mu(\omega^*, \psi_0)\), Cai and Jin \cite{cai_optimal_2010} essentially need \(\psi\) to be close to \(\psi_0\). Since \(\psi(\omega) = (1-\varepsilon) \psi_0(\omega) + \varepsilon \hat{Q}(\omega) e^{-\frac{\omega^2\sigma^2}{2}}\), assumptions on \(Q\) cannot be avoided if \(\psi\) and \(\psi_0\) are to be close. This is a major drawback to writing \(\theta\) as a function of \(\psi_0\).
    
    We obtain \(\theta\) from \(\psi\) directly and thus do not need \(\psi\) and \(\psi_0\) to be close. Consequently, assumptions on the outliers are totally avoided. To motivate our construction, consider the following line of reasoning. Recall the notation \(\mathcal{O} = \{1 \leq j \leq n: \gamma_j \neq 0\}\) and \(\mathcal{I} = \mathcal{O}^c\). Consider the expectation of the empirical characteristic function at any frequency \(\omega \in \R\) is 
    \begin{equation*}
        E_{\theta, \gamma}\left(\frac{1}{n} \sum_{j=1}^{n} e^{i\omega X_j}\right) = e^{i\omega \theta -\frac{\omega^2}{2}}\left(\frac{n-|\mathcal{O}|}{n} + \frac{1}{n} \sum_{j \in \mathcal{O}} e^{i\omega \gamma_j} \right). 
    \end{equation*}
    Cai and Jin \cite{cai_optimal_2010} essentially assume \(\frac{1}{|\mathcal{O}|}\sum_{j \in \mathcal{O}} e^{i\omega \gamma_j}\) is small for a choice of large frequency. Instead, we will fit the function \(\omega \mapsto \frac{1}{|\mathcal{O}|}\sum_{j \in \mathcal{O}} e^{i\omega \gamma_j}\) directly. The following result states how \(\theta\) can be obtained from a population level optimization program. 

    \begin{lemma} 
        If \(k < \frac{n}{2}\), then 
        \begin{equation}\label{eqn:population_level}
            \theta = \argmin_{\mu \in \R} \sup_{\omega \in \R} \inf_{\substack{\zeta \in \C \\ |\zeta| \leq 1}} \left|E_{\theta, \gamma}\left(\frac{1}{n} \sum_{j=1}^{n} e^{i\omega (X_j - \mu) + \frac{\omega^2}{2}}\right) - \frac{n-k}{n} - \frac{k}{n} \zeta \right|.
        \end{equation}
    \end{lemma}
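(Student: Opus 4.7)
The first step is to compute the objective explicitly. Using $X_j = \theta + \gamma_j + Z_j$ with $Z_j \sim N(0,1)$, a direct calculation gives
\[
F(\mu,\omega) := E_{\theta,\gamma}\left(\frac{1}{n}\sum_{j=1}^n e^{i\omega(X_j - \mu) + \omega^2/2}\right) = e^{i\omega(\theta-\mu)} A(\omega), \qquad A(\omega) := \frac{1}{n}\sum_{j=1}^n e^{i\omega\gamma_j} = \frac{n-k_0}{n} + \frac{1}{n}\sum_{j\in\mathcal{O}} e^{i\omega\gamma_j},
\]
where $k_0 := |\mathcal{O}| \leq k$. The inner infimum is the distance from $F(\mu,\omega)$ to the closed disk of center $(n-k)/n$ and radius $k/n$, namely
\[
\inf_{|\zeta|\leq 1}\left|F(\mu,\omega) - \tfrac{n-k}{n} - \tfrac{k}{n}\zeta\right| = \max\left(0,\ \left|F(\mu,\omega) - \tfrac{n-k}{n}\right| - \tfrac{k}{n}\right).
\]
So, writing $G(\mu)$ for the displayed $\sup_\omega \inf_\zeta$ quantity, the task reduces to showing that $G(\theta)=0$ while $G(\mu)>0$ for every $\mu\neq\theta$.

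For the first, at $\mu=\theta$ one has $F(\theta,\omega) = A(\omega)$, and solving $A(\omega)-(n-k)/n = (k/n)\zeta$ yields
\[
\zeta = \frac{k-k_0}{k} + \frac{1}{k}\sum_{j\in\mathcal{O}} e^{i\omega\gamma_j},
\]
which satisfies $|\zeta| \leq (k-k_0)/k + k_0/k = 1$ by the triangle inequality. Hence the inner infimum is $0$ for every $\omega$, and $G(\theta)=0$.

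For the second, fix $\mu\neq\theta$ and set $\delta := \theta - \mu \neq 0$. The key choice is $\omega = \pi/\delta$, which forces $e^{i\omega\delta} = -1$ and hence $F(\mu,\omega) = -A(\omega)$. Since $|\cos|\leq 1$, we have $\operatorname{Re}A(\omega) \geq (n-2k_0)/n \geq (n-2k)/n$, so
\[
\operatorname{Re}\left(F(\mu,\omega) - \tfrac{n-k}{n}\right) \leq -\tfrac{n-2k}{n} - \tfrac{n-k}{n} = -\tfrac{2n-3k}{n}.
\]
Therefore $|F(\mu,\omega) - (n-k)/n| \geq (2n-3k)/n$, and because $k < n/2$ gives $2n-3k > k$, we obtain
\[
\inf_{|\zeta|\leq 1}\left|F(\mu,\omega) - \tfrac{n-k}{n} - \tfrac{k}{n}\zeta\right| \geq \tfrac{2n-3k}{n} - \tfrac{k}{n} = \tfrac{2(n-2k)}{n} > 0,
\]
so $G(\mu) > 0$. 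Combining both cases identifies $\theta$ as the unique minimizer.

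The only subtlety is the final step: one must pick $\omega$ so that the contribution from the $k_0$ outlier terms in $A(\omega)$ cannot conspire with the rotation $e^{i\omega(\theta-\mu)}$ to bring $F(\mu,\omega)$ back inside the disk around $(n-k)/n$. The choice $\omega = \pi/(\theta-\mu)$ does this by flipping the sign of the dominant $(n-k_0)/n$ real part, and the identifiability hypothesis $k < n/2$ is exactly what makes the resulting shift $(2n-3k)/n$ exceed the disk radius $k/n$.
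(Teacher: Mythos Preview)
Your proof is correct and takes essentially the same approach as the paper: both pick $\omega = \pi/(\theta-\mu)$ to force $e^{i\omega(\theta-\mu)}=-1$ and arrive at the identical lower bound $2(n-2k)/n$ on the objective. Your version is slightly more explicit---you compute the expectation, identify the inner infimum as the distance to the disk of center $(n-k)/n$ and radius $k/n$, and use a real-part argument---whereas the paper argues by contradiction via the reverse triangle inequality; but the substance is the same.
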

    \begin{proof}
        Clearly \(\theta\) is a minimizer at which the objective function is equal to zero. It remains to show \(\theta\) is a unique minimizer. Suppose \(\mu\) is also a minimizer. Then 
        \begin{align*}
            0 &= \sup_{\omega \in \R} \inf_{\substack{\zeta \in \C \\ |\zeta| \leq 1}} \left|E_{\theta, \gamma}\left(\frac{1}{n} \sum_{j=1}^{n} e^{i\omega (X_j - \mu) + \frac{\omega^2}{2}}\right) - \frac{n-k}{n} - \frac{k}{n} \zeta \right| \\
            &= \sup_{\omega \in \R} \inf_{\substack{\zeta \in \C \\ |\zeta| \leq 1}} \left|\frac{n-k}{n} \left(e^{i\omega(\theta - \mu)} - 1\right) + \frac{k}{n}\left(\frac{1}{k} \sum_{j \in \mathcal{O'}} e^{i\omega(\theta + \gamma_j - \mu)} -  \zeta \right)\right|
        \end{align*}
        where \(\mathcal{O'}\) is \(\mathcal{O}\) along with some arbitrary indices taken from \(\mathcal{I}\) (if needed) to ensure \(|\mathcal{O}'| = k\). Suppose \(\mu \neq \theta\). By reverse triangle inequality,  
        \begin{align*}
            0 &\geq \sup_{\omega \in \R} \inf_{\substack{\zeta \in \C \\ |\zeta| \leq 1}} \left|\frac{n-k}{n} \left(e^{i\omega(\theta - \mu)} - 1\right)\right| - \left|\frac{k}{n}\left(\frac{1}{k} \sum_{j \in \mathcal{O'}} e^{i\omega(\theta + \gamma_j - \mu)} -  \zeta \right)\right| \\
            &\geq \sup_{\omega \in \R} \frac{n-k}{n} \left|e^{i\omega(\theta - \mu)} - 1\right| - \frac{2k}{n} \\
            &\geq \frac{n-k}{n} \left|e^{i\frac{\pi}{\theta-\mu}(\theta-\mu)} - 1\right| - \frac{2k}{n} \\
            &= \frac{2(n-2k)}{n}
        \end{align*}
        which is a contradiction since \(k < \frac{n}{2}\). Therefore, \(\theta = \mu\), which is to say \(\theta\) is the unique minimizer.
    \end{proof}

    Since \(\theta\) can be written as the minimizer of a population level optimization program, it is natural to consider the estimator which minimizes an empirical version of the population level program. Following instinct, we would like to replace the expectation in (\ref{eqn:population_level}) with its empirical counterpart. However, it is nonsensical to do this without any modification since (\ref{eqn:population_level}) involves supremum over \(\omega \in \R\) and so the term \(\frac{1}{n} \sum_{j=1}^{n} e^{i\omega (X_j - \mu) + \frac{\omega^2}{2}}\) has variance which may blow up. To rectify this issue, we truncate and only consider \(|\omega| \leq \tau\). Consider 
    \begin{equation}\label{estimator:theta_known_var}
        \hat{\theta} = \argmin_{\mu \in \R} \sup_{|\omega| \leq \tau} \inf_{\substack{\zeta \in \C \\ |\zeta| \leq 1}} \left| \frac{1}{n} \sum_{j=1}^{n} e^{i\omega(X_j - \mu) + \frac{\omega^2}{2}} - \frac{n-k}{n} - \frac{k}{n} \zeta\right|. 
    \end{equation}
    \noindent Here, \(\tau\) is a tuning parameter to be selected. It will turn out to determine the bias-variance tradeoff in the risk of \(\hat{\theta}\). 
    \begin{theorem}\label{thm:cf_estimator}
        Fix \(\delta \in (0, 1)\). There exist \(C, C', c > 0\) depending only on \(\delta\) such that if \(n\) is sufficiently large depending only on \(\delta\), \(k \leq \frac{n}{2} - C\sqrt{n}\), and \(\tau = 1 \vee c\sqrt{\log\left(1 + \frac{k^2(n-2k)^2}{n^3}\right)}\), then
        \begin{equation*}
            \sup_{\substack{\theta \in \R \\ ||\gamma||_0 \leq k}} P_{\theta, \gamma}\left\{ |\hat{\theta} - \theta| > C'\left(\frac{1}{\sqrt{n}} + \frac{k}{n}\tau^{-1}\right)\right\} \leq \delta. 
        \end{equation*}
    \end{theorem}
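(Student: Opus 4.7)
The plan is to translate the uniqueness argument of the population-level lemma above into the empirical setting, controlling the stochastic fluctuations through uniform concentration of the empirical characteristic function on the interval $|\omega|\le\tau$. Writing $\Delta := \mu-\theta$, $\alpha(\omega) := e^{-i\omega\Delta}$, and introducing the centered process
\[
G_n(\omega) \ :=\ \frac{1}{n}\sum_{j=1}^n \Bigl(e^{i\omega(X_j-\theta)+\omega^2/2} - E_{\theta,\gamma}\bigl[e^{i\omega(X_j-\theta)+\omega^2/2}\bigr]\Bigr),
\]
a direct algebraic rearrangement, using $E_{\theta,\gamma}\bigl[\frac{1}{n}\sum_j e^{i\omega(X_j-\theta)+\omega^2/2}\bigr] = \frac{n-k'}{n} + \frac{1}{n}\sum_{j\in\mathcal{O}}e^{i\omega\gamma_j}$ with $k':=|\mathcal{O}|$, reduces the inner infimum to $\max\bigl(0,\ |V(\omega,\mu)|-\frac{k}{n}\bigr)$, where
\[
V(\omega,\mu) \ :=\ \alpha(\omega)\,G_n(\omega) \,+\, \tfrac{n-k}{n}\bigl(\alpha(\omega)-1\bigr) \,+\, \tfrac{k}{n}\alpha(\omega)\eta(\omega), \quad \eta(\omega) := \tfrac{k-k'}{k} + \tfrac{1}{k}\sum_{j\in\mathcal{O}}e^{i\omega\gamma_j},
\]
and $|\eta(\omega)|\le 1$ by the triangle inequality. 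Plugging in $\mu=\theta$ (so $\alpha\equiv 1$) and choosing an admissible $\zeta$ that cancels $\eta$ yields the upper bound $M_n(\theta)\le \sup_{|\omega|\le\tau}|G_n(\omega)|$; conversely, the triangle inequality on $V$ yields the lower bound $M_n(\mu)\ge \tfrac{n-k}{n}|\alpha(\omega)-1|-|G_n(\omega)|-\tfrac{2k}{n}$ at every $|\omega|\le\tau$.

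The main probabilistic step is to show that, for some $C_0$ depending only on $\delta$ and with probability at least $1-\delta/2$,
\[
\sup_{|\omega|\le\tau}|G_n(\omega)| \ \le\ C_0\, r_n, \qquad r_n := \frac{e^{\tau^2/2}}{\sqrt{n}} \ \asymp\ \frac{1}{\sqrt{n}} + \frac{k(n-2k)}{n^2}.
\]
The structural observation enabling this bound is the factorization $e^{i\omega(X_j-\theta)+\omega^2/2}-E_{\theta,\gamma}[\cdot] = e^{i\omega\gamma_j\mathbbm{1}_{\{j\in\mathcal{O}\}}}\bigl(e^{i\omega Z_j+\omega^2/2}-1\bigr)$: the unit-modulus outlier phase factors out, so the moduli and $\omega$-derivatives of the summands depend only on the Gaussian $Z_j$, never on the arbitrary $\gamma_j$. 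Each summand is thus bounded in modulus by $2e^{\tau^2/2}$ with Lipschitz constant (in $\omega$) at most $(|Z_j|+\tau)e^{\tau^2/2}$, the latter controlled in probability via concentration of $\frac{1}{n}\sum_j|Z_j|$. Hoeffding at a single frequency combined with a Lipschitz-based covering of $[-\tau,\tau]$ (or Dudley chaining to avoid a stray $\sqrt{\log n}$) then yields the uniform bound.

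Armed with this concentration and the oracle inequality $M_n(\hat\theta)\le M_n(\theta)\le C_0 r_n$, the conclusion follows by case analysis of $\Delta := \hat\theta-\theta$. If $|\Delta|\ge \pi/\tau$, pick $\omega=\pi/\Delta\in[-\tau,\tau]$ so that $\alpha(\omega)=-1$ and $|\alpha(\omega)-1|=2$; the lower bound forces $2(n-2k)/n\le 2C_0 r_n$, which is ruled out for $C$ large by the hypothesis $k\le n/2-C\sqrt{n}$ since $(n-2k)/n\gtrsim 1/\sqrt{n}$ and $(n-2k)/n\ge(k/n)(n-2k)/n\asymp$ the other term of $r_n$. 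Hence $|\Delta|<\pi/\tau$, and taking $\omega=\tau$ gives $|\alpha(\omega)-1|\ge(2/\pi)\tau|\Delta|$, so that the lower bound becomes $(2/\pi)\tfrac{n-k}{n}\tau|\Delta|\le 2C_0 r_n + 2k/n$; this solves to $|\Delta|\lesssim (r_n+k/n)/\tau$, which simplifies via $r_n/\tau\le 1/\sqrt{n}+(k/n)(n-2k)/(n\tau)\le 1/\sqrt{n}+(k/n)/\tau$ to the announced rate.

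The main obstacle is the uniform concentration in the second paragraph: since the outliers $\gamma_j$ are arbitrary and potentially enormous, a naive derivative bound involving $\frac{1}{n}\sum_j|X_j-\theta|$ could be made useless by the adversary. The factorization removing $\gamma_j$ from both the magnitude and the $\omega$-derivative of every summand is precisely what rescues the argument, and is the structural feature that lets the proposed estimator tolerate unbounded outliers without any assumption on their distribution, in contrast to the approach of Cai and Jin.
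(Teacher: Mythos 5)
Your overall plan --- restrict to $|\omega|\le\tau$, establish uniform concentration of $\sup_{|\omega|\le\tau}|G_n(\omega)|$, compare the objective at $\hat\theta$ to that at $\theta$ for an oracle inequality, and finish with a case analysis on $\Delta=\hat\theta-\theta$ --- matches the structure of the paper's proof, and your algebraic reductions (inner infimum equal to $(|V(\omega,\mu)|-k/n)_{+}$, the rewriting of $V$, $|\eta|\le1$, the two choices $\omega=\pi/\Delta$ and $\omega=\tau$, the simplification $r_n/\tau\lesssim 1/\sqrt{n}+(k/n)/\tau$) are all correct. The case split on $|\Delta|$ rather than on $k$ is a harmless cosmetic variant of what the paper does.

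However, the concentration step contains a genuine error, and it is the crux of the proof. You claim that ``the unit-modulus outlier phase factors out, so the moduli and $\omega$-derivatives of the summands depend only on the Gaussian $Z_j$, never on the arbitrary $\gamma_j$,'' and from this derive the per-summand Lipschitz constant $(|Z_j|+\tau)e^{\tau^2/2}$. The claim about moduli is true, but the claim about derivatives is false: writing $f_j(\omega)=e^{i\omega\gamma_j}(e^{i\omega Z_j+\omega^2/2}-1)$ and differentiating,
\begin{equation*}
f_j'(\omega)=i\gamma_j e^{i\omega\gamma_j}\bigl(e^{i\omega Z_j+\omega^2/2}-1\bigr)+e^{i\omega\gamma_j}(iZ_j+\omega)e^{i\omega Z_j+\omega^2/2},
\end{equation*}
so $|f_j'(\omega)|$ contains the contribution $|\gamma_j|\,|e^{i\omega Z_j+\omega^2/2}-1|$, which an adversary can make arbitrarily large by choosing $|\gamma_j|$ huge. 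Your stated Lipschitz constant only accounts for the second term (the derivative of the Gaussian factor), silently dropping the derivative of the outlier phase $e^{i\omega\gamma_j}$. As a consequence, the covering number of $[-\tau,\tau]$ under the relevant sub-Gaussian increment metric grows with $\max_j|\gamma_j|$, and a Lipschitz-based covering or Dudley chaining estimate degrades by a $\sqrt{\log\max_j|\gamma_j|}$ factor --- the argument fails precisely in the unbounded-outlier regime the estimator is built to handle, and passing from a naive covering to Dudley does not remove this dependence.

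The paper establishes the concentration event $\mathcal{E}$ via the bounded-differences (McDiarmid) inequality applied directly to the random variable $\sup_\omega\bigl|\frac{1}{n}\sum_j(e^{i\omega(X_j-\theta)}-E[e^{i\omega(X_j-\theta)}])\bigr|$: changing any one $X_j$ perturbs this supremum by at most $2/n$ irrespective of the size of $\gamma_j$, since each summand is a unit-modulus exponential. No pointwise Lipschitz estimate in $\omega$, and hence no derivative of $e^{i\omega\gamma_j}$, ever enters. That device --- not the $\omega$-derivative factorization --- is what lets the estimator tolerate unbounded outliers, and it is exactly what your covering-based concentration step is missing; repairing your argument would essentially require adopting it.
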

    \noindent Note
    \begin{equation*}
        \frac{1}{\sqrt{n}} + \frac{k}{n \sqrt{1 \vee \log\left(1 + \frac{k^2(n-2k)^2}{n^{3}}\right)}} \asymp 
        \begin{cases}
            \frac{1}{\sqrt{n}} &\textit{if } k \leq \sqrt{n}, \\
            \frac{k}{n \sqrt{\log\left(1 + \frac{k^2}{n}\right)}} &\textit{if } \sqrt{n} < k < \frac{n}{4}, \\
            \frac{k}{n \sqrt{\log\left(1 + \frac{(n-2k)^2}{n}\right)}} &\textit{if } \frac{n}{4} \leq k < \frac{n}{2}-\sqrt{n},
        \end{cases}
    \end{equation*}
    from the inequality \(u/2 \leq \log(1+u) \leq u\) for \(u \in (0, 1)\). Thus, \(\hat{\theta}\) indeed achieves the rate (\ref{rate:theta}) when \(k \leq \frac{n}{2} - C\sqrt{n}\). 
    
    As with other Fourier-based methods, \(\hat{\theta}\) exploits the Gaussian character of the data to denoise. Namely, (\ref{estimator:theta_known_var}) involves dividing by the characteristic function of the standard Gaussian distribution, i.e. the term \(e^{\omega^2/2}\) appears. As discussed in Section \ref{section:deconvolution}, the estimator can be generalized to handle non-Gaussian noise. A couple of remarks are in order. 
    
    \begin{remark}
        The minimax rate for \(1 \leq k \leq cn\) for a small universal constant \(c \in (0, 1)\) had been independently developed by Carpentier and Verzelen\footnote{(A. Carpentier and N. Verzelen, personal communication, 2023)}, though their estimator is conceptually different from ours. Carpentier and Verzelen also employ a Fourier-based approach, but essentially aim to extract \(\theta\) from \(\psi_0\) (similar to \cite{cai_optimal_2010}) instead of explicitly accounting for the outliers as done in our estimator.
    \end{remark}
    
    \begin{remark}[Computation]
        Fitting to the outliers in the frequency domain is advantageous as it only involves optimization over the two-dimensional (one-dimensional complex) variable \(\zeta\). In contrast, typical ideas of fitting to outliers in the spatial domain (e.g. as in the mean shift contamination literature discussed in Section \ref{section:robust}) require optimization over a \(k\)-dimensional variable (or an \(n\)-dimensional variable with a penalty, as in \cite{collier_multidimensional_2019} mentioned in Section \ref{section:robust}). The computation of (\ref{estimator:theta_known_var}) is thus straightforward. First, consider the minimization over \(\mu\). One can take the interval having length of order \(\sqrt{\log n}\) centered at \(\median(X_1,...,X_n)\) and discretize it with grid points having distance of order the statistical rate \(\epsilon(k, n)\) given by (\ref{rate:theta}). Since the domain of optimization over \(\omega\) is a bounded interval, it can be discretized with a fine enough grid. A glance at the proof suggests taking grid points having distance of order \(\frac{1}{\sqrt{n \log n}}\) since we are guaranteed \(|\hat{\theta} - \theta|\lesssim \sqrt{\log n}\) by the discretization employed for \(\mu\). Minimization over \(\zeta\) is direct as it is a convex problem with a convex constraint. The estimator can then be computed by a simple two-dimensional grid search. 
    \end{remark}

    \section{Methodology: unknown variance}\label{section:unknown_var}

    In this section, the model (\ref{model:additive}) is considered in the setting where the variance \(\sigma^2\) is unknown. Sections \ref{section:sigmatilde} and \ref{section:sigmahat} address variance estimation and Section \ref{section:location_estimation} addresses generalizing the Fourier-based estimator of Section \ref{section:known_var_cf} to handle unknown variance. 

    \subsection{A pilot variance estimator}\label{section:sigmatilde}
    A pilot estimator which captures the order of \(\sigma^2\) will be needed, and we will directly use the correlation estimator of \cite{kotekal_sparsity_2023}. As the context of \cite{kotekal_sparsity_2023} appears different from our current setting, we discuss, for the reader's understanding, the development of the pilot estimator of \cite{kotekal_sparsity_2023}. One idea for a variance estimator is
    \begin{equation*}
        \min_{|S|=\lceil \frac{n}{2} \rceil} \frac{1}{|S|- 1} \sum_{j \in S} (X_j - \bar{X}_S)^2,
    \end{equation*}
    where \(\bar{X}_S = \frac{1}{|S|} \sum_{j \in S} X_j\). The intuition is that the \(X_j\) all have the same mean \(\theta\) for \(j \in \mathcal{I}\), and so the usual sample variance estimator could be used if \(\mathcal{I}\) were known. Since it is unknown, a search must be done. Subsets of size at least \(\frac{n}{2}\) are searched over because \(\mathcal{I}\) is of at least this size. If \(S \subset \mathcal{I}\), then \(\frac{1}{|S| - 1} \sum_{j \in S} (X_j - \bar{X}_S)^2\) will be a decent estimator of \(\sigma^2\). On the other hand, if \(S \cap \mathcal{O} \neq \emptyset\), then \(\frac{1}{|S| - 1} \sum_{j \in S} (X_j - \bar{X}_S)^2\) will overestimate \(\sigma^2\). Outliers can be conceptualized as contributing additional variability. Therefore, it is natural to minimize over all subsets \(S \subset \{1,...,n\}\) with \(|S| = \lceil \frac{n}{2}\rceil\). However, the variance estimator in our setting appears to require an exhaustive search and thus seems computationally intractable. The idea is rescued in \cite{kotekal_sparsity_2023} by random sampling to obtain a polynomial-time estimator. 

    Independently draw subsets \(E_1,...,E_m \subset \left\{1,...,n\right\}\) of size \(\ell\) uniformly at random and define 
    \begin{equation}\label{estimator:var_tilde}
        \tilde{\sigma}^2 := \min_{1 \leq r \leq m} \frac{1}{\ell-1} \sum_{j \in E_r} (X_j - \bar{X}_{E_r})^2,
    \end{equation}
    where \(\bar{X}_{E_r} = \frac{1}{\ell} \sum_{j \in E_r} X_j\). It is immediate that \(\tilde{\sigma}^2\) is an estimator computable in polynomial time in \(n\) whenever \(m\) scales polynomially in \(n\). The following result can be proved via the same argument as in \cite{kotekal_sparsity_2023}, so we omit the proof.

    \begin{proposition}[\cite{kotekal_sparsity_2023}]\label{prop:var_tilde}
        Fix \(\delta \in (0, 1)\). There exist constants \(C_1, C_2, L > 0\) depending only on \(\delta\) such that if \(m = \lceil n^{C_1} \rceil\) and \(2 < \ell = \lceil C_2 \log n \rceil\), then 
        \begin{equation*}
            \inf_{\substack{\theta \in \R \\ ||\gamma||_0 < n/2 \\ \sigma > 0}} P_{\theta, \gamma, \sigma} \left\{ L^{-1} \leq \frac{\tilde{\sigma}^2}{\sigma^2} \leq L\right\} \geq 1-\delta. 
        \end{equation*}
    \end{proposition}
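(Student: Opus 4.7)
The plan is to establish the two one-sided bounds $\tilde{\sigma}^2 \geq \sigma^2/L$ and $\tilde{\sigma}^2 \leq L\sigma^2$ separately, both reduced to chi-squared tail bounds. The key observation is that for any fixed subset $E \subset \{1,\dots,n\}$ of size $\ell$, writing $\mu_j := \theta + \gamma_j$ so that $X_j = \mu_j + \sigma Z_j$, we have $X_j - \bar{X}_E = (\mu_j - \bar{\mu}_E) + \sigma(Z_j - \bar{Z}_E)$, so conditional on $E$ the quantity $\sum_{j \in E}(X_j - \bar{X}_E)^2/\sigma^2$ is distributed as noncentral $\chi^2_{\ell-1}(\lambda_E)$ with noncentrality $\lambda_E := \sum_{j \in E}(\mu_j - \bar{\mu}_E)^2/\sigma^2 \geq 0$.

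For the lower bound, I would use the Poisson-mixture representation of the noncentral chi-squared to see that $\chi^2_{\ell-1}(\lambda)$ stochastically dominates $\chi^2_{\ell-1}$, so the lower tail of the noncentral distribution is controlled by that of the central one. The standard concentration $P(\chi^2_{\ell-1} \leq (\ell-1)/L) \leq e^{-c(L)\ell}$, combined with a union bound over the $m = \lceil n^{C_1}\rceil$ random subsets (which are independent of the data), gives an overall failure probability of order $n^{C_1}\, e^{-c(L) C_2 \log n}$, less than $\delta/2$ once $C_2$ is chosen large enough compared to $C_1$ and $L$.

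For the upper bound, it suffices to produce, with high probability, a single index $r$ such that $E_r \subset \mathcal{I}$ and $\hat{s}^2_{E_r} \leq L\sigma^2$. The probability that one uniformly random $E_r$ lies in $\mathcal{I}$ equals
\[
p := \binom{|\mathcal{I}|}{\ell}\Big/\binom{n}{\ell} = \prod_{j=0}^{\ell-1}\left(1 - \frac{k}{n-j}\right).
\]
Because $k < n/2$ and $\ell = O(\log n) = o(n)$, each factor is bounded below by a positive universal constant (e.g. at least $1/3$ for large $n$), so $p \geq c_0^\ell \geq n^{-C_2 \log(1/c_0)}$ for some absolute $c_0 \in (0,1)$. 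Choosing $C_1 > C_2 \log(1/c_0)$ makes $mp$ diverge polynomially in $n$, so $P(E_r \not\subset \mathcal{I}\text{ for all }r) \leq (1-p)^m \leq e^{-mp}$ is negligible. Conditional on $\{E_r \subset \mathcal{I}\}$ for a specific $r$, the data on $E_r$ are iid $N(\theta, \sigma^2)$, so $\hat{s}^2_{E_r}/\sigma^2 \sim \chi^2_{\ell-1}/(\ell-1)$ and lies below $L$ with probability $1 - e^{-c'(L)\ell}$ by the upper chi-squared tail. Combining then yields $\tilde{\sigma}^2 \leq L\sigma^2$ with probability $\geq 1 - \delta/2$.

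The main obstacle is the regime where $k$ is only slightly below $n/2$: then $|\mathcal{I}|/n$ is barely above $1/2$, forcing the per-subset inclusion probability to scale like $(1/2 + o(1))^\ell$. This dictates the relationship $C_1 > C_2 \log 2$ between the constants but does not otherwise complicate the argument. Once $L$, $C_2$, and $C_1$ are calibrated in this order (first $L$ large depending on $\delta$, then $C_2$ large to defeat the lower-tail union bound, then $C_1$ larger still to guarantee abundant pure-inlier subsets), the three probability bounds compose to yield the claimed $1-\delta$ confidence uniformly over $\theta$, $\gamma$, and $\sigma$.
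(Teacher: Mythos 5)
Your argument is correct in substance and is the natural one for this ``best-of-many-random-subsets'' estimator: noncentral chi-squared stochastic domination controls the lower tail uniformly over the noncentrality, the union bound over $m$ subsets (each independent of the data) costs only a polynomial factor $n^{C_1}$ absorbed by taking $\ell \asymp \log n$, and the upper bound reduces to showing a pure-inlier subset is sampled with overwhelming probability and then applying the central chi-squared upper tail. The paper simply cites \cite{kotekal_sparsity_2023} and omits the proof, so there is no competing argument to compare against, but this is the approach one expects.

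One imprecision worth flagging: the calibration order you state at the end, ``first $L$, then $C_2$ large, then $C_1$ larger still,'' does not quite parse, because the two constraints couple $C_1$ and $C_2$ in opposite directions. Writing the single-subset lower-tail bound as $e^{-c(L)\ell}\leq n^{-c(L)C_2}$, the union bound requires $C_1 < c(L)\,C_2$, while the pure-inlier condition $mp\to\infty$ requires $C_1 > C_2\log(1/c_0)$. Thus what must actually be arranged is $C_2\log(1/c_0) < C_1 < c(L)\,C_2$, a nonempty window if and only if $c(L) > \log(1/c_0)$; one therefore fixes $L$ large enough first (to open the window) and then chooses the \emph{ratio} $C_1/C_2$ inside $(\log(1/c_0),\,c(L))$, rather than picking $C_2$ and then increasing $C_1$ unboundedly. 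You clearly understand this tension --- it is visible in the preceding paragraphs --- but the closing sentence as worded would, taken literally, eventually violate the lower-tail constraint. A second very minor point: the quoted probability bounds are ``$\to 0$ as $n\to\infty$,'' which delivers the conclusion only for $n$ sufficiently large; this matches the paper's implicit convention elsewhere and does not affect the substance.
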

    \noindent With the pilot estimator in hand, a Fourier-based variance estimator can now be constructed. 

    \subsection{A rate-optimal variance estimator}\label{section:sigmahat}
    Our Fourier-based variance estimator is quite different from Cai and Jin's Fourier-based variance estimator \cite{cai_optimal_2010}. Recalling they work in the context of (\ref{model:mean_shift_mixture}) (but their work is applicable to (\ref{model:mean_shift}) as well), let \(\psi(\omega) = e^{-\omega^2\sigma^2/2} \left((1-\varepsilon) e^{i\omega \theta} + \varepsilon \hat{Q}(\omega)\right)\) and \(\psi_0(\omega) = e^{i\omega \theta - \omega^2\sigma^2/2}\) denote the characteristic functions of the marginal and the null distributions. Cai and Jin \cite{cai_optimal_2010,jin_estimating_2007} define the function \(v : \R \to \R\) with
    \begin{align*}
        v(\omega;\xi) = -\left.\frac{\frac{d}{dt}|\xi(t)|}{t|\xi(t)|}\right|_{t = \omega}
    \end{align*}
    for \(\omega \in \R\) and for a differentiable function \(\xi : \R \to \C\). Importantly, \(v(\omega;\psi_0) = \sigma^2\) for all \(\omega \neq 0\). With the empirical characteristic function \(\hat{\psi}(\omega) := \frac{1}{n} \sum_{j=1}^{n} e^{i\omega X_j}\), Cai and Jin \cite{cai_optimal_2010} define the estimator \(\hat{\sigma}^2 = v(\omega^* ; \hat{\psi})\) with some specific choice \(\omega^*\). The same considerations noted for their location estimator in Section \ref{section:known_var_cf} apply to their variance estimator. As mentioned before, their estimator requires decay of \(\hat{Q}\) and so various assumptions were imposed on the outliers. 
    
    The estimator we propose avoids assumptions by exploiting the connection between the variance and the norm of the characteristic function. Set \(\hat{N}(\omega) := \left|\frac{1}{n}\sum_{j=1}^{n} e^{i\omega X_j}\right|\). Define 
    \begin{equation}\label{estimator:sigma_hat}
        \hat{\sigma}^2 = \inf_{a \leq \omega \leq b} -\frac{2\log \hat{N}(\omega)}{\omega^2}
    \end{equation}
    where \(a \leq b\) are tuning parameters to be set. 

    \begin{theorem}\label{thm:var}
        Fix \(\delta \in (0, 1)\). There exist constants \(C, c > 0\) depending only on \(\delta\) such that if \(n\) is sufficiently large depending only on \(\delta\), \(a = c \tilde{\sigma}^{-1}\left(\sqrt{1 \vee \log\left(\frac{ek^2}{n}\right)}\right)\), and \(b = 100a\), then 
        \begin{equation*}
            \sup_{\substack{\theta \in \R \\ ||\gamma||_0 \leq k \\ \sigma > 0}}P_{\theta, \gamma, \sigma} \left\{ \frac{|\hat{\sigma}^2 - \sigma^2|}{\sigma^2} > \frac{Ck}{n\log\left(1 + \frac{k}{\sqrt{n}}\right)}  \right\} \leq \delta, 
        \end{equation*}
        where \(\tilde{\sigma}\) is the pilot estimator from Section \ref{section:sigmatilde}.
    \end{theorem}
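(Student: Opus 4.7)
The plan is to analyze $\hat{\sigma}^2 = \inf_{\omega \in [a,b]} -2\omega^{-2}\log\hat{N}(\omega)$ by writing, for each $\omega$, $-2\omega^{-2}\log\hat{N}(\omega) - \sigma^2 = -2\omega^{-2}\log|A(\omega)| - 2\omega^{-2}\log\bigl(\hat{N}(\omega)/|E_{\theta,\gamma,\sigma}\hat{\psi}(\omega)|\bigr)$, where $\tilde{\gamma}_j := \gamma_j\mathbbm{1}_{\{j \in \mathcal{O}\}}$ and $A(\omega) := n^{-1}\sum_{j=1}^n e^{i\omega\tilde{\gamma}_j}$, so that $|E_{\theta,\gamma,\sigma}\hat{\psi}(\omega)| = e^{-\omega^2\sigma^2/2}|A(\omega)|$. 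The first term is a deterministic bias from the outliers and the second is stochastic fluctuation. I would first condition on the event $\{L^{-1}\sigma^2 \le \tilde{\sigma}^2 \le L\sigma^2\}$ from Proposition \ref{prop:var_tilde}, which occurs with probability at least $1 - \delta/4$ and pins $a \asymp \sigma^{-1}\sqrt{1 \vee \log(ek^2/n)}$, $b = 100a$ at the correct scale. Uniform concentration $\sup_{\omega \in [a,b]} |\hat{\psi}(\omega) - E\hat{\psi}(\omega)| \le \eta$, with $\eta$ of order $n^{-1/2}$ up to polylogarithmic factors, would follow from Hoeffding applied pointwise (since $|e^{i\omega X_j}| = 1$) and a net argument exploiting the Lipschitz bound $|\hat{\psi}'(\omega)| \le n^{-1}\sum_j |X_j|$, where the $|X_j|$ are handled after recentering by a robust location estimator.

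For the lower bound $\hat{\sigma}^2 \ge \sigma^2 - \text{rate}$, the inequality $|A(\omega)| \le 1$ gives $\hat{N}(\omega) \le e^{-\omega^2\sigma^2/2} + \eta$ uniformly. The tuning constant $c$ is chosen small enough that $\eta e^{b^2\sigma^2/2}$ is uniformly controlled; this is feasible because $b^2\sigma^2 \asymp \log(ek^2/n)$ is only polylogarithmic in $n$, so $e^{b^2\sigma^2/2}$ is a bounded power of $n$ tunable through $c$. Applying $\log(1+x) \le x$ then yields $-2\omega^{-2}\log\hat{N}(\omega) \ge \sigma^2 - O\bigl(\eta e^{\omega^2\sigma^2/2}/\omega^2\bigr)$ uniformly on $[a,b]$, and the infimum over $\omega$ gives the desired bound once one verifies, using $a^2\sigma^2 \asymp 1 \vee \log(ek^2/n)$, that $\eta e^{b^2\sigma^2/2}/a^2 \lesssim \sigma^2 k/(n\log(1+k/\sqrt{n}))$.

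For the upper bound $\hat{\sigma}^2 \le \sigma^2 + \text{rate}$, it suffices to exhibit some $\omega^* \in [a,b]$ where $|A(\omega^*)|$ is large enough. The key averaging identity is
\begin{equation*}
\frac{1}{b-a}\int_a^b \bigl(1 - \mathrm{Re}\,A(\omega)\bigr)\,d\omega = \frac{1}{n}\sum_{j \in \mathcal{O}} \frac{1}{b-a}\int_a^b \bigl(1 - \cos(\omega\gamma_j)\bigr)\,d\omega \le \frac{2k}{n},
\end{equation*}
so Markov's inequality produces some $\omega^* \in [a,b]$ with $\mathrm{Re}\,A(\omega^*) \ge 1 - 4k/n$, hence $|A(\omega^*)| \ge 1 - 4k/n$ whenever this is positive. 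The resulting bias bound $-2\omega^{*-2}\log|A(\omega^*)| \lesssim \log(n/(n-4k))/a^2$, divided by $\sigma^2$, matches the target rate $k/(n\log(1 + k/\sqrt{n}))$ in the regime where $k/n$ is bounded away from $1/2$, and combining with the pointwise worst case $|A(\omega)| \ge (n-2k)/n$ extends this conclusion to $k/n$ bounded away from $1/2$ by a constant.

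The main obstacle is the regime where $k$ is very close to $n/2$: both the averaging bound $1 - 4k/n$ and the pointwise worst case $(n-2k)/n$ degenerate, and a naive analysis misses the $1/\log n$ target rate by a logarithmic factor. In this delicate regime, making the upper bound work requires a quantitative lower bound for $\sup_{\omega \in [a,b]} |A(\omega)|$ by a universal constant $c > 0$; a constant lower bound would yield a bias $\lesssim \log(1/c)/a^2 \asymp \sigma^2/\log n$ which hits the target. My plan for this step is to bound $(b-a)^{-1}\int_a^b |A(\omega)|^2 \, d\omega$ from below, noting that the diagonal contribution equals $((n-k')^2 + k')/n^2 \ge 1/4$ whenever $k' \le n/2$, and then to argue that the length $b - a = 99a$ is chosen exactly so that the off-diagonal cross terms $\int_a^b e^{i\omega(\tilde{\gamma}_j - \tilde{\gamma}_l)}\,d\omega$, each bounded by $\min(b-a, 2/|\tilde{\gamma}_j - \tilde{\gamma}_l|)$, aggregate to a quantity strictly smaller than the diagonal. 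Making this cross-term estimate quantitatively tight uniformly over all adversarial $\gamma$ is the crux.
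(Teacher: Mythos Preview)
Your overall decomposition, use of the pilot estimator, and treatment of the lower bound on $\hat\sigma^2$ all match the paper's proof. The substantive difference is in the step you correctly flag as the crux: producing $\omega^*\in[a,b]$ with $|A(\omega^*)|$ bounded below by a universal constant, uniformly over all $k<n/2$.

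The paper does not go through an $L^2$ average. Instead, Proposition~\ref{prop:cosines} asserts that for any reals $\gamma_1,\dots,\gamma_k$ and any $\alpha>0$,
\[
\sup_{\omega\in[\alpha,100\alpha]}\frac{1}{k}\sum_{j=1}^k\cos(\omega\gamma_j)\;\ge\;-\tfrac15.
\]
Its proof is a two-line averaging trick: with $W\sim\mathrm{Exponential}(1/(50\alpha))$ one has $E[\cos(W\gamma_j)]=\lambda^2/(\lambda^2+\gamma_j^2)\ge 0$, hence the unconditional average of $f(W)=\tfrac1k\sum_j\cos(W\gamma_j)$ is nonnegative; conditioning on $W\in[\alpha,100\alpha]$ costs at most $P(W\notin[\alpha,100\alpha])/P(W\in[\alpha,100\alpha])\le 1/5$. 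Combining with $\bigl|1-\tfrac1k\sum_j e^{i\omega\gamma_j}\bigr|^2\le 2\bigl(1-\tfrac1k\sum_j\cos(\omega\gamma_j)\bigr)$ yields $|A(\omega^*)|\ge 1-\tfrac{k}{n}\sqrt{12/5}\ge 1-\sqrt{3/5}$ for every $k<n/2$, with no case split. The reason your uniform Lebesgue average is weaker is precisely that $\tfrac{1}{b-a}\int_a^b\cos(\omega\gamma)\,d\omega$ can be negative, whereas the exponential weight forces nonnegativity for free. Your $L^2$ route can in fact be completed---it reduces to showing $\inf_{x>0}\frac{\sin(100x)-\sin(x)}{99x}>-\tfrac13$, which does hold---but it is heavier and yields a worse constant than the exponential-weight argument.

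One smaller gap: your net argument relies on $|\hat\psi'(\omega)|\le n^{-1}\sum_j|X_j|$, but for $j\in\mathcal{O}$ one has $|X_j|\approx|\theta+\gamma_j|$ with $\gamma_j$ arbitrary, and recentering by a robust location estimate does nothing for these coordinates. The paper avoids any Lipschitz-in-$\omega$ argument by applying the bounded differences inequality directly to the functional $Z=\sup_{\omega}|\hat\psi(\omega)-E\hat\psi(\omega)|$: changing a single $X_j$ perturbs $Z$ by at most $2/n$ irrespective of its value, which decouples the concentration step from the size of the outliers entirely.
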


    \noindent Note 
    \begin{equation*}
        \frac{k}{n\log\left(1 + \frac{k}{\sqrt{n}}\right)} \asymp 
        \begin{cases}
            \frac{1}{\sqrt{n}} &\textit{if } 1 \leq k \leq \sqrt{n}, \\
            \frac{k}{n\log\left(1 + \frac{k}{\sqrt{n}}\right)} &\textit{if } \sqrt{n} < k < \frac{n}{2}
        \end{cases}
    \end{equation*}
    by the inequality \(u/2 \leq \log(1 + u) \leq u\) for \(u \in (0, 1)\). Thus, \(\hat{\sigma}^2\) indeed achieves the rate (\ref{rate:variance}). 

    The estimator given in (\ref{estimator:sigma_hat}) is quite close to the variance estimator proposed in the context of sparse Gaussian sequence model \cite{comminges_adaptive_2021}, which is a special case of (\ref{model:additive}) with $\theta=0$. Even though $\theta$ is unknown in the setting of (\ref{model:additive}), it is clear \(\hat{N}(\omega)\) is shift invariant; it retains the same value even if the original data \(\left\{X_j\right\}_{j=1}^{n}\) are replaced with the shifted data \(\{X_j+\mu\}_{j=1}^{n}\) for any \(\mu \in \R\). The difference between (\ref{estimator:sigma_hat}) and the estimator of \cite{comminges_adaptive_2021} lies entirely in taking infimum over \(a \leq \omega \leq b\); the authors of \cite{comminges_adaptive_2021} explicitly choose \(\omega\) and use the estimator
\begin{equation}
\hat{\sigma}^2_{\omega}=-\frac{2\log \hat{N}(\omega)}{\omega^2}. \label{eq:v-e-f-f}
\end{equation}
However, it was only proved in \cite{comminges_adaptive_2021} that this estimator is rate-optimal in the regime \(\frac{k}{n} \leq c\) for some sufficiently small constant \(0 < c < \frac{1}{8}\). In large-scale inference contexts, there is a practical need to address the case \(k \geq \frac{n}{8}\). 
    
    To explain how (\ref{estimator:sigma_hat}) improves on (\ref{eq:v-e-f-f}), suppose \(|\mathcal{O}| = k\) without loss of generality and consider the population counterpart of \(\hat{N}(\omega)\), 
    \begin{equation*}
        N(\omega) := \left|\frac{1}{n} \sum_{j=1}^{n} e^{i\omega(\theta+\gamma_j) - \frac{\sigma^2\omega^2}{2}}\right| = e^{-\frac{\sigma^2\omega^2}{2}} \left|1 - \frac{k}{n}\left(1 - \frac{1}{k}\sum_{j \in \mathcal{O}} e^{i \omega \gamma_j} \right)\right|. 
    \end{equation*}
 A rearrangement of the above equality leads to the following population counterpart of (\ref{eq:v-e-f-f}),
 \begin{equation}
 -\frac{2\log N(\omega)}{\omega^2}=\sigma^2-\frac{2}{\omega^2}\log\left|1 - \frac{k}{n}\left(1 - \frac{1}{k}\sum_{j \in \mathcal{O}} e^{i \omega \gamma_j} \right)\right|.\label{eq:v-e-f-f-p}
 \end{equation}
 The key quantity in (\ref{eq:v-e-f-f-p}) is the nonnegative bias term $-\frac{2}{\omega^2}\log\left|1 - \frac{k}{n}\left(1 - \frac{1}{k}\sum_{j \in \mathcal{O}} e^{i \omega \gamma_j} \right)\right|$, and the extra infimum over \(a \leq \omega \leq b\) in (\ref{estimator:sigma_hat}) is to achieve a smaller bias. To elaborate, we first note that
 \begin{equation}
 \left|1 - \frac{1}{k}\sum_{j \in \mathcal{O}} e^{i \omega \gamma_j}\right|^2=1+\left|\frac{1}{k}\sum_{j \in \mathcal{O}} e^{i \omega \gamma_j}\right|^2- \frac{2}{k} \sum_{j \in \mathcal{O}} \cos(\omega \gamma_j)\leq 2\left(1 - \frac{1}{k} \sum_{j \in \mathcal{O}} \cos(\omega \gamma_j)\right). \label{eq:used2}
 \end{equation}
 Thus, a lower bound for $\frac{1}{k} \sum_{j \in \mathcal{O}} \cos(\omega \gamma_j)$ leads to an upper bound for the positive bias term in (\ref{eq:v-e-f-f-p}). The paper \cite{comminges_adaptive_2021} uses the inequality $\frac{1}{k} \sum_{j \in \mathcal{O}} \cos(\omega \gamma_j)\geq -1$, so that
\begin{equation}
-\frac{2}{\omega^2}\log\left|1 - \frac{k}{n}\left(1 - \frac{1}{k}\sum_{j \in \mathcal{O}} e^{i \omega \gamma_j} \right)\right|\leq \frac{2}{\omega^2}\log\left(\frac{n}{n-2k}\right). \label{eq:pbb1}
\end{equation}
 It is clear $k$ needs to be bounded away from $\frac{n}{2}$ so that the bias bound (\ref{eq:pbb1}) does not blow up. In fact, the bound (\ref{eq:pbb1}) is sharp in the sense that for any $\omega$, there exist $\gamma_1,\ldots,\gamma_k$ such that (\ref{eq:pbb1}) is an equality. Therefore, the condition that $k$ is bounded away from $\frac{n}{2}$ imposed by \cite{comminges_adaptive_2021} cannot be improved for the estimator (\ref{eq:v-e-f-f}).
 
The additional infimum in our estimator (\ref{estimator:sigma_hat}) leads to better bias control. To elaborate, consider the following population counterpart of (\ref{estimator:sigma_hat}),
  \begin{equation}
 \inf_{a \leq \omega \leq b}-\frac{2\log N(\omega)}{\omega^2}=\sigma^2-\sup_{a \leq \omega \leq b}\frac{2}{\omega^2}\log\left|1 - \frac{k}{n}\left(1 - \frac{1}{k}\sum_{j \in \mathcal{O}} e^{i \omega \gamma_j} \right)\right|.\label{eq:v-e-a-f-p}
 \end{equation}
 Compared with (\ref{eq:v-e-f-f-p}), now the positive bias term becomes $-\sup_{a \leq \omega \leq b}\frac{2}{\omega^2}\log\left|1 - \frac{k}{n}\left(1 - \frac{1}{k}\sum_{j \in \mathcal{O}} e^{i \omega \gamma_j} \right)\right|$. In view of the inequality (\ref{eq:used2}), it suffices to find a lower bound for $\sup_{a \leq \omega \leq b}\frac{1}{k} \sum_{j \in \mathcal{O}} \cos(\omega \gamma_j)$, which is given by the following proposition.

    \begin{proposition}\label{prop:cosines}
        Suppose \(\gamma_1,...,\gamma_k \in \R\). Define the function \(f : \R \to \R\) with \(f(\omega) = \frac{1}{k} \sum_{j=1}^{k} \cos(\omega \gamma_j)\). If \(\alpha > 0\), then \(\sup_{\omega \in [\alpha, 100\alpha]} f(\omega) \geq -\frac{1}{5}\). 
    \end{proposition}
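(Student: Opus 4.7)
My plan is to exploit positive semi-definiteness of the complex-valued function \(\phi(\omega) := \frac{1}{k} \sum_{j=1}^{k} e^{i\omega\gamma_j}\), whose real part is \(f\). For any choice of \(\omega_1,\ldots, \omega_N \in \R\), expanding the squared modulus of the inner sum gives
\[
\sum_{i, j = 1}^{N} \phi(\omega_i - \omega_j) = \frac{1}{k} \sum_{l=1}^{k} \left|\sum_{i=1}^{N} e^{i \omega_i \gamma_l}\right|^2 \geq 0.
\]
Since this sum is real, taking real parts and using the evenness of \(f\) yields
\[
N + 2 \sum_{1 \leq i < j \leq N} f(\omega_j - \omega_i) \geq 0.
\]

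I would then specialize to \(N = 6\) and \(\omega_i = i\alpha\) for \(i = 1, \ldots, 6\). The differences \(\omega_j - \omega_i = (j-i)\alpha\) lie in \(\{\alpha, 2\alpha, 3\alpha, 4\alpha, 5\alpha\} \subset [\alpha, 100\alpha]\), and collecting pairs by their common gap \(m = j - i\) (for which there are exactly \(6 - m\) pairs), the inequality becomes
\[
\sum_{m=1}^{5} (6 - m)\, f(m\alpha) \geq -3.
\]

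To conclude, I argue by contradiction. If \(f(\omega) < -1/5\) were to hold for every \(\omega \in [\alpha, 100\alpha]\), then in particular \(f(m\alpha) < -1/5\) for each \(m \in \{1, \ldots, 5\}\); since \(\sum_{m=1}^{5}(6 - m) = 5 + 4 + 3 + 2 + 1 = 15\), this would give \(\sum_{m=1}^{5}(6 - m)\, f(m\alpha) < -15/5 = -3\), contradicting the preceding display. Hence \(\sup_{\omega \in [\alpha, 100\alpha]} f(\omega) \geq -1/5\), as claimed.

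The only genuine hurdle is spotting the combination of positive semi-definiteness with an arithmetic progression of frequencies; once this setup is in hand, the arithmetic balances precisely at the stated constant, since \(15/5 = 3\) matches \(N/2 = 3\) when \(N = 6\). Notably, the argument does not exploit the full interval \([\alpha, 100\alpha]\); only the frequencies \(\alpha, 2\alpha, \ldots, 5\alpha\) are actually used, and the same strategy with larger \(N\) (and a correspondingly larger interval) would yield a considerably smaller constant in place of \(-1/5\).
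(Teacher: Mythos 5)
Your proof is correct, but it takes a genuinely different route from the paper. The paper averages \(f\) against an exponential density \(p(\omega)=\lambda e^{-\lambda\omega}\mathbbm{1}_{\{\omega>0\}}\) with \(\lambda = \frac{1}{50\alpha}\), uses the identity \(\int \cos(\omega\gamma_j)\,p(\omega)\,d\omega = \frac{\lambda^2}{\lambda^2+\gamma_j^2}\geq 0\) to get \(E[f(W)]\geq 0\), and then conditions on \(W\in[\alpha,100\alpha]\) (using \(|f|\leq 1\) to control the tail contribution); the constant emerges from the numerical bound \(1-e^{-1/50}+e^{-2}\leq 0.16\). You instead invoke the Bochner-type positive semi-definiteness of \(\phi(\omega)=\frac{1}{k}\sum_j e^{i\omega\gamma_j}\) at the six frequencies \(\alpha,2\alpha,\ldots,6\alpha\), reduce to the linear inequality \(\sum_{m=1}^5(6-m)f(m\alpha)\geq -3\), and finish by pigeonhole. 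Your argument is more elementary (no integral identity, no truncation bookkeeping), yields the sharper conclusion that one may restrict to \([\alpha,5\alpha]\), and makes transparent how the constant improves with a longer arithmetic progression; the paper's continuous-averaging argument is perhaps more natural if one wants to tune the band-to-constant tradeoff by a single scale parameter \(\lambda\) rather than by the progression length.
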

To apply Proposition \ref{prop:cosines}, we set $b=100a$, and then the bias term in (\ref{eq:v-e-a-f-p}) can be bounded by
$$-\sup_{a \leq \omega \leq 100a}\frac{2}{\omega^2}\log\left|1 - \frac{k}{n}\left(1 - \frac{1}{k}\sum_{j \in \mathcal{O}} e^{i \omega \gamma_j} \right)\right|\leq -\frac{2}{a^2}\log\left(1-\sqrt{\frac{12}{5}}\frac{k}{n}\right)\asymp \frac{1}{a^2},$$
whenever the condition $\frac{k}{n}< \frac{1}{2}$ holds.
In particular, the bias bound does not blow up to infinity when $k=\frac{n}{2}$, and an improvement over (\ref{eq:pbb1}) for \(k\) near \(\frac{n}{2}\) has been achieved. The estimator (\ref{estimator:sigma_hat}) is thus motivated from this bias improvement. The choice of the hyperparameter \(a\) involves a typical bias-variance tradeoff and Theorem \ref{thm:var} attests the success of (\ref{estimator:sigma_hat}).  

    \subsection{A variance-adaptive location estimator}\label{section:location_estimation}
    In this section, the Fourier-based estimator of Section \ref{section:known_var_cf} is generalized to handle unknown variance. Define 
    \begin{equation*}
        [\sigma_{-}^2, \sigma_{+}^2] := \left[ \hat{\sigma}^2\left(1 - \frac{Rk}{n \log\left(1 + \frac{k}{\sqrt{n}}\right)}\right), \hat{\sigma}^2\left(1 + \frac{Rk}{n \log\left(1 + \frac{k}{\sqrt{n}}\right)}\right)\right]
    \end{equation*}
    where \(R\) is some constant to be set. Then, the estimator is
    \begin{equation}\label{estimator:theta_unknown_var}
        (\hat{\theta}, \hat{v}) = \argmin_{\substack{\mu \in \R \\ \sigma_{-}^2 \leq v \leq \sigma_{+}^2}} \sup_{|\omega| \leq \tau} \inf_{\substack{\zeta \in \C \\ |\zeta| \leq 1}} \left| \frac{1}{n} \sum_{j=1}^{n} e^{i\omega(X_j - \mu) + \frac{v^2\omega^2}{2}} - \frac{n-k}{n} - \frac{k}{n} \zeta\right|.
    \end{equation}
    \noindent The following theoretical guarantee is available.
    \begin{theorem}\label{thm:theta_unknown_var}
        Fix \(\delta \in (0, 1)\). Let \(\hat{\sigma}^2\) be the variance estimator from Section \ref{section:sigmahat} at confidence level \(\frac{\delta}{2}\). There exist \(C, C', c, R > 0\) depending only on \(\delta\) such that the following holds. If \(n\) is sufficiently large depending only on \(\delta\), \(1 \leq k \leq \frac{n}{2} - C\sqrt{n}\), and \(\tau = \hat{\sigma}^{-1}\left(1 \vee c \sqrt{\log\left(1 + \frac{k^2(n-2k)^2}{n^3}\right)}\right)\), then 
        \begin{equation*}
            \sup_{\substack{\theta \in \R \\ ||\gamma||_0 \leq k \\ \sigma > 0}} P_{\theta, \gamma, \sigma}\left\{ \frac{|\hat{\theta} - \theta|}{\sigma} > C' \frac{k}{n} \log^{-\frac{1}{2}}\left(1 + \frac{k^2(n-2k)^2}{n^3}\right) \right\} \leq \delta
        \end{equation*} 
        where \(\hat{\theta}\) is given by (\ref{estimator:theta_unknown_var}). 
    \end{theorem}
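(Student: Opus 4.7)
The plan is to combine Theorem \ref{thm:var} (which guarantees $\sigma^2$ lies in $[\sigma_-^2,\sigma_+^2]$) with an adaptation of the proof blueprint of Theorem \ref{thm:cf_estimator}. Let $\mathcal{E}_1$ be the event from Theorem \ref{thm:var} at confidence $\delta/2$ on which $|\hat\sigma^2-\sigma^2|/\sigma^2 \lesssim k/(n\log(1+k/\sqrt{n}))$, so that by taking $R$ sufficiently large we have $\sigma^2 \in [\sigma_-^2,\sigma_+^2]$. Let $\mathcal{E}_2$ be a uniform concentration event for the empirical characteristic function over $|\omega|\leq \tau$ of the type used in the proof of Theorem \ref{thm:cf_estimator}, tuned to hold with probability at least $1-\delta/2$. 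A union bound gives $P(\mathcal{E}_1\cap\mathcal{E}_2)\geq 1-\delta$, and on this event the true parameter $(\theta,\sigma^2)$ is feasible for the optimization defining $(\hat\theta,\hat v)$.

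A short case check across $k\leq \sqrt{n}$, $\sqrt{n}<k\leq n/4$, and $n/4<k\leq n/2-C\sqrt{n}$ confirms that the chosen $\tau$ and the confidence half-width $R\hat{\sigma}^2/(n\log(1+k/\sqrt{n}))$ satisfy $\sup_{|\omega|\leq\tau,\,v^2\in[\sigma_-^2,\sigma_+^2]} |v^2-\sigma^2|\omega^2 \lesssim 1$. Hence $e^{(v^2-\sigma^2)\omega^2/2}$ is bounded above and below by positive constants throughout the optimization domain, meaning the quantities $\frac{1}{n}\sum_j e^{i\omega(X_j-\mu)+v^2\omega^2/2}$ differ from their $v=\sigma$ counterparts only by a bounded multiplicative factor. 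Since $(\theta,\sigma^2)$ is feasible, the minimum objective value attained by $(\hat\theta,\hat v)$ is bounded above by the objective at $(\theta,\sigma^2)$, which on $\mathcal{E}_2$ is $O(1/\sqrt{n})$ by exactly the argument used in Theorem \ref{thm:cf_estimator}: the inlier terms concentrate around $1$, and the outlier contribution is absorbed by taking $\zeta = k^{-1}\sum_{j\in\mathcal{O}} e^{i\omega\gamma_j}$.

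For the lower bound, I would show that any $(\mu,v)$ in the feasible set with $|\mu-\theta|$ exceeding the target rate produces an objective of larger order. Following the reverse triangle inequality argument used in the population level lemma, and retaining the bounded factor $e^{(v^2-\sigma^2)\omega^2/2}$, evaluation at a frequency $\omega$ of order $\min(\tau,\pi/|\mu-\theta|)$ makes the inlier piece of size $\gtrsim (n-2k)/n$ up to a constant while the outlier-plus-$\zeta$ contribution is at most $2k/n$. Comparison with the $O(1/\sqrt{n})$ upper bound yields $|\hat\theta-\theta| \lesssim \sigma(n^{-1/2} + k/(n\tau))$, which by the algebra noted after Theorem \ref{thm:cf_estimator} simplifies to the claimed rate $\sigma \cdot (k/n)\log^{-1/2}(1 + k^2(n-2k)^2/n^3)$. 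The main obstacle is the joint deviation of $(\mu, v)$: one must rule out that a tuned $v\neq \sigma$ cancels the effect of $\mu\neq\theta$ at every $|\omega|\leq\tau$. The narrowness of the variance confidence interval is precisely what prevents this, because it forces the modulus factor $e^{(v^2-\sigma^2)\omega^2/2}$ to be nearly constant over $[-\tau,\tau]$ while the phase $e^{i\omega(\mu-\theta)}$ still oscillates freely, guaranteeing separation at some admissible frequency.
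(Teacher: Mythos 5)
Your strategy matches the paper's own: intersect the characteristic-function concentration event with the variance confidence event \(\{\sigma^2 \in [\sigma_-^2,\sigma_+^2]\}\), observe that \(e^{(v^2-\sigma^2)\omega^2/2}\) stays bounded over the feasible region (the paper packages this as a relaxed constraint \(|z|\leq\alpha\) with \(\alpha = 1 + e^{(\hat{v}-\sigma^2)\omega^2/2}\)), and rerun the frequency-picking contradiction of Theorem~\ref{thm:cf_estimator}.

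Two stated bounds would fail as written, however, and they are exactly where the choice of \(\tau\) is decided. First, on the concentration event the objective at the truth \((\theta,\sigma^2)\) is \(O\bigl(e^{\sigma^2\tau^2/2}/\sqrt{n}\bigr)\), not \(O(1/\sqrt{n})\); for \(\tau^2 \asymp \log\bigl(1 + k^2(n-2k)^2/n^3\bigr)\) the factor \(e^{\sigma^2\tau^2/2}\) grows like a small power of \(k(n-2k)/n^{3/2}\), and suppressing it hides the bias--variance tradeoff that determines \(\tau\). Second, the accounting ``inlier \(\gtrsim (n-2k)/n\), minus \(2k/n\)'' is only correct when you can take \(\omega = \pi/|\mu-\theta| \leq \tau\) so the oscillation is of constant order; for \(k > n/4\) that subtraction would even be negative, and when the frequency is instead capped at \(\omega=\tau\), the oscillation is of order \(\tau|\mu-\theta|\) and the net lower bound is \(\asymp k(n-2k)/n^2\), not \((n-2k)/n\). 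The paper avoids the case split by choosing \(\omega^*\) so that \(\omega^*|\theta-\hat\theta| = \pi \wedge \mathrm{const}\cdot k/n\), giving a deterministic oscillation and a clean \(k(n-2k)/n^2\) lower bound; comparing this against \(e^{\sigma^2\tau^2/2}/\sqrt{n}\) is precisely what fixes \(\tau\) and yields the rate. Both slips are repairable while keeping your approach intact.
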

    \noindent This estimator achieves the same rate as if the variance is known; adaptation to unknown \(\sigma^2\) is possible. The proof is broadly the same as the argument in the case where \(\sigma^2\) is known. The only difference is in the additional optimization over \(v\), which is not a serious complication. 
    
    \section{Lower bounds}
    In this section, minimax lower bounds are given. Location estimation is addressed in Section \ref{section:lower_bounds_location} and variance estimation is discussed in Section \ref{section:lower_bounds_variance}. 

    \subsection{Location estimation}\label{section:lower_bounds_location}
    Since adaptation to \(\sigma^2\) when estimating the location parameter is possible, we will prove the lower bounds as if \(\sigma^2 = 1\) were known. Recall that \(P_{\theta, \gamma}\) and \(E_{\theta, \gamma}\) are used in place of \(P_{\theta, \gamma, 1}\) and \(E_{\theta, \gamma, 1}\) respectively. The Fourier transform of an integrable function \(f\) is \(\hat{f}(t) = \int e^{-itx} f(x)\,dx\) and the Fourier transform of a finite Borel measure \(\pi\) is \(\hat{\pi}(t) = \int e^{-itx} \pi(dx)\). 
    
    A minimax lower bound can be established for \(1 \leq k \leq \frac{n}{2} - \sqrt{n}\) via a Fourier-based approach, showcasing an interesting parallel with the upper bound involving a Fourier-based estimator proposed in Section \ref{section:known_var_cf}. 
    \begin{theorem}\label{thm:lower_bound_cf}
        Suppose \(1 \leq k \leq \frac{n}{2} - \sqrt{n}\). There exist some universal constants \(C, c > 0\) such that 
        \begin{equation*}
            \inf_{\hat{\theta}} \sup_{\substack{\theta \in \R \\ ||\gamma||_0 \leq k}} P_{\theta, \gamma}\left\{ |\hat{\theta} - \theta| > C \cdot \frac{k}{n}\log^{-1/2}\left(1+\frac{k^2(n-2k)^2}{n^3}\right)\right\} \geq c.
        \end{equation*}
    \end{theorem}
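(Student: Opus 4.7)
The plan is to apply Le Cam's two-point method within the Bayes formulation (\ref{model:mean_shift_mixture}), using the equivalence of minimax rates between the Bayes and frequentist settings noted earlier in the paper. Fix the target separation $\rho \asymp \frac{k}{n}\log^{-1/2}\left(1 + \frac{k^2(n-2k)^2}{n^3}\right)$, set $\theta_0 = -\rho$, $\theta_1 = \rho$, and $\varepsilon = k/n$. The goal is to construct probability measures $Q_0, Q_1$ on $\R$ such that the marginal densities $f_j = (1-\varepsilon)\phi(\,\cdot\, - \theta_j) + \varepsilon(Q_j * \phi)$, where $\phi$ denotes the standard Gaussian density, have their characteristic functions agreeing on the interval $[-\tau, \tau]$ for $\tau \asymp \sqrt{\log\left(1 + k^2(n-2k)^2/n^3\right)}$. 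Notably, this is precisely the frequency interval used by the upper-bound estimator in Section \ref{section:known_var_cf}, which points to its optimality.

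The matching condition reduces to
\begin{equation*}
\hat{Q}_1(\omega) - \hat{Q}_0(\omega) = \frac{1 - \varepsilon}{\varepsilon}\left[e^{i\omega\theta_0} - e^{i\omega\theta_1}\right] = -2i\,\frac{n-k}{k}\sin(\omega \rho) \qquad \text{for } |\omega| \leq \tau.
\end{equation*}
Since the difference of any two characteristic functions is bounded by $2$ in modulus, feasibility requires $\tau \rho \lesssim k/(n-k)$, and this is precisely what the chosen $\rho$ and $\tau$ satisfy. Crucially, the amplitude $\frac{n-k}{k}$, rather than the cruder $\frac{1}{\varepsilon} = \frac{n}{k}$ used in prior constructions such as \cite{comminges_adaptive_2021, cai_optimal_2010}, is what produces the second-order $n-2k$ scaling. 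To realize the matching, I would take $Q_0, Q_1$ to have densities $q - s$ and $q + s$, where $s$ is the real odd function with Fourier transform $\hat{s}(\omega) = -i\frac{n-k}{k}\sin(\omega\rho)\chi(\omega)$ for a smooth symmetric cutoff $\chi$ equal to $1$ on $[-\tau, \tau]$ and supported in $[-2\tau, 2\tau]$, and $q$ is a fixed symmetric probability density satisfying $q(x) \geq |s(x)|$ pointwise.

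With matching on $[-\tau, \tau]$ in hand, set $g = f_0 - f_1$. Then $\hat{g}(\omega) = e^{-\omega^2/2} H(\omega)$ with $H$ vanishing on $[-\tau, \tau]$ and $|H| \leq 2$ elsewhere, so Parseval gives $\|g\|_2^2 \lesssim e^{-\tau^2}/\tau$. Converting this into a Hellinger or $\chi^2$ bound uses the pointwise lower bound $f_0(x) \gtrsim \phi(x - \theta_0)$ on bounded regions together with the Gaussian tail decay of both $f_0, f_1$ outside, yielding $H^2(f_0, f_1) \lesssim e^{-c\tau^2}$. With $\tau^2 \asymp \log(1 + k^2(n-2k)^2/n^3)$ and the leading constants appropriately chosen, $n H^2(f_0, f_1)$ is bounded by a small constant, and tensorization gives $\dTV(f_0^{\otimes n}, f_1^{\otimes n}) \leq 1 - c$, completing the two-point argument. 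The main obstacle is the density construction: the amplitude $\frac{n-k}{k}$ can be large when $k$ is close to $n/2$, making $\|s\|_\infty$ substantial, so the base density $q$ must be chosen carefully (for instance as a suitable rescaling of the Fej\'er or de la Vall\'ee Poussin kernel) to ensure $q \geq |s|$ pointwise while itself remaining a probability density. Saturating the feasibility constraint $\tau\rho \asymp k/(n-k)$ as sharply as possible is exactly what produces the sharp $n-2k$ factor absent from previous constructions.
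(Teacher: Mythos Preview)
Your construction---with $\hat{Q}_1-\hat{Q}_0$ equal to the target $\frac{1-\varepsilon}{\varepsilon}\cdot 2i\sin(\omega\rho)$ on $[-\tau,\tau]$ and tapering to \emph{zero} outside $[-2\tau,2\tau]$---is exactly the ``first idea'' the paper writes down in (\ref{eqn:zero_approx}) and then explains is insufficient. The source of the $n-2k$ scaling is not the amplitude $\frac{n-k}{k}$ versus $\frac{n}{k}$: both equal $\frac{1-\varepsilon}{\varepsilon}$ and $\frac{1}{\varepsilon}$ respectively, and both are $\asymp 1$ when $k$ is near $n/2$, so neither carries any information about $n-2k$. (Incidentally, your remark that this amplitude ``can be large when $k$ is close to $n/2$'' is backwards; it tends to $1$.) With your cutoff, for $|t|>2\tau$ one has $\hat{f}_1(t)-\hat{f}_0(t)=(1-\varepsilon)(e^{-2it\rho}-1)\hat{\varphi}(t)$, and carrying through the Hermite/$\chi^2$ bookkeeping gives only
\[
\chi^2(f_1\,\|\,f_0)\ \lesssim\ \varepsilon^2\,e^{-C\tau^2},
\]
with no $(1-2\varepsilon)^2$ factor. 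Forcing $\chi^2\lesssim 1/n$ then yields $\tau\asymp\sqrt{\log(ek^2/n)}$ and the lower bound $\frac{k}{n}\log^{-1/2}(ek^2/n)$. This matches the theorem only for $k\le n/4$; for $\frac{n}{4}<k<\frac{n}{2}-\sqrt{n}$ it is strictly weaker. Concretely, when $n-2k=n^{1/2+\delta}$ the theorem's rate is $\asymp (2\delta\log n)^{-1/2}$ while your construction gives $\asymp (\log n)^{-1/2}$, and the ratio $\delta^{-1/2}$ is unbounded as $\delta\downarrow 0$, so no universal constant can repair this.

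The missing idea is the parameterization $Q_0=2\varepsilon\,\delta_{\rho}+(1-2\varepsilon)\tilde{Q}_0$, $Q_1=2\varepsilon\,\delta_{-\rho}+(1-2\varepsilon)\tilde{Q}_1$, which interpolates between your construction and the degenerate $\varepsilon=\tfrac12$ case. With this choice, for $|t|>2\tau$ one has $\hat{Q}_1(t)-\hat{Q}_0(t)=2\varepsilon\cdot 2i\sin(t\rho)$ rather than $0$, so the residual in $\hat{f}_1-\hat{f}_0$ becomes $(1-2\varepsilon)(1+\varepsilon)(e^{-2it\rho}-1)\hat{\varphi}(t)$. The extra $(1-2\varepsilon)$ factor propagates to $\chi^2\lesssim (1-2\varepsilon)^2\varepsilon^2 e^{-C\tau^2}$, which permits the larger $\tau\asymp\sqrt{\log\!\big(1+\frac{k^2(n-2k)^2}{n^3}\big)}$ and hence the sharp separation $\rho\asymp\frac{k}{n}\tau^{-1}$ claimed in the theorem.
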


    \noindent Note Theorem \ref{thm:lower_bound_cf} implies consistent estimation is impossible when \(n - 2k \lesssim \sqrt{n}\) as at least constant order error is unavoidable. Furthermore, consider
    \begin{equation*}
        \frac{k}{n \sqrt{\log\left(1 + \frac{k^2(n-2k)^2}{n^{3}}\right)}} \asymp 
        \begin{cases}
            \frac{1}{\sqrt{n}} &\textit{if } k \leq \sqrt{n}, \\
            \frac{k}{n \sqrt{\log\left(1 + \frac{k^2(n-2k)^2}{n^{3}}\right)}} &\textit{if } \sqrt{n} < k < \frac{n}{2}-\sqrt{n},
        \end{cases}
    \end{equation*}
    from the inequality \(u/2 \leq \log(1+u) \leq u\) for \(u \in (0, 1)\), and so the lower bound does indeed match the upper bound. The minimax lower bound for $k>\frac{n}{2}-\sqrt{n}$ will be presented in Section \ref{section:inconsistent_lbound}. 

    The proof of Theorem \ref{thm:lower_bound_cf} proceeds by first linking the model (\ref{model:additive}) to its mixture formulation (\ref{model:mean_shift_mixture}) via a standard concentration argument. The lower bound argument then involves considering the testing problem 
    \begin{align*}
        H_0 &: X_1,...,X_n \overset{iid}{\sim} (1-\varepsilon)N(-\theta, 1) + \varepsilon (Q_0 * N(0, 1)), \\
        H_1 &: X_1,...,X_n \overset{iid}{\sim} (1-\varepsilon)N(\theta, 1) + \varepsilon (Q_1 * N(0, 1)).
    \end{align*}
    where contamination distributions \(Q_0\) and \(Q_1\) as well as the location parameter \(\theta \in \R\) are all to be selected. Here, \(\varepsilon = \frac{k}{n}\). As usual in minimax lower bound arguments, the goal is to construct \(Q_0\) and \(Q_1\) such that \(\theta\) can be taken as large as possible while ensuring \(H_0\) and \(H_1\) cannot be distinguished. It turns out a Fourier-based approach \cite{cai_optimal_2010,carpentier_adaptive_2019,comminges_adaptive_2021} yields a rate-optimal construction. Letting \(f_0\) and \(f_1\) denote the marginal densities of \(H_0\) and \(H_1\) respectively, the parameters \(Q_0, Q_1,\) and \(\theta\) are selected such that the Fourier transforms of \(f_0\) and \(f_1\) agree on as wide an interval \([-\tau, \tau]\) as possible. As argued in the literature \cite{cai_optimal_2010,comminges_adaptive_2021,carpentier_adaptive_2019}, the \(\chi^2\)-divergence admits a bound in terms of the Fourier transforms of the marginal densities,
    \begin{equation*}
        \chi^2(f_1\,||\, f_0) \lesssim \sum_{k=0}^{\infty} \frac{1}{2^k k!} \int_{-\infty}^{\infty} \left|\frac{d^{k}}{dt^k}\left(\hat{f}_1(t) - \hat{f}_0(t)\right)e^{-it\theta}\right|^2 \, dt.
    \end{equation*}
   If \(Q_0, Q_1,\) and \(\theta\) are chosen such that the Fourier transforms match \(\hat{f}_1(t) = \hat{f}_0(t)\) on \([-\tau, \tau]\), then there is hope for the above integral to be small. It turns out the optimal choice of \(\tau\) in Theorem \ref{thm:cf_estimator} is precisely the correct choice in the lower bound as well, and essentially the optimal choice for the location parameter is \(|\theta| \asymp \frac{\varepsilon}{\tau}\). 

    Let us elaborate by making some more technical remarks. The condition \(\hat{f}_0(t) = \hat{f}_1(t)\) implies one should pick \(Q_0\) and \(Q_1\) such that 
    \begin{equation*}
        \hat{Q}_1(t) - \hat{Q}_0(t) = \frac{1-\varepsilon}{\varepsilon} \cdot 2i \sin(t\theta).
    \end{equation*}
    Since \(\varepsilon < \frac{1}{2}\) implies \(\frac{1-\varepsilon}{\varepsilon} > 1\), it is clear that this cannot hold for all \(t \in \R\) since \(|\hat{Q}_1(t) - \hat{Q}_0(t)| \leq 2\) because \(|\hat{Q}_1(t)| \vee |\hat{Q}_0(t)| \leq 1\) by virtue of \(Q_0\) and \(Q_1\) being probability measures. An idea is to seek \(Q_0\) and \(Q_1\) such that
    \begin{equation}\label{eqn:zero_approx}
        \hat{Q}_1(t) - \hat{Q}_0(t) = 
        \begin{cases}
            0 &\textit{if } t < -2\tau, \\
            \frac{1-\varepsilon}{\varepsilon} \cdot 2i\sin(\tau \theta) \cdot \frac{-2\tau-t}{\tau} &\textit{if } -2\tau \leq t \leq -\tau, \\
            \frac{1-\varepsilon}{\varepsilon} \cdot 2i \sin(t\theta) &\textit{if } -\tau < t < \tau, \\
            \frac{1-\varepsilon}{\varepsilon} \cdot 2i \sin(\tau\theta) \cdot \frac{2\tau-t}{\tau} &\textit{if } \tau \leq t \leq 2\tau, \\
            0 &\textit{if } t > 2\tau,
        \end{cases}
    \end{equation}
    where \(\tau\) is a tuning parameter to be set and will depend on the choice of \(\theta\). The linear functions for \(\tau \leq |t| \leq 2\tau\) simply taper to zero. When \(|\theta| \asymp \frac{\varepsilon}{\tau}\), the existence of such \(Q_0\) and \(Q_1\) is given by Proposition \ref{prop:prior_construction}. 
    
    Though appealing in its simplicity, it seems to us this avenue is only capable of delivering a sharp minimax lower bound for, say, the regime \(\varepsilon \leq \frac{1}{4}\). The behavior for \(\varepsilon\) close to \(\frac{1}{2}\) is missed; indeed, it is not clear from this idea why the term \(1-2\varepsilon \left(= \frac{n-2k}{n}\right)\) should even appear in the rate. Roughly speaking, choosing \(|\theta| \asymp \frac{\varepsilon}{\tau}\) yields the following bound on the \(\chi^2\)-divergence
    \begin{equation*}
        \chi^2(f_0\,||\, f_1) \lesssim \varepsilon^2 e^{-C\tau^2},
    \end{equation*}
    where \(C > 0\) is a universal constant. To have \(\chi^2(f_0\,||\,f_1) \lesssim \frac{1}{n}\) would force the choice \(\tau \asymp \sqrt{\log\left(en\varepsilon^2\right)}\asymp\sqrt{\log\left(\frac{ek^2}{n}\right)}\), thus missing the \(n-2k\) scaling. 
    
    The main issue is that the approximation of \(\frac{1-\varepsilon}{\varepsilon} \cdot 2i \sin(t\theta)\) by \(0\) for \(|t| > 2\tau\) is not good enough. Instead, consider seeking for \(|t| > 2\tau\), 
    \begin{equation}\label{eqn:smart_approx}
        \hat{Q}_1(t) - \hat{Q}_0(t) = 2\varepsilon \cdot 2i\sin(t\theta).
    \end{equation}
    Observe that now the approximation error is \(\left|\frac{1-\varepsilon}{\varepsilon} \left(2i \sin(t\theta)\right) - 2\varepsilon\left(2i\sin(t\theta)\right)\right| = (1-2\varepsilon)\frac{(1+\varepsilon)}{\varepsilon} \left|2i\sin(t\theta)\right|\), which scales with \(1-2\varepsilon\). Of course, a major question is whether it is even possible to find two probability measures \(Q_0\) and \(Q_1\) satisfying (\ref{eqn:smart_approx}). It turns out via a simple reparameterization that this question can be reduced to asking the same question in the context of the previous idea, namely whether there exists a pair \(\tilde{Q}_0,\tilde{Q}_1\) satisfying (\ref{eqn:zero_approx}). Consider the parameterization  
    \begin{align*}
        Q_0 &= 2\varepsilon \delta_{\theta} + (1-2\varepsilon) \tilde{Q}_0, \\
        Q_1 &= 2\varepsilon \delta_{-\theta} + (1-2\varepsilon) \tilde{Q}_1. 
    \end{align*}
    Then \(\hat{Q}_1(t) - \hat{Q}_0(t) = 2\varepsilon \cdot 2i \sin(t\theta) + (1-2\varepsilon)\left(\widehat{\tilde{Q}}_1(t) - \widehat{\tilde{Q}}_0(t)\right)\). Then both the condition (\ref{eqn:smart_approx}) for \(|t| \geq 2\tau\) and the condition \(\hat{Q}_1(t) - \hat{Q}_0(t) = \frac{1-\varepsilon}{\varepsilon} \cdot 2i \sin(t\theta)\) for \(|t| \leq \tau\) is satisfied if \(\tilde{Q}_0, \tilde{Q}_1\) satisfy (\ref{eqn:zero_approx}) with contamination proportion \(\frac{\varepsilon}{1+2\varepsilon}\) in place of \(\varepsilon\). The choice of parameterization admits a natural explanation. In the case \(\varepsilon = \frac{1}{2}\), we have \(Q_0 = \delta_\theta\) and \(Q_1 = \delta_{-\theta}\). This is natural as the location parameter is not identifiable and \(\chi^2(f_1\,||\, f_0) = 0\). This extreme case suggests the above parameterization, which interpolates between the construction (\ref{eqn:zero_approx}) (useful for \(\varepsilon < \frac{1}{4}\)) and the extremal case \(\varepsilon = \frac{1}{2}\). 
    
    The technical parts of the proof consist entirely in constructing suitable \(\tilde{Q}_0\) and \(\tilde{Q}_1\) and bounding the induced \(\chi^2\)-divergence using arguments inspired by \cite{comminges_adaptive_2021}. The correct choices of \(\tau\) and \(\theta\) emerge as consequences. Roughly speaking, choosing \(|\theta| \asymp \frac{\varepsilon}{\tau}\) yields the following bound on the \(\chi^2\)-divergence
    \begin{equation*}
        \chi^2(f_0\,||\, f_1) \lesssim (1-2\varepsilon)^2\varepsilon^2 e^{-C\tau^2},
    \end{equation*}
    where \(C > 0\) is a universal constant. Then the correct choice \(\tau \asymp \sqrt{\log\left(1+n\varepsilon^2(1-2\varepsilon)^2\right)}\asymp \sqrt{\log\left(1 + \frac{k^2(n-2k)^2}{n^3}\right)}\) is available to yield \(\chi^2(f_0\,||\,f_1) \lesssim \frac{1}{n}\). This rough sketch captures the essential spirit of the proof of Theorem \ref{thm:lower_bound_cf}.

    \subsection{Variance estimation}\label{section:lower_bounds_variance}
    For variance estimation, a minimax lower bound is given by Part (ii) of Proposition 7 in \cite{comminges_adaptive_2021}, matching the upper bound of Section \ref{section:sigmahat}. We give a statement of their lower bound in the present context for completeness. 

    \begin{theorem}[Part (ii) of Proposition 7 in \cite{comminges_adaptive_2021}]\label{thm:var_lbound}
        There exist some universal constants \(C, c > 0\) such that 
        \begin{equation*}
            \inf_{\hat{\theta}} \sup_{\substack{\theta \in \R \\ ||\gamma||_0 \leq k \\ \sigma > 0}} P_{\theta, \gamma, \sigma} \left\{ \frac{|\hat{\sigma}^2 - \sigma^2|}{\sigma^2} > \frac{Ck}{n\log\left(1 + \frac{k}{\sqrt{n}}\right)} \right\} \geq c.
        \end{equation*}
    \end{theorem}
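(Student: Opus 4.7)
The plan is to adapt the Le Cam two-point argument used for the location lower bound (Theorem~\ref{thm:lower_bound_cf}) to the variance parameter. I will construct two marginal distributions $P_0, P_1$ on $\R$ of the two-groups mixture form
\begin{equation*}
    P_i = (1-\varepsilon) N(0, \sigma_i^2) + \varepsilon(Q_i * N(0, \sigma_i^2)), \qquad i = 0, 1,
\end{equation*}
with $\varepsilon = k/n$, $\sigma_0^2 = 1$, and $\sigma_1^2 = 1 + \Delta$ for $\Delta \asymp \frac{k}{n\log(1+k/\sqrt{n})}$, such that $\chi^2(P_0^{\otimes n} \| P_1^{\otimes n}) = O(1)$. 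A standard reduction from this Bayes construction to the frequentist model~(\ref{model:additive})---via conditioning on the high-probability event that a Bernoulli$(\varepsilon)$ realization of $\supp(\gamma)$ has size at most $k$---then delivers the claimed minimax lower bound through Le Cam's inequality.

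For the parametric regime $k \leq \sqrt{n}$ the target rate is $\frac{1}{\sqrt{n}}$, and the construction is trivial: take $Q_0 = Q_1 = \delta_0$ (no contamination, permissible since $k \geq 1$) and $\sigma_1^2 = 1 + c/\sqrt{n}$. The Gaussian chi-squared identity $\chi^2(N(0, \sigma_1^2)\,\|\,N(0,1)) = \frac{1}{\sqrt{1 - (\sigma_1^2 - 1)^2}} - 1$ gives $\chi^2 = O(1/n)$, so $\chi^2(P_0^{\otimes n}\,\|\,P_1^{\otimes n}) = O(1)$ for $c$ small enough.

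For $\sqrt{n} < k < n/2$, I would choose $Q_0, Q_1$ by a Fourier-matching scheme so that $\hat{P}_0(\omega) = \hat{P}_1(\omega)$ on an interval $[-\tau, \tau]$. Rearranging the equality $e^{-\omega^2/2}[(1-\varepsilon) + \varepsilon\hat{Q}_0(\omega)] = e^{-\omega^2(1+\Delta)/2}[(1-\varepsilon) + \varepsilon\hat{Q}_1(\omega)]$ prescribes $\hat{Q}_1$ in terms of $\hat{Q}_0$ on $[-\tau,\tau]$, and the modulus constraint $|\hat{Q}_1| \leq 1$ enforces $\tau^2 \Delta \lesssim \varepsilon$. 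The prescribed values must then be extended to genuine characteristic functions of probability measures on $\R$, for instance by smoothly tapering to zero outside $[-2\tau, 2\tau]$ and securing positive-definiteness via convolution with a suitable bump in physical space. The chi-squared divergence is then controlled by the Hermite/Fourier bound already invoked in the location lower bound,
\begin{equation*}
    \chi^2(P_1 \| P_0) \lesssim \sum_{j=0}^{\infty}\frac{1}{2^j j!}\int_{|\omega|>\tau}\left|\tfrac{d^j}{d\omega^j}(\hat{P}_1(\omega) - \hat{P}_0(\omega))\right|^2 d\omega,
\end{equation*}
and the Gaussian factors $e^{-\omega^2 \sigma_i^2/2}$ inside each $\hat{P}_i$ contribute decay $e^{-c\tau^2}$ (absorbing the polynomial Hermite prefactors). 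Choosing $\tau^2 \asymp \log(1 + n\varepsilon^2)$ makes $n\chi^2(P_1 \| P_0) = O(1)$, and $\tau^2 \Delta \asymp \varepsilon$ then fixes $\Delta \asymp \frac{\varepsilon}{\log(1+n\varepsilon^2)} \asymp \frac{k}{n\log(1+k/\sqrt{n})}$, matching the target.

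The main obstacle will be simultaneously ensuring (i) that the extended $\hat{Q}_0,\hat{Q}_1$ are positive-definite, so $Q_0, Q_1$ are bona fide probability measures, and (ii) that the Gaussian tail factor in the Hermite-series bound on $\chi^2$ is sharp enough, with constants that deliver the $\log(1+k/\sqrt{n})$ factor rather than a coarser $\log n$. These are exactly the technical points treated in \cite{comminges_adaptive_2021}, and the proof amounts to transcribing their construction into the present notation.
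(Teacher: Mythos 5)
Two preliminary points. First, the paper does not actually prove Theorem \ref{thm:var_lbound}: it is imported verbatim as Part (ii) of Proposition 7 of \cite{comminges_adaptive_2021} (whose setting, the sparse sequence model with \(\theta = 0\) known, is a special case of (\ref{model:additive}), so the lower bound transfers immediately). Your sketch is therefore not being compared against an in-paper argument; it is a re-derivation following essentially the strategy of that reference and of the paper's own location lower bound (Theorem \ref{thm:lower_bound_cf}): two mixture marginals whose characteristic functions agree on \([-\tau,\tau]\), the Hermite-weighted \(\chi^2\) bound, the modulus constraint forcing \(\tau^2\Delta \lesssim \varepsilon\), and the choice \(\tau^2 \asymp \log(1+n\varepsilon^2) \asymp \log(1+k/\sqrt{n})\), which indeed yields \(\Delta \asymp k/(n\log(1+k/\sqrt{n}))\). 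The orders check out, and the parametric regime \(k \le \sqrt{n}\) is handled correctly.

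The one step that would fail as written is your mechanism for turning the prescribed \(\hat{Q}_1\) into a genuine probability measure: tapering to zero outside \([-2\tau,2\tau]\) and then ``securing positive-definiteness via convolution with a suitable bump in physical space.'' Convolving \(Q_1\) with a bump multiplies \(\hat{Q}_1\) by the bump's characteristic function and thus destroys the exact matching of \(\hat{P}_0\) and \(\hat{P}_1\) on \([-\tau,\tau]\) on which the whole argument rests; and a tapered profile is in general not positive-definite, so Bochner's theorem gives you nothing. The device actually used, both in \cite{comminges_adaptive_2021} and in the paper's Proposition \ref{prop:prior_construction} for the location case, avoids demanding positive-definiteness of \(\hat{Q}_1\) altogether: fix a heavy-tailed base density \(p_0\) (uniform on \([-1/\tau,1/\tau]\) with \(x^{-2}\) tails), set \(p_1 = p_0 + \)(inverse Fourier transform of the prescribed, compactly supported difference --- here an even real-valued profile proportional to \(\frac{1-\varepsilon}{\varepsilon}(e^{t^2\Delta/2}-1)\) on \([-\tau,\tau]\), tapered on \(\tau\le|t|\le 2\tau\)), and verify pointwise nonnegativity of \(p_1\) using the constraint \(\tau^2\Delta \lesssim \varepsilon\) together with decay estimates of the inverse transform; integrating to one follows since the prescribed difference vanishes at \(t=0\). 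One more small repair: in the reduction to the frequentist class \(\{||\gamma||_0\le k\}\), take the contamination proportion strictly below \(k/n\) (as in the proof of Theorem \ref{thm:lower_bound_cf}, where \(\varepsilon = (k-10\sqrt{n})/n\)); with \(\varepsilon = k/n\) exactly, the event \(\{\mathrm{Binomial}(n,\varepsilon)\le k\}\) has probability only about \(1/2\), so the truncation step does not produce a small total variation correction. Neither repair changes the rate, and with them your outline matches the cited proof.
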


    \section{Discussion}

    In this section, some remarks are made discussing finer points of interest. 

    \subsection{\texorpdfstring{Inconsistent, yet rate-optimal estimation for \(n-2k \lesssim \sqrt{n}\)}{Inconsistent, yet rate-optimal estimation for n-2k <= sqrt(n)}}\label{section:inconsistent}
    In the regime \(n - 2k \lesssim \sqrt{n}\), consistent estimation of \(\theta\) is impossible as implied by the lower bound of Section \ref{section:lower_bounds_location}. From the perspective of large-scale inference which motivates the two-groups model (\ref{model:additive}), it seems uninteresting to investigate this regime further. However, it turns out that the minimax rate in this regime still exhibits a subtle logarithmic dependence on $n-2k$, which can be derived from recent understanding of a kernel mode estimator \cite{kotekal_sparsity_2023}.

    \subsubsection{Upper bound}\label{section:kme}   
    Motivated by the recent results in \cite{kotekal_sparsity_2023}, we consider a kernel mode estimator with a box kernel. For a bandwidth \(h > 0\), define \(\hat{G}_h(t) := \frac{1}{2nh} \sum_{j=1}^{n} \mathbbm{1}_{\{|t - X_j| \leq h\}}\) and set 
    \begin{equation}\label{estimator:kme}
        \hat{\theta} := \argmax_{t \in \R} \hat{G}_h(t). 
    \end{equation}
    To simplify presentation, the variance will be assumed known and equal to one in this section; adaptation to \(\sigma^2\), as well as \(k\), has been discussed in \cite{kotekal_sparsity_2023}.
    
    A mode-type estimator is natural for the purpose of null distribution estimation. In the setting of two-groups model (\ref{model:additive}), the majority of the coordinates share the same mean value $\theta$. Efron \cite{efron_large-scale_2004,efron_microarrays_2008} essentially estimates a marginal density of the \(z\)-scores and then takes the center of the peak as an estimator of $\theta$. The kernel mode estimator in the form of (\ref{estimator:kme}) has a long history. It was first proposed by Parzen \cite{parzen_estimation_1962} in his original paper of kernel density estimation. Consistency, rates of convergence and asymptotic distribution were investigated by Chernoff \cite{chernoff_estimation_1964} and Eddy \cite{eddy_optimum_1980}.
        
    The recent paper \cite{kotekal_sparsity_2023} studies the statistical property of the kernel mode estimator in the context of robust estimation. Unlike the classical literature \cite{parzen_estimation_1962,chernoff_estimation_1964,eddy_optimum_1980}, it turns out the optimal choice of the bandwidth is a widening, rather than shrinking, one.

    \begin{theorem}[Theorem 3.1 \cite{kotekal_sparsity_2023}]\label{thm:kme}
        Suppose \(1 \leq k < \frac{n}{2}\). There exist universal constants \(C_1, C_2 > 0\) such that the following holds. For any \(\delta \in (0, 1)\), there exists \(L_\delta\) depending only on \(\delta\) such that if \(n\) is sufficiently large depending only on \(\delta\) and \(h = C_1 \sqrt{1 \vee \log\left(L_\delta n/(n-2k)^2\right)}\), then 
        \begin{equation*}
            \sup_{\substack{\theta \in \R \\ ||\gamma||_0 \leq k}} P_{\theta, \gamma}\left\{ |\hat{\theta} - \theta| > C_2 h\right\} \leq \delta
        \end{equation*}
        where \(\hat{\theta}\) is given by (\ref{estimator:kme}).
    \end{theorem}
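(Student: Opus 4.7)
The plan is to show $\hat{G}_h(\theta) > \hat{G}_h(t)$ uniformly for every $t$ with $|t - \theta| > C_2 h$ with probability at least $1 - \delta$; this directly gives $|\hat{\theta} - \theta| \leq C_2 h$ by the argmax definition. For each such $t$, consider
\[
D_t := 2nh[\hat{G}_h(\theta) - \hat{G}_h(t)] = \sum_{j=1}^n \bigl(\mathbbm{1}_{X_j \in [\theta - h, \theta + h]} - \mathbbm{1}_{X_j \in [t - h, t + h]}\bigr),
\]
a sum of $n$ independent $\{-1, 0, 1\}$-valued variables amenable to Bernstein's inequality. Write $\epsilon := 2\bar{\Phi}(h)$ for the Gaussian tail beyond $h$ and $\tau := 2h\phi((C_2 - 1)h)$ for the inlier probability of landing in a window at distance at least $C_2 h$ from $\theta$. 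The worst adversarial outlier placement is to set all $k$ outlier means equal to $t$, since this simultaneously maximizes the negative contribution and leaves the positive contribution at the tail level $\tau$; this gives $E[D_t] \geq (n - 2k)(1 - \epsilon - \tau)$ and $\Var[D_t] = O(n(\epsilon + \tau))$.

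With the bandwidth $h^2 = C_1^2 \log(L_\delta n/(n - 2k)^2)$ and universal $C_1, C_2$ chosen so that $(C_2 - 1)^2 C_1^2 / 2$ comfortably exceeds $1$, both $\tau$ and $\epsilon$ are small powers of $(n - 2k)^2/(L_\delta n)$, with $\tau$ dominated by $\epsilon$. Substituting into Bernstein yields
\[
P(D_t \leq 0) \leq \exp\!\bigl(-c \min\{n - 2k,\; (n - 2k)^2/(n\epsilon)\}\bigr)
\]
for a universal $c > 0$. Here $(n - 2k)^2/(n\epsilon)$ simplifies, up to polylogarithmic factors, to $L_\delta^{C_1^2/2} h$, which can be made arbitrarily large by enlarging $L_\delta$ depending only on $\delta$. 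In the interior regime $n - 2k = \omega(\sqrt{n})$, the first term of the minimum dominates and is superpolynomially small in $n - 2k$; in the boundary regime $n - 2k \asymp \sqrt{n}$, the second term suffices because it is of order $L_\delta^{C_1^2/2}$.

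The main obstacle is making the comparison uniform in $t$. A deterministic grid of mesh $h/2$ requires union-bounding over $O(n)$ candidates, whose $\log n$ cost may swamp the Bernstein exponent in the boundary regime. The remedy is a cluster-based pruning: $\hat{G}_h(t)$ is locally maximal only in windows that contain a cluster of data, and any outlier cluster of size $s \leq k$ produces an alternative local mode of height at most $s(1 - \epsilon)/(2nh)$. Since only clusters of size comparable to $n - k$ are serious competitors with the true mode at $\theta$, and since the total outlier count is $k < n - k$, there is at most $O(1)$ such serious competitor. This reduces the effective union bound to a constant; combined with a standard Gaussian-tail control on $\max_{j \in \mathcal{I}}|X_j - \theta|$ to handle sub-dangerous noise modes and the piecewise-constant structure of $\hat{G}_h$, the proof closes. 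The bandwidth choice $h \asymp \sqrt{\log(L_\delta n/(n - 2k)^2)}$ emerges as the precise balance of $\epsilon$, $\tau$, and the signal $n - 2k$ inside Bernstein's exponent.
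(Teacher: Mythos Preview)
The paper does not prove this theorem; it is quoted from \cite{kotekal_sparsity_2023} without argument, so there is no in-paper proof to compare against. I therefore assess your sketch on its own merits.

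Your overall plan is sound, but there is a genuine gap in the concentration step. Your Bernstein exponent is \(c\min\{n-2k,\,(n-2k)^2/(n\epsilon)\}\), and you treat only the regimes \(n-2k=\omega(\sqrt{n})\) (first term large) and \(n-2k\asymp\sqrt{n}\) (second term of order \(L_\delta^{C_1^2/2}\)). But the theorem is invoked by the paper precisely in the inconsistency regime \(n-2k\lesssim\sqrt{n}\), including \(n-2k\) a bounded constant. There the minimum is at most \(c(n-2k)\), a fixed constant independent of \(L_\delta\), so Bernstein yields only \(P(D_t\le 0)\le e^{-c(n-2k)}\), which cannot be pushed below an arbitrary \(\delta\). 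Your claim that ``the second term suffices'' ignores that the exponent is a minimum. The remedy is Bennett's inequality (or a binomial Chernoff bound): when \(n\epsilon\ll n-2k\), Bennett gives an exponent of order \((n-2k)\log\bigl((n-2k)/(n\epsilon)\bigr)\asymp (n-2k)\log(L_\delta h)\), and it is this extra logarithm that lets \(L_\delta\) enter and drive the bound to zero.

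The uniformity argument is also too informal to stand. Even with Bennett, the per-\(t\) exponent in the regime \(n-2k=O(1)\) is only \(O(\log L_\delta+\log\log n)\), far short of the \(\log n\) needed for a union bound over the \(O(n)\) pieces of the piecewise-constant \(\hat G_h\). Your cluster-pruning heuristic is plausible but not a proof: ``serious competitor'' is undefined, the claim that there is \(O(1)\) of them tacitly assumes outliers are concentrated in one place (the adversary need not oblige), and the appeal to \(\max_{j\in\mathcal I}|X_j-\theta|\) does not by itself control \(\sup_t\hat G_h(t)\). A rigorous argument must separately handle (i) values of \(t\) far from all outlier means, where a deterministic containment bound on the data eligible to fall in \([t-h,t+h]\) gives a single \(t\)-free binomial, and (ii) values of \(t\) near outlier means, where one needs a scan-statistic bound on \(\max_t\#\{j\in\mathcal O:|X_j-t|\le h\}\) that is sharp to order \(\sqrt{k\epsilon}\) rather than \(\sqrt{k}\). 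Neither piece is supplied.
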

    
    Applying Theorem \ref{thm:kme} to the regime $\frac{n}{2}-C\sqrt{n}<k<\frac{n}{2}$, the minimax rate (\ref{rate:theta}) is achieved in the inconsistency regime, thus complementing the result of Theorem \ref{thm:cf_estimator} for the Fourier-based estimator.
    
    \subsubsection{Lower bound}\label{section:inconsistent_lbound}

    In Section \ref{section:kme}, a kernel mode estimator is used to achieve the minimax rate in the inconsistency regime. The following matching minimax lower bound can be established, which is stated for \(\sigma^2 = 1\) without loss of generality.

    \begin{theorem}\label{thm:inconsistent_lbound}
        Suppose \(\frac{n}{2} - \sqrt{n} < k < \frac{n}{2}\). There exist some universal constants \(C, c > 0\) such that 
        \begin{equation*}
            \inf_{\hat{\theta}} \sup_{\substack{\theta \in \R \\ ||\gamma||_0 \leq k}} P_{\theta, \gamma}\left\{ |\hat{\theta} - \theta| > C \sqrt{\log\left(1 + \frac{n}{(n-2k)^2}\right)}\right\} \geq c. 
        \end{equation*}
    \end{theorem}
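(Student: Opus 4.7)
The plan is a two-point Le Cam argument in the frequentist formulation. Set \(\mu = c\sqrt{\log(1+n/(n-2k)^2)}\) for a sufficiently small constant \(c>0\), and write \(m=n-2k\). Consider configurations \((\theta_A,\gamma_A)\) with \(\theta_A=\mu\) and \(\gamma_j=-2\mu\) on exactly \(k\) coordinates (so \(\eta_j=-\mu\) there), and its mirror image \((\theta_B,\gamma_B)\) with \(\theta_B=-\mu\) and \(\gamma_j=2\mu\) on \(k\) coordinates. Placing a uniform prior on the outlier positions (uniform size-\(k\) subsets of \([n]\)) yields exchangeable data distributions \(P_A, P_B\) in which exactly \(n-k\) observations come from one \(N(\pm\mu,1)\)-cluster and \(k\) from the other; since the minimax sup over \(\gamma\) upper-bounds any Bayes risk, it suffices to prove \(\dTV(P_A,P_B)\leq 1-c'\) for some universal \(c'>0\).

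To bound the TV, I would first analyze the summary statistic \(\hat{N}_+:=\#\{j:X_j>0\}\). Given the label vector, the Gaussian noises in different coordinates are independent, so under \(P_A\) the count \(\hat{N}_+\) is distributed as the independent sum \(\mathrm{Bin}(n-k,\Phi(\mu))+\mathrm{Bin}(k,\Phi(-\mu))\) with mean \((n-k)-m\Phi(-\mu)\) and variance \(n\Phi(\mu)\Phi(-\mu)\); under \(P_B\) the two binomials swap, yielding the mirror mean \(k+m\Phi(-\mu)\) and identical variance. The mean gap is \(m(1-2\Phi(-\mu))\) while the standard deviation is of order \(\sqrt{n\Phi(-\mu)}\) for moderately large \(\mu\). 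Using the Mills-type asymptotic \(\Phi(-\mu)\asymp e^{-\mu^2/2}/\mu\), the choice \(\mu\leq c\sqrt{\log(1+n/m^2)}\) with \(c\) small forces \(\Phi(-\mu)\gtrsim m^2/n\), so the mean gap is at most a constant multiple of one standard deviation; a direct Gaussian (or Binomial) TV comparison then gives \(\dTV(\mathrm{Law}(\hat{N}_+|P_A),\mathrm{Law}(\hat{N}_+|P_B))\leq 1-c'\).

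Extending this bound from \(\hat{N}_+\) to the full data is the main obstacle, since \(\hat{N}_+\) is not a sufficient statistic. My preferred route is a conditional coupling: given the split counts on \(\{X>0\}\) and \(\{X\leq 0\}\), the within-cluster empirical distributions under \(P_A\) and \(P_B\) agree up to an \(O(\Phi(-\mu))\) contamination (in both configurations the positive half is overwhelmingly drawn from the \(N(\mu,1)\)-cluster, and symmetrically on the negative half), so the extra conditional TV contribution is of the same order as the count-statistic bound and can be absorbed into \(c'\). A cleaner alternative is a direct \(\chi^2\) computation via the explicit likelihood ratio \(\mathrm{d}P_A/\mathrm{d}P_B=e_k(f)/e_{n-k}(f)\) with \(f_j=e^{-2\mu X_j}\) (where \(e_j\) denotes the \(j\)th elementary symmetric polynomial), whose second moment under \(P_B\) is controlled by the marginal \(p_B^{(1)}=\tfrac{k}{n}\phi(\cdot-\mu)+\tfrac{n-k}{n}\phi(\cdot+\mu)\) together with the exchangeable covariance structure. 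Once \(\dTV(P_A,P_B)\leq 1-c'\) is secured, Le Cam's inequality
\[
\inf_{\hat{\theta}}\sup_{\theta,\gamma}P_{\theta,\gamma}\{|\hat{\theta}-\theta|\geq\mu\}\;\geq\;\frac{1-\dTV(P_A,P_B)}{2}\;\geq\;\frac{c'}{2}
\]
delivers the claimed minimax lower bound.
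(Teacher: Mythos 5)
Your high-level plan is in the right direction: a two-point Le Cam argument in the frequentist model with a mirror construction and uniform prior over the outlier positions is indeed how the paper proceeds, and your calculations for the sign-count statistic \(\hat{N}_+\) are correct (the mean gap \(m(1-2\Phi(-\mu))\), the variance \(n\Phi(\mu)\Phi(-\mu)\), and the lower bound \(\Phi(-\mu)\gtrsim m^2/n\) for small enough \(c\) all check out). The elementary-symmetric-polynomial formula \(\mathrm{d}P_A/\mathrm{d}P_B = e_k(f)/e_{n-k}(f)\) with \(f_j=e^{-2\mu X_j}\) is also correct. But the crucial step — establishing \(\dTV(P_A,P_B)\leq 1-c'\) — is not carried out, and this is precisely the hard part. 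The count-statistic bound only controls tests that are measurable functions of \(\hat{N}_+\); since \(\hat{N}_+\) is very far from sufficient, closing the gap to the full data is not a minor technicality. Your conditional coupling claim that ``the within-cluster empirical distributions under \(P_A\) and \(P_B\) agree up to an \(O(\Phi(-\mu))\) contamination'' is not backed up by a concrete argument — the conditional laws of the positive and negative values given \(\hat{N}_+\) are mixtures with \emph{different} mixing proportions under \(P_A\) and \(P_B\), and bookkeeping the TV contribution from this discrepancy (and then aggregating over all thresholds, not just \(t=0\)) is not addressed. Similarly, the \(\chi^2\) alternative is stated as ``controlled by the marginal together with the exchangeable covariance structure,'' but no such control is exhibited; a \(\chi^2\)-divergence between two mixtures whose densities both involve \(e_k\)-type sums is not amenable to standard Ingster--Suslina machinery, because the null is not a simple product measure.

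The missing idea is the decoupling the paper uses: the contamination sets \(S_0\) and \(S_1\) are drawn from two \emph{disjoint halves} \(E_0=\{1,\dots,n/2\}\) and \(E_1=\{n/2+1,\dots,n\}\). This breaks exchangeability, but it makes the marginal of the mixture \(\bar{P}_1\) on \(E_0\) a \emph{fixed} product Gaussian \(N(-C\psi\mathbf{1},I_{n/2})\), because \(\pi_1\) places no contamination in \(E_0\). One can then compute \(\chi^2(\tilde Q_0\,\|\,\tilde Q_1)\) with \(\tilde Q_1=N(0,I_{n/2})\) exactly by the Ingster--Suslina identity, reducing the bound to the MGF of \(|T_0\cap T_0'|\) for independent uniform size-\((n/2-k)\) subsets, a hypergeometric random variable. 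Note also that in the paper's construction the number of coordinates that actually carry signal on \(E_0\) is \(n/2-k=m/2\), so the problem is effectively a \emph{sparse} (size-\(m\)) detection problem rather than the dense disagreement your two hypotheses exhibit — this is what makes the divergence tractable. Without such a reduction, the TV bound for your fully exchangeable \(P_A\) versus \(P_B\) remains an open computation, and so the proof as proposed is incomplete.
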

    
    Together with the result of Theorem \ref{thm:lower_bound_cf}, we have established the lower bound of the minimax rate (\ref{rate:theta}) for all $1\leq k<\frac{n}{2}$.

    The construction in the proof of Theorem \ref{thm:inconsistent_lbound} is, at its core, similar to that in \cite{kotekal_sparsity_2023}. Since the context of \cite{kotekal_sparsity_2023} is quite different, we still present a self-contained proof in the paper. Instead of constructing contamination distributions and selecting a location parameter such that the characteristic functions of the marginals match on a large interval containing zero, a more direct argument is available. Strictly speaking, the concentration argument used in Section \ref{section:lower_bounds_location} to link the frequentist model (\ref{model:additive}) to the Bayes model (\ref{model:mean_shift_mixture}) can no longer be used. The proof of Theorem \ref{thm:inconsistent_lbound} is thus directly constructed for the frequentist model (\ref{model:additive}).

    \subsection{Null estimation in total variation}\label{section:TV}
    In some situations, estimation of the null distribution with respect to some information-theoretic distance or divergence is of interest. Our results regarding parameter estimation turn out to directly yield minimax rates for estimating the null distribution in the total variation distance. The connection between the total variation of two univariate Gaussian distributions and their parameters is given by the following result of Devroye, Mehrabian, and Reddad \cite{devroye_total_2023}.
    \begin{lemma}[Theorem 1.3 in \cite{devroye_total_2023}]
        If \(\mu_1, \mu_2 \in \R\) and \(\sigma_1, \sigma_2 > 0\), then 
        \begin{equation*}
            \dTV\left(N(\mu_1, \sigma_1^2), N(\mu_2, \sigma_2^2) \right) \asymp 1 \wedge \left(\frac{|\sigma_1^2 - \sigma_2^2|}{\sigma_1^2 \vee \sigma_2^2} \vee \frac{|\mu_1 - \mu_2|}{\sigma_1 \vee \sigma_2}\right).  
        \end{equation*}
    \end{lemma}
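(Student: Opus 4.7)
The plan is to prove matching two-sided bounds by reducing to a canonical pair $(N(0,1), N(m, s^2))$, decomposing via the triangle inequality for the upper bound, and exhibiting explicit tests for the lower bound. First I would exploit the affine invariance of $\dTV$: applying the map $x \mapsto (x - \mu_1)/(\sigma_1 \wedge \sigma_2)$ to both measures (and swapping the two Gaussians if necessary) reduces the claim to showing
$$\dTV(N(0,1), N(m, s^2)) \;\asymp\; 1 \wedge \left(\frac{s^2-1}{s^2} \vee \frac{m}{s}\right),$$
where $s := (\sigma_1 \vee \sigma_2)/(\sigma_1 \wedge \sigma_2) \geq 1$ and $m := |\mu_1 - \mu_2|/(\sigma_1 \wedge \sigma_2) \geq 0$. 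Note that $\sigma_1^2 \vee \sigma_2^2 = s^2 (\sigma_1 \wedge \sigma_2)^2$, so $(s^2-1)/s^2 = |\sigma_1^2 - \sigma_2^2|/(\sigma_1^2 \vee \sigma_2^2)$ and $m/s = |\mu_1 - \mu_2|/(\sigma_1 \vee \sigma_2)$, exactly matching the quantities in the statement.

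For the upper bound I would invoke the triangle inequality with the intermediate Gaussian $N(0, s^2)$, giving $\dTV(N(0,1), N(m,s^2)) \leq \dTV(N(0,1), N(0,s^2)) + \dTV(N(0,s^2), N(m, s^2))$. The second term equals $2\Phi(m/(2s)) - 1 \lesssim 1 \wedge (m/s)$ by a standard one-parameter Gaussian computation. For the first term, I would solve $\phi(x) = s^{-1}\phi(x/s)$ to find the density crossing points $|x| = x^\star$ with $(x^\star)^2 = 2s^2 \log s / (s^2 - 1)$, evaluate the TV in closed form as $2(\Phi(x^\star) - \Phi(x^\star/s)) - 1$, Taylor expand in $s-1$ when $s$ is near $1$ (yielding order $s-1 \asymp (s^2-1)/s^2$), and fall back on the trivial bound $\dTV \leq 1$ when $s$ is large, in total matching $1 \wedge (s^2-1)/s^2$ up to a constant.

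For the lower bound I would exhibit explicit tests tailored to which of the two quantities dominates. If $m/s \geq (s^2-1)/s^2$, use the half-line $A = \{x > m/2\}$: by a direct computation $|P_0(A) - P_1(A)| = |\Phi(m/2) - \Phi(-m/(2s))|$, and Taylor expanding $\Phi$ (using that the derivative $\phi$ is bounded below on any fixed interval) yields a lower bound of order $1 \wedge m/s$. If $(s^2-1)/s^2 \geq m/s$, use the symmetric interval $A = \{|x| \leq c\}$ for a fixed constant $c$: then $P_0(A) = 2\Phi(c)-1$ while $P_1(A) = \Phi((c-m)/s) - \Phi((-c-m)/s)$, which by a direct computation differs from $P_0(A)$ by an amount of order $1 \wedge (s^2-1)/s^2$ (the mean-shift contribution being controlled by the assumed dominance). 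In each case, the chosen test set achieves the claimed lower bound.

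The main obstacle is producing lower bounds with a single universal constant across the whole range $s \geq 1$, since the near-$s=1$ behavior (linear in $s-1$) and the $s \gg 1$ saturation behavior (order one) must be handled simultaneously. The clean route is to split into $s \in [1, 2]$, where one linearizes via $(s^2-1)/s^2 \asymp s - 1$ and uses Taylor expansion of $\Phi$, and $s > 2$, where $(s^2-1)/s^2 \asymp 1$ so it suffices to note that $N(0,1)$ concentrates on $[-3, 3]$ while $N(m, s^2)$ places mass of order one outside this interval, hence $\dTV \gtrsim 1$. A secondary technical point is selecting the constant $c$ in the symmetric interval test so that $P_0(A)$ is strictly between $0$ and $1$ while still producing a non-trivial discrepancy from $P_1(A)$ in the variance-dominated regime; taking $c$ as a small absolute constant suffices and keeps all constants universal.
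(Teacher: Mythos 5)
The paper does not prove this lemma; it simply quotes Theorem 1.3 of Devroye, Mehrabian, and Reddad, so there is no internal argument to compare against. Your proof is nevertheless essentially correct and follows the natural route: affine-invariance reduction to the canonical pair $\bigl(N(0,1),N(m,s^2)\bigr)$ with $s\geq 1$, triangle inequality through the intermediate measure $N(0,s^2)$ for the upper bound, and two explicit test sets for the lower bound. Two small remarks. First, the closed form for the pure-variance term should read $\dTV\bigl(N(0,1),N(0,s^2)\bigr) = 2\bigl(\Phi(x^\star)-\Phi(x^\star/s)\bigr)$, without the trailing $-1$ you wrote (as stated it would give $-1$ at $s=1$); the Taylor-expansion conclusion, that this is of order $s-1\asymp (s^2-1)/s^2$ for $s\in[1,2]$, is unaffected. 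Second, in the variance-dominated case the mean shift is handled more simply than you suggest: for the symmetric test set $A=[-c,c]$ the map $m\mapsto P\bigl(N(m,s^2)\in A\bigr)$ is maximized at $m=0$ by unimodality and symmetry, so $P_0(A)-P_1(A)\geq 2\bigl(\Phi(c)-\Phi(c/s)\bigr)$ regardless of $m$, and the assumed dominance $(s^2-1)/s^2\geq m/s$ is not actually needed to control the shift. With these fixes the argument is complete and produces universal constants once you split $s\in[1,2]$ from $s>2$ as you describe.
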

    \noindent This result immediately yields the minimax rate for estimation of the null distribution in total variation, namely 
    \begin{equation*}
        \epsilon_{\text{TV}}^*(k, n) \asymp 1 \wedge \frac{k}{n\sqrt{\log\left(1 + \frac{k^2(n-2k)^2}{n^3}\right)}} \asymp 
        \begin{cases}
            \frac{k}{n}\log^{-\frac{1}{2}}\left(1 + \frac{k^2(n-2k)^2}{n^3}\right) &\textit{if } 1 \leq k \leq \frac{n}{2} - \sqrt{n}, \\
            1 &\textit{if } \frac{n}{2}-\sqrt{n} < k < \frac{n}{2}. 
        \end{cases}
    \end{equation*}
    Notably, the location estimation rate dominates and thus determines the total variation rate. A formal statement of the minimax rate follows, which we give without proof. 
    \begin{theorem}
    Suppose \(1 \leq k < \frac{n}{2}\), and consider the estimators \(\hat{\theta}\) and \(\hat{\sigma}^2\) from Theorems \ref{thm:var} and \ref{thm:theta_unknown_var}, respectively. For any $\delta\in (0,1)$, there exists some constant $C>0$ depending only on $\delta$ such that 
        \begin{equation*}
            \sup_{\substack{\theta \in \R \\ ||\gamma||_0 \leq k \\ \sigma > 0}} P_{\theta, \gamma, \sigma} \left\{ \dTV\left(N(\hat{\theta}, \hat{\sigma}^2), N(\theta, \sigma^2)\right) > C\left( \frac{k}{n\sqrt{\log\left(1 + \frac{k^2(n-2k)^2}{n^3}\right)}} \wedge 1 \right)\right\} \leq \delta. 
        \end{equation*}
         Furthermore, there exist some universal constants \(C', c > 0\) such that 
        \begin{equation*}
            \inf_{\hat{\theta}, \hat{\sigma}} \sup_{\substack{\theta \in \R \\ ||\gamma||_0 \leq k \\ \sigma > 0}} P_{\theta, \gamma, \sigma}\left\{  \dTV\left(N(\hat{\theta}, \hat{\sigma}^2), N(\theta, \sigma^2)\right) \geq C'\left( \frac{k}{n\sqrt{\log\left(1 + \frac{k^2(n-2k)^2}{n^3}\right)}} \wedge 1 \right) \right\} \geq c.
        \end{equation*}
    \end{theorem}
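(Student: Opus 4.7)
The plan is to obtain both directions directly by combining the parameter-estimation theorems already established in the paper with the Devroye–Mehrabian–Reddad characterization of the Gaussian total variation.

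For the upper bound, I would invoke Theorem \ref{thm:theta_unknown_var} and Theorem \ref{thm:var} each at confidence level $\delta/2$ and work on the intersection event $E$, which has $P_{\theta,\gamma,\sigma}(E) \geq 1-\delta$. On $E$ we have
\[
|\hat{\theta} - \theta| \lesssim \sigma \cdot \frac{k}{n}\log^{-1/2}\!\left(1 + \frac{k^2(n-2k)^2}{n^3}\right) \quad \text{and} \quad \frac{|\hat{\sigma}^2 - \sigma^2|}{\sigma^2} \lesssim \frac{k}{n\log(1 + k/\sqrt{n})}.
\]
For $n$ large enough in terms of $\delta$, the second bound forces $\hat\sigma^2 \asymp \sigma^2$. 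Applying the displayed lemma of Devroye, Mehrabian, and Reddad yields
\[
\dTV\!\left(N(\hat{\theta}, \hat{\sigma}^2), N(\theta, \sigma^2)\right) \lesssim \frac{|\hat{\sigma}^2 - \sigma^2|}{\sigma^2 \vee \hat{\sigma}^2} + \frac{|\hat{\theta} - \theta|}{\sigma \vee \hat{\sigma}}.
\]
The remaining step is to verify that the variance contribution is absorbed by the location contribution in every regime. This is a short calculation: when $k \leq \sqrt{n}$ both rates collapse to $n^{-1/2}$; when $\sqrt{n} < k < n/2$, one uses $\log(1 + k/\sqrt{n}) \gtrsim 1$ together with $\log(1+k/\sqrt{n}) \gtrsim \sqrt{\log(1+k^2(n-2k)^2/n^3)}$, which follows from $k^2(n-2k)^2/n^3 \leq k^2/n$ and $\log(1+k/\sqrt{n}) \asymp \log(1+k^2/n)^{1/2}$ in that range. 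This gives the stated upper bound.

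For the lower bound, I would recycle the two-point constructions already used to prove Theorem \ref{thm:lower_bound_cf} and Theorem \ref{thm:inconsistent_lbound}. In both constructions the scale parameter is taken to be $\sigma = 1$ and only the location and the distribution of the $\gamma_j$ vary, so the two hypotheses differ in a location pair $(\theta_0,\theta_1)$ with $|\theta_0-\theta_1|$ exactly of order the minimax location rate appearing in the claim. The Devroye–Mehrabian–Reddad lemma immediately gives
\[
\dTV\!\left(N(\theta_0,1), N(\theta_1,1)\right) \asymp 1 \wedge |\theta_0-\theta_1|,
\]
which is of the claimed order in every regime (bounded below by a constant when $k > n/2 - \sqrt{n}$). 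A standard two-point reduction then concludes: for any candidate estimator $(\hat\theta,\hat\sigma^2)$, the triangle inequality for $\dTV$ implies that on any event $\max_{i\in\{0,1\}}\dTV(N(\hat\theta,\hat\sigma^2), N(\theta_i,1)) \geq \tfrac{1}{2}\dTV(N(\theta_0,1), N(\theta_1,1))$, so the testing error lower bound inherited from Theorems \ref{thm:lower_bound_cf} and \ref{thm:inconsistent_lbound} transfers directly.

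I do not anticipate a serious obstacle, since the analytic heavy lifting is already in place via the two parameter-estimation minimax theorems and the black-box Gaussian TV formula. The only point requiring any care is the rate arithmetic showing that the variance rate is no larger than the location rate across all regimes of $k$; this is elementary but must be handled with the case split $k \leq \sqrt{n}$, $\sqrt{n} < k \leq n/2 - \sqrt{n}$, and $k > n/2 - \sqrt{n}$.
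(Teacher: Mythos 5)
Your proof proposal is correct and implements exactly the derivation the paper intends: the paper states this theorem follows directly from the parameter-estimation theorems together with the Devroye--Mehrabian--Reddad lemma and explicitly gives the statement ``without proof,'' and your decomposition (combine the two estimation error bounds with the TV formula for the upper bound; reuse the two-point lower bound constructions, note both priors fix $\sigma=1$ and differ only in $\theta$, and transfer the testing error to the TV loss via the triangle inequality) is that routine argument, carried out properly. The only quibble is cosmetic: the intermediate claim that $\log(1+k/\sqrt{n}) \asymp \bigl(\log(1+k^2/n)\bigr)^{1/2}$ is not a two-sided equivalence (the left side grows like $\log(k^2/n)$ while the right grows like $\sqrt{\log(k^2/n)}$), but you only need the one-sided bound $\log(1+k/\sqrt{n}) \gtrsim \bigl(\log(1+k^2/n)\bigr)^{1/2}$, which does hold uniformly over $\sqrt{n}\le k<n/2$ because $\log(1+k/\sqrt{n}) \asymp \log(1+k^2/n) \ge \sqrt{\log(1+k^2/n)}$ once the logarithm exceeds one, and both sides are bounded between universal positive constants otherwise.
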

    \noindent Since for \(k \geq \frac{n}{2} - \sqrt{n}\) the lower bound is \(\epsilon_{\text{TV}}^*(k, n) \gtrsim 1\), it follows from the boundedness of total variation that \(N(\hat{\theta}, \hat{\sigma}^2)\) achieves the minimax rate even though no guarantees are available for \(\hat{\theta}\) when \(n-2k \lesssim \sqrt{n}\). We always have \(\dTV\left(N(\hat{\theta}, \hat{\sigma}^2), N(\theta, \sigma^2) \right) \lesssim 1\). This marks a significant difference between parameter estimation (in which minimax estimation for \(k \geq \frac{n}{2} - \sqrt{n}\) is nontrivial) and estimation in total variation.   
    
    \subsection{\texorpdfstring{Adaptation to \(k\)}{Adaptation to k}}\label{section:adapt_k}
    Thus far it has been assumed that the signal/contamination level \(k\) in (\ref{model:additive}) is known to the statistician. In practice, this is usually not the case and so adaptation to unknown \(k\) is an important problem. As frequently seen in the literature, a standard application of Lepski's method \cite{lepski_problem_1990,lepski_asymptotically_1991,lepski_asymptotically_1992} gives an adaptive estimator of the location parameter \(\theta\). Fix \(\delta, \eta \in (0, 1)\). We will consider adaptation to \(k\) for \(1 \leq k \leq \frac{n}{2} - C_\delta \sqrt{n}\) where \(C_\delta > 0\) is the constant depending on \(\delta\) from Theorem \ref{thm:theta_unknown_var}. Let \(\tilde{\sigma}^2\) denote the pilot variance estimator in Proposition \ref{prop:var_tilde} at confidence level \(\eta\). By Proposition \ref{prop:var_tilde} there exists \(L_\eta \geq 1\) such that \(\tilde{\sigma}^2 L_\eta^{-1} \leq \sigma^2 \leq \tilde{\sigma}^2 L_\eta\) with probability at least \(1-\eta\) uniformly over the parameter space. To apply Lepski's method, define
    \begin{align}\label{eqn:K_grid}
        \mathcal{K} &:= \left\{1,2,3,..., \left\lfloor \frac{n}{2} - C_\delta \sqrt{n}\right\rfloor\right\}. 
    \end{align}
    Let \(\hat{\theta}_k\) denote the estimator of Theorem \ref{thm:theta_unknown_var} at signal level \(k\) and confidence level \(\delta\). Note \(\hat{\theta}_k\) has associated with it a choice \(\tau = \tau_k\) depending on \(k\) given by Theorem \ref{thm:theta_unknown_var}. Define 
    \begin{equation}\label{rate:minimax}
        \epsilon(k, n) = \frac{k}{n\sqrt{\log\left(1 + \frac{k^2(n-2k)^2}{n^3}\right)}}.
    \end{equation}
    For \(k \in \mathcal{K}\), define the intervals 
    \begin{equation*}
        J_k = \left[\hat{\theta}_k - C_{\delta, \eta}'\sqrt{L_\eta} \tilde{\sigma}\epsilon(k, n), \hat{\theta}_k + C_{\delta, \eta}'\sqrt{L_\eta} \tilde{\sigma}\epsilon(k, n) \right],
    \end{equation*}
    where \(C_{\delta, \eta}'\) is chosen large enough depending on \(\delta\) and \(\eta\). The adaptive estimator \(\hat{\theta}\) is defined to be any element of the set 
    \begin{equation}\label{eqn:Lepski}
        \bigcap_{\substack{k \in \mathcal{K} \\ k \geq k'}} J_k,
    \end{equation}
    where \(k' \in \mathcal{K}\) is the smallest value such that the set is nonempty. If no such \(k'\) exists, set \(\hat{\theta} = 0\). Note that computing \(\hat{\theta}\) does not require any knowledge of the true signal/contamination level nor the true variance. 
    \begin{theorem}\label{thm:adapt_k}
        Suppose \(\delta, \eta \in (0, 1)\). There exists \(C_\delta, C_{\delta, \eta} > 0\) sufficiently large depending only on \(\delta\) and \(\delta, \eta\) respectively such that the following holds. If \(n\) is sufficiently large depending only on \(\delta, \eta\) and \(1 \leq k^* \leq \frac{n}{2} - C_\delta \sqrt{n}\), then 
        \begin{equation*}
            \sup_{\substack{\theta \in \R \\ ||\gamma||_0 \leq k^* \\ \sigma > 0}} P_{\theta, \gamma, \sigma}\left\{ \frac{|\hat{\theta} - \theta|}{\sigma} > C_{\delta, \eta}\epsilon(k^*, n)\right\} \leq \eta + \delta,
        \end{equation*}
        where \(\epsilon(k, n)\) is given by (\ref{rate:minimax}). 
    \end{theorem}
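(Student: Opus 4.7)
The plan is to carry out a standard Lepski-method argument. I begin by introducing two high-probability events. Let $E_1 = \{L_\eta^{-1} \leq \tilde{\sigma}^2/\sigma^2 \leq L_\eta\}$, which by Proposition \ref{prop:var_tilde} satisfies $P(E_1^c) \leq \eta$. For $E_2$, I would apply Theorem \ref{thm:theta_unknown_var} at confidence level $\delta/|\mathcal{K}|$ to each $\hat{\theta}_k$ for $k \in \mathcal{K}$, and let $E_2$ be the event that $|\hat{\theta}_k - \theta| \leq C''_\delta \sigma \epsilon(k,n)$ simultaneously for every $k \in \mathcal{K}$ with $k \geq k^*$; since $|\mathcal{K}| \leq n$, a union bound gives $P(E_2^c) \leq \delta$.

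Next comes the Lepski argument. Choose $C'_{\delta,\eta} \geq C''_\delta$ sufficiently large. On $E_1 \cap E_2$, the half-width of $J_k$ satisfies $C'_{\delta,\eta}\sqrt{L_\eta}\,\tilde{\sigma}\,\epsilon(k,n) \geq C'_{\delta,\eta}\sigma\epsilon(k,n) \geq C''_\delta\sigma\epsilon(k,n) \geq |\hat{\theta}_k - \theta|$ for every $k \geq k^*$ in $\mathcal{K}$, so $\theta \in J_k$. It follows that $\theta \in \bigcap_{k \in \mathcal{K},\, k \geq k^*} J_k$, whence this intersection is nonempty, and thus the smallest valid index $k'$ satisfies $k' \leq k^*$. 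Since $\hat{\theta}$ lies in every $J_k$ with $k \geq k'$, in particular $\hat{\theta} \in J_{k^*}$. Using $\tilde{\sigma} \leq \sqrt{L_\eta}\sigma$ on $E_1$, the triangle inequality then yields
\begin{equation*}
    |\hat{\theta} - \theta| \leq |\hat{\theta} - \hat{\theta}_{k^*}| + |\hat{\theta}_{k^*} - \theta| \leq 2C'_{\delta,\eta}\sqrt{L_\eta}\,\tilde{\sigma}\,\epsilon(k^*,n) + C''_\delta \sigma\epsilon(k^*,n) \leq C_{\delta,\eta}\,\sigma\epsilon(k^*,n),
\end{equation*}
with $C_{\delta,\eta} := 2C'_{\delta,\eta}L_\eta + C''_\delta$ depending only on $\delta,\eta$. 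Combined with $P(E_1^c \cup E_2^c) \leq \eta + \delta$, this delivers the desired conclusion.

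The hard part will be controlling the constant $C''_\delta$ that emerges from the union bound in the definition of $E_2$. Theorem \ref{thm:theta_unknown_var} only asserts, abstractly, that its constants depend on $\delta$, so invoking it at the stringent level $\delta/|\mathcal{K}| \asymp \delta/n$ threatens a $\sqrt{\log n}$-type inflation in $C''_\delta$ and, in turn, in $C_{\delta,\eta}$, which would contradict the claim that $C_{\delta,\eta}$ depends only on $(\delta,\eta)$. Two avenues resolve this: either revisit the proof of Theorem \ref{thm:theta_unknown_var} and verify that the underlying concentrations of the empirical characteristic function are sub-Gaussian in the relevant deviation parameter, so shrinking $\delta$ to $\delta/n$ costs only a constant; or replace the dense grid $\mathcal{K}$ by a geometric subgrid of cardinality $O(\log n)$ and use the explicit regime-by-regime form of $\epsilon(k,n)$ from (\ref{rate:theta}) to show that rounding $k^*$ up to the nearest grid point inflates the rate by only a universal constant. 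Either route keeps the final constant a function of $(\delta,\eta)$ alone.
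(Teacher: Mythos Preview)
Your Lepski scaffold is exactly right, and you have correctly put your finger on the only nontrivial issue: establishing that the family $\{\hat{\theta}_k : k \in \mathcal{K},\, k \geq k^*\}$ is simultaneously accurate without paying a union-bound price in the constant. But your two proposed resolutions miss the actual mechanism.

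The first fix---that sub-Gaussianity of the empirical characteristic function makes the cost of shrinking $\delta$ to $\delta/n$ ``only a constant''---is not correct as stated. Tracing the proof of Theorem~\ref{thm:theta_unknown_var} (or Theorem~\ref{thm:cf_estimator}), the deviation level $L$ on the event $\mathcal{E}$ scales like $\sqrt{\log(1/\delta)}$, and in the regime $k \lesssim \sqrt{n}$ the final bound is $|\hat{\theta}_k - \theta| \lesssim L/\sqrt{n}$. Replacing $\delta$ by $\delta/n$ therefore inflates the parametric rate by a genuine $\sqrt{\log n}$ factor, which is exactly what you were worried about. The geometric-grid fix could perhaps be pushed through with care, but it is unnecessary.

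The paper's resolution is cleaner and different in kind: no union bound over $k$ is taken at all. The point is that the proof of Theorem~\ref{thm:theta_unknown_var} works entirely on a \emph{single} high-probability event---the uniform deviation bound $\sup_{\omega}\bigl|\frac{1}{n}\sum_j e^{i\omega(X_j-\theta)} - E\,e^{i\omega(X_j-\theta)}\bigr| \leq L/\sqrt{n}$ together with the analogous event for the variance estimator---and this event does not depend on $k$. Once on that event, the conclusion $|\hat{\theta}_k - \theta| \leq C'\sigma\,\epsilon(k,n)$ holds \emph{deterministically and simultaneously} for every $k \geq k^*$. This is recorded in the paper as Corollary~\ref{corollary:location_tail}, and it is what makes $P(\mathcal{G}^c) \leq \eta + \delta$ hold with a constant depending only on $(\delta,\eta)$. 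So the right move when you ``revisit the proof'' is not to track how constants scale with $\delta$, but to notice that the conditioning event is $k$-free.
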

    \noindent Thus, the adaptive estimator \(\hat{\theta}\) achieves the minimax rate. It is not so surprising that adaptation to \(k\) is possible since it has been established in a one-sided contamination model \cite{carpentier_estimating_2021}. Of course, adaptation is trivial in Huber's contamination model since the sample median is already rate-optimal.

    The construction of an adaptive variance estimator is similar. Fix \(\delta, \eta \in (0, 1)\). Recall that $\tilde{\sigma}^2$ is a pilot variance estimator satisfying \(\tilde{\sigma}^2L_{\eta}^{-1} \leq \sigma^2 \leq \tilde{\sigma}^2 L_\eta\) with probability at least \(1-\eta\) uniformly over the parameter space. Let \(\hat{\sigma}^2_k\) denote the estimator from Section \ref{section:sigmahat} at confidence level \(\delta\) and signal level \(k\). Define 
    \begin{equation}\label{rate:minimax_var}
        \epsilon_{\text{var}}(k, n) = \frac{k}{n\log\left(1 + \frac{k}{\sqrt{n}}\right)}.    
    \end{equation}
    For \(1 \leq k < \frac{n}{2}\), define the intervals 
    \begin{equation*}
        J_k = \left[\hat{\sigma}^2_k - \tilde{\sigma}^2C_{\delta, \eta}'L_\eta\epsilon_{\text{var}}(k, n), \hat{\sigma}_k^2 + \tilde{\sigma}^2 C_{\delta, \eta}' L_\eta \epsilon_{\text{var}}(k, n) \right],
    \end{equation*}
    where \(C'_{\delta, \eta}\) is a sufficiently large constant depending only on \(\eta\) and \(\delta\). The adaptive estimator \(\hat{\sigma}^2\) is defined to be any element of the set 
    \begin{equation}\label{eqn:Lepski_var}
        \bigcap_{k' \leq k} J_k,
    \end{equation}
    where \(1 \leq k' < \frac{n}{2}\) is the smallest value such that the set is nonempty. If no such \(k'\) exists, set \(\hat{\sigma}^2 = 1\). Again, note that computing \(\hat{\sigma}^2\) does not require any knowledge of the true signal/contamination level. The following guarantee is available. 

    \begin{theorem}\label{thm:var_adapt_k}
        Suppose \(1 \leq k^* < \frac{n}{2}\). If \(\delta, \eta \in (0, 1)\), and \(n\) is sufficiently large depending only on \(\eta\) and \(\delta\), then there exists \(C_{\delta, \eta} > 0\) depending only on \(\eta\) and \(\delta\) such that 
        \begin{equation*}
            \sup_{\substack{\theta \in \R \\ ||\gamma||_0 \leq k^* \\ \sigma > 0}} P_{\theta, \gamma, \sigma}\left\{ \frac{|\hat{\sigma}^2 - \sigma^2|}{\sigma^2} > C_{\delta, \eta} \epsilon_{\text{var}}(k^*, n) \right\} \leq \eta + \delta. 
        \end{equation*}
    \end{theorem}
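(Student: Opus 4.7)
\textbf{Proof proposal for Theorem \ref{thm:var_adapt_k}.}

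I would run the classical Lepski's method argument, leveraging the fact that the target rate $\epsilon_{\text{var}}(k, n)$ in (\ref{rate:minimax_var}) is nondecreasing in $k$. This monotonicity is a quick check: $\log(1+u) \asymp u$ for $u \in (0, 1]$ gives $\epsilon_{\text{var}}(k, n) \asymp n^{-1/2}$ for $k \leq \sqrt{n}$, while for $k > \sqrt{n}$ the map $k \mapsto k/\log(1 + k/\sqrt{n})$ is strictly increasing. Combined with the obvious inclusion $\{||\gamma||_0 \leq k^*\} \subseteq \{||\gamma||_0 \leq k\}$ for $k \geq k^*$, Theorem \ref{thm:var} certifies $\hat{\sigma}^2_k$ at its own rate $\epsilon_{\text{var}}(k, n)$ for every such $k$.

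I then condition on two high-probability events. By Proposition \ref{prop:var_tilde}, the event $\mathcal{E}_1 := \{L_\eta^{-1} \tilde{\sigma}^2 \leq \sigma^2 \leq L_\eta \tilde{\sigma}^2\}$ holds with probability at least $1 - \eta$. Applying Theorem \ref{thm:var} at confidence $\delta/n$ for each $k$ in $\{k^*, \ldots, \lceil n/2 \rceil - 1\}$ and taking a union bound produces an event $\mathcal{E}_2$ of probability at least $1 - \delta$ on which $|\hat{\sigma}^2_k - \sigma^2| \leq c_\delta \sigma^2 \epsilon_{\text{var}}(k, n)$ holds simultaneously for all $k \geq k^*$, where $c_\delta$ inherits at worst a polylog-in-$n$ factor. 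Choosing $C'_{\delta, \eta} \geq c_\delta$ ensures that, on $\mathcal{E}_1 \cap \mathcal{E}_2$, for every $k \geq k^*$,
\begin{equation*}
    |\hat{\sigma}^2_k - \sigma^2| \leq c_\delta L_\eta \tilde{\sigma}^2 \epsilon_{\text{var}}(k, n) \leq \tilde{\sigma}^2 C'_{\delta, \eta} L_\eta \epsilon_{\text{var}}(k, n),
\end{equation*}
which says precisely that $\sigma^2 \in J_k$.

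Consequently $\bigcap_{k \geq k^*} J_k \ni \sigma^2$ is nonempty, forcing the selected threshold in (\ref{eqn:Lepski_var}) to satisfy $k' \leq k^*$. Since $k^* \geq k'$, the interval $J_{k^*}$ is one of those being intersected, giving $\hat{\sigma}^2 \in J_{k^*}$ and hence
\begin{equation*}
    |\hat{\sigma}^2 - \hat{\sigma}^2_{k^*}| \leq \tilde{\sigma}^2 C'_{\delta, \eta} L_\eta \epsilon_{\text{var}}(k^*, n) \leq L_\eta^2 C'_{\delta, \eta} \sigma^2 \epsilon_{\text{var}}(k^*, n).
\end{equation*}
Combining this with $|\hat{\sigma}^2_{k^*} - \sigma^2| \leq c_\delta \sigma^2 \epsilon_{\text{var}}(k^*, n)$ via the triangle inequality yields the stated bound with a constant $C_{\delta, \eta}$ depending only on $\delta$ and $\eta$, while the overall failure probability is bounded by $\eta + \delta$.

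The main obstacle is the union bound underlying $\mathcal{E}_2$: Theorem \ref{thm:var} is stated at a single confidence $\delta$ with unspecified dependence of the constant on $\delta$. To handle this cleanly, one either inspects the proof of Theorem \ref{thm:var} to confirm that the constant depends only polylogarithmically on $1/\delta$ (expected from a sub-Gaussian type concentration bound on the empirical characteristic function), so that the inflation $\delta \to \delta/n$ contributes only a $\log n$ factor absorbed into $c_\delta$; alternatively, one replaces the $O(n)$-sized set of signal levels with a dyadic grid of size $O(\log n)$ and exploits the monotonicity of $\epsilon_{\text{var}}(\cdot, n)$ to control the intermediate levels. Either route is routine and preserves the claimed rate.
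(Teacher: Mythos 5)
Your high-level Lepski structure is the same as the paper's: show that on a favorable event $\sigma^2 \in J_k$ for every $k \geq k^*$, deduce that the selection rule satisfies $k' \leq k^*$, conclude $\hat{\sigma}^2 \in J_{k^*}$, and finish with the triangle inequality through $\hat{\sigma}^2_{k^*}$. That part is correct.

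The union bound is a genuine gap, and neither of your proposed routes closes it. Applying Theorem \ref{thm:var} at confidence $\delta/n$ and unioning over $O(n)$ values of $k$ would require the per-$k$ bound to hold with a constant $C_{\delta/n}$. Even under the favorable scenario you describe (polylogarithmic dependence of the constant on $1/\delta$), $C_{\delta/n} \asymp \mathrm{polylog}(n)$, so the resulting guarantee is $|\hat{\sigma}^2_k - \sigma^2| \lesssim \mathrm{polylog}(n)\,\sigma^2 \epsilon_{\text{var}}(k, n)$, which is strictly weaker than the claimed rate; $c_\delta$ cannot "absorb" a factor that grows with $n$, since the theorem asserts a constant $C_{\delta,\eta}$ that is independent of $n$. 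The dyadic-grid alternative has the additional problem that the estimator in (\ref{eqn:Lepski_var}) intersects $J_k$ over \emph{all} integers $k' \leq k < n/2$, not a dyadic grid, so you would be analyzing a different estimator; and even on a grid of size $O(\log n)$, the confidence inflation still grows with $n$.

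The observation you need — and the one the paper makes, packaged as Corollary \ref{corollary:var_tail} — is that no union bound over $k$ is necessary at all, because the high-probability event underlying Theorem \ref{thm:var} does not depend on $k$. Concretely, the proof of Theorem \ref{thm:var} works on the event $\mathcal{E} \cap \mathcal{E}'$, where $\mathcal{E}$ controls $\sup_{\omega \in \R}\bigl|\frac{1}{n}\sum_j \bigl(e^{i\omega X_j} - E e^{i\omega X_j}\bigr)\bigr|$ and $\mathcal{E}'$ controls the pilot variance estimator; neither event references $k$. Conditional on $\mathcal{E} \cap \mathcal{E}'$, the rest of the Theorem \ref{thm:var} argument is \emph{deterministic}: it evaluates the estimator's bias on the $k$-dependent window $[a_k, 100 a_k]$ via Proposition \ref{prop:cosines} and bounds the error by $C k / \bigl(n \log(1 + k/\sqrt{n})\bigr)$ for each $k \geq k^*$ (noting $||\gamma||_0 \leq k^* \leq k$ keeps the parameter in the relevant class). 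Hence a single event of probability $\geq 1-\delta$ simultaneously certifies every $\hat{\sigma}^2_k$ with the \emph{same} constant $C = C_\delta$. With that replacing your $\mathcal{E}_2$, the rest of your argument goes through verbatim and yields the stated constant $C_{\delta,\eta}$ and failure probability $\eta + \delta$.
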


    With an adaptive variance estimator in hand which can achieve the minimax rate of location estimation, an adaptive estimator of the null distribution in total variation distance can be immediately constructed, yielding the following result which we state without proof. 
    \begin{theorem}
        For $\delta\in(0,1)$, there exists \(\tilde{C} > 0\) depending only on \(\delta\) such that the following holds. If \(n\) is sufficiently large depending only on \(\delta\), then 
        \begin{equation*}
            \sup_{\substack{\theta \in \R \\ ||\gamma||_0 \leq k \\ \sigma > 0}} P_{\theta, \gamma, \sigma}\left\{ \dTV\left(N(\hat{\theta}, \hat{\sigma}^2), N(\theta, \sigma^2)\right) > \tilde{C} \left(\frac{k}{n\sqrt{\log\left(1 + \frac{k^2(n-2k)^2}{n^3}\right)}} \wedge 1\right)\right\} \leq \delta
        \end{equation*}
        with appropriate hyperparameters for the adaptive estimators \(\hat{\theta}\) and \(\hat{\sigma}^2\). 
    \end{theorem}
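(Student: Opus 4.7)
The approach is to combine the Devroye--Mehrabian--Reddad total variation lemma (recalled in Section \ref{section:TV}) with the adaptive parameter estimation guarantees of Theorems \ref{thm:adapt_k} and \ref{thm:var_adapt_k}. The idea is straightforward: on a single high-probability event both parameter errors are small, and then the DMR lemma translates these closeness bounds directly into a total variation bound. Applying that lemma and using $\hat\sigma^2 \vee \sigma^2 \geq \sigma^2$ and $\hat\sigma \vee \sigma \geq \sigma$ in the denominators yields
\[
\dTV\!\left(N(\hat\theta, \hat\sigma^2),\, N(\theta, \sigma^2)\right) \lesssim 1 \wedge \left(\frac{|\hat\sigma^2 - \sigma^2|}{\sigma^2} \vee \frac{|\hat\theta - \theta|}{\sigma}\right),
\]
so it suffices to control the two parameter errors simultaneously.

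I would split based on whether $n - 2k \gtrsim \sqrt{n}$. In the inconsistency regime $n - 2k \lesssim \sqrt{n}$, the claimed rate $\frac{k}{n\sqrt{\log(1 + k^2(n-2k)^2/n^3)}} \wedge 1$ is already of constant order, so the conclusion follows trivially from $\dTV \leq 1$ for any choice of estimators provided $\tilde C$ is taken large enough. No adaptive guarantee is needed in this regime.

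In the main regime $k \leq n/2 - C_\delta \sqrt{n}$, where $C_\delta$ is the constant from Theorem \ref{thm:adapt_k}, I would instantiate Theorems \ref{thm:adapt_k} and \ref{thm:var_adapt_k} each with confidence parameters $(\delta/4,\delta/4)$ in place of $(\delta,\eta)$, and take a union bound. This yields, with probability at least $1-\delta$, both $|\hat\theta - \theta|/\sigma \lesssim \epsilon(k, n)$ and $|\hat\sigma^2 - \sigma^2|/\sigma^2 \lesssim \epsilon_{\mathrm{var}}(k, n)$. Substituting into the displayed DMR bound gives $\dTV \lesssim \epsilon(k, n) \vee \epsilon_{\mathrm{var}}(k, n)$.

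The remaining step, and the only mildly tedious one, is to verify that the location rate dominates the variance rate, i.e., $\epsilon_{\mathrm{var}}(k, n) \lesssim \epsilon(k, n)$. Using $(n-2k)^2 \leq n^2$ to bound $k^2(n-2k)^2/n^3 \leq k^2/n$, this reduces to showing $\log(1 + k/\sqrt{n}) \gtrsim \sqrt{\log(1 + k^2/n)}$, which is handled by elementary case analysis: for $k \leq \sqrt n$ both sides are $\asymp k/\sqrt n$, and for $k > \sqrt n$ one has $\log(1 + k^2/n) \asymp \log(k/\sqrt n) \lesssim \log^2(1 + k/\sqrt n)$. No new ideas beyond previous sections are required; the main obstacle is simply being careful about the rate comparison across regimes.
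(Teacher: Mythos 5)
Your approach is correct and, as far as one can tell from the paper (which states this theorem without proof), it is precisely the intended argument: apply the Devroye--Mehrabian--Reddad bound, lower-bound the max-denominators by $\sigma^2$ and $\sigma$, invoke the adaptive guarantees of Theorems \ref{thm:adapt_k} and \ref{thm:var_adapt_k} on a joint high-probability event, dispose of the regime $n-2k \lesssim \sqrt{n}$ by noting the target rate is already of constant order so $\dTV \leq 1$ suffices, and then check that the location rate dominates the variance rate. The overall logic is sound.

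One small imprecision worth tightening: in the rate-domination check, the step ``for $k > \sqrt{n}$ one has $\log(1 + k^2/n) \asymp \log(k/\sqrt{n})$'' is false near $k \asymp \sqrt{n}$ (as $k \downarrow \sqrt{n}$, the left side tends to $\log 2$ while the right side tends to $0$, so the $\asymp$ constant blows up). The correct observation is that, writing $u = k/\sqrt{n}$, the ratio $g(u) = \log(1+u) / \sqrt{\log(1+u^2)}$ is a continuous positive function on $(0, \infty)$ with $g(u) \to 1$ as $u \to 0$ and $g(u) \to \infty$ as $u \to \infty$, so $\inf_{u > 0} g(u)$ is a strictly positive universal constant; equivalently, split into the three regions $u \leq 1$, $1 < u \leq U_0$, and $u > U_0$ for a fixed $U_0$ (e.g. $U_0 = e^2$), where your two estimates apply to the outer regions and compactness handles the middle. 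This fixes the gap without changing the conclusion.
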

    \noindent Thus, the minimax rate can be achieved without knowledge of \(k\).
    
    \begin{remark}[Computation]
        The grid (\ref{eqn:K_grid}) has \(O(n)\) cardinality, which means computing all the \(\hat{\theta}_k\)'s requires solving \(O(n)\)-many instances of the optimization problem (\ref{estimator:theta_unknown_var}). This computational burden can be mitigated by replacing \(\mathcal{K}\) by a geometric grid with \(O(\log n)\) cardinality without any loss in the statistical rate. Indeed, such a modification is standard as Lepski's method is very frequently implemented with a geometric grid in the literature. 
        
        To be clear, some care is needed since the minimax rate \(\epsilon(k, n)\) does not satisfy \(\epsilon(k, n) \asymp \epsilon(2k, n)\) for all \(k\), which the specialist recognizes as an important property for justifying the use of a geometric grid in Lepski's method. However, the property \(\epsilon(k, n) \asymp \epsilon(2k, n)\) \emph{does} hold for, say, \(1 \leq k \leq \frac{n}{8}\). Furthermore, we have \(\epsilon(k, n) = \epsilon\left(\frac{n}{2} - \left(\frac{n}{2} - k\right), n\right) \asymp \epsilon\left(\frac{n}{2} - 2\left(\frac{n}{2}-k\right), n\right)\) for \(\frac{3n}{8} \leq k \leq \frac{n}{2}-C\sqrt{n}\). Note for the remaining sparsities \(\frac{n}{8} \leq k \leq \frac{3n}{8}\), we have \(\epsilon(k, n) \asymp \epsilon(3n/8, n)\). 
        
        In other words, the minimax rates for \(k\) and \(k'\) are the same when both \(k \asymp k'\) and \(\frac{n}{2} - k \asymp \frac{n}{2}-k'\). Consequently, one can replace \(\mathcal{K}\) with \(\mathcal{K}_1 \cup \mathcal{K}_2\) having \(O(\log n)\) cardinality, where \(\mathcal{K}_1\) is a geometric grid over \(\{1,...,n/8\}\) and \(\mathcal{K}_2\) is a geometric grid over \(\{3n/8,...,n/2-C\sqrt{n}\}\).
    \end{remark}

    \subsection{General noise distributions}\label{section:deconvolution}
    The model (\ref{model:additive}) stipulates the noise is Gaussian. However, it is also interesting to consider the generic version of (\ref{model:additive}) where \(Z_1,...,Z_n \overset{iid}{\sim} F\) for some known symmetric distribution \(F\) (e.g. Cauchy, Laplace, etc.). For discussion, take \(\sigma^2 = 1\) to be known. 
    
    Since the parametric rate can always be achieved by the sample median for \(k \lesssim \sqrt{n}\), for sake of discussion consider the regime \(\sqrt{n} \leq k \leq \frac{n}{2}-C\sqrt{n}\). Our methodology requires only a slight modification to accommodate noise variables drawn from \(F\). Note in (\ref{estimator:theta_known_var}) the term \(e^{\frac{\omega^2}{2}}\) is the reciprocal of the characteristic function of \(N(0, 1)\) evaluated at \(\omega\). Therefore, a natural idea is to consider the estimator 
    \begin{equation*}
        \hat{\theta}_F = \argmin_{\mu \in \R} \sup_{|\omega| \leq \tau} \inf_{\substack{\zeta \in \C \\ |\zeta| \leq 1}} \left| \frac{1}{n} \sum_{j=1}^{n} \frac{e^{i\omega(X_j - \mu)}}{\psi_F(\omega)} - \frac{n-k}{n} - \frac{k}{n} \zeta \right|, 
    \end{equation*}
    where \(\psi_F(\omega) = E(e^{i\omega Z_1})\) is the characteristic function of \(F\).

    The arguments in the Gaussian case can be straightforwardly modified to show that with the choice \(\tau\) such that \(\frac{1}{\psi_F(\tau)} \lesssim \frac{k(n-2k)}{n^{3/2}}\), we have \(|\hat{\theta}_F - \theta| \lesssim \frac{k}{n} \tau^{-1}\) with high probability. For example, if \(F\) is the Laplace distribution with unit scale, then \(\psi_F(\omega) = \frac{1}{1+\omega^2}\), and so the choice \(\tau^2 \asymp \frac{k(n-2k)}{n^{3/2}}\) yields \(|\hat{\theta}_F - \theta|^2 \lesssim \frac{1}{\sqrt{n}} \cdot \frac{k}{(n-2k)}\). Notably, this Laplace rate is faster than the Gaussian rate (\ref{rate:theta}) by a polynomial factor! The improvement in the rate can be seen as the result of the Laplace distribution having a characteristic function which decays polynomially in \(\omega\) rather than exponentially like that of the standard Gaussian distribution. This phenomenon has been noted before in the deconvolution literature \cite{meister_deconvolution_2009,fan_optimal_1991,carroll_optimal_1988}. 

    The reader may be wondering about the existence of a choice of \(\tau\) such that \(\frac{1}{\psi_F(\tau)} \lesssim \frac{k(n-2k)}{n^{3/2}}\) to get the bound \(|\hat{\theta}_F - \theta| \lesssim \frac{k}{n} \tau^{-1}\). In the regime \(\sqrt{n} \lesssim k \leq \frac{n}{2} - C\sqrt{n}\), we always have \(\frac{k(n-2k)}{n^{3/2}} \gtrsim 1\). Since \(\psi_F\) is continuous with \(\psi_F(0) = 1\) (as it is a characteristic function), one can always pick \(\tau\) to be a sufficiently small constant depending only on \(F\) to satisfy \(\frac{1}{\psi_F(\tau)} \lesssim \frac{k(n-2k)}{n^{3/2}}\). Of course, the choice of constant \(\tau\) only delivers the Huber rate \(|\hat{\theta}_F - \theta| \lesssim \frac{k}{n}\), and so it is of great desire to choose growing \(\tau\). We conjecture that, for any \(F\), there indeed exists such a growing choice of \(\tau\) so that Huber's rate can always be beaten. Furthermore, an interesting problem is to prove a matching minimax lower bound for estimating \(\theta\) with a generic noise distribution \(F\). 
   
    \section{Proofs}
    This section contains the analyses of the Fourier-based location estimator of Section \ref{section:known_var_cf} and the variance estimator of Section \ref{section:sigmahat}.

    \subsection{\texorpdfstring{Fourier-based estimator: known variance}{Fourier-based estimator: known variance}}

    \begin{proof}[Proof of Theorem \ref{thm:cf_estimator}]
        Fix \(\delta \in (0, 1)\). It follows from an argument using the bounded differences inequality (e.g. Theorem \ref{thm:bounded_differences}) that for a sufficiently large \(L > 0\) depending only on \(\delta\), the event \(\mathcal{E} := \left\{ \sup_{\omega \in \R} \left| \frac{1}{n} \sum_{j=1}^{n} \left(e^{i\omega(X_j - \theta)} - E_{\theta, \gamma}\left(e^{i\omega(X_j - \theta)} \right)\right)\right| \leq \frac{L}{\sqrt{n}}\right\}\) has \(P_{\theta, \gamma}\)-probability of at least \(1-\delta\) uniformly over \((\theta, \gamma)\). Examining (\ref{estimator:theta_known_var}), by definition of \(\hat{\theta}\) we have on the event \(\mathcal{E}\), 
        \begin{align}
            &\sup_{|\omega| \leq \tau} \inf_{\substack{\zeta \in \C, \\ |\zeta| \leq 1}} \left| \frac{1}{n} \sum_{j=1}^{n} e^{i\omega(X_j - \hat{\theta}) + \frac{\omega^2}{2}} - \frac{n-k}{n} - \frac{k}{n} \zeta\right| \nonumber \\
            &\leq \sup_{|\omega| \leq \tau} \inf_{\substack{\zeta \in \C, \\ |\zeta| \leq 1}} \left| \frac{1}{n} \sum_{j=1}^{n} e^{i\omega(X_j - \theta) + \frac{\omega^2}{2}} - \frac{n-k}{n} - \frac{k}{n} \zeta\right| \nonumber \\
            &\leq \sup_{|\omega| \leq \tau} \left| \frac{1}{n} \sum_{j=1}^{n} e^{i\omega(X_j - \theta) + \frac{\omega^2}{2}} - \frac{n-k}{n} - \frac{k}{n} \cdot \frac{1}{k} \sum_{j \in \mathcal{O}} e^{i\omega \gamma_j}\right| \nonumber \\
            &= \sup_{|\omega| \leq \tau} \left| \frac{1}{n} \sum_{j=1}^{n} \left(e^{i\omega(X_j - \theta) + \frac{\omega^2}{2}} - E_{\theta, \gamma}\left(e^{i\omega(X_j - \theta) + \frac{\omega^2}{2}} \right)\right)\right| \nonumber \\
            &\leq \frac{Le^{\frac{\tau^2}{2}} }{\sqrt{n}}. \label{eqn:cf_theta_upperbound}
        \end{align}
        Now, consider that on \(\mathcal{E}\) we also have 
        \begin{align*}
            \sup_{|\omega| \leq \tau }\left|\frac{1}{n} \sum_{j=1}^{n} \left(e^{i\omega(X_j - \hat{\theta}) + \frac{\omega^2}{2}} - e^{i\omega(\theta + \gamma_j - \hat{\theta})}\right)\right| &= \sup_{|\omega| \leq \tau }\left|e^{-i\omega\hat{\theta} + \frac{\omega^2}{2}}\right| \left|\frac{1}{n} \sum_{j=1}^{n} \left(e^{i\omega X_j} - e^{i\omega(\theta + \gamma_j) - \frac{\omega^2}{2}}\right)\right| \\
            &\leq e^{\frac{\tau^2}{2}} \cdot \sup_{\omega \in \R}\left|\frac{1}{n} \sum_{j=1}^{n} \left(e^{i\omega X_j} - E_{\theta, \gamma}\left(e^{i\omega X_j}\right)\right)\right| \\
            &= e^{\frac{\tau^2}{2}} \cdot \sup_{\omega \in \R}\left|\frac{1}{n} \sum_{j=1}^{n} \left(e^{i\omega (X_j-\theta)} - E_{\theta, \gamma}\left(e^{i\omega (X_j - \theta)}\right)\right)\right| \\
            &\leq \frac{Le^{\frac{\tau^2}{2}}}{\sqrt{n}}. 
        \end{align*}
        Therefore, by reverse triangle inequality, it follows that on \(\mathcal{E}\) 
        \begin{align}
            &\sup_{|\omega| \leq \tau} \inf_{\substack{\zeta \in \C, \\ |\zeta| \leq 1}} \left| \frac{1}{n} \sum_{j=1}^{n} e^{i\omega(X_j - \hat{\theta}) + \frac{\omega^2}{2}} - \frac{n-k}{n} - \frac{k}{n} \zeta\right| \nonumber \\
            &\geq \sup_{|\omega| \leq \tau} \inf_{\substack{\zeta \in \C, \\ |\zeta| \leq 1}} \left| \frac{1}{n} \sum_{j=1}^{n} e^{i\omega(\theta + \gamma_j - \hat{\theta})} - \frac{n-k}{n} - \frac{k}{n} \zeta\right| - \frac{Le^{\frac{\tau^2}{2}}}{\sqrt{n}} \nonumber \\
            &= \sup_{|\omega| \leq \tau} \inf_{\substack{\zeta \in \C, \\ |\zeta| \leq 1}} \left| \frac{n-k}{n}\left(e^{i\omega (\theta - \hat{\theta})} - 1\right) + \frac{k}{n}\left(\frac{1}{k} \sum_{j \in \mathcal{O}'} e^{i\omega (\theta+\gamma_j - \hat{\theta})} - \zeta \right)\right| - \frac{Le^{\frac{\tau^2}{2}}}{\sqrt{n}} \nonumber \\
            &\geq \sup_{|\omega| \leq \tau} \inf_{\substack{z \in \C, \\ |z| \leq 2}} \left| \frac{n-k}{n}\left(e^{i\omega (\theta - \hat{\theta})} - 1\right) + \frac{k}{n}z\right| - \frac{Le^{\frac{\tau^2}{2}}}{\sqrt{n}}.  \label{eqn:reverse_triangle}
        \end{align}
        Here, \(\mathcal{O}'\) is defined to be \(\mathcal{O}\) with any arbitrary \(k - |\mathcal{O}|\) indices added from \(\mathcal{I}\). This is simply to ensure \(|\mathcal{O}'| = k\). Let us now examine the first term on the right hand side of (\ref{eqn:reverse_triangle}). We break up the analysis into two cases to separately obtain the parametric part \(\frac{1}{\sqrt{n}}\) and the nonparametric part \(\frac{k}{n \tau}\) in our target bound. Let \(C_1 > 0\) be sufficiently large depending only on \(\delta\). \newline
        
        \noindent \textbf{Case 1:} Suppose \(k \leq C_1 \sqrt{n}\). The argument is fairly simple to obtain the parametric rate. From (\ref{eqn:reverse_triangle}) and (\ref{eqn:cf_theta_upperbound}) it follows 
        \begin{equation*}
            \sup_{|\omega| \leq \tau} \left|e^{i\omega(\theta - \hat{\theta})} - 1\right| \leq \left(\frac{n}{n-k}\right) \left(\frac{2Le^{\frac{\tau^2}{2}}}{\sqrt{n}} + \frac{2k}{n}\right) \leq  \frac{1}{\sqrt{n}}\left(\frac{1}{1 - \frac{C_1}{\sqrt{n}}}\right)\left(2Le^{\frac{\tau^2}{2}} + 2C_1\right) \leq \frac{\tilde{C}}{\sqrt{n}}
        \end{equation*}
        where \(\tilde{C} > 0\) depends only on \(\delta\). Here, we have used \(n\) is sufficiently large (depending only on \(\delta\)). We have also used \(k \leq C_1 \sqrt{n}\) implies \(\log\left(1 + \frac{k^2(n-2k)^2}{n^3}\right) \leq \log\left(1 + C_1^2\right)\), and so \(\tau = 1\) by selecting \(c\) sufficiently small depending only on \(\delta\). Let \(\tilde{C}' = \frac{12}{5}\tilde{C}\). We claim \(|\hat{\theta} - \theta| \leq \frac{\tilde{C}'}{\sqrt{n}}\). To prove the claim, suppose not, that is, suppose \(|\hat{\theta} - \theta| > \frac{\tilde{C}'}{\sqrt{n}}\). Consider the choice \(\omega^* = \frac{\tilde{C}'}{\sqrt{n}(\theta - \hat{\theta})}\) and observe \(|\omega^*| \leq 1 = \tau\). Therefore, from the inequality \(\frac{5}{6}x \leq \sin x \leq |e^{ix} - 1|\) for \(x \in (0, 1)\), we have 
        \begin{align*}
            \frac{2\tilde{C}}{\sqrt{n}} = \frac{5}{6} \cdot \frac{\tilde{C}'}{\sqrt{n}} = \frac{5}{6} \omega^*(\theta - \hat{\theta}) \leq \left|e^{i\omega^*(\theta - \hat{\theta})} - 1\right| \leq \sup_{|\omega| \leq \tau} \left|e^{i\omega(\theta - \hat{\theta})} - 1\right| \leq \frac{\tilde{C}}{\sqrt{n}}
        \end{align*}
        which is a contradiction. Hence, we have the claim. To summarize, we have shown \(|\hat{\theta} - \theta| \leq \frac{\tilde{C}'}{\sqrt{n}}\) on the event \(\mathcal{E}\) when \(k \leq C_1 \sqrt{n}\), as desired. \newline 

        \noindent \textbf{Case 2:} Suppose \(k > C_1 \sqrt{n}\). For any fixed \(\omega\) with \(|\omega| \leq \tau\), it is straightforward to solve the optimization problem 
        \begin{equation*}
            \inf_{\substack{z \in \C, \\ |z| \leq 2}} \left| \frac{n-k}{n}\left(e^{i\omega (\theta - \hat{\theta})} - 1\right) + \frac{k}{n}z\right|
        \end{equation*}
        directly and obtain the solution 
        \begin{equation*}
            z^* = 
            \begin{cases}
                -\frac{n-k}{k} \left(e^{i\omega (\theta - \hat{\theta})} - 1\right) &\textit{if } \left|e^{i\omega(\theta - \hat{\theta})} - 1\right| \leq \frac{2k}{n-k}, \\
                - \frac{2}{\left|e^{i\omega(\theta - \hat{\theta})} - 1\right|} \cdot \left(e^{i\omega(\theta - \hat{\theta})} - 1\right) &\textit{otherwise}. 
            \end{cases}
        \end{equation*}
        Plugging in \(z^*\), we see the value of the optimization problem is zero when \(\left|e^{i\omega(\theta - \hat{\theta})} - 1\right| \leq \frac{2k}{n-k}\). Therefore,
        \begin{align*}
            \sup_{|\omega| \leq \tau} \inf_{\substack{z \in \C, \\ |z| \leq 2}} \left| \frac{n-k}{n}\left(e^{i\omega (\theta - \hat{\theta})} - 1\right) + \frac{k}{n}z\right| &= \sup_{|\omega| \leq \tau} \left| \frac{n-k}{n}\left(e^{i\omega (\theta - \hat{\theta})} - 1\right) + \frac{k}{n}z^*\right| \\
            &= \sup_{\substack{|\omega| \leq \tau, \\ \left|e^{i\omega(\theta - \hat{\theta})} - 1\right| > \frac{2k}{n-k}}} \left| \frac{n-k}{n}\left(e^{i\omega (\theta - \hat{\theta})} - 1\right) + \frac{k}{n}z^*\right| \\
            &= \sup_{\substack{|\omega| \leq \tau, \\ \left|e^{i\omega(\theta - \hat{\theta})} - 1\right| > \frac{2k}{n-k}}} \left|\frac{n-k}{n} - \frac{2k/n}{|e^{i\omega(\theta - \hat{\theta})} - 1|} \right| \left| e^{i\omega (\theta - \hat{\theta})} - 1\right|.
        \end{align*}

        To summarize, from (\ref{eqn:cf_theta_upperbound}) and (\ref{eqn:reverse_triangle}) it has been shown that on \(\mathcal{E}\) we have 
        \begin{equation}\label{eqn:setup_contradiction}
            \sup_{\substack{|\omega| \leq \tau, \\ \left|e^{i\omega(\theta - \hat{\theta})} - 1\right| > \frac{2k}{n-k}}} \left|\frac{n-k}{n} - \frac{2k/n}{|e^{i\omega(\theta - \hat{\theta})} - 1|} \right| \left| e^{i\omega (\theta - \hat{\theta})} - 1\right| \leq \frac{2Le^{\frac{\tau^2}{2}}}{\sqrt{n}}. 
        \end{equation}
        Set \(C = \left(\frac{4L}{4-\pi}\right)^2\) and take \(c \leq \frac{1}{2}\) (and sufficiently small so that the Case 1 analysis goes through). We claim that (\ref{eqn:setup_contradiction}) implies \(|\hat{\theta} - \theta| \leq 8\frac{k}{n} \tau^{-1}\). For sake of contradiction, suppose \(|\hat{\theta} - \theta| > \frac{8k}{n} \tau^{-1}\). Now, consider the choice \(\omega^* := \frac{\pi \wedge (8k/n)}{\theta - \hat{\theta}}\). Note that \(|\omega^*| = \frac{\pi \wedge (8k/n)}{|\hat{\theta} - \theta|} \leq \frac{8}{8} \tau = \tau\). Further consider \(|e^{i\omega^*(\theta - \hat{\theta})} - 1| = |e^{i(\pi \wedge (8k/n))} - 1|\). If \(\pi \leq 8k/n\), then \(|e^{i\omega^*(\theta - \hat{\theta})} - 1| = 2 > \frac{2k}{n-k}\) since \(k < \frac{n}{2}\). On the other hand, if \(\pi > 8k/n\), then \(|e^{i\omega^*(\theta - \hat{\theta})} - 1| = |e^{i(8k/n)} - 1| \geq \frac{4k}{n} > \frac{2k}{n-k}\). Hence, \(\omega^*\) lives in the domain of optimization in (\ref{eqn:setup_contradiction}) and so 
        \begin{align*}
            \frac{2Le^{\frac{\tau^2}{2}}}{\sqrt{n}}
            &\geq 
            \begin{cases}
                \left(\frac{n-k}{n} - \frac{1}{2}\right)\left(\frac{4k}{n}\right) &\textit{if } \frac{8k}{n} < \pi, \\
                2 \cdot \frac{n-2k}{n} &\textit{if } \frac{8k}{n} \geq \pi
            \end{cases} \\
            &\geq 
            \begin{cases}
                \frac{4-\pi}{2} \cdot \frac{k}{n} &\textit{if } \frac{8k}{n} < \pi, \\
                2 \cdot \frac{n-2k}{n} &\textit{if } \frac{8k}{n} \geq \pi.
            \end{cases} \\
            &\geq 
            \begin{cases}
                \frac{4-\pi}{2} \cdot \frac{k}{n} \cdot \frac{n-2k}{n} &\textit{if } \frac{8k}{n} < \pi, \\
                \frac{4-\pi}{2} \cdot \frac{n-2k}{n} \cdot \frac{k}{n} &\textit{if } \frac{8k}{n} \geq \pi.
            \end{cases} \\
            &= \frac{4-\pi}{2} \cdot \frac{(n-2k)k}{n^2}
        \end{align*}
        where we have used that \(\frac{n-2k}{n}\) and \(\frac{k}{n}\) are both less than or equal to \(1\) in the penultimate step. Rearranging, we have 
        \begin{equation*}
            1 \leq \frac{4L}{4-\pi} \cdot \frac{n^{3/2}}{(n-2k)k} e^{\frac{\tau^2}{2}}.  
        \end{equation*}
        \noindent Since \(C_1 \sqrt{n} \leq k \leq \frac{n}{2} - C\sqrt{n}\), we can take \(C_1, C\) large depending only on \(\delta\) to obtain \(\frac{(n-2k)^2k^2}{n^3} \geq C_2\) for a sufficiently large \(C_2 > 0\) depending only on \(\delta\). Consequently, 
        \begin{equation*}
            \tau^2 = 1 \vee c^2 \log\left(1 + \frac{(n-2k)^2k^2}{n^3}\right) < \frac{1}{2} \log\left(\frac{(n-2k)^2k^2}{n^3}\right) \leq \log\left(\frac{(n-2k)^2k^2}{n^3}\right) - 2\log\left(\frac{4L}{4-\pi}\right).
        \end{equation*}
        Therefore, \(1 \leq \frac{4L}{4-\pi} \cdot \frac{n^{3/2}}{(n-2k)k} e^{\frac{\tau^2}{2}} < 1\) and so we have a contradiction. Hence, \(|\hat{\theta} - \theta| \leq \frac{8k}{n} \tau^{-1}\) on \(\mathcal{E}\). The proof is complete.         
    \end{proof}

    \subsection{Variance estimation}
    \begin{proof}[Proof of Theorem \ref{thm:var}]
        Fix \(\delta \in (0, 1)\). It follows from an argument using the bounded differences inequality (e.g. Theorem \ref{thm:bounded_differences}) that for a sufficiently large \(L > 0\) depending only on \(\delta\), the event \(\mathcal{E} := \left\{ \sup_{\omega \in \R} \left| \frac{1}{n} \sum_{j=1}^{n} \left(e^{i\omega X_j} - E_{\theta, \gamma, \sigma}\left(e^{i\omega X_j} \right)\right)\right| \leq \frac{L}{\sqrt{n}}\right\}\) satisfies \(\inf_{\substack{\theta \in \R, \\ ||\gamma||_0 \leq k, \\ \sigma > 0}} P_{\theta, \gamma, \sigma}(\mathcal{E}) \geq 1-\frac{\delta}{2}\). For \(\omega \in \R\), define \(N(\omega) := \left|\frac{1}{n}\sum_{j=1}^{n} e^{i\omega(\theta+\gamma_j) - \frac{\sigma^2\omega^2}{2}}\right|\). By definition of \(\hat{\sigma}^2\), for any \(a \leq \omega \leq b\), we have on the event \(\mathcal{E}\)  
        \begin{align}
            \hat{\sigma}^2 - \sigma^2 &\leq \frac{2}{\omega^2}\left|\log \hat{N}(\omega) - \log N(\omega)\right| - \frac{2\log N(\omega)}{\omega^2} - \sigma^2 \nonumber \\
            &\leq \frac{2}{\omega^2} \frac{|\hat{N}(\omega) - N(\omega)|}{\hat{N}(\omega) \wedge N(\omega)} - \frac{2 \log N(\omega)}{\omega^2} - \sigma^2 \nonumber \\
            &\leq \frac{2}{\omega^2} \frac{\frac{L}{\sqrt{n}}}{\left(N(\omega) - \frac{L}{\sqrt{n}}\right)_{+}} - \frac{2 \log N(\omega)}{\omega^2} - \sigma^2 \label{eqn:sigma_hat_upperbound}
        \end{align}
        where we have used the inequality \(|\log(x) - \log(y)|\leq \frac{|x - y|}{x \wedge y}\) for \(x, y > 0\). On the other hand, consider that for every \(\omega \in \R\), we have on the event \(\mathcal{E}\) 
        \begin{align*}
            -\frac{2\log \hat{N}(\omega)}{\omega^2} &\geq -\frac{2 \log\left(N(\omega) + \frac{L}{\sqrt{n}}\right)}{\omega^2} \\
            &= -\frac{2\log\left(e^{-\frac{\sigma^2\omega^2}{2}} \left|\frac{1}{n}\sum_{j=1}^{n} e^{i\omega(\theta+\gamma_j)}\right| + \frac{L}{\sqrt{n}}\right)}{\omega^2} \\
            &\geq -\frac{2 \log\left(e^{-\frac{\sigma^2\omega^2}{2}} + \frac{L}{\sqrt{n}}\right)}{\omega^2} \\
            &= -\frac{2}{\omega^2} \left(\log\left(e^{-\frac{\sigma^2\omega^2}{2}} + \frac{L}{\sqrt{n}}\right) - \log\left(e^{-\frac{\sigma^2\omega^2}{2}}\right)\right) + \sigma^2.
        \end{align*}
        Rearranging and using the inequality \(|\log(x)-\log(y)| \leq \frac{|x-y|}{ x \wedge y}\) for \(x, y > 0\), we have 
        \begin{align*}
            -\frac{2\log \hat{N}(\omega)}{\omega^2} - \sigma^2 \geq -\frac{2}{\omega^2} \frac{\frac{L}{\sqrt{n}}}{e^{-\frac{\sigma^2\omega^2}{2}}}. 
        \end{align*}
        Taking infimum over \(\omega \in [a, b]\) yields 
        \begin{equation}\label{eqn:sigma_hat_lowerbound}
            \hat{\sigma}^2 - \sigma^2 \geq - \sup_{\omega \in [a, b]} \frac{2}{\omega^2} \cdot \frac{Le^{\frac{\sigma^2\omega^2}{2}}}{\sqrt{n}}. 
        \end{equation}

        \noindent To summarize, combining our upper and lower bounds yields 
        \begin{align}\label{eqn:sigma_hat_error}
            |\hat{\sigma}^2 - \sigma^2| &\leq \left(\sup_{\omega \in [a, b]} \frac{2}{\omega^2} \cdot \frac{Le^{\frac{\sigma^2\omega^2}{2}}}{\sqrt{n}}\right) + \inf_{\omega \in [a, b]} \left( \frac{2}{\omega^2} \frac{\frac{L}{\sqrt{n}}}{\left(N(\omega) - \frac{L}{\sqrt{n}}\right)_{+}} - \frac{2 \log N(\omega)}{\omega^2} - \sigma^2 \right) \nonumber \\
            &= \left(\sup_{\omega \in [a, b]} \frac{2}{\omega^2} \cdot \frac{Le^{\frac{\sigma^2\omega^2}{2}}}{\sqrt{n}}\right) + \inf_{\omega \in [a, b]} \left( \frac{2}{\omega^2} \frac{\frac{L}{\sqrt{n}}}{\left(N(\omega) - \frac{L}{\sqrt{n}}\right)_{+}} - \frac{2}{\omega^2} \log \left|\frac{1}{n}\sum_{j=1}^{n} e^{i\omega(\theta+\gamma_j)} \right| \right).
        \end{align}
        Define the event \(\mathcal{E}' = \left\{\frac{1}{L'} \leq \frac{\tilde{\sigma}}{\sigma} \leq L'\right\}\) where \(L'\) depends only on \(\delta\) and, according to Proposition \ref{prop:var_tilde}, ensures \(\inf_{\substack{\theta \in \R, \\ ||\gamma||_0 \leq k, \\ \sigma > 0}} P_{\theta, \gamma, \sigma}(\mathcal{E}') \geq 1-\frac{\delta}{2}\). Let us examine the second term in (\ref{eqn:sigma_hat_error}). Let \(\mathcal{O}'\) be the set given by \(\mathcal{O}\) along with some arbitrary additional indices from \(\mathcal{I}\) to ensure \(|\mathcal{O}'| = k\). Then  
        \begin{equation}\label{eqn:error_bound}
            \left|\frac{1}{n} \sum_{j=1}^{n} e^{i\omega(\theta + \gamma_j)}\right| = \left|1 - \frac{k}{n}\left(1 - \frac{1}{k} \sum_{j \in \mathcal{O}'} e^{i\omega \gamma_j}\right)\right| \geq 1 - \frac{k}{n}\left|1 - \frac{1}{k} \sum_{j \in \mathcal{O}'} e^{i\omega \gamma_j}\right|. 
        \end{equation}
        Consider that 
        \begin{equation*}
            \left|1 - \frac{1}{k}\sum_{j\in \mathcal{O}'} e^{i\omega \gamma_j}\right|^2 = 1 + \left|\frac{1}{k}\sum_{j \in \mathcal{O}'}e^{i\omega \gamma_j}\right|^2 - \frac{2}{k} \sum_{j\in \mathcal{O}'} \cos(\omega \gamma_j) \leq 2\left(1 - \frac{1}{k}\sum_{j \in \mathcal{O}'} \cos(\omega \gamma_j)\right). 
        \end{equation*}
        Proposition \ref{prop:cosines} implies there exists \(\omega^* \in [a, b]\) such that \(\frac{1}{k}\sum_{j \in \mathcal{O}'} \cos(\omega^* \gamma_j) \geq -\frac{1}{5}\), and so (\ref{eqn:error_bound}) can be bounded as \(\left|\frac{1}{n} \sum_{j=1}^{n} e^{i\omega^*(\theta + \gamma_j)}\right| \geq \frac{n-k\sqrt{\frac{12}{5}}}{n}\). Note this directly implies \(N(\omega^*) \geq e^{-\frac{\sigma^2(\omega^*)^2}{2}}\left(\frac{n-k\sqrt{\frac{12}{5}}}{n}\right)\). Therefore, the bound (\ref{eqn:sigma_hat_error}) reduces to 
        \begin{align}
            |\hat{\sigma}^2 - \sigma^2| &\leq \left(\sup_{\omega \in [a, b]} \frac{2}{\omega^2} \cdot \frac{Le^{\frac{\sigma^2\omega^2}{2}}}{\sqrt{n}}\right) + \frac{2}{(\omega^*)^2}\left( \frac{\frac{L}{\sqrt{n}}}{\left(e^{-\frac{\sigma^2(\omega^*)^2}{2}}\left(1 - \sqrt{\frac{3}{5}}\right)-\frac{L}{\sqrt{n}}\right)_{+}} + \log\left(\frac{1}{1-\frac{k}{n}\sqrt{\frac{12}{5}}}\right)\right) \nonumber \\
            &\leq \frac{2}{a^2} \cdot \frac{Le^{\frac{\sigma^2b^2}{2}}}{\sqrt{n}} + \frac{2}{a^2}\left( \frac{\frac{L}{\sqrt{n}}}{\left(e^{-\frac{\sigma^2b^2}{2}}\left(1 - \sqrt{\frac{3}{5}}\right)-\frac{L}{\sqrt{n}}\right)_{+}} + \frac{4k}{n}\right) \label{eqn:sigma_hat_masterbound}
        \end{align}
        where we have used \(\frac{k}{n} < \frac{1}{2}\) to obtain \(\frac{n-k\sqrt{\frac{12}{5}}}{n} \geq 1 - \sqrt{\frac{3}{5}}\). We have also used the inequality \(\log\left(1/\left(1-x\sqrt{\frac{12}{5}}\right)\right) \leq 4x\) for \(0 \leq x \leq \frac{1}{2}\). Consider that \(a = c\tilde{\sigma}^{-1} \sqrt{1 \vee \log\left(\frac{ek^2}{n}\right)}\). On the event \(\mathcal{E}'\) we have \(a \geq \frac{c}{L'} \sigma^{-1} \sqrt{1\vee \log\left(\frac{ek^2}{n}\right)}\) and \(b \leq 100L' c \sigma^{-1} \sqrt{1 \vee \log\left(\frac{ek^2}{n}\right)}\). Choosing \(c\) appropriately small depending only on \(L, L'\) and noting \(n\) is sufficiently large depending only on \(\delta\), it thus follows from (\ref{eqn:sigma_hat_masterbound}) that on the event \(\mathcal{E} \cap \mathcal{E}'\)
        \begin{equation*}
            |\hat{\sigma}^2 - \sigma^2| \leq \frac{C'\sigma^2}{1 \vee \log\left(\frac{ek^2}{n}\right)} \left(\frac{1 \vee \frac{k}{\sqrt{n}}}{\sqrt{n}} + \frac{4k}{n} \right) \leq \frac{C' \sigma^2 k}{n \log\left(1 + \frac{k}{\sqrt{n}}\right)}
        \end{equation*}
        for some constant \(C' > 0\) depending only on \(\delta\) (whose value may change from instance to instance). Since union bound asserts \(\mathcal{E} \cap \mathcal{E}'\) has \(P_{\theta, \gamma, \sigma}\)-probability at least \(1-\delta\) uniformly over the parameter space, the proof is complete.
    \end{proof}

    \bibliographystyle{skotekal}
    \bibliography{null_estimation}

    \appendix

    \section{\texorpdfstring{Fourier-based estimator: unknown variance}{Fourier-based estimator: unknown variance}}

    \begin{proof}[Proof of Theorem \ref{thm:theta_unknown_var}]
        Fix \(\delta \in (0, 1)\). It follows from an argument using the bounded differences inequality (e.g. Theorem \ref{thm:bounded_differences}) that for a sufficiently large \(L > 0\) depending only on \(\delta\), the event 
        \begin{equation*}
            \mathcal{E} := \left\{ \sup_{\omega \in \R} \left|\frac{1}{n} \sum_{j=1}^{n} e^{i\omega(X_j - \theta)} - E_{\theta, \gamma, \sigma}(e^{i\omega(X_j - \theta)})\right| \leq \frac{L}{\sqrt{n}}\right\} 
        \end{equation*}
        has \(P_{\theta, \gamma, \sigma}\)-probability of at least \(1-\frac{\delta}{2}\) uniformly over the parameter space. Let us also define the event 
        \begin{equation*}
            \mathcal{E}_{\text{var}} := \left\{ \sigma^2 \in [\sigma_{-}^2, \sigma_{+}^2] \right\}.    
        \end{equation*}
        For an appropriate \(R\), Theorem \ref{thm:var} asserts that \(\mathcal{E}_{\text{var}}\) has \(P_{\theta, \gamma, \sigma}\)-probability of at least \(1 - \frac{\delta}{2}\) uniformly over the parameter space. For ease of notation, let us define the function
        \begin{equation*}
            F(\mu, v) := \sup_{|\omega| \leq \tau} \inf_{\substack{\zeta \in \C, \\ |\zeta| \leq 1}} \left| \frac{1}{n} \sum_{j=1}^{n} e^{i\omega (X_j - \mu) + \frac{v^2\omega^2}{2}} - \frac{n-k}{n} - \frac{k}{n}\zeta\right|. 
        \end{equation*}
        By definition of \(\hat{\theta}\) and \(\hat{v}\), we have on the event \(\mathcal{E} \cap \mathcal{E}_{\text{var}}\) 
        \begin{equation}\label{eqn:F_upperbound}
            F(\hat{\theta}, \hat{v}) \leq F(\theta, \sigma^2) \\
            \leq \frac{Le^{\frac{\sigma^2\tau^2}{2}}}{\sqrt{n}} + \sup_{|\omega| \leq \tau} \inf_{\substack{\zeta \in \C, \\ |\zeta| \leq 1}} \left|\frac{1}{n} \sum_{j=1}^{n} e^{i\omega \gamma_j} - \frac{n-k}{n} - \frac{k}{n} \zeta\right| \\
            = \frac{Le^{\frac{\sigma^2\tau^2}{2}}}{\sqrt{n}}. 
        \end{equation}
        On the other hand, consider that on the event \(\mathcal{E} \cap \mathcal{E}_{\text{var}}\) 
        \begin{align}
            F(\hat{\theta}, \hat{v}) &= \sup_{|\omega| \leq \tau} \inf_{\substack{\zeta \in \C, \\ |\zeta| \leq 1}} \left|\frac{1}{n} \sum_{j=1}^{n} e^{i\omega(X_j - \hat{\theta}) + \frac{\hat{v}\omega^2}{2}} - \frac{n-k}{n} - \frac{k}{n} \zeta \right| \nonumber \\
            &\geq - \frac{L e^{\frac{\hat{v}\tau^2}{2}}}{\sqrt{n}} + \sup_{|\omega| \leq \tau} \inf_{\substack{\zeta \in \C, \\ |\zeta| \leq 1}} \left|\frac{1}{n} \sum_{j=1}^{n} e^{i\omega(\theta + \gamma_j - \hat{\theta}) + \frac{(\hat{v} - \sigma^2)\omega^2}{2}} - \frac{n-k}{n} - \frac{k}{n} \zeta \right| \nonumber \\
            &= - \frac{L e^{\frac{\hat{v}\tau^2}{2}}}{\sqrt{n}} + \sup_{|\omega| \leq \tau} \inf_{\substack{\zeta \in \C, \\ |\zeta| \leq 1}} \left|\frac{n-k}{n} \left( e^{i\omega(\theta - \hat{\theta}) + \frac{(\hat{v} - \sigma^2)\omega^2}{2}}-1\right) + \frac{k}{n} \left(\frac{1}{k} \sum_{j \in \mathcal{O}'} e^{i\omega(\gamma_j - \hat{\theta}) + \frac{(\hat{v} -\sigma^2)\omega^2}{2}} - \zeta\right) \right| \nonumber \\
            &\geq - \frac{L e^{\frac{\hat{v}\tau^2}{2}}}{\sqrt{n}} + \sup_{|\omega| \leq \tau} \inf_{\substack{z \in \C, \\ |z| \leq \alpha }} \left|\frac{n-k}{n} \left( e^{i\omega(\theta - \hat{\theta}) + \frac{(\hat{v} - \sigma^2)\omega^2}{2}}-1\right) + \frac{k}{n} z \right| \label{eqn:F_lowerbound}
        \end{align}
        where \(\mathcal{O}'\) is the set obtained by taking \(\mathcal{O}\) and adding arbitrary indices from \(\mathcal{I}\) if needed to ensure \(|\mathcal{O}'| = k\). Also, here we denote \(\alpha = 1 + e^{\frac{(\hat{v} - \sigma^2)\omega^2}{2}}\). Similar to the proof of Theorem \ref{thm:cf_estimator}, we break up the analysis into two cases to separately obtain the parametric part and the nonparametric part of our target bound. Let \(C_1 > 0\) be a sufficiently large constant depending only on \(\delta\). \newline

        \noindent \textbf{Case 1:} Suppose \(k \leq C_1 \sqrt{n}\). From (\ref{eqn:F_upperbound}) and (\ref{eqn:F_lowerbound}) we have 
        \begin{align*}
            \sup_{|\omega| \leq \tau} \left|e^{i\omega (\theta - \hat{\theta}) + \frac{(\hat{v} - \sigma^2)\omega^2}{2}} - 1\right| \leq \frac{n}{n-k}\left(\frac{k}{n}\left(1 + e^{\frac{|\hat{v} - \sigma^2|\tau^2}{2}}\right) + \frac{L\left(e^{\frac{\sigma^2\tau^2}{2}} + e^{\frac{\hat{v}^2\tau^2}{2}}\right)}{\sqrt{n}}\right).
        \end{align*}
        Since we are working on the event \(\mathcal{E} \cap \mathcal{E}_{\text{var}}\) where we have \(\hat{v}, \sigma^2 \in [\sigma_{-}^2, \sigma_{+}^2]\), by taking \(c\) sufficiently small depending only on \(\delta\) we have 
        \begin{equation*}
            |\hat{v} - \sigma^2|\tau^2 \leq \frac{2Rk}{n\log\left(1 + \frac{k}{\sqrt{n}}\right)} \cdot \left(1 \vee c^2 \log\left(1 + \frac{k^2(n-2k)^2}{n^3}\right)\right) \leq \frac{2Rk}{n\log\left(1 + \frac{k}{C_1\sqrt{n}}\right)} \leq \frac{4RC_1}{\sqrt{n}}
        \end{equation*}
        where we have used that one can take \(C_1 \geq 1\) and we have used the inequality \(\frac{x}{2} \leq \log(1+x)\) for \(0 \leq x \leq 1\). Since \(n\) is sufficiently large to ensure \(\frac{4RC_1}{\sqrt{n}}\), we have \(1 + e^{\frac{|\hat{v} - \sigma^2|\tau^2}{2}} \leq 2 + \frac{2eRC_1}{\sqrt{n}}\) from the inequality \(e^x \leq 1 + ex\) for \(0 \leq x \leq 1\). Further, since \(n\) is sufficiently large and \(c\) sufficiently small we have 
        \begin{align*}
            \sigma^2 \tau^2 &= \frac{\sigma^2}{\hat{\sigma}^2}\left(1 \vee c^2 \log\left(1 + \frac{k^2(n-2k)^2}{n^3}\right)\right) \leq \frac{1}{1-\frac{Rk}{n\log\left(1 + \frac{k}{\sqrt{n}}\right)}} \left(1 \vee c^2\log\left(1 + \frac{k^2(n-2k)^2}{n^3}\right)\right) \leq 2, \\
            \hat{v}^2 \tau^2 &\leq \left(1 + \frac{Rk}{n\log\left(1 + \frac{k}{\sqrt{n}}\right)}\right) \left(1 \vee c^2\log\left(1 + \frac{k^2(n-2k)^2}{n^3}\right)\right) \leq 2. 
        \end{align*}
        Putting together these bounds, we have 
        \begin{equation*}
            \sup_{|\omega| \leq \tau} \left|e^{i\omega (\theta - \hat{\theta}) + \frac{(\hat{v} - \sigma^2)\omega^2}{2}} - 1\right| \leq \frac{n}{n-k} \left(\frac{k}{n} \left(2 + \frac{2eRC_1}{\sqrt{n}}\right) + \frac{2Le}{\sqrt{n}}\right) \leq \frac{\tilde{C}}{\sqrt{n}}
        \end{equation*}
        for some positive constant \(\tilde{C}\) depending only on \(\delta\). We claim \(|\hat{\theta} - \theta| \leq \frac{\tilde{C}'\hat{\sigma}}{\sqrt{n}}\) where \(\tilde{C}'\) is a sufficiently large constant depending only on \(\delta\). To prove the claim, suppose not, that is, suppose \(|\hat{\theta} - \theta| > \frac{\tilde{C}'\hat{\sigma}}{\sqrt{n}}\). Consider the choice \(\omega^* = \frac{\tilde{C}'}{\sqrt{n}(\theta - \hat{\theta})}\) satisfies \(|\omega^*| \leq \hat{\sigma}^{-1} \leq \tau\). Using the inequality \(|ye^{ix} - 1| \geq |\cos(x)e^{ix} - 1|\) for all \(x, y \in \R\), we have 
        \begin{equation*}
            \left|\cos\left(\frac{\tilde{C}'}{\sqrt{n}}\right)e^{\frac{i\tilde{C}'}{\sqrt{n}}} - 1\right| \leq \left|e^{i\omega^*(\theta - \hat{\theta}) + \frac{(\hat{v} - \sigma^2)(\omega^*)^2}{2}} - 1\right| \leq \sup_{|\omega| \leq \tau} \left| e^{i\omega(\theta - \hat{\theta}) + \frac{(\hat{v}-\sigma^2)\omega^2}{2}} - 1 \right| \leq \frac{\tilde{C}}{\sqrt{n}}. 
        \end{equation*}
        Clearly this gives a contradiction when \(\tilde{C}'\) and \(n\) are sufficiently large depending only on \(\delta\). Hence, we have established the claim \(|\hat{\theta} - \theta| \leq \frac{\tilde{C}'\hat{\sigma}}{\sqrt{n}}\). Since we are working on the event \(\mathcal{E} \cap \mathcal{E}_{\text{var}}\), it thus follows \(|\hat{\theta} - \theta| \leq \tilde{C}''\frac{\sigma}{\sqrt{n}}\) for some constant \(\tilde{C}''\) depending only on \(\delta\). Thus we have the desired result. \newline

        \noindent \textbf{Case 2:} Suppose \(k > C_1\sqrt{n}\). Consider the second term in (\ref{eqn:F_lowerbound}). Analogous to the proof of Theorem \ref{thm:cf_estimator}, the minimizer solving 
        \begin{equation*}
            \inf_{\substack{z \in \C, \\ |z| \leq \alpha }} \left|\frac{n-k}{n} \left( e^{i\omega(\theta - \hat{\theta}) + \frac{(\hat{v} - \sigma^2)\omega^2}{2}}-1\right) + \frac{k}{n} z \right|
        \end{equation*}
        is 
        \begin{equation*}
            z^* = 
            \begin{cases}
                -\frac{n-k}{k} \left(e^{i\omega(\theta-\hat{\theta}) + \frac{(\hat{v} - \sigma^2)\omega^2}{2}} - 1\right) &\textit{if } \left| e^{i\omega(\theta-\hat{\theta}) + \frac{(\hat{v} - \sigma^2)\omega^2}{2}} - 1 \right| \leq \frac{\alpha k}{n-k}, \\
                - \frac{\alpha}{\left|e^{i\omega(\theta-\hat{\theta}) + \frac{(\hat{v} - \sigma^2)\omega^2}{2}} - 1\right|} \left(e^{i\omega(\theta-\hat{\theta}) + \frac{(\hat{v} - \sigma^2)\omega^2}{2}} - 1\right) &\textit{otherwise}.
            \end{cases}
        \end{equation*}
        Therefore, arguing analogously as in the proof of Theorem \ref{thm:cf_estimator}, we have  
        \begin{align*}
            &\sup_{|\omega| \leq \tau} \inf_{\substack{z \in \C, \\ |z| \leq \alpha }} \left|\frac{n-k}{n} \left( e^{i\omega(\theta - \hat{\theta}) + \frac{(\hat{v} - \sigma^2)\omega^2}{2}}-1\right) + \frac{k}{n} z \right| \\
            &= \sup_{|\omega| \leq \tau} \left|\frac{n-k}{n} \left( e^{i\omega(\theta - \hat{\theta}) + \frac{(\hat{v} - \sigma^2)\omega^2}{2}}-1\right) + \frac{k}{n} z^*\right| \\
            &= \sup_{\substack{|\omega| \leq \tau, \\ \left| e^{i\omega(\theta - \hat{\theta}) + \frac{(\hat{v} - \sigma^2)\omega^2}{2}}-1\right| > \frac{\alpha k}{n-k}}} \left|\frac{n-k}{n} \left( e^{i\omega(\theta - \hat{\theta}) + \frac{(\hat{v} - \sigma^2)\omega^2}{2}}-1\right) + \frac{k}{n} z^*\right| \\
            &= \sup_{\substack{|\omega| \leq \tau, \\ \left| e^{i\omega(\theta - \hat{\theta}) + \frac{(\hat{v} - \sigma^2)\omega^2}{2}}-1\right| > \frac{\alpha k}{n-k}}} \left|\frac{n-k}{n} -  \frac{\alpha k}{n\left|e^{i\omega(\theta - \hat{\theta}) + \frac{(\hat{v} - \sigma^2)\omega^2}{2}}-1\right|}\right| \left|e^{i\omega(\theta - \hat{\theta}) + \frac{(\hat{v} - \sigma^2)\omega^2}{2}}-1\right|.
        \end{align*}

        \noindent To summarize, from (\ref{eqn:F_upperbound}) and (\ref{eqn:F_lowerbound}), we have 
        \begin{equation}\label{eqn:F_upperbound_2}
            \sup_{\substack{|\omega| \leq \tau, \\ \left| e^{i\omega(\theta - \hat{\theta}) + \frac{(\hat{v} - \sigma^2)\omega^2}{2}}-1\right| > \frac{\alpha k}{n-k}}} \left|\frac{n-k}{n} -  \frac{\alpha k}{n\left|e^{i\omega(\theta - \hat{\theta}) + \frac{(\hat{v} - \sigma^2)\omega^2}{2}}-1\right|}\right| \left|e^{i\omega(\theta - \hat{\theta}) + \frac{(\hat{v} - \sigma^2)\omega^2}{2}}-1\right| \leq \frac{L \left( e^{\frac{\sigma^2\tau^2}{2}} + e^{\frac{\hat{v}^2\tau^2}{2}}\right)}{\sqrt{n}}.
        \end{equation}
        The argument continues similarly as in the proof of Theorem \ref{thm:cf_estimator}. We claim (\ref{eqn:F_upperbound_2}) implies \(|\hat{\theta} - \theta| \leq \frac{16k}{n} \tau^{-1}\). For sake of contradiction, suppose \(|\hat{\theta} - \theta| > \frac{16k}{n} \tau^{-1}\). Consider the choice \(\omega^* := \frac{\pi \wedge (16k/n)}{\theta - \hat{\theta}}\). Note \(|\omega^*| = \frac{\pi \wedge (16k/n)}{|\hat{\theta} - \theta|} \leq \tau\). Let \(\alpha^*\) denote the value of \(\alpha\) corresponding to \(\omega^*\). If \(\pi \leq 16k/n\), then \(\omega^* = \frac{\pi}{\theta - \hat{\theta}}\) and so 
        \begin{equation*}
            \left|e^{i\omega^* (\theta - \hat{\theta}) + \frac{(\hat{v} - \sigma^2)(\omega^*)^2}{2}}-1\right| = \alpha^* > \frac{\alpha^* k}{n-k}.
        \end{equation*}
        
        \noindent Likewise, if \(\pi > 16k/n\), then 
        \begin{equation*}
            \left|e^{i\omega^*(\theta - \hat{\theta}) + \frac{(\hat{v} - \sigma^2)(\omega^*)^2}{2}}-1\right| = \left|e^{i(16k/n) + \frac{(\hat{v} - \sigma^2)(\omega^*)^2}{2}}-1\right| > \frac{5k}{n}. 
        \end{equation*}
        Here, we have used the inequality \(|ye^{ix} - 1| > \frac{5}{16} x\) which holds at least on the region \(\{(x,y) \in \R^2 : y > 0 \text{ and } 0 \leq x \leq \pi\}\). We have applied this inequality with the choice \(x = \frac{16k}{n} < \pi\) and \(y = \exp\left(\frac{(\hat{v} - \sigma^2)(\omega^*)^2}{2}\right) > 0\). Further consider that since \((\omega^*)^2 \leq \tau^2\) and since \(\hat{v}, \sigma^2 \in [\sigma_{-}^2, \sigma_{+}^2]\) as we are working on \(\mathcal{E} \cap \mathcal{E}_{\text{var}}\), we have
        \begin{align*}
            \alpha^* &= 1 + e^{\frac{(\hat{v} - \sigma^2)(\omega^*)^2}{2}} \\
            &\leq 1 + e^{\frac{|\hat{v} - \sigma^2|\tau^2}{2}} \\
            &\leq 1 + \exp\left(\frac{R}{2}\frac{k}{n\log\left(1 + \frac{k}{\sqrt{n}}\right)}\left(1 \vee c^2 \log\left(1 + \frac{k^2(n-2k)^2}{n^3}\right)\right) \right) \\
            &\leq 1 + \exp\left(\frac{c^2Rk}{n}\right)
        \end{align*}
        where we have used \(C_1 < k < \frac{n}{2} - C\sqrt{n}\) for \(C_1\) and \(C\) sufficiently large. Consider that for \(c\) such that \(c^2R \leq 1\), we have \(e^{c^2Rk/n} \leq 1 + \frac{ec^2Rk}{n}\) by the inequality \(e^x \leq 1 + ex\) for \(x \in [0, 1]\). Therefore, taking \(c\) sufficiently small to ensure \(ec^2R \leq 1\) gives us \(\alpha^* \leq 2 + \frac{k}{n} \leq \frac{5}{2}\). Hence, 
        \begin{equation*}
            \left|e^{i\omega^*(\theta - \hat{\theta}) + \frac{(\hat{v} - \sigma^2)(\omega^*)^2}{2}}-1\right| > \frac{5k}{n} \geq \frac{\alpha^* k}{n-k}. 
        \end{equation*}

        \noindent In summary, \(\omega^*\) lives in the domain of optimization in (\ref{eqn:F_upperbound_2}). Therefore, from (\ref{eqn:F_upperbound})
        \begin{align*}
            \frac{L \left(e^{\frac{\sigma^2\tau^2}{2}} + e^{\frac{\hat{v}^2\tau^2}{2}}\right)}{\sqrt{n}} &\geq 
            \begin{cases}
                \left(\frac{n-2k}{n}\right) \alpha^* &\textit{if } \pi \leq 16k/n, \\
                \left(\frac{n-k}{n} - \frac{\alpha^*}{5} \right)\left(\frac{5k}{n}\right) &\textit{if } \pi > 16k/n
            \end{cases}
            \\
            &\geq 
            \begin{cases}
                \frac{4-\pi}{8}\left(\frac{n-2k}{n}\right) &\textit{if } \pi \leq 16k/n, \\
                \frac{4-\pi}{8}\left(\frac{5k}{n}\right) &\textit{if } \pi > 16k/n
            \end{cases} 
            \\
            &\geq
            \begin{cases}
                \frac{4-\pi}{8}\left(\frac{n-2k}{n}\right)\left(\frac{k}{n}\right) &\textit{if } \pi \leq 16k/n, \\
                \frac{4-\pi}{8}\left(\frac{k}{n}\right) \left(\frac{n-2k}{n}\right) &\textit{if } \pi > 16k/n
            \end{cases} 
            \\
            &=  \frac{4-\pi}{8} \cdot \frac{(n-2k)k}{n^2}.
        \end{align*}
        Note we have used \(\alpha^* \geq 1\). Rearranging, we have \(1 \leq \frac{16L}{4-\pi} \cdot \frac{n^{3/2}}{(n-2k)k} \left(e^{\frac{\sigma^2 \tau^2}{2}} + e^{\frac{\hat{v}^2\tau^2}{2}}\right)\). Since \(\tau^2 = \hat{\sigma}^{-2}\left(1 \vee c^2 \log\left(1 + \frac{k^2(n-2k)^2}{n^3}\right)\right)\), since \(n\) is larger than a sufficiently large constant depending only on \(\delta\), and since \(C_1 < k < \frac{n}{2} - C\sqrt{n}\) for \(C_1\) and \(C\) sufficiently large, we have 
        \begin{align*}
            \sigma^2 \tau^2 &= \frac{\sigma^2}{\hat{\sigma}^2}\left(1 \vee c^2 \log\left(1 + \frac{k^2(n-2k)^2}{n^3}\right)\right) \\
            &\leq \frac{1}{1-\frac{Rk}{n\log\left(1 + \frac{k}{\sqrt{n}}\right)}} \left(1 \vee c^2\log\left(1 + \frac{k^2(n-2k)^2}{n^3}\right)\right) \\
            &\leq 2c^2\log\left(1 + \frac{k^2(n-2k)^2}{n^3}\right),
        \end{align*}
        and 
        \begin{align*}
            \hat{v}^2 \tau^2 \leq \left(1 + \frac{Rk}{n\log\left(1 + \frac{k}{\sqrt{n}}\right)}\right) \left(1 \vee c^2\log\left(1 + \frac{k^2(n-2k)^2}{n^3}\right)\right) \leq 2c^2\log\left(1 + \frac{k^2(n-2k)^2}{n^3}\right). 
        \end{align*}
        Taking \(c\) sufficiently small and \(C\) sufficiently large yields 
        \begin{equation*}
            1 \leq \frac{32L}{4-\pi} \cdot \frac{n^{3/2}}{(n-2k)k} e^{2c^2\log\left(1 + \frac{k^2(n-2k)^2}{n^3}\right)} < 1
        \end{equation*}
        which is a contradiction. Hence, on the event \(\mathcal{E} \cap \mathcal{E}_{\text{var}}\) we have \(|\hat{\theta} - \theta| \leq \frac{32k}{n} \tau^{-1}\), which clearly yields our desired result.  
    \end{proof}
    
    \section{Lower bounds}\label{appendix:lower_bound_proofs}
    In this section, we prove Theorems \ref{thm:lower_bound_cf} and \ref{thm:inconsistent_lbound}. Recall that the Fourier transform of an integrable function \(f\) is \(\hat{f}(t) = \int e^{-itx} f(x)\,dx\) and the Fourier transform of a finite Borel measure \(\pi\) is \(\hat{\pi}(t) = \int e^{-itx} \pi(dx)\). Recall also that \(P_{\theta, \gamma}\) denotes the joint distribution of the data \(\{X_i\}_{i=1}^{n}\) generated from the model (\ref{model:additive}) with parameters \(\theta\) and \(\gamma\) (recall from Section \ref{section:known_var_cf} that suppression of \(\sigma\) in the notation denotes \(\sigma = 1\)).

    \subsection{Proof of Theorem \ref{thm:lower_bound_cf}}\label{appendix:cf_lower_bound_proof}

    \begin{proof}[Proof of Theorem \ref{thm:lower_bound_cf}] 
        For ease of notation, set 
        \begin{equation*}
            \psi = \frac{k}{n}\log^{-1/2}\left(1 + \frac{k^2(n-2k)^2}{n^3}\right).
        \end{equation*}
        If \(n\) is bounded above by a universal constant, it is trivial to show the lower bound \(\psi \asymp 1\). Suppose \(n\) is larger than a sufficiently big universal constant. If \(k \lesssim \sqrt{n}\), then \(\psi \asymp \frac{k}{n} \log^{-1/2}\left(1 + \frac{k^2}{n}\right) \asymp \frac{k}{n} \cdot \frac{\sqrt{n}}{k} = n^{-1/2}\) where we have used \(\frac{x}{2} \leq \log(1+x) \leq x\) for \(x \in [0, 1]\). Thus \(\psi\) is the parametric rate and a very standard two point argument will establish the lower bound. Hence, we can limit our attention to the case \(k \geq C' \sqrt{n}\) for a sufficiently large universal \(C' > 0\) and \(n\) larger than a sufficiently big universal constant. 

        We now construct the priors for \((\theta, \gamma)\) to establish the lower bound. Let \(f_0\) and \(f_1\) denote the probability densities given by Proposition \ref{prop:prior_chisquare} with the choice \(\varepsilon = \frac{k-10\sqrt{n}}{n}\) and \(c = 1\). Let \(\mu, \tau, \lambda, g_0,\) and \(g_1\) be the associated quantities from Proposition \ref{prop:prior_chisquare}. Note that 
        \begin{equation*}
            \mu = \frac{c_0\lambda}{\tau} = \left(\frac{c_0}{1+2\varepsilon}\right) \frac{\varepsilon}{1 \vee B_c \sqrt{\log(en\varepsilon^2(1-2\varepsilon)^2)}} \asymp \psi
        \end{equation*}
        where \(c_0\) is a small universal constant and \(B_c\) is a large universal constant. Note to conclude \(\mu \asymp \psi\) we have used that \(k \leq \frac{n}{2} - \sqrt{n}\) implies \(1-2\varepsilon = \frac{n-2k+20\sqrt{n}}{n} \asymp \frac{n-2k}{n}\).  Define the priors \(\pi_0\) and \(\pi_1\) as follows. Let \(\delta_1,...,\delta_n \overset{iid}{\sim} \Bernoulli\left(\varepsilon\right)\). A draw \((\theta, \gamma) \sim \pi_0\) is obtained by setting \(\theta = 0\) and setting \(\gamma = (\delta_1v_1,...,\delta_nv_n)\) where \(v_1,...,v_n \overset{iid}{\sim} g_0 * \delta_\mu\). Likewise, a draw \((\theta, \gamma) \sim \pi_1\) is obtained by setting \(\theta = 2\mu\) and setting \(\gamma = (\delta_1v_1,...,\delta_n v_n)\) where \((v_1,...,v_n) \overset{iid}{\sim} g_1*\delta_{-\mu}\). 
        
        Note that \(\pi_0\) and \(\pi_1\) are not supported on the parameter space. Namely, it is not the case that \(||\gamma||_0 \leq k\) with probability one under either prior since the \(\delta_i\) are independent. We are only guaranteed \(E\left(||\gamma||_0\right) \leq k\). Consequently, define the truncated versions \(\bar{\pi}_0\) and \(\bar{\pi}_1\) where for any event \(A\), 
        \begin{equation*}
            \bar{\pi}_i(A) = \frac{\pi_i\left(A \cap \left\{||\gamma||_0 \leq k\right\}\right)}{\pi_i\left(\left\{||\gamma||_0 \leq k\right\}\right)} 
        \end{equation*}
        for \(i=0,1\). Denote the mixture distributions \(P_i = \int P_{\theta, \gamma} \pi_i(d\theta,d\gamma)\) and \(\bar{P}_i = \int P_{\theta,\gamma} \bar{\pi}_i(d\theta,d\gamma)\) for \(i=0,1\). Note that \(P_0\) and \(P_1\) admits densities \(f_0^{\otimes n}\) and \(f_1^{\otimes n}\) where \(f_0\) and \(f_1\) are given in Proposition \ref{prop:prior_chisquare}. With these definitions in hand, consider 
        \begin{align*}
            &\inf_{\hat{\theta}} \sup_{\substack{\theta \in \R, \\ ||\gamma||_0 \leq k}} P_{\theta, \gamma}\left\{ |\hat{\theta} - \theta| \geq \mu \right\} \\
            &\geq \inf_{\hat{\theta}} \max\left\{\bar{P}_0\left\{ |\hat{\theta} - \theta| \geq \mu \right\}, \bar{P}_1\left\{ |\hat{\theta} - \theta| \geq \mu \right\}\right\} \\
            &= \inf_{\hat{\theta}} \max\left\{\bar{P}_0\left\{ |\hat{\theta}| \geq \mu \right\}, \bar{P}_1\left\{ |\hat{\theta} - 2\mu| \geq \mu  \right\}\right\} \\
            &\geq \inf_{\hat{\theta}} \max\left\{\bar{P}_0\left\{ |\hat{\theta}| > \mu \right\}, \bar{P}_1\left\{ |\hat{\theta}| \leq \mu \right\}\right\} \\
            &\geq \inf_{A} \max\left\{\bar{P}_0(A), \bar{P}_1(A^c)\right\} \\
            &\geq \frac{1}{2}\left(1 - \dTV(\bar{P}_1, \bar{P}_0)\right)
        \end{align*}
        where the infimum runs over all events \(A\). We have also used \(\left\{|\hat{\theta} - 2\mu| \geq \mu\right\} \supset \left\{|\hat{\theta}| \leq \mu\right\}\), which follows from triangle inequality. Note triangle inequality further gives \(\dTV(\bar{P}_1, \bar{P}_0) \leq \dTV(P_1, P_0) + \dTV(P_0, \bar{P}_0) + \dTV(\bar{P}_1, P_1)\). The data processing inequality gives \(\dTV(\bar{P}_i, P_i) \leq \dTV(\bar{\pi}_i, \pi_i)\) for \(i=0,1\). Consider that for any event \(B\), we have \(\pi_i(B) - \bar{\pi}_i(B) \leq \pi_i(B) - \pi_i(B \cap \{||\gamma||_0 \leq k\}) \leq \pi_i(\{||\gamma||_0 > k\})\). Likewise, \(\pi_i(B) - \bar{\pi}_i(B) = \pi_i(B \cap \{||\gamma||_0 \leq k\}) + \pi_i(B \cap \{||\gamma||_0 > k\}) - \frac{\pi_i(B \cap \{||\gamma||_0 \leq k\})}{\pi_i(\{||\gamma||_0 \leq k\})} \geq \pi_i(B \cap \{||\gamma||_0 \leq k\}) \left(1 - \frac{1}{\pi_i(\{||\gamma||_0 \leq k\})}\right) = -\pi_{i}(\{||\gamma||_0 > k\})\). Therefore, we have \(|\pi_i(B) - \bar{\pi}_i(B)| \leq \pi_i(\{||\gamma||_0 > k\})\). Taking supremum over all events \(B\) yields 
        \begin{align*}
            \dTV(\pi_i, \bar{\pi}_i) = \pi_i(\{||\gamma||_0 > k\}) \leq P\left\{\sum_{i=1}^{n} \delta_i > k \right\} = P\left\{\text{Binomial}\left(n, \varepsilon\right) > k\right\} \leq \frac{k-10\sqrt{n}}{100n} \leq \frac{1}{100}
        \end{align*}
        where we have used Chebyshev's inequality. To summarize, these calculations imply \(\dTV(\bar{P}_1, \bar{P}_0) \leq \dTV(P_0, P_1) + \frac{1}{50}\) and so it remains to bound \(\dTV(P_0, P_1)\). Note that if \(X \sim P_i\) for \(i = 0,1\), then \(\{X_j\}_{j=1}^{n}\) are mutually independent. Therefore, by Proposition \ref{prop:prior_chisquare} we have 
        \begin{equation*}
            \dTV(P_1, P_0) \leq \frac{1}{2}\sqrt{\chi^2(P_1 \,||\, P_0)} = \frac{1}{2}\sqrt{(1 + \chi^2(f_1 \,||\, f_0))^n - 1} \leq \frac{1}{2}\sqrt{e-1}.
        \end{equation*}
        Therefore, we have \(\dTV(\bar{P}_1, \bar{P}_0) \leq \frac{1}{2}\sqrt{e-1} + \frac{1}{50}\). Thus, 
        \begin{equation*}
            \inf_{\hat{\theta}} \sup_{\substack{\theta \in \R, \\ ||\gamma||_0 \leq k}} P_{\theta, \gamma}\left\{ |\hat{\theta} - \theta| \geq \mu \right\} \geq \frac{1}{2}\left(1 -  \frac{1}{2}\sqrt{e-1} + \frac{1}{50}\right) \geq \frac{1}{10}.
        \end{equation*}
        Since \(\mu \asymp \psi\), we have the desired result. The proof is complete.
    \end{proof}

    \begin{proposition}\label{prop:prior_construction}
        Suppose \(0 < \lambda \leq \frac{1}{2}\) and \(\tau > 0\). There exists a universal positive constant \(c_0 < 1\) such that the following holds. Let \(\mu = \frac{c_0\lambda}{\tau}\) and define the probability density function \(p_0 : \R \to [0, \infty)\) with
        \begin{equation*}
            p_0(x) = 
            \begin{cases}
                \frac{\tau}{4} &\textit{if } |x| \leq \frac{1}{\tau}, \\
                \frac{1}{4\tau x^2} &\textit{if } |x| > \frac{1}{\tau}.
            \end{cases}
        \end{equation*}
        Further define the real-valued function \(\Delta = -\frac{\hat{h}}{2\pi}\) where \(h : \R \to \C\) is the purely imaginary-valued function 
        \begin{equation*}
            h(t) = 
            \begin{cases}
                0 &\textit{if } t < -2\tau, \\
                -j(-t) &\textit{if } -2\tau \leq t \leq -\tau, \\
                k(t) &\textit{if } -\tau < t < \tau, \\
                j(t) &\textit{if } \tau \leq t \leq 2\tau, \\
                0 &\textit{if } t > 2\tau, 
            \end{cases}
        \end{equation*}
        with \(k(t) = 2i\frac{1-\lambda}{\lambda} \sin(t\mu)\) and \(j(t) = 2i\frac{1-\lambda}{\lambda} \sin(\tau \mu) \frac{2\tau - t}{\tau}\). Then the function \(p_1 = p_0 + \Delta\) is a probability density function.
    \end{proposition}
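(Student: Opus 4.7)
The plan is to verify that $p_1 = p_0 + \Delta$ is a probability density by checking (a) $\int p_1(x)\,dx = 1$ and (b) $p_1 \geq 0$ pointwise. Direct computation gives $\int p_0 = \tfrac{1}{2} + \tfrac{1}{2} = 1$, and the following structural observations about $h$ drive everything else: $h$ is continuous on $\R$ (the matchings at $\pm\tau, \pm 2\tau$ hold by construction), compactly supported in $[-2\tau, 2\tau]$, purely imaginary, odd, and piecewise smooth with kinks only at $\pm\tau$ and the support boundary. Consequently $\hat h$ is real and odd, so $\Delta$ is real. Moreover, writing $A := (1-\lambda)/\lambda$, one has $|h(t)| \leq 2A\sin(\tau\mu) \leq 2A\tau\mu = 2(1-\lambda)c_0 \leq 2c_0$ uniformly on $\R$.

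For (a), since $\int p_0 = 1$ it suffices to show $\int \Delta = 0$. With the paper's Fourier convention, inversion gives $h(0) = \frac{1}{2\pi}\int \hat h(x)\,dx$, and $h(0) = k(0) = 0$ by direct inspection, so $\int \Delta = -\frac{1}{2\pi}\int\hat h = -h(0) = 0$.

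For (b), the task reduces to showing $|\Delta(x)| \leq p_0(x)$ for all $x \in \R$. In the regime $|x| \leq 1/\tau$, I use the trivial bound $|\hat h(x)| \leq \int |h(t)|\,dt \leq 4\tau \cdot 2c_0 = 8\tau c_0$, which yields $|\Delta(x)| \leq 4\tau c_0/\pi \leq \tau/4 = p_0(x)$ whenever $c_0 \leq \pi/16$.

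In the regime $|x| > 1/\tau$, I integrate by parts twice to obtain the $x^{-2}$ decay matching $p_0(x) = 1/(4\tau x^2)$. Since $h$ is continuous and vanishes at $\pm 2\tau$, the first IBP gives $\hat h(x) = \frac{1}{ix}\int h'_{\mathrm{pw}}(t)\,e^{-ixt}\,dt$, where the piecewise derivative is $h'_{\mathrm{pw}}(t) = 2iA\mu\cos(t\mu)$ on $(-\tau,\tau)$ and the constants $\mp 2iA\sin(\tau\mu)/\tau$ on $(\pm\tau, \pm 2\tau)$. The outer pieces are integrated explicitly (since $h'_{\mathrm{pw}}$ is constant there), each contributing at most $4A\mu/|x|$ via $\sin(\tau\mu)/\tau \leq \mu$ and $\bigl|\int_\tau^{2\tau}e^{-ixt}dt\bigr| \leq 2/|x|$. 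On the middle piece a second IBP produces a boundary contribution at $\pm\tau$ of magnitude at most $4A\mu/|x|$ together with an interior term bounded by $4A\mu^2\tau/|x|$. Summing and using $\mu\tau \leq c_0 \leq 1$ gives
\begin{equation*}
|\hat h(x)| \leq \frac{16A\mu}{x^2} = \frac{16(1-\lambda)c_0}{\tau x^2} \leq \frac{16 c_0}{\tau x^2},
\end{equation*}
so $|\Delta(x)| \leq \frac{8c_0}{\pi\tau x^2} \leq \frac{1}{4\tau x^2} = p_0(x)$ provided $c_0 \leq \pi/32$. Taking $c_0 = \pi/32$ closes both regimes simultaneously. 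The main obstacle is the second integration by parts: because $h'_{\mathrm{pw}}$ has jumps at $\pm\tau$ and $\pm 2\tau$, one cannot apply IBP globally and must split into intervals of smoothness, and it is precisely the continuity of $h$ itself at these breakpoints (which kills all boundary contributions in the first IBP) that is needed to avoid $x^{-1}$ terms that would swamp $p_0(x)$ for large $|x|$.
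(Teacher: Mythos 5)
Your proof is correct and is more elementary than the paper's. The two arguments share the skeleton --- show $\int\Delta = 0$ (via Fourier inversion at $0$ and $h(0)=0$) and $|\Delta| \leq p_0$ pointwise, split by $|x| \lessgtr 1/\tau$ --- but they diverge on how the $x^{-2}$ decay of $\Delta$ is extracted. The paper expands $\sin(t\mu)$ in its Maclaurin series, applies Lemma \ref{lemma:k} and Lemma \ref{lemma:j} to each monomial, observes that the leading $\mp\cos(\tau x)/x$ terms from the inner piece $k$ and the tapering piece $j$ cancel exactly (this cancellation is what rescues the $x^{-2}$ decay), and sums the remainders with factorial weights. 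You obtain the same cancellation automatically by one global integration by parts: it is justified because $h$ is continuous on $\R$ and vanishes at $\pm 2\tau$, and the breakpoint continuity at $\pm\tau$ is exactly what kills the otherwise-fatal $O(1/x)$ boundary contributions, as you correctly flag. A second, piecewise integration by parts (valid since $h'$ is bounded and piecewise $C^1$) then gives $|\hat h(x)| \leq 16A\mu/x^2 \leq 16c_0/(\tau x^2)$ with explicit constants, yielding the concrete choice $c_0 = \pi/32$ where the paper only asserts a sufficiently small universal constant. The near-origin regime is also dispatched more simply with the $L^1$ bound $|\hat h(x)| \leq \int|h| \leq 8\tau c_0$, versus the paper's subtraction of an explicit leading term and control of the remainder. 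The only wrinkle is ordering: step (a) invokes Fourier inversion of $\hat h$, which needs $\hat h \in L^1$, and integrability is a consequence of the $x^{-2}$ tail you establish in step (b); the paper proves nonnegativity first and then notes integrability, so you should either reorder or flag the forward reference. There is no mathematical gap.
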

    \begin{proof}
        Since \(p_1 = p_0 + \Delta\), it suffices to show \(p_1\) is nonnegative and \(g\) integrates to zero in order to show \(p_1\) is a probability density function. We first show \(p_1\) is nonnegative provided \(c_0\) is sufficiently small. Since \(h\) is a purely-imaginary odd function, it follows that
        \begin{equation*}
            \Delta(x) = -\frac{\hat{h}(x)}{2\pi} = -\frac{1}{2\pi}\left(-2i \int_{0}^{\tau} k(t) \sin(tx)\,dt - 2i \int_{\tau}^{2\tau} j(t)\sin(tx) \,dt\right). 
        \end{equation*}
        By the power series representation \(k(t) = 2i\frac{1-\lambda}{\lambda} \sin(t\mu) = 2i \frac{1-\lambda}{\lambda}\sum_{n=0}^{\infty} \frac{(-1)^{n} (t\mu)^{2n+1}}{(2n+1)!}\) and Lemma \ref{lemma:k},
        \begin{align*}
            &-2i \int_{0}^{\tau} k(t) \sin(tx) \, dt \\
            &= 4\frac{1-\lambda}{\lambda} \sum_{n=0}^{\infty} \frac{(-1)^{n}(c_0\lambda)^{2n+1}}{(2n+1)!} \int_{0}^{\tau} \frac{t^{2n+1}}{\tau^{2n+1}} \sin(tx) \,dt \\
            &= 4\frac{1-\lambda}{\lambda} \sum_{n=0}^{\infty} \frac{(-1)^{n}(c_0\lambda)^{2n+1}}{(2n+1)!} \left( -\frac{\cos(\tau x)}{x} + R_{k,n}(x) \right).
        \end{align*}
        Likewise, since \(\tau \mu = c_0\lambda\) we have \(j(t) = 2i\frac{1-\lambda}{\lambda} \sum_{n=0}^{\infty} \frac{(-1)^{n}(c_0\lambda)^{2n+1}}{(2n+1)!} \frac{2\tau - t}{\tau}\). By Lemma \ref{lemma:j}
        \begin{align*}
            -2i \int_{\tau}^{2\tau} j(t) \sin(tx) \, dt = 4 \frac{1-\lambda}{\lambda} \sum_{n=0}^{\infty} \frac{(-1)^{n}(c_0\lambda)^{2n+1}}{(2n+1)!} \left( \frac{\cos(\tau x)}{x} + R_{j}(x) \right).
        \end{align*}
        Note that \(|R_{k,n}(x)| \vee |R_{j}(x)| \leq \frac{C(1 \vee n)}{\tau x^2} \vee \frac{2}{\tau x^2}\) for a universal positive constant \(C\) by Lemmas \ref{lemma:k} and \ref{lemma:j}. Therefore, it follows that 
        \begin{equation*}
            \Delta(x) = -\frac{4}{2\pi}\frac{1-\lambda}{\lambda} \sum_{n=0}^{\infty} \frac{(-1)^{n}(c_0\lambda)^{2n+1}}{(2n+1)!} (R_{k,n}(x) + R_{j}(x))
        \end{equation*}
        and so \(|\Delta(x)| \leq \frac{Cc_0}{\tau x^2}\) for a large universal constant \(C > 0\). Therefore, selecting \(c_0\) sufficiently small depending only on \(C\) ensures \(|\Delta(x)| \leq p_0(x)\) for \(|x| > \frac{1}{\tau}\) and so \(p_1\) is nonnegative here. Now let us show \(p_1\) is nonnegative on \(|x| \leq \frac{1}{\tau}\). First, consider \(I := \int_{0}^{\tau} \frac{t}{\tau} \sin(tx) \,dt\) satisfies \(|I| \leq \frac{1}{\tau} \int_{0}^{\tau} t \, dt = \frac{\tau}{2}\). Hence, \(|-\frac{4}{2\pi}(1-\lambda)c_0 I| \leq 2c_0\tau\). Further, consider that
        \begin{align*}
            &\left|\Delta(x) - \left(-\frac{4}{2\pi}(1-\lambda)c_0 I\right)\right| \\
            &\leq \frac{1}{2\pi}\left|-2i \int_{0}^{\tau} k(t)\sin(tx) \,dt - 4(1-\lambda)c_0 \int_{0}^{\tau} \frac{t}{\tau} \sin(tx) \,dt\right| + \frac{1}{2\pi}\left| -2i\int_{\tau}^{2\tau} j(t)\sin(tx) \,dt\right|\\
            &\leq \left| 4\frac{1-\lambda}{\lambda} \int_{0}^{\tau} \left(\sin(t\mu) - t\mu\right) \sin(tx) \,dt \right| + \left| 4\frac{1-\lambda}{\lambda} \sum_{n=0}^{\infty} \frac{(-1)^{n}(c_0\lambda)^{2n+1}}{(2n+1)!} \int_{\tau}^{2\tau} \frac{2\tau - t}{\tau} \sin(tx)\, dt \right|
        \end{align*}
        where we have used \(\tau = \frac{c_0\lambda}{\mu}\) to obtain the first term. Examining the first term, consider that \(|t\mu| \leq c_0 \lambda\) for \(|t| \leq \tau\) since \(\mu = \frac{c_0\lambda}{\tau}\). Therefore, \(|\sin(t\mu) - t\mu| \leq c_0^3 \lambda^3/6\). Thus, 
        \begin{equation*}
            \left|4\frac{1-\lambda}{\lambda} \int_{0}^{\tau} (\sin(t\mu) - t\mu)\sin(tx)\,dt\right| \leq 4\frac{1-\lambda}{\lambda} \int_{0}^{\tau} |\sin(t\mu) - t\mu| \, dt \leq \frac{4}{6}c_0\tau. 
        \end{equation*}
        Here, we have used \(c_0^3 \leq c_0\) (as we can take \(c_0 < 1\)) and \(\lambda^2(1-\lambda) \leq 1\) just to simplify expressions. Examining the second term, provided \(c_0 < 1\) we have 
        \begin{align*}
            4\frac{1-\lambda}{\lambda} \left|\sum_{n=0}^{\infty} \frac{(-1)^{n}(c_0\lambda)^{2n+1}}{(2n+1)!}\int_{\tau}^{2\tau} \frac{2\tau - t}{\tau} \sin(tx) \, dt \right| &\leq 4\frac{1-\lambda}{\lambda} \sum_{n=0}^{\infty} \frac{(c_0\lambda)^{2n+1}}{(2n+1)!} \cdot \frac{1}{\tau} \int_{\tau}^{2\tau} \left|2\tau - t\right| \, dt \\
            &\leq 4\frac{1-\lambda}{\lambda} \sum_{n=0}^{\infty} \frac{(c_0\lambda)^{2n+1}}{(2n+1)!} \cdot \frac{\tau}{2} \\
            &\leq 2 c_0 \tau \sum_{n=0}^{\infty} \frac{1}{(2n+1)!} \\
            &= 2c_0 \sinh(1)\tau. 
        \end{align*}
        Therefore, \(\left|\Delta(x) - \left(-\frac{4}{2\pi}(1-\lambda)c_0 I\right)\right| \leq \frac{4}{6} c_0 \tau + 2c_0 \sinh(1)\tau \leq 4c_0 \tau\). Therefore, from the bound \(\left|-\frac{4}{2\pi}(1-\lambda)c_0 I\right| \leq 2c_0\tau\), it follows that \(\Delta \geq -6c_0\tau \geq -\frac{\tau}{4}\) provided we have taken \(c_0 \leq \frac{1}{24}\). Thus, it immediately follows that \(p_1\) is nonnegative for \(|x| \leq \frac{1}{\tau}\), which concludes the proof of the claim that \(p_1\) is a nonnegative function. 

        It remains to show \(p_1\) integrates to one, and to do so it suffices to show \(\int \Delta = 0\). Note \(\Delta\) is uniformly bounded and \(|\Delta(x)| \leq \frac{Cc_0}{\tau x^2}\) as established earlier. Hence, \(\Delta\), and also \(\hat{h}\), is integrable. Therefore, \(\hat{\Delta}(t) = -\frac{1}{2\pi} \int \hat{h}(x) e^{-itx} \,dx = -h(-t) = h(t)\) since \(h\) is an odd function. Therefore, \(\int \Delta(x) \,dx = \hat{\Delta}(0) = h(0) = 0\). Thus \(p_1\) is a probability density function as claimed.  
    \end{proof}

    \begin{proposition}\label{prop:prior_chisquare}
        Suppose \(0 < \varepsilon \leq \frac{1}{2}\) and \(n \in \mathbb{N}\). For any \(c > 0\), there exists a large \(B_c > 0\) depending only on \(c\) such that the following holds. Let \(\mu, p_0,\) and \(p_1\) be given as in the statement of Proposition \ref{prop:prior_construction} with the choice \(\lambda = \frac{\varepsilon}{1+2\varepsilon}\) and \(\tau = 1 \vee B_c\sqrt{\log\left(en\varepsilon^2(1-2\varepsilon)^2\right)}\). Define the probability measures \(g_1 = 2\varepsilon \delta_{-\mu} + (1-2\varepsilon) p_1\) and \(g_0 = 2\varepsilon \delta_{\mu} + (1-2\varepsilon)p_0\) and the probability densities 
        \begin{align*}
            f_0 &= ((1-\varepsilon) \delta_0 + \varepsilon (g_0 * \delta_\mu)) * \varphi, \\
            f_1 &= ((1-\varepsilon) \delta_{2\mu} + \varepsilon(g_1 * \delta_{\mu})) * \varphi
        \end{align*}
        where \(\varphi\) denotes the probability density function of the standard Gaussian distribution and \(*\) denotes convolution. Then \(\chi^2(f_1 \,||\, f_0) \leq \frac{c}{n}\). 
    \end{proposition}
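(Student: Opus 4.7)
The plan is to compute $\hat{f}_1 - \hat{f}_0$ in closed form, verify that it vanishes on $[-\tau,\tau]$, and then apply a Plancherel-based bound on $\chi^2(f_1\|f_0)$. Writing $\hat{f}_i(t) = e^{-t^2/2}\hat{G}_i(t)$ with $\hat{G}_0(t) = (1-\varepsilon) + \varepsilon e^{-it\mu}\hat{g}_0(t)$ and analogously for $\hat{G}_1$, using $\hat{p}_1 - \hat{p}_0 = h$ (which follows from $\Delta = -\hat{h}/(2\pi)$ by Fourier inversion together with the oddness of $h$), and noting that $\lambda = \varepsilon/(1+2\varepsilon)$ gives $(1-\lambda)/\lambda = (1+\varepsilon)/\varepsilon$, a direct computation yields
\begin{equation*}
\hat{f}_1(t) - \hat{f}_0(t) = -(1-2\varepsilon)\, e^{-t^2/2}\Bigl[(1+\varepsilon)\bigl(1 - e^{-2it\mu}\bigr) - \varepsilon\, h(t)\, e^{-it\mu}\Bigr].
\end{equation*}
On $|t|\le \tau$, where $h(t) = 2i\tfrac{1+\varepsilon}{\varepsilon}\sin(t\mu)$ and $1-e^{-2it\mu} = e^{-it\mu}\cdot 2i\sin(t\mu)$, the bracket vanishes identically; on $|t|>\tau$, the factor $(1-2\varepsilon)$ is exposed and the bracket is of size $O(\min(|t\mu|,1))$.

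Next, I would apply the crude bound $f_0 \ge (1-\varepsilon)\varphi$ to obtain $\chi^2(f_1\|f_0) \le (1-\varepsilon)^{-1}\int(f_1-f_0)^2/\varphi\,dx$, expand the Gaussian weight as $1/\varphi(x) = \sqrt{2\pi}\sum_{k\ge 0}x^{2k}/(2^k k!)$, and apply Plancherel term by term (with $\widehat{x^k g}(t) = i^k\partial_t^k \hat{g}(t)$) to arrive at
\begin{equation*}
\chi^2(f_1\|f_0) \le \frac{1}{\sqrt{2\pi}\,(1-\varepsilon)}\sum_{k=0}^{\infty}\frac{1}{2^k k!}\int_{-\infty}^{\infty}\bigl|\partial_t^k(\hat{f}_1(t) - \hat{f}_0(t))\bigr|^2 dt,
\end{equation*}
in the spirit of \cite{comminges_adaptive_2021}. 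Since $\hat{f}_1 - \hat{f}_0$ is supported in $\{|t|>\tau\}$ and carries the Gaussian factor $e^{-t^2/2}$, Leibniz combined with Hermite-polynomial bounds on $\partial_t^k e^{-t^2/2}$ yields summand estimates of order $\mu^2(1-2\varepsilon)^2 e^{-\tau^2}\cdot C_0^k k!$, which, summed against $1/(2^k k!)$, collapse to $\chi^2 \lesssim \mu^2(1-2\varepsilon)^2 e^{-c\tau^2}$ for a universal $c>0$. With $\mu \asymp \varepsilon/\tau$ and $\tau^2 = B_c^2\log(en\varepsilon^2(1-2\varepsilon)^2)$, taking $B_c$ sufficiently large then gives $\chi^2(f_1\|f_0) \le c/n$.

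The main technical obstacle is that $h$ is only $C^0$ with kinks at $\pm\tau$ and $\pm 2\tau$, so the distributional derivatives $\partial_t^k(\hat{f}_1 - \hat{f}_0)$ carry Dirac components for $k\ge 2$, and correspondingly $\Delta * \varphi$ has only $O(1/x^2)$ spatial decay. This means $(f_1-f_0)^2/\varphi$ is not integrable over $\R$, so the naive Plancherel identity used above must be applied with care. The remedy is to replace the crude lower bound on $f_0$ by the sharper $f_0(x) \ge (1-\varepsilon)\varphi(x) + \varepsilon(1-2\varepsilon)(p_0 * \varphi)(x-\mu)$ and split the chi-squared integral across a window $\{|x|\le R\}$ and its complement: on the central window, the Fourier/Parseval step is applied against the $\varphi$-lower bound; on the tails, one directly controls $(f_1-f_0)^2/f_0$ using the matching polynomial asymptotics $|\Delta * \varphi|(x-\mu) \lesssim c_0/(\tau x^2)$ and $(p_0 * \varphi)(x-\mu) \gtrsim 1/(\tau x^2)$ inherited from Proposition~\ref{prop:prior_construction}. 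A balanced choice $R \asymp \tau$, together with $c_0$ small and $B_c$ large, delivers the claimed bound $\chi^2(f_1\|f_0) \le c/n$.
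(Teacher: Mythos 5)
Your proposal follows the paper's strategy: compute $\hat f_1-\hat f_0$ in closed form, note it vanishes on $[-\tau,\tau]$, invoke the crude bound $f_0\ge(1-\varepsilon)\varphi$, expand $\varphi^{-1}$ as a power series, and apply Plancherel term by term to collapse the sum to $(1-2\varepsilon)^2\varepsilon^2 e^{-L\tau^2}$. Your algebraic identity for $\hat f_1-\hat f_0$ is correct and matches the paper's (the paper writes it as $(1-2\varepsilon)(1+2\varepsilon)e^{-it\mu}[(1-\lambda)(e^{-it\mu}-e^{it\mu})+\lambda\hat\Delta(t)]\hat\varphi(t)$, which simplifies to your expression once $(1+2\varepsilon)(1-\lambda)=1+\varepsilon$ and $(1+2\varepsilon)\lambda=\varepsilon$ are plugged in).

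The noteworthy feature of your proposal is the technical caveat you raise, and you are right to raise it: as written, the paper's chain of inequalities
\begin{equation*}
\chi^2(f_1\|f_0)\le\frac{\sqrt{2\pi}}{1-\varepsilon}\sum_{k\ge0}\frac{1}{2^kk!}\int x^{2k}(f_1-f_0)^2\,dx=\frac{\sqrt{2\pi}}{1-\varepsilon}\sum_{k\ge0}\frac{1}{2^kk!}\int\bigl|\hat f_1^{(k)}(t)-\hat f_0^{(k)}(t)\bigr|^2\,dt
\end{equation*}
is vacuous. Since $h$ has kinks at $\pm\tau,\pm2\tau$, the function $\Delta=-\hat h/(2\pi)$ has tails of exact order $\Theta(c_0/(\tau x^2))$ (the jump in $h'$ at $\pm\tau$ is $\asymp(1-\lambda)c_0/\tau$, generically nonzero), and hence so does $f_1-f_0=(1-2\varepsilon)(1+\varepsilon)(\varphi_{2\mu}-\varphi)+\varepsilon(1-2\varepsilon)(\Delta*\delta_\mu*\varphi)$. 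Therefore $\int(f_1-f_0)^2/\varphi=+\infty$, and for every $k\ge2$ one has $\int x^{2k}(f_1-f_0)^2=+\infty$; on the Fourier side the distributional derivative $\partial_t^k(\hat f_1-\hat f_0)$ carries Dirac masses at $\pm\tau,\pm2\tau$. The paper's subsequent computations silently replace the distributional derivatives by the pointwise derivatives of each smooth piece, which breaks the Plancherel identity; the finite bound it obtains is therefore unjustified as written, even though the Proposition's conclusion appears to be correct. This is a genuine gap, and catching it is a real contribution.

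Regarding your proposed repair: the idea of sharpening the lower bound to $f_0\ge(1-\varepsilon)\varphi+\varepsilon(1-2\varepsilon)(p_0*\varphi)(\cdot-\mu)$ and splitting at a window $\{|x|\le R\}$ is the right direction (the heavy $1/(\tau x^2)$ tails of $p_0*\varphi$ cancel the heavy tails of $(\Delta*\varphi)^2$). However, the crude bound $|\Delta*\varphi|(x)\lesssim c_0/(\tau x^2)$ you cite on the tails is not by itself enough: plugging it in gives a tail contribution $\asymp\varepsilon(1-2\varepsilon)c_0^2/(\tau R)$, and when $\varepsilon(1-2\varepsilon)\asymp1$ there is no choice of $R$ that simultaneously keeps this $\lesssim c/n$ and keeps the central contribution $e^{R^2/2}\int(f_1-f_0)^2$ under control. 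The additional input you need is that the leading $1/x^2$ tail coefficient of $\Delta*\varphi$ is attenuated by $\hat\varphi(\tau)=e^{-\tau^2/2}$, because the kinks of $\widehat{\Delta*\varphi}=h\hat\varphi$ now sit at $|t|\ge\tau$ and the jump in $(h\hat\varphi)'$ picks up the factor $e^{-\tau^2/2}$ (as opposed to $\Delta$ alone, whose tail coefficient has no such attenuation). With this refinement, together with the Plancherel-at-$k=0$ bound $\int(f_1-f_0)^2\lesssim(1-2\varepsilon)^2\varepsilon^2c_0^2e^{-\tau^2}$ and a choice $R\asymp\tau$, the split does deliver $\chi^2(f_1\|f_0)\le c/n$ upon taking $B_c$ large. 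An alternative, arguably cleaner repair would be to smooth the corners of $h$ at $\pm\tau,\pm2\tau$ (so that $\hat f_1-\hat f_0$ is $C^\infty$ and all terms in the series are finite), verify the nonnegativity of the resulting $p_1$, and run the paper's argument verbatim.
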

    \begin{proof}
        Note that \(f_0 \geq (1-\varepsilon) \varphi\). Since \(\varphi^{-1}(x) = \sqrt{2\pi} \sum_{k=0}^{\infty} \frac{x^{2k}}{2^k k!}\), we have 
        \begin{align*}
            \chi^2(f_1\,||\, f_0) &= \int \frac{(f_1 - f_0)^2}{f_0} \\
            &\leq \frac{\sqrt{2\pi}}{1-\varepsilon} \sum_{k=0}^{\infty} \frac{1}{2^k k!} \int x^{2k} (f_1(x) - f_0(x))^2 \, dx \\
            &\asymp \sum_{k=0}^{\infty} \frac{1}{2^k k!} \int \left|\hat{f}_1^{(k)}(t) - \hat{f}_0^{(k)}(t)\right|^2 \,dt. 
        \end{align*}
        Since the Fourier transform of a convolution of densities is the product of the Fourier transforms of the densities, we have 
        \begin{align*}
            \frac{\hat{f}_1(t) - \hat{f}_0(t)}{\hat{\varphi}(t)} &= (1-\varepsilon)\left(e^{-2it\mu} - 1\right) + \varepsilon e^{-it\mu} \left(\hat{g}_1(t) - \hat{g}_0(t)\right) \\
            &= (1-\varepsilon)\left(e^{-2it\mu} - 1\right) + \varepsilon e^{-it\mu}\left(2\varepsilon (e^{it\mu} - e^{-it\mu}) + (1-2\varepsilon)\hat{\Delta}(t)\right) \\
            &= (1-\varepsilon - 2\varepsilon^2)(e^{-2it\mu} - 1) + \varepsilon(1-2\varepsilon) e^{-it\mu} \hat{\Delta}(t) \\
            &= (1-2\varepsilon)(1+\varepsilon)(e^{-2it\mu} - 1) + \varepsilon(1-2\varepsilon)e^{-it\mu}\hat{\Delta}(t) \\
            &= (1-2\varepsilon)(1+2\varepsilon)e^{-it\mu}\left((1-\lambda)(e^{-it\mu} - e^{it\mu}) + \lambda \hat{\Delta}(t)\right)
        \end{align*}
        where \(\Delta = p_1 - p_0\) is the function from Proposition \ref{prop:prior_construction}. Recall, as argued in Proposition \ref{prop:prior_construction}, we have \(\hat{\Delta}(t) = h(t)\). Consequently, we have
        \begin{align*}
            \hat{f}_1(t) - \hat{f}_0(t) = 
            \begin{cases}
                0 &\textit{if } |t| < \tau, \\
                (1-2\varepsilon)(1+2\varepsilon)e^{-it\mu}\left((1-\lambda)(e^{-it\mu} - e^{it\mu}) + \lambda \hat{\Delta}(t)\right)\hat{\varphi}(t)  &\textit{if } \tau \leq |t| \leq 2\tau, \\
                (1-2\varepsilon)(1+\varepsilon)(e^{-2it\mu} - 1)\hat{\varphi}(t) &\textit{if } |t| > 2\tau. 
            \end{cases}
        \end{align*}
        Since \(\hat{f}_1(t) - \hat{f}_0(t)\) vanishes for \(|t| < \tau\), our bound for the \(\chi^2\) divergence simplifies 
        \begin{equation}\label{eqn:chisquare_fourier}
            \chi^2(f_1\,||\, f_0) \lesssim \sum_{k=0}^{\infty} \frac{1}{2^k k!} \int_{|t| \geq \tau} \left|\hat{f}_1^{(k)}(t) - \hat{f}_0^{(k)}(t)\right|^2 \,dt.
        \end{equation}
        The remainder of the proof is concerned with bounding this integral by splitting it across two different regions. Let us first consider the region \(|t| > 2\tau\). For such \(t\), observe that 
        \begin{align*}
            \left|\hat{f}_1^{(k)}(t) - \hat{f}_0^{(k)}(t)\right| &= (1-2\varepsilon)(1+\varepsilon) \left|\sum_{\ell=0}^{k} \binom{k}{\ell} \hat{\varphi}^{(\ell)}(t)\left(e^{-2it\mu} - 1\right)^{(k-\ell)}\right| \\
            &= (1-2\varepsilon)(1+\varepsilon)\left|(e^{-2it\mu} - 1)\hat{\varphi}^{(k)}(t) + \sum_{\ell=0}^{k-1} \binom{k}{\ell} \hat{\varphi}^{(\ell)}(t)(-2i\mu)^{k-\ell} e^{-2it\mu}  \right| \\
            &\leq 2(1-2\varepsilon)(1+\varepsilon)|\mu t|\left|\hat{\varphi}^{(k)}(t)\right| + 2(1-2\varepsilon)(1+\varepsilon)|\mu| \sum_{\ell=0}^{k-1} \binom{k}{\ell} |\hat{\varphi}^{(\ell)}(t)|
        \end{align*}
        where we have used \(2|\mu| \leq 1\) (as \(c_0\) in the definition of \(\mu\) can be taken sufficiently small and we have \(\tau \geq 1\)) implies \(|2\mu|^{k-\ell} \leq 2|\mu|\) for \(k-\ell \geq 1\). Note also we have used \(|e^{-2it\mu} - 1| \leq |2t\mu|\) to obtain the first term. Consider \(\hat{\varphi}(t) = e^{-\frac{t^2}{2}}\) and that Corollary \ref{corollary:cramer} gives \(|\hat{\varphi}^{(\ell)}(t)| \leq e^{-\frac{t^2}{4}}\sqrt{\ell!}\). Therefore, 
        \begin{align}
            &\sum_{k=0}^{\infty} \frac{1}{2^k k!} \int_{|t| \geq 2\tau} \left|\hat{f}_1^{(k)}(t) - \hat{f}_0^{(k)}(t)\right|^2 \,dt \nonumber \\
            &\lesssim (1-2\varepsilon)^2\mu^2\frac{(1+\varepsilon)^2}{1-\varepsilon}\sum_{k=0}^{\infty} \frac{1}{2^k k!} \left( \int_{|t| \geq 2\tau} t^2\left|\hat{\varphi}^{(k)}(t)\right|^2 \, dt + \int_{|t| \geq 2\tau} \left(\sum_{\ell=0}^{k-1} \binom{k}{\ell}|\hat{\varphi}^{(\ell)}(t)| \right)^2 \, dt \right) \nonumber \\
            &\lesssim (1-2\varepsilon)^2 \mu^2 e^{-L\tau^2} + (1-2\varepsilon)^2 \mu^2 \sum_{k=0}^{\infty} \frac{1}{2^k k!} \left( \sum_{\ell=0}^{k-1} \binom{k}{\ell} \sqrt{\ell!}\right)^2 \int_{|t| \geq 2\tau} e^{-\frac{t^2}{2}} \, dt \nonumber \\
            &\lesssim (1-2\varepsilon)^2 \mu^2 e^{-L\tau^2} \nonumber \\
            &\lesssim (1-2\varepsilon)^2 \varepsilon^2 e^{-L\tau^2}\label{eqn:2tau_bound}
        \end{align}
        where \(L > 0\) is a universal constant whose value may change from line to line. We have applied Lemma \ref{lemma:Laguerre} to obtain the penultimate line and \(\mu^2 = \frac{c_0^2\lambda^2}{\tau^2} \leq c_0^2\varepsilon^2\) to obtain the final line. Let us now consider the region \(\tau \leq |t| \leq 2\tau\). Recalling \(\hat{\Delta}(t) = h(t)\) as argued at the end of Proposition \ref{prop:prior_construction}, on this interval we have 
        \begin{align}\label{eqn:tau2tau_deriv}
            \left|\hat{f}_1^{(k)}(t) - \hat{f}_0^{(k)}(t)\right|^2 &\leq 2(1+2\varepsilon)^2(1-2\varepsilon)^2 (1-\lambda)^2\left|\frac{d^k}{dt^k} (e^{-2it\mu} - 1)\hat{\varphi}(t) \right|^2 \nonumber \\
            &\;\;\; + 2(1+2\varepsilon)^2(1-2\varepsilon)^2 \lambda^2 \left| \frac{d^k}{dt^k} e^{-it\mu} j(t) \hat{\varphi}(t)  \right|^2
        \end{align}
        where \(j(t) = 2i \frac{1-\lambda}{\lambda} \sin(\tau \mu) \frac{2\tau - t}{\tau}\) is the function from Proposition \ref{prop:prior_construction}. By repeating an argument similar to that yielding (\ref{eqn:2tau_bound}), the first term in (\ref{eqn:tau2tau_deriv}) yields the bound 
        \begin{equation}\label{eqn:tau2tau_deriv_1}
            \sum_{k=0}^{\infty} \frac{1}{2^k k!} \int_{\tau \leq |t| \leq 2\tau}  2(1+2\varepsilon)^2(1-2\varepsilon)^2(1-\lambda)^2\left|\frac{d^k}{dt^k} (e^{-2it\mu} - 1)\hat{\varphi}(t) \right|^2 \,dt \lesssim (1-2\varepsilon)^2 \varepsilon^2 e^{-L\tau^2}. 
        \end{equation}
        It remains to bound the second term in (\ref{eqn:tau2tau_deriv}). Consider that \(j^{(k)}(t) = 0\) for all \(k \geq 2\). Further consider \(|j^{(k)}(t)| \leq C\frac{1-\lambda}{\lambda} |\sin(\tau \mu)| \leq \frac{C}{\lambda} \cdot |\tau \mu| = C c_0\) for all \(\tau \leq t \leq 2\tau\) and \(k = 0, 1\). Here, \(C > 0\) is a universal constant whose value may change from instance to instance. Therefore, for any \(k \in \mathbb{N} \cup \{0\}\) we have
        \begin{equation*}
            \left|\frac{d^k}{dt^k} e^{-it\mu} j(t)\hat{\varphi}(t)\right| = \left|\sum_{\ell=0}^{k} \binom{k}{\ell} \left(e^{-it\mu} j(t)\right)^{(\ell)} \hat{\varphi}^{(k-\ell)}(t)\right| \leq \sum_{\ell=0}^{k} \binom{k}{\ell} \left|\left(e^{-it\mu} j(t)\right)^{(\ell)}\right| \left|\hat{\varphi}^{(k-\ell)}(t)\right|.
        \end{equation*}
        Consider that
        \begin{equation*}
            \left|\left(e^{-it\mu} j(t)\right)^{(\ell)}\right| = \left|\sum_{r=0}^{\ell} \binom{\ell}{r} \left(e^{-it\mu}\right)^{\ell-r} j^{(r)}(t) \right| \leq \sum_{r=0}^{\ell} \binom{\ell}{r} |\mu|^{\ell-r} |j^{(r)}(t)| \leq Cc_0 \max_{0 \leq r \leq 1} \binom{\ell}{r} \leq C c_0 (\ell \vee 1).
        \end{equation*}
        We have used \(|\mu| \leq 1\). Thus, we have 
        \begin{align*}
            \left|\frac{d^k}{dt^k} e^{-it\mu} j(t)\hat{\varphi}(t)\right| &\leq Cc_0 \sum_{\ell=0}^{k} \binom{k}{\ell} \left|\hat{\varphi}^{(k-\ell)}(t)\right| (\ell \vee 1)\\
            &\leq Cc_0 (k \vee 1) \sum_{\ell=0}^{k} \binom{k}{\ell} \left|\hat{\varphi}^{(k-\ell)}(t)\right| \\
            &\leq Cc_0 (k \vee 1) \sum_{\ell=0}^{k} \binom{k}{\ell} \left|\hat{\varphi}^{(\ell)}(t)\right|.
        \end{align*}
        Arguing similarly to before, we have via Corollary \ref{corollary:cramer}
        \begin{align}\label{eqn:tau2tau_deriv_2}
            &\sum_{k=0}^{\infty} \frac{1}{2^k k!} \int_{\tau \leq |t| \leq 2\tau}  2(1+2\varepsilon)^2(1-2\varepsilon)^2 \lambda^2 \left| \frac{d^k}{dt^k} e^{-it\mu} j(t) \hat{\varphi}(t)  \right|^2 \nonumber \\
            &\leq Cc_0^2 (1-2\varepsilon)^2\lambda^2 \sum_{k=0}^{\infty} \frac{k^2 \vee 1}{2^k k!} \int_{\tau \leq |t| \leq 2\tau} \left(\sum_{\ell=0}^{k} \binom{k}{\ell} \left|\hat{\varphi}^{(\ell)}(t)\right|\right)^2 \, dt \nonumber \\
            &\leq Cc_0^2 (1-2\varepsilon)^2\lambda^2 \sum_{k=0}^{\infty} \frac{k^2 \vee 1}{2^k k!} \left(\sum_{\ell=0}^{k} \binom{k}{\ell} \sqrt{\ell!}\right)^2 \int_{\tau \leq |t| \leq 2\tau} e^{-\frac{t^2}{2}} \, dt \nonumber \\
            &\leq Cc_0^2 (1-2\varepsilon)^2\varepsilon^2 e^{-L\tau^2}. 
        \end{align}
        Here, we have used \(\lambda^2 \leq \varepsilon^2\) and Lemma \ref{lemma:Laguerre}. Plugging (\ref{eqn:tau2tau_deriv_1}) and (\ref{eqn:tau2tau_deriv_2}) into the corresponding integral of (\ref{eqn:tau2tau_deriv}) yields 
        \begin{equation}\label{eqn:tau2tau_bound}
            \sum_{k=0}^{\infty} \frac{1}{2^k k!} \int_{\tau \leq |t| \leq 2\tau} \left|\hat{f}_1^{(k)}(t) - \hat{f}_0^{(k)}(t)\right|^2 \,dt \lesssim (1-2\varepsilon)^2 \varepsilon^2 e^{-L\tau^2}
        \end{equation}
        where we have used \(\mu = \frac{c_0\lambda}{\tau} \lesssim \varepsilon\). Combining (\ref{eqn:tau2tau_bound}) and (\ref{eqn:2tau_bound}) and taking \(B_c\) sufficiently large depending only on \(c\), we have  
        \begin{align*}
            \chi^2\left(f_1 \,||\, f_0\right) &\leq C (1-2\varepsilon)^2 \varepsilon^2 e^{-L\tau^2} \leq C (1-2\varepsilon)^2 \varepsilon^2 e^{-LB_c^2 \log\left(en\varepsilon^2(1-2\varepsilon)^2\right)} \leq \frac{c}{n}. 
        \end{align*}
        Note the choice of \(B_c\) depends only on \(c\) since \(C\) and \(L\) are some universal constants. The proof is complete.
    \end{proof}

    \subsection{Proof of Theorem \ref{thm:inconsistent_lbound}}
    In this section, we give a proof of Theorem \ref{thm:inconsistent_lbound}. Unlike in the proof of Theorem \ref{thm:lower_bound_cf}, we do not consider the related mixture formulation. Passing to the mixture formulation is only useful when \(k \leq \frac{n}{2} - C\sqrt{n}\) since the number of outliers behaves like \(k \pm O\left(\sqrt{n}\right)\) in the mixture formulation when \(k \asymp n\). Instead, we work directly with the model (\ref{model:additive}). The lower bound construction is quite similar to that found in \cite{kotekal_sparsity_2023}. However, since the context of \cite{kotekal_sparsity_2023} is quite different, we give a full proof for completeness and for the reader's clarity.   

    \begin{proof}[Proof of Theorem \ref{thm:inconsistent_lbound}]
        Recall we take \(\sigma^2 = 1\) without loss of generality. For ease of notation, set \(\psi^2 = \log\left(1+\frac{n}{(n-2k)^2}\right)\). For further ease, let us assume without loss of generality \(n/2\) is an integer. Define the sets \(E_0 := \left\{1,...,\frac{n}{2}\right\}\) and \(E_1 := \left\{\frac{n}{2}+1,...,n\right\}\). Let \(C > 0\) be a suitably small constant. Let \(\pi_0\) denote the prior in which \(\theta\) is equal to \(C\psi\) and \(\gamma = -2C\psi \mathbf{1}_{S_0}\) where \(S_0 \subset E_0\) is a subset of size \(k\) drawn uniformly at random. Likewise, let \(\pi_1\) denote the prior in which \(\theta = -C\psi\) and \(\gamma = 2C\psi \mathbf{1}_{S_1}\) where \(S_1 \subset E_1\) is a subset of size \(k\) drawn uniformly at random. Observe that if \((\theta_0, \gamma_0) \sim \pi_0\) and \((\theta_1, \gamma_1) \sim \pi_1\), then \(|\theta_0 - \theta_1| = 2C\psi\). With this separation in hand, an invocation of Proposition \ref{prop:fuzzy_hypotheses} yields 
        \begin{equation*}
            \inf_{\hat{\theta}} \sup_{\substack{\theta \in \R, \\ ||\gamma||_0 \leq k}} P_{\theta, \gamma}\left\{ |\theta - \hat{\theta}| > C \psi \right\} \geq \frac{1}{2}(1-\dTV(P_{\pi_0}, P_{\pi_1}))
        \end{equation*}
        where \(P_{\pi_j} = \int P_{\theta, \gamma} \pi_j(d\theta, d\gamma)\) for \(j = 0, 1\) denotes the corresponding mixture measures. It remains to show the total variation is small to obtain the desired result. 

        Consider the random vector \(X \in \R^n\) drawn from the mixture induced from the prior \(X \sim P_{\pi_j}\) for \(j =0, 1\). Let us write \(X_{E_0}, X_{E_1} \in \R^{n/2}\) to denote the random vectors of dimension \(n/2\) obtained by taking the coordinates in \(E_0\) and \(E_1\) respectively. Note \(X_{E_0}\) and \(X_{E_1}\) are independent due to the construction of the priors. Let us write \(P^{I}_{\pi_j}\) and \(P^{II}_{\pi_j}\) to denote the marginal distributions of \(X_{E_0}\) and \(X_{E_1}\) respectively. By the independence, we have 
        \begin{equation}\label{eqn:dtv_triangle_inequality}
            \dTV(P_{\pi_0}, P_{\pi_1}) \leq \dTV(P^{I}_{\pi_0}, P_{\pi_1}^{I}) + \dTV(P^{II}_{\pi_0}, P^{II}_{\pi_1}) = 2\dTV(P^{I}_{\pi_0}, P_{\pi_1}^{I}).
        \end{equation}
        The equality follows from the fact the marginal distributions \(P^{I}_{\pi_0}\) and \(P^{II}_{\pi_1}\), likewise \(P^{II}_{\pi_0}\) and \(P^{I}_{\pi_1}\), are the same up to a deterministic sign flip; hence the two total variation terms are equal in the above display. 

        It remains to bound \(\dTV(P^{I}_{\pi_0}, P_{\pi_1}^{I})\). For ease of notation, set \(Y = X_{E_0}\), \(Q_0 = P^{I}_{\pi_0}\), and \(Q_1 = P^{I}_{\pi_1}\). Observe that 
        \begin{align*}
            Q_0 &= \int N(-C\psi \mathbf{1}_{S_0} + C\psi\mathbf{1}_{S_0^c}, I_{n/2}) \pi_0(dS_0)\\
            Q_1 &= N(-C\psi \mathbf{1}_{\frac{n}{2}}, I_{n/2}). 
        \end{align*}
        By Neyman-Pearson lemma, the quantity \(1 - \dTV(P^{I}_{\pi_0}, P_{\pi_1}^{I})\) is the optimal Type I plus Type II error for the testing problem 
        \begin{align*}
            H_0 &: Y \sim Q_0, \\
            H_1 &: Y \sim Q_1. 
        \end{align*} 
        Equivalently, one can shift the data and consider the problem 
        \begin{align*}
            H_0 &: \tilde{Y} \sim \tilde{Q}_0, \\
            H_1 &: \tilde{Y} \sim \tilde{Q}_1
        \end{align*}
        where \(\tilde{Y} = Y + C\psi \mathbf{1}_{\frac{n}{2}}\) and \(\tilde{Q}_0, \tilde{Q}_1\) are the corresponding distributions of \(\tilde{Y}\) for when \(Y\sim Q_0\) or \(Y\sim Q_1\) respectively. Consider that 
        \begin{align*}
            \tilde{Q}_0 &= \int N(2C\psi\mathbf{1}_{S_0^c}, I_{n/2}) \pi_0(dS_0)\\
            \tilde{Q}_1 &= N(0, I_{n/2}). 
        \end{align*}
        In summary, \(\dTV(P^{I}_{\pi_0}, P_{\pi_1}^{I}) = \dTV(\tilde{Q}_0, \tilde{Q}_1) \leq \frac{1}{2}\sqrt{\chi^2(\tilde{Q}_0\,||\,\tilde{Q}_1)}\). By the Ingster-Suslina method, we have 
        \begin{equation}\label{eqn:ingster_suslina_chisquare}
            \chi^2(\tilde{Q}_0\,||\,\tilde{Q}_1) = E\left( \exp\left( \langle \mu, \mu'\rangle  \right) \right) - 1
        \end{equation}
        where \(\mu = 2C\psi \mathbf{1}_{S_0^c}\) and \(\mu' = 2C\psi \mathbf{1}_{(S_0')^c}\) for \(S_0, S_0'\) independent and uniformly drawn from the collection of size \(k\) subsets of \(E_0\). For ease of notation, denote \(T_0 = S_0^c\) and \(T_0' = (S_0')^c\). Noting that \(|T_0 \cap T_0'|\) is a hypergeometric random variable, Lemma \ref{lemma:hypergeometric} yields  
        \begin{align*}
            E\left( \exp\left( \langle \mu, \mu'\rangle  \right) \right) &= E\left( \exp\left( 4C^2\psi^2 |T_0 \cap T_0'|\right)\right) \\
            &\leq \left(1 - \frac{n/2-k}{n/2} + \frac{n/2-k}{n/2} e^{4C^2\psi^2}\right)^{n/2-k} \\
            &= \left(1 - \frac{n/2-k}{n/2} + \frac{n/2-k}{n/2} \left(1 + \frac{n}{(n-2k)^2}\right)^{4C^2}\right)^{n/2-k} \\
            &\leq \left(1 + \frac{2C^2}{n/2-k}\right)^{n/2-k} \\
            &\leq e^{2C^2} 
        \end{align*}
        where we have used \(2C^2 \leq 1\) (since \(C\) is taken sufficiently small) along with the inequality \((1+x)^\delta \leq 1+\delta x\) for \(0\leq \delta \leq 1\) and \(x \geq 0\). Therefore, from (\ref{eqn:ingster_suslina_chisquare}) we have \(\chi^2(\tilde{Q}_0\,||\,\tilde{Q}_1) \leq e^{2C^2} - 1\), and so we can conclude \(\dTV(P_{\pi_0}^{I}, P_{\pi_1}^{I}) \leq \frac{1}{2}\sqrt{e^{2C^2} - 1}\). From (\ref{eqn:dtv_triangle_inequality}) it thus follows 
        \begin{equation*}
            \inf_{\hat{\theta}} \sup_{\substack{\theta \in \R, \\ ||\gamma||_0 \leq k}} P_{\theta, \gamma}\left\{ |\theta - \hat{\theta}| > C \psi \right\} \geq \frac{1}{2}(1-\sqrt{e^{2C^2} - 1}).
        \end{equation*}
        Taking \(C\) sufficiently small, the right hand side can be bounded below by some positive universal constant \(c\). The proof is complete. 
    \end{proof}

    \section{\texorpdfstring{Adaptation to \(k\)}{Adaptation to k}}
    Recall the setup of Section \ref{section:adapt_k}. The notation of Section \ref{section:adapt_k} is freely used in the following. 

    \begin{proof}[Proof of Theorem \ref{thm:adapt_k}]
        Define the event  
        \begin{equation*}
            \mathcal{G} := \left\{L_\eta^{-1} \leq \frac{\sigma^2}{\tilde{\sigma}^2} \leq L_\eta \right\} \cap \bigcap_{\substack{k \in \mathcal{K}, \\ k \geq k^*}} \left\{ \frac{|\hat{\theta}_k - \theta|}{\sigma} \leq C'_{\delta, \eta} \epsilon(k, n)\right\}
        \end{equation*}
        Consider that 
        \begin{align}
            &\sup_{\substack{\theta \in \R, \\ ||\gamma||_0 \leq k^*, \\ \sigma > 0}} P_{\theta, \gamma, \sigma}\left\{ \frac{|\hat{\theta} - \theta|}{\sigma} > 2C'_{\delta, \eta}L_\eta \epsilon(k^*, n)\right\} \nonumber \\
            &\leq \sup_{\substack{\theta \in \R, \\ ||\gamma||_0 \leq k^*, \\ \sigma > 0}} P_{\theta, \gamma, \sigma}\left(\left\{ \frac{|\hat{\theta} - \theta|}{\sigma} > 2C'_{\delta, \eta}L_\eta \epsilon(k^*, n)\right\} \cap \mathcal{G} \right) + \sup_{\substack{\theta \in \R, \\ ||\gamma||_0 \leq k^*, \\ \sigma > 0}} P_{\theta, \gamma, \sigma}(\mathcal{G}^c). \label{eqn:adapt_master_bound} 
        \end{align}
        Consider that by Corollary \ref{corollary:location_tail} we have \(P_{\theta, \gamma, \sigma}(\mathcal{G}^c) \leq \eta + \delta\). It remains to bound the first term of (\ref{eqn:adapt_master_bound}). Consider that 
        \begin{equation*}
            \mathcal{G} \subset \left\{ L_\eta^{-1} \leq \frac{\sigma^2}{\tilde{\sigma}^2} \leq L_\eta\right\} \cap \bigcap_{\substack{k \in \mathcal{K}, \\ k \geq k^*}} \left\{ |\hat{\theta}_k - \theta| \leq \tilde{\sigma} C'_{\delta, \eta}\sqrt{L_\eta}\epsilon(k, n) \right\}.
        \end{equation*}
        Consequently, on \(\mathcal{G}\) it follows from the definition of \(k'\) in (\ref{eqn:Lepski}) that \(k^* \geq k'\). Importantly, we thus have \(\hat{\theta} \in J_{k^*}\), and so it immediately follows by triangle inequality 
        \begin{equation*}
            \mathcal{G} \subset \left\{ L_\eta^{-1} \leq \frac{\sigma^2}{\tilde{\sigma}^2} \leq L_\eta\right\} \cap \left\{ |\hat{\theta} - \theta| \leq 2\tilde{\sigma} C'_{\delta, \eta}\sqrt{L_\eta}\epsilon(k^*, n)\right\} \subset \left\{|\hat{\theta} - \theta| \leq 2\sigma C'_{\delta, \eta} L_\eta \epsilon(k^*, n)\right\}. 
        \end{equation*}
        Therefore, the first term of (\ref{eqn:adapt_master_bound}) is zero. We have the desired result by taking \(C_{\delta, \eta}\) sufficiently large. 
    \end{proof}
    \begin{corollary}\label{corollary:location_tail}
        Fix \(\delta \in (0, 1)\). There exist constants \(C', \tilde{C}, c > 0\) depending only on \(\delta\) such that the following holds. If \(n\) is sufficiently large depending only on \(\delta\) and \(1 \leq k^* < \frac{n}{2} - \tilde{C}\sqrt{n}\), then
        \begin{equation*}
            \sup_{\substack{\theta \in \R, \\ ||\gamma||_0 \leq k^*, \\ \sigma > 0}} P_{\theta, \gamma, \sigma}\left(\bigcup_{k^* \leq k < \frac{n}{2} - \tilde{C}\sqrt{n}} \left\{ \frac{|\hat{\theta}_k - \theta|}{\sigma} > \frac{C'k}{n\sqrt{\log\left(1 + \frac{k^2(n-2k)^2}{n^3}\right)}}\right\} \right) \leq \delta
        \end{equation*}
        where \(\hat{\theta}_k\) is given by (\ref{estimator:theta_unknown_var}). 
    \end{corollary}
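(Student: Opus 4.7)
The plan is to show that the apparent need for a union bound over $k$ is illusory: the stochastic events in the proofs of Theorems \ref{thm:var} and \ref{thm:theta_unknown_var} are defined entirely in terms of the data (and not in terms of $k$), so a single high-probability event controls $\hat{\theta}_k$ simultaneously for every admissible $k$. Concretely, I would define the two master events
\begin{equation*}
    \mathcal{E} := \left\{ \sup_{\omega \in \R} \left|\frac{1}{n} \sum_{j=1}^{n} \left(e^{i\omega X_j} - E_{\theta, \gamma, \sigma}\left(e^{i\omega X_j} \right)\right)\right| \leq \frac{L}{\sqrt{n}}\right\}, \qquad \mathcal{E}' := \left\{ L_\delta^{-1} \leq \frac{\tilde{\sigma}^2}{\sigma^2} \leq L_\delta \right\},
\end{equation*}
with $L, L_\delta$ depending only on $\delta$, chosen so that each has $P_{\theta, \gamma, \sigma}$-probability at least $1 - \delta/2$ uniformly over the parameter space (via bounded differences for $\mathcal{E}$ and Proposition \ref{prop:var_tilde} for $\mathcal{E}'$, after rescaling $\delta$). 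Neither event depends on $k$, and since the bounded-differences event for $e^{i\omega(X_j - \theta)}$ is the same as that for $e^{i\omega X_j}$ (as $|e^{-i\omega\theta}|=1$), the \emph{same} $\mathcal{E}$ appears in both theorems.

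Next, I would argue that on $\mathcal{E} \cap \mathcal{E}'$, the conclusion of Theorem \ref{thm:var} holds simultaneously for every $k^* \leq k < \frac{n}{2} - \tilde{C}\sqrt{n}$. The proof of Theorem \ref{thm:var} only uses $\mathcal{E}$ and $\mathcal{E}'$ (after the constants $L, L_\delta$ are tuned at level $\delta/2$), and the output is a deterministic bound of the form $|\hat{\sigma}^2_k - \sigma^2|/\sigma^2 \leq C''k/(n\log(1 + k/\sqrt{n}))$ with $C''$ depending only on $\delta$; the only dependence on $k$ enters through the tuning parameters $a,b$ and Proposition \ref{prop:cosines} (whose hypothesis $||\gamma||_0 \leq k^* \leq k$ is satisfied). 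Consequently, on $\mathcal{E} \cap \mathcal{E}'$, the event $\mathcal{E}_{\text{var}}(k) := \{\sigma^2 \in [\sigma_-^2(k),\sigma_+^2(k)]\}$ holds for every admissible $k$ (after choosing $R$ large enough depending only on $\delta$). Having verified this, I would then reapply the proof of Theorem \ref{thm:theta_unknown_var} verbatim: on $\mathcal{E} \cap \mathcal{E}_{\text{var}}(k) \supset \mathcal{E} \cap \mathcal{E}'$, the two-case analysis (parametric regime $k \leq C_1 \sqrt{n}$ and nonparametric regime $k > C_1 \sqrt{n}$) produces the bound $|\hat{\theta}_k - \theta|/\sigma \leq C'\epsilon(k, n)$ with $C'$ independent of $k$. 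Taking union over $k$ \emph{of the deterministic conclusions} (not of the probabilistic events) yields the corollary, with total probability at least $1 - \delta$.

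The main obstacle is checking that every constant appearing in the proofs of Theorems \ref{thm:var} and \ref{thm:theta_unknown_var} depends only on $\delta$ and not on $k$ (so that, in particular, the tuning choice $c$ in $\tau = \hat{\sigma}^{-1}(1 \vee c\sqrt{\log(1 + k^2(n-2k)^2/n^3)})$ can be made uniformly valid), and verifying that the admissibility condition $k < \frac{n}{2} - \tilde{C}\sqrt{n}$ (with $\tilde{C}$ matching the constant $C$ in Theorem \ref{thm:theta_unknown_var}) is used consistently in the case analysis. This is bookkeeping rather than genuinely new analysis, so the proof reduces to quoting the chain of deterministic estimates inside the proofs of the two theorems, observing they depend on $k$ only through the stated rates, and invoking the single high-probability event $\mathcal{E} \cap \mathcal{E}'$.
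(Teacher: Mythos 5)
Your proposal is correct and takes essentially the same approach as the paper: identify that the high-probability events $\mathcal{E}$ (bounded-differences) and $\mathcal{E}'$ (pilot variance) do not depend on $k$, observe that their intersection implies $\mathcal{E}_{\text{var},k}$ for every admissible $k$ simultaneously, and then run the deterministic chain of estimates from Theorem \ref{thm:theta_unknown_var} uniformly in $k$. Your explicit remark that the centered and uncentered forms of $\mathcal{E}$ coincide because $|e^{-i\omega\theta}|=1$ is a small clarification the paper leaves implicit, but the argument is otherwise the same.
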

    \begin{proof}
        The proof is largely the same as the proof of Theorem \ref{thm:theta_unknown_var}. Let \(\mathcal{E}\) be the event from the proof of Theorem \ref{thm:theta_unknown_var} with the choice \(L\) depending on \(\delta\) such that for any \(\theta \in \R\), \(||\gamma||_0 \leq k^*,\) and \(\sigma > 0\), we have \(P_{\theta, \gamma, \sigma}(\mathcal{E}^c) \leq \delta/2\). For each \(k\), let \(\mathcal{E}_{\text{var}, k} = \left\{\sigma^2 \in [\sigma^2_{-, k}, \sigma^2_{+, k}]\right\}\) where \(\sigma^2_{-, k}, \sigma^2_{+, k}\) are the quantities associated to \(\hat{\theta}_k\) in (\ref{estimator:theta_unknown_var}). We claim 
        \begin{equation*}
            P_{\theta, \gamma, \sigma} \left( \bigcap_{k^* \leq k < \frac{n}{2}} \mathcal{E}_{\text{var}, k} \right) \geq 1 - \frac{\delta}{2}
        \end{equation*}
        uniformly over \(\theta \in \R\), \(||\gamma||_0 \leq k^*\), and \(\sigma > 0\). As argued in Theorem \ref{thm:var}, we have \(\mathcal{E}_{\text{var}, k} \supset \tilde{\mathcal{E}}\) where \(\tilde{\mathcal{E}}\) is the event \(\mathcal{E} \cap \mathcal{E}'\) in the proof of Theorem \ref{thm:var} (except now at confidence level \(\delta/2\) instead of \(\delta\)). Importantly, \(\tilde{\mathcal{E}}\) does not depend on \(k\) and we have \(P_{\theta, \gamma, \sigma} (\tilde{\mathcal{E}}) \geq 1-\delta/2\) uniformly over \(\theta \in \R\), \(||\gamma||_0 \leq k^*\), and \(\sigma > 0\). Thus, the claim is proved. Then, as argued in Theorem \ref{thm:theta_unknown_var}, we have 
        \begin{equation*}
            \mathcal{E} \cap \bigcap_{k^* \leq k < \frac{n}{2}} \mathcal{E}_{\text{var},k} \subset  \bigcap_{k^* \leq k < \frac{n}{2} - \tilde{C}\sqrt{n}} \left\{ \frac{|\hat{\theta}_k - \theta|}{\sigma} \leq \frac{C'k}{n\sqrt{\log\left(1 + \frac{k^2(n-2k)^2}{n^3}\right)}}\right\}
        \end{equation*}
        since \(\tilde{C}\) is sufficiently large. Thus, we have the desired result since the probability of the event on the left hand side is at least \(1 - \delta\) by union bound. 
    \end{proof}
    
    \noindent The following results pertain to the construction of the variance estimator for the purposes of adaptive null estimation in Section \ref{section:TV}. Recall the setup of Section \ref{section:TV} as we will use that notation freely in what follows. 
    
    \begin{proof}[Proof of Theorem \ref{thm:var_adapt_k}]
        The proof proceeds in the same manner as the proof of Theorem \ref{thm:adapt_k}. As in that argument, define the event 
        \begin{equation*}
            \mathcal{G} := \left\{L_\eta^{-1} \leq \frac{\sigma^2}{\tilde{\sigma}^2} \leq L_\eta\right\} \cap \bigcap_{k^* \leq k < \frac{n}{2}} \left\{ \frac{|\hat{\sigma}^2_k - \sigma^2|}{\sigma^2} \leq \frac{C_{\delta, \eta}' k}{n\log\left(1 + \frac{k}{\sqrt{n}}\right)}\right\}.
        \end{equation*}
        Analogous to (\ref{eqn:adapt_master_bound}), we have
        \begin{align}
            &\sup_{\substack{\theta \in \R, \\ ||\gamma||_0 \leq k^*, \\ \sigma > 0}} P_{\theta, \gamma, \sigma}\left\{ \frac{|\hat{\sigma}^2 - \sigma^2|}{\sigma^2} > \frac{2C'_{\delta, \eta} L_\eta^2 k^*}{n\log\left(1 + \frac{k^*}{\sqrt{n}}\right)}  \right\} \nonumber \\
            &\leq \sup_{\substack{\theta \in \R, \\ ||\gamma||_0 \leq k^*, \\ \sigma > 0}} P_{\theta, \gamma, \sigma}\left(\left\{ \frac{|\hat{\sigma}^2 - \sigma^2|}{\sigma^2} > \frac{2C'_{\delta, \eta} L_\eta^2  k^*}{n\log\left(1 + \frac{k^*}{\sqrt{n}}\right)} \right\} \cap \mathcal{G}\right) + \sup_{\substack{\theta \in \R, \\ ||\gamma||_0 \leq k^*, \\ \sigma > 0}} P_{\theta, \gamma, \sigma}\left(\mathcal{G}^c\right). \label{eqn:var_adapt_master_bound}
        \end{align}
        Consider that by Corollary \ref{corollary:var_tail} we have \(P_{\theta, \gamma, \sigma}(\mathcal{G}^c) \leq \eta + \delta\). To bound the first term in (\ref{eqn:var_adapt_master_bound}), consider that
        \begin{equation*}
            \mathcal{G} \subset \left\{ L_\eta^{-1} \leq \frac{\sigma^2}{\tilde{\sigma}^2} \leq L_\eta\right\} \cap \bigcap_{k^* \leq k < \frac{n}{2}} \left\{|\hat{\sigma}^2_k - \sigma^2| \leq \tilde{\sigma}^2\frac{C'_{\delta, \eta} L_\eta k }{n\log\left(1 + \frac{k}{\sqrt{n}}\right)}\right\}. 
        \end{equation*}
        Consequently, on \(\mathcal{G}\) it follows from the definition of \(k'\) in (\ref{eqn:Lepski_var}) that \(k^* \geq k'\) and so \(\hat{\sigma}^2 \in J_{k^*}\). It immediately follows by triangle inequality that
        \begin{equation*}
            \mathcal{G} \subset \left\{\frac{|\hat{\sigma}^2 - \sigma^2|}{\sigma^2} \leq \frac{2C'_{\delta, \eta} L_\eta^2 k^*}{n\log\left(1 + \frac{k^*}{\sqrt{n}}\right)}\right\}
        \end{equation*}
        and so the first term of (\ref{eqn:var_adapt_master_bound}) is zero. Thus, we have proved the desired result.
    \end{proof}
    
    \begin{corollary}\label{corollary:var_tail}
        Fix \(\delta \in (0, 1)\). There exists \(C > 0\) depending only on \(\delta\) such that the following holds. If \(n\) is sufficiently large depending only on \(\delta\) and \(1 \leq k^* < \frac{n}{2}\), then
        \begin{equation*}
            \sup_{\substack{\theta \in \R, \\ ||\gamma||_0 \leq k^*, \\ \sigma > 0}} P_{\theta, \gamma, \sigma}\left(\bigcup_{k^* \leq k < \frac{n}{2}}\left\{ \frac{|\hat{\sigma}^2_k - \sigma^2|}{\sigma^2} > \frac{C k}{n\log\left(1 + \frac{k}{\sqrt{n}}\right)} \right\}\right) \leq \delta
        \end{equation*}
        where \(\hat{\sigma}^2_k\) is the variance estimator from Theorem \ref{thm:var}.
    \end{corollary}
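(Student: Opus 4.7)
The plan is to mimic the strategy used for Corollary \ref{corollary:location_tail}: extract from the proof of Theorem \ref{thm:var} two ``good'' events that do not depend on $k$, and then run the argument simultaneously for every admissible $k$. Specifically, inspecting that proof, the stochastic input enters only through (i) the uniform empirical characteristic function concentration event
\begin{equation*}
    \mathcal{E} := \left\{ \sup_{\omega \in \R} \left| \frac{1}{n}\sum_{j=1}^{n} \left(e^{i\omega X_j} - E_{\theta,\gamma,\sigma}(e^{i\omega X_j})\right)\right| \leq \frac{L}{\sqrt{n}}\right\},
\end{equation*}
and (ii) the pilot estimator event $\mathcal{E}' := \{(L')^{-1} \leq \tilde{\sigma}/\sigma \leq L'\}$ provided by Proposition \ref{prop:var_tilde}. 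Crucially, neither event depends on $k$, and for $L, L'$ large depending only on $\delta$, each has probability at least $1-\delta/2$ uniformly over the parameter space, so $P_{\theta,\gamma,\sigma}(\mathcal{E}\cap\mathcal{E}') \geq 1-\delta$.

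Next, I would fix $k$ with $k^* \leq k < n/2$ and simply re-run the argument of Theorem \ref{thm:var} on $\mathcal{E}\cap\mathcal{E}'$ using the estimator $\hat{\sigma}^2_k$ built with thresholds $a_k = c\tilde{\sigma}^{-1}\sqrt{1\vee \log(ek^2/n)}$ and $b_k = 100 a_k$. The computation splits into the upper bound (\ref{eqn:sigma_hat_upperbound}) and lower bound (\ref{eqn:sigma_hat_lowerbound}), both of which hold on $\mathcal{E}$ without reference to any cardinality assumption on $\gamma$. The only place the outlier count enters is in the application of Proposition \ref{prop:cosines}, which requires writing $\mathcal{O}'$ as a padded index set of size $k$. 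Since the hypothesis $\|\gamma\|_0 \leq k^* \leq k$ still allows us to pad $\mathcal{O}$ with arbitrary indices from $\mathcal{I}$ (where $\gamma_j = 0$), the cosines bound $\sup_{\omega \in [a_k,b_k]} \frac{1}{k}\sum_{j \in \mathcal{O}'}\cos(\omega\gamma_j) \geq -\tfrac{1}{5}$ applies verbatim, and the inequality $k/n < 1/2$ keeps the logarithmic bias term under control.

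Combining these ingredients, the master bound (\ref{eqn:sigma_hat_masterbound}) yields, on $\mathcal{E}\cap\mathcal{E}'$,
\begin{equation*}
    \frac{|\hat{\sigma}^2_k - \sigma^2|}{\sigma^2} \leq \frac{C k}{n \log\!\left(1 + \frac{k}{\sqrt{n}}\right)}
\end{equation*}
for some $C>0$ depending only on $\delta$, with the same $C$ working for every $k \in [k^*, n/2)$ since the constants in $\mathcal{E}, \mathcal{E}'$ and in Proposition \ref{prop:cosines} are $k$-free. Taking the union over $k$ inside the deterministic conclusion on $\mathcal{E}\cap\mathcal{E}'$ and then complementing gives the claimed uniform tail bound.

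The argument is essentially bookkeeping rather than a new estimate, and I do not anticipate a genuine obstacle; the one point requiring care is verifying that the bias analysis in Theorem \ref{thm:var} only used $\|\gamma\|_0 \leq k$ for the padding step (which is satisfied automatically when the true sparsity $k^*$ is smaller), so that no additional constant degradation occurs when $k$ ranges up to nearly $n/2$. Once this is confirmed, the $k$-independence of $\mathcal{E}$ and $\mathcal{E}'$ delivers the uniform statement immediately.
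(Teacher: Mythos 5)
Your proposal is correct and takes essentially the same route as the paper: both identify that the only stochastic input in the proof of Theorem \ref{thm:var} is the pair of $k$-free events $\mathcal{E}$ (uniform empirical characteristic function concentration) and $\mathcal{E}'$ (pilot estimator accuracy), each of probability $\geq 1-\delta/2$ uniformly over the parameter space, and observe that on $\mathcal{E}\cap\mathcal{E}'$ the deterministic bias-plus-fluctuation bound from Theorem \ref{thm:var} holds simultaneously for every $k\geq k^*$ since the padding of $\mathcal{O}$ to size $k$ with indices from $\mathcal{I}$ remains valid whenever $\|\gamma\|_0\leq k^*\leq k$. The paper's proof is terser but relies on exactly this bookkeeping.
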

    \begin{proof}
        The proof is the same as the proof of Theorem \ref{thm:var}, and the argument's structure is the same as that of the proof of Corollary \ref{corollary:location_tail}. In particular, let \(\mathcal{E} \cap \mathcal{E}'\) be the event from the proof of Theorem \ref{thm:var}, having made choices \(L, L'\) depending only on \(\delta\) such that for any \(\theta \in \R\), \(||\gamma||_0 \leq k^*,\) and \(\sigma > 0\), we have \(P_{\theta, \gamma, \sigma}(\mathcal{E}^c \cup \mathcal{E}'^c) \leq \delta\). The arguments of the proof of Theorem \ref{thm:var} show 
        \begin{equation*}
            \mathcal{E} \cap \mathcal{E}' \subset \left\{ \frac{|\hat{\sigma}^2_k - \sigma^2|}{\sigma^2} \leq \frac{C k}{n\log\left(1 + \frac{k}{\sqrt{n}}\right)} \right\}
        \end{equation*}
        for \(k \geq k^*\). Since \(\mathcal{E} \cap \mathcal{E}'\) does not depend on \(k\), it follows 
        \begin{equation*}
            \mathcal{E} \cap \mathcal{E}' \subset \bigcap_{k^* \leq k < \frac{n}{2}} \left\{ \frac{|\hat{\sigma}^2_k - \sigma^2|}{\sigma^2} \leq \frac{C k}{n\log\left(1 + \frac{k}{\sqrt{n}}\right)} \right\},
        \end{equation*}
        and so we have the desired result. 
    \end{proof}

    \section{Auxiliary results}
    
    \begin{proof}[Proof of Proposition \ref{prop:cosines}]
        Let \(p(x) = \lambda e^{-\lambda x}\mathbbm{1}_{\{x > 0\}}\) denote the \(\text{Exponential}(\lambda)\) distribution with \(\lambda = \frac{1}{50\alpha}\). Note this distribution has mean \(\frac{1}{\lambda}\) and variance \(\frac{1}{\lambda^2}\). Let \(W \sim p\) and consider from Lemma \ref{lemma:exponential} 
        \begin{align*}
            E(f(W)) = \frac{1}{k} \sum_{j=1}^{k} \int \cos(\omega \gamma_j) p(\omega) d\omega = \frac{1}{k} \sum_{j=1}^{k} \frac{\lambda^2}{\lambda^2 + \gamma_j^2} \geq 0. 
        \end{align*}
        For ease of notation, denote \(\beta = 100\alpha\). Let \(\bar{p}\) denote the conditional density of \(W\) conditional on \(W \in [\alpha, \beta]\). Note \(\bar{p}\) is supported on \([\alpha, \beta]\). Then 
        \begin{align*}
            \max_{\omega \in [\alpha, \beta]} f(\omega) &\geq \int f(\omega) \bar{p}(\omega) d\omega \\
            &= \frac{E(f(W)\mathbbm{1}_{\{W \in [\alpha, \beta]\}})}{P\{W \in [\alpha, \beta]\}} \\
            &= \frac{E(f(W)) - E(f(W) \mathbbm{1}_{\{W \in [\alpha, \beta]^c\}})}{P\{W \in [\alpha, \beta]\}} \\
            &\geq -\frac{P\{W \in [\alpha, \beta]^c\}}{P\{W \in [\alpha, \beta]\}}.
        \end{align*}
        Consider that 
        \begin{equation*}
            P\left\{ W \in [\alpha, \beta]^c \right\} = 1 - e^{-\lambda \alpha} + e^{-\lambda \beta} = 1 - e^{-\frac{1}{50}} + e^{-2} \leq 0.16
        \end{equation*}
        Therefore, 
        \begin{equation*}
            \max_{\omega \in [\alpha, \beta]} f(\omega) \geq - \frac{P\{W \in [\alpha, \beta]^c\}}{P\{W \in [\alpha, \beta]\}} \geq - \frac{0.16}{1-0.16} \geq -\frac{1}{5}. 
        \end{equation*}
        The proof is complete.
    \end{proof}

    \begin{lemma}[Lemma 1 \cite{laurent_adaptive_2000}]\label{lemma:chisquare_tail}
        For any positive integer \(d\) and \(t > 0\), we have 
        \begin{equation*}
            P\left\{\chi^2_d \geq d + 2\sqrt{dt} + 2t\right\} \leq e^{-t}. 
        \end{equation*}
    \end{lemma}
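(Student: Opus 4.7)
The plan is a standard Chernoff-bound argument that exploits the explicit form of the chi-squared moment generating function. First I would write $X \sim \chi^2_d$ as $X = \sum_{i=1}^{d} Z_i^2$ with $Z_1,\ldots,Z_d \overset{iid}{\sim} N(0,1)$, and use $E[e^{\lambda Z_i^2}] = (1-2\lambda)^{-1/2}$ for $0 < \lambda < 1/2$ together with independence to obtain
\[
\log E[e^{\lambda(X-d)}] = -\lambda d - \frac{d}{2}\log(1-2\lambda).
\]

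Next I would establish the cumulant bound $\log E[e^{\lambda(X-d)}] \leq \frac{d\lambda^2}{1-2\lambda}$, again valid for $0 < \lambda < 1/2$. This follows from the power-series expansion $-\frac{1}{2}\log(1-2\lambda) = \sum_{k \geq 1} \frac{(2\lambda)^k}{2k}$: subtracting $\lambda d$ kills the $k=1$ term, and the crude bound $\frac{1}{2k} \leq \frac{1}{4}$ for $k \geq 2$ lets me sum the resulting geometric tail in closed form. Combining this with Markov's inequality produces the Cram\'er--Chernoff bound
\[
P(X - d \geq u) \leq \inf_{0 < \lambda < 1/2} \exp\!\left(-\lambda u + \frac{d\lambda^2}{1-2\lambda}\right) \quad \text{for all } u > 0.
\]

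Finally, with $u = 2\sqrt{dt}+2t$, the concrete choice $\lambda = \sqrt{t/d}/(1+2\sqrt{t/d})$ (for which $1-2\lambda = (1+2\sqrt{t/d})^{-1}$) can be substituted directly. A short algebraic simplification using the identity $\sqrt{t/d}\cdot\sqrt{dt} = t$ shows that both $\lambda u$ and $d\lambda^2/(1-2\lambda)$ share denominator $1+2\sqrt{t/d}$, and the numerators telescope to give an exponent of exactly $-t$. The only real ``obstacle'' is guessing this particular $\lambda$; once chosen, the admissibility constraint $\lambda < 1/2$ is automatic from the defining formula and the remainder is routine algebra, so I anticipate no genuine difficulty.
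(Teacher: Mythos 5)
Your proof is correct, and the algebra checks out: with $\lambda = \sqrt{t/d}/(1+2\sqrt{t/d})$ one gets $1-2\lambda = (1+2\sqrt{t/d})^{-1}$, $\lambda u = (2t+2t\sqrt{t/d})/(1+2\sqrt{t/d})$, and $d\lambda^2/(1-2\lambda) = t/(1+2\sqrt{t/d})$, so the exponent collapses to $-t$ exactly. Note, however, that the paper does not prove this lemma at all; it simply cites it as Lemma 1 of Laurent and Massart (2000). Your Cram\'er--Chernoff argument, with the cumulant bound $\log E[e^{\lambda(X-d)}] \leq d\lambda^2/(1-2\lambda)$ obtained from the series $-\tfrac12\log(1-2\lambda) - \lambda = \sum_{k\geq 2}(2\lambda)^k/(2k) \leq \lambda^2/(1-2\lambda)$, is precisely the argument in that cited reference, so you have reproduced the standard proof rather than found a new route.
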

    
    \begin{lemma}[Lemma 11.1 \cite{verzelen_minimax_2012}]\label{lemma:chisquare_lower_tail}
        For any positive integer \(d\) and \(t > 0\), we have 
        \begin{equation*}
            P\left\{\chi^2_d \leq e^{-1} dt^{2/d}\right\} \leq t. 
        \end{equation*}
    \end{lemma}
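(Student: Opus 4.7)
The plan is to prove this standard lower-tail chi-square bound via a Chernoff-style argument applied to $e^{-s\chi^2_d}$. Recall the moment generating function $E[e^{-s\chi^2_d}] = (1+2s)^{-d/2}$ for $s \geq 0$. For any $x > 0$ and $s > 0$, Markov's inequality applied to $e^{-s\chi^2_d}$ yields
\begin{equation*}
    P\{\chi^2_d \leq x\} = P\{e^{-s\chi^2_d} \geq e^{-sx}\} \leq e^{sx}(1+2s)^{-d/2}.
\end{equation*}

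Assuming $x < d$, I would optimize the right-hand side in $s$: differentiating in $s$ gives the first-order condition $1+2s = d/x$, i.e., $s^* = (d-x)/(2x) > 0$. Plugging $s^*$ back in produces the clean envelope
\begin{equation*}
    P\{\chi^2_d \leq x\} \leq \left(\frac{x}{d}\right)^{d/2} e^{(d-x)/2}.
\end{equation*}
Now I would substitute the choice $x = e^{-1} d \, t^{2/d}$. The prefactor becomes $(e^{-1} t^{2/d})^{d/2} = e^{-d/2}\, t$, so the two $e^{d/2}$ factors cancel exactly, leaving
\begin{equation*}
    P\{\chi^2_d \leq e^{-1} d\, t^{2/d}\} \leq t \cdot e^{-e^{-1} d\, t^{2/d}/2} \leq t,
\end{equation*}
as desired.

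The only loose end is the hypothesis $x < d$ used in the optimization, which translates to $t^{2/d} < e$, i.e., $t < e^{d/2}$. If instead $t \geq e^{d/2}$, then in particular $t \geq 1$, and the conclusion $P\{\chi^2_d \leq x\} \leq 1 \leq t$ is trivial. These two cases together cover all $t > 0$. I do not expect any real obstacle here: the argument is a direct Chernoff computation, and the specific form $x = e^{-1} d\, t^{2/d}$ is chosen precisely so that the MGF bound collapses to the stated inequality. The only mild subtlety is tracking that the optimal $s^*$ is positive, which is handled by splitting into the Chernoff regime ($t < e^{d/2}$) and the trivial regime ($t \geq e^{d/2}$).
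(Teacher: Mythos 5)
Your proof is correct, and the paper itself gives no proof of this lemma --- it is cited from Verzelen (2012), where the argument is essentially the same Chernoff-type computation on $e^{-s\chi^2_d}$. Your optimization is right ($s^* = (d-x)/(2x)$ when $x < d$), the substitution $x = e^{-1} d\, t^{2/d}$ does make the $e^{\pm d/2}$ factors cancel cleanly, the residual factor $e^{-x/2} \leq 1$ is dropped correctly, and the split into $t < e^{d/2}$ versus $t \geq e^{d/2}$ (where the bound is trivial because $t \geq 1$) covers all $t > 0$. No gaps.
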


    \begin{proposition}[Method of two fuzzy hypotheses \cite{tsybakov_introduction_2009}]\label{prop:fuzzy_hypotheses}
        Suppose \(\mathcal{P}\) is a collection of distributions on a sample space \(\mathcal{X}\) indexed by \(\Theta\) and \((\Upsilon, \rho)\) is a metric space. Let \(\tau : \Theta \to \Upsilon\) be a function, \(\phi : \R_+ \to \R_+\) be a non-decreasing function with \(\phi(0) = 0\). If \(\Theta_0, \Theta_1 \subset \Theta\) and \(\pi_0, \pi_1\) are two priors supported on \(\Theta_0, \Theta_1\) respectively, then 
        \begin{equation*}
            \inf_{\hat{\tau}}\sup_{\theta \in \Theta} P_\theta\left\{ \phi\left(\rho(\hat{\tau}(X), \tau(\theta))\right) > \phi(\delta) \right\} \geq \frac{1}{2}\left(1 - \dTV(P_{\pi_0}, P_{\pi_1})\right)
        \end{equation*}
        where \(2\delta := \inf_{\substack{\theta_0 \in \Theta_0 \\ \theta_1 \in \Theta_1}} \rho(\tau(\theta_0), \tau(\theta_1))\) and \(P_{\pi_j} = \int P_\theta \, \pi_j\) for \(j = 0, 1\). 
    \end{proposition}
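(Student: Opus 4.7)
The plan is to reduce the estimation problem to a two-hypothesis test between the mixtures $P_{\pi_0}$ and $P_{\pi_1}$ and then invoke the standard Le Cam-type lower bound. First I would replace the supremum over $\Theta$ by averages over the priors: for any estimator $\hat{\tau}$,
\begin{equation*}
    \sup_{\theta \in \Theta} P_\theta\{\phi(\rho(\hat{\tau},\tau(\theta))) > \phi(\delta)\} \;\geq\; \max_{j \in \{0,1\}}\int P_\theta\{\phi(\rho(\hat{\tau},\tau(\theta))) > \phi(\delta)\}\,\pi_j(d\theta) \;\geq\; \tfrac{1}{2}\sum_{j=0,1}\int P_\theta\{\cdots\}\,\pi_j(d\theta),
\end{equation*}
so it suffices to lower bound the sum of the two Bayes errors by $1-\dTV(P_{\pi_0},P_{\pi_1})$.

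Next I would turn the estimator $\hat{\tau}$ into a test $\hat{\psi}\in\{0,1\}$ by declaring $\hat{\psi}(X)=0$ if there exists $\theta_0 \in \Theta_0$ with $\rho(\hat{\tau}(X),\tau(\theta_0)) \leq \delta$, and $\hat{\psi}(X)=1$ otherwise. The crux is to show that for every $\theta_j \in \Theta_j$,
\begin{equation*}
    P_{\theta_j}\{\hat{\psi} \neq j\} \;\leq\; P_{\theta_j}\{\phi(\rho(\hat{\tau},\tau(\theta_j))) > \phi(\delta)\}.
\end{equation*}
The case $j=0$ is by construction: if $\rho(\hat{\tau},\tau(\theta_0)) \leq \delta$ then $\hat{\psi}=0$. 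The case $j=1$ uses the separation: on $\{\hat{\psi}=0\}$ there is a witness $\theta_0 \in \Theta_0$ with $\rho(\hat{\tau},\tau(\theta_0)) \leq \delta$, so by the triangle inequality and $\rho(\tau(\theta_0),\tau(\theta_1)) \geq 2\delta$, we get $\rho(\hat{\tau},\tau(\theta_1)) \geq \delta$. Monotonicity of $\phi$ then yields $\phi(\rho(\hat{\tau},\tau(\theta_1))) \geq \phi(\delta)$.

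The main technical nuisance lies here: the target event in the proposition is $\{\phi(\rho)>\phi(\delta)\}$ (strict), whereas the triangle inequality only delivers $\phi(\rho)\geq\phi(\delta)$. I would handle this by defining the test instead with a strict threshold $\rho(\hat{\tau},\tau(\theta_0))<\delta$, so that the triangle argument now gives $\rho(\hat{\tau},\tau(\theta_1))>\delta$ whenever the infimum in the definition of $2\delta$ is approached (the argument using a vanishing perturbation $\delta-\varepsilon$ for $\varepsilon \downarrow 0$ handles the case where the infimum is attained and allows me to convert $\geq$ into $>$ after passing to the limit). A uniform version via dominated convergence then controls the probabilities.

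Finally, integrating the pointwise inequality against $\pi_0$ and $\pi_1$ and summing gives
\begin{equation*}
    \sum_{j=0,1}\int P_\theta\{\phi(\rho(\hat{\tau},\tau(\theta)))>\phi(\delta)\}\,\pi_j(d\theta) \;\geq\; P_{\pi_0}\{\hat{\psi}=1\}+P_{\pi_1}\{\hat{\psi}=0\}.
\end{equation*}
The standard variational characterization of total variation (or equivalently the Neyman–Pearson lemma applied to the likelihood ratio test between $P_{\pi_0}$ and $P_{\pi_1}$) provides $P_{\pi_0}\{\hat{\psi}=1\}+P_{\pi_1}\{\hat{\psi}=0\} \geq 1-\dTV(P_{\pi_0},P_{\pi_1})$ for any test $\hat{\psi}$. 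Combining with the supremum-to-average passage yields the claimed bound. I expect the hardest part to be the strict-versus-nonstrict inequality, which is purely a bookkeeping issue but requires a careful $\varepsilon$-perturbation argument since $\phi$ is only assumed non-decreasing and may be flat at $\delta$.
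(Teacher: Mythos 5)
The paper offers no proof of this proposition; it is stated in the auxiliary-results appendix with a bare citation to Tsybakov's monograph. Your overall plan --- lower-bound the worst-case risk by the average Bayes risk under $\pi_0$ and $\pi_1$, convert the estimator into a test by thresholding the distance from $\hat{\tau}$ to $\tau(\Theta_0)$, use the triangle inequality against the separation $2\delta$, and finish with the Le Cam / Neyman--Pearson inequality $P_{\pi_0}\{\hat{\psi}=1\}+P_{\pi_1}\{\hat{\psi}=0\}\geq 1-\dTV(P_{\pi_0},P_{\pi_1})$ --- is exactly the standard argument behind Tsybakov's theorem, and your structure is correct.

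The strict inequality, however, is not a bookkeeping matter, and the $\varepsilon$-perturbation cannot produce it, because the proposition with strict $>$ is in fact false as printed. Even with the strictened test $\hat{\psi}=0 \Leftrightarrow \rho(\hat{\tau},\tau(\theta_0))<\delta$ for some $\theta_0\in\Theta_0$, on $\{\hat{\psi}=1\}$ one only gets $\rho(\hat{\tau},\tau(\theta_0))\geq\delta$ (never strictly), and in either branch a merely non-decreasing $\phi$ turns $\rho\geq\delta$ or $\rho>\delta$ into $\phi(\rho)\geq\phi(\delta)$, since $\phi$ may be flat at $\delta$. The limiting argument fares no better: the sets $\{\phi(\rho)>\phi(\delta-\varepsilon)\}$ shrink, as $\varepsilon\downarrow 0$, to a set containing $\{\phi(\rho)\geq\phi(\delta)\}$, which is the wrong direction. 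A concrete counterexample makes the failure unavoidable: take $\Theta=\Theta_0\cup\Theta_1$ with $\Theta_j=\{\theta_j\}$ singletons, $\dTV(P_{\theta_0},P_{\theta_1})<1$, $\Upsilon=\R$ with $\rho(u,v)=|u-v|$, $\rho(\tau(\theta_0),\tau(\theta_1))=2\delta>0$, and $\phi=\mathbbm{1}_{[2\delta,\infty)}$ (which is non-decreasing with $\phi(0)=0$). Then $\phi(\delta)=0$, and the constant estimator equal to the midpoint of $\tau(\theta_0)$ and $\tau(\theta_1)$ has $\rho(\hat{\tau},\tau(\theta))=\delta$ almost surely under both $\theta_0$ and $\theta_1$, so the left-hand side is $0$ while the right-hand side is positive.

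What Tsybakov proves, and what your argument cleanly establishes once the perturbation step is discarded and only the $\leq\delta$ thresholded test is retained, is the version with $\geq$:
\begin{equation*}
\inf_{\hat{\tau}}\sup_{\theta\in\Theta} P_\theta\left\{\phi\left(\rho(\hat{\tau}(X),\tau(\theta))\right)\geq\phi(\delta)\right\}\geq\tfrac{1}{2}\left(1-\dTV(P_{\pi_0},P_{\pi_1})\right).
\end{equation*}
This suffices for the paper's single use of the proposition (Theorem \ref{thm:inconsistent_lbound}), where $\phi$ is the identity and the conclusion involves unspecified universal constants $C,c$: since $\{|\hat{\theta}-\theta|\geq C\psi\}\subset\{|\hat{\theta}-\theta|>(C/2)\psi\}$, halving $C$ converts the $\geq$ bound into the $>$ bound stated in that theorem. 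So your proof is correct for the result that is actually needed, but it does not prove the proposition literally as printed; the $\varepsilon$-perturbation should simply be deleted and the $>$ in the conclusion replaced by $\geq$.
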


    \begin{proposition}[Ingster-Suslina method \cite{ingster_nonparametric_2003}]\label{prop:ingster_suslina}
        Suppose \(\Sigma \in \R^{n\times n}\) is a positive definite matrix and \(\Theta \subset \R^n\) is a parameter space. Let \(P_\theta\) denote the distribution \(N(\theta, \Sigma)\). If \(\pi\) is a probability distribution supported on \(\Theta\), then 
        \begin{equation*}
            \chi^2(P_\pi || P_0) = E\left( \exp\left( \langle \theta, \Sigma^{-1} \tilde{\theta} \rangle \right)\right) - 1
        \end{equation*}
        where \(\theta, \tilde{\theta} \overset{iid}{\sim} \pi\). Here, \(P_\pi = \int_{\theta} P_\theta \pi(d\theta)\) and \(\chi^2(\cdot || \cdot)\) denotes the \(\chi^2\)-divergence. 
    \end{proposition}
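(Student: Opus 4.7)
The plan is a direct unfolding of the $\chi^2$-divergence, reducing the identity to a short Gaussian moment-generating-function computation. First I would use the standard representation
\begin{equation*}
\chi^2(P_\pi \,||\, P_0) = \int \left(\frac{dP_\pi}{dP_0}\right)^2 dP_0 - 1
\end{equation*}
together with the mixture likelihood ratio $\frac{dP_\pi}{dP_0}(x) = \int \frac{dP_\theta}{dP_0}(x)\, \pi(d\theta)$. Expanding the square as a product of two such integrals parameterized by independent copies $\theta, \tilde\theta \overset{iid}{\sim} \pi$ and swapping the order of integration (permissible by Tonelli since all integrands are nonnegative) reduces matters to evaluating the kernel $\int \frac{dP_\theta}{dP_0}(x)\, \frac{dP_{\tilde\theta}}{dP_0}(x)\, dP_0(x)$ for fixed $\theta, \tilde\theta$.

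Next, I would plug in the explicit Gaussian likelihood ratio $\frac{dP_\theta}{dP_0}(x) = \exp\bigl(\theta^\top \Sigma^{-1} x - \tfrac{1}{2}\theta^\top \Sigma^{-1}\theta\bigr)$. The product of two such ratios becomes $\exp$ of a linear term in $x$ plus a deterministic constant in $\theta, \tilde\theta$. Integrating the linear exponential against $P_0 = N(0, \Sigma)$ is precisely the Gaussian MGF, $\int \exp(v^\top \Sigma^{-1} x)\, dP_0(x) = \exp(\tfrac{1}{2} v^\top \Sigma^{-1} v)$, applied at $v = \theta + \tilde\theta$. Combining this with the elementary algebraic identity $\tfrac{1}{2}(\theta+\tilde\theta)^\top \Sigma^{-1}(\theta+\tilde\theta) - \tfrac{1}{2}\theta^\top \Sigma^{-1}\theta - \tfrac{1}{2}\tilde\theta^\top \Sigma^{-1}\tilde\theta = \theta^\top \Sigma^{-1}\tilde\theta$ will collapse the inner integral to $\exp(\langle \theta, \Sigma^{-1}\tilde\theta\rangle)$. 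Taking expectation over $\theta, \tilde\theta \overset{iid}{\sim} \pi$ and subtracting $1$ then recovers the stated formula.

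There is no genuine obstacle: the content is a finite-dimensional Gaussian calculation. The only points meriting attention will be verifying the Gaussian MGF by completing the square in the quadratic exponent, and confirming the positivity-based application of Tonelli; both are standard and uneventful.
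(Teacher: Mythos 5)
Your argument is correct and is the standard derivation of the Ingster--Suslina $\chi^2$-divergence identity. The paper states this as a cited auxiliary result (attributed to Ingster and Suslina) and supplies no proof of its own, so there is no internal argument to compare against; your reconstruction, expanding $\left(\int \frac{dP_\theta}{dP_0}\,\pi(d\theta)\right)^2$ as a double integral over independent copies, swapping integrals by Tonelli, and evaluating the inner Gaussian integral via the MGF and completing the square, is exactly the textbook route and closes without gaps.
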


    \begin{lemma}[\cite{collier_minimax_2017}]\label{lemma:hypergeometric}
        Suppose \(1 \leq k \leq n\). If \(Y\) is distributed according to the hypergeometric distribution with probability mass function \(P\{Y = \ell\} = \frac{\binom{k}{\ell} \binom{n-k}{k-\ell}}{\binom{n}{k}}\) for \(0 \leq \ell \leq k\), then  
        \begin{equation*}
            E(e^{\lambda Y}) \leq \left(1 - \frac{k}{n} + \frac{k}{n} e^{\lambda}\right)^k
        \end{equation*}
        for \(\lambda > 0\). 
    \end{lemma}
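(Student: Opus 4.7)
The plan is to represent $Y$ as a sum of exchangeable Bernoulli random variables produced by sampling without replacement, and then invoke the classical negative-association property of such samples (Joag-Dev and Proschan, 1983) to dominate its moment generating function by that of the corresponding binomial.

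First I would construct the representation. Let $\pi$ be a uniformly random permutation of $\{1,\ldots,n\}$ and set $Z_i := \mathbbm{1}_{\{\pi(i) \leq k\}}$ for $i = 1,\ldots,k$. Each $Z_i$ is marginally $\Bernoulli(k/n)$, and the quantity $\sum_{i=1}^{k} Z_i$ counts how many of the first $k$ positions of the permutation land in $\{1,\ldots,k\}$, which is exactly the hypergeometric experiment described by the pmf in the statement. Hence $Y$ and $\sum_{i=1}^{k} Z_i$ have the same distribution.

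Next I would invoke the fact that the coordinates of a simple random sample without replacement are negatively associated: for any collection of coordinatewise non-decreasing functions $f_1,\ldots,f_k$ of disjoint components,
\[
E\left[\prod_{i=1}^{k} f_i(Z_i)\right] \leq \prod_{i=1}^{k} E[f_i(Z_i)].
\]
Since $\lambda > 0$, the map $z \mapsto e^{\lambda z}$ is non-decreasing, so applying the above with $f_i(z) = e^{\lambda z}$ gives
\[
E(e^{\lambda Y}) = E\left[\prod_{i=1}^{k} e^{\lambda Z_i}\right] \leq \prod_{i=1}^{k} E[e^{\lambda Z_i}] = \left(1 - \frac{k}{n} + \frac{k}{n}e^{\lambda}\right)^k,
\]
which is the desired bound.

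The main obstacle is really just invoking the negative-association result (equivalently, Hoeffding's 1963 inequality stating $E[\phi(S_{\text{wor}})] \leq E[\phi(S_{\text{wr}})]$ for convex $\phi$). A self-contained derivation would proceed by swapping without-replacement draws for with-replacement ones one at a time via a conditioning argument that preserves marginals while increasing the convex expectation; but since this is a well-known classical lemma, the cleanest route is to cite it and apply it directly as above.
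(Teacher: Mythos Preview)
The paper does not actually prove this lemma; it simply cites it as a known result from \cite{collier_minimax_2017} and states it without argument in the auxiliary results section. Your proof is correct: the representation of the hypergeometric variable as $\sum_{i=1}^{k}\mathbbm{1}_{\{\pi(i)\leq k\}}$ is valid, the indicators obtained by sampling without replacement are negatively associated (Joag-Dev and Proschan, 1983), and for $\lambda>0$ the map $z\mapsto e^{\lambda z}$ is increasing, so the product inequality applies and yields the binomial mgf bound. This is exactly one of the standard routes to this inequality (equivalently, Hoeffding's convex-order comparison of sampling with and without replacement), so there is nothing to add beyond noting that the paper relies on the citation rather than on any particular argument.
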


    \begin{lemma}\label{lemma:exponential}
        For \(\lambda > 0\), define the probability density function \(f(x) = \lambda e^{-\lambda x}\mathbbm{1}_{\{x \geq 0\}}\) for the \(\text{Exponential}(\lambda)\) distribution. If \(X \sim \text{Exponential}(\lambda)\), then 
        \begin{equation*}
            E(\cos(tX)) = \frac{\lambda^2}{\lambda^2 + t^2}. 
        \end{equation*}
    \end{lemma}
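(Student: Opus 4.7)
The plan is to compute the characteristic function of the exponential distribution by a direct integration, and then extract the real part. Concretely, I would first write
\[
E(e^{itX}) = \int_0^\infty e^{itx} \lambda e^{-\lambda x} \, dx = \lambda \int_0^\infty e^{(it-\lambda)x} \, dx.
\]
Since $\lambda > 0$, the integrand decays at infinity and the antiderivative $\frac{1}{it - \lambda} e^{(it-\lambda)x}$ is legitimate, giving $E(e^{itX}) = \frac{\lambda}{\lambda - it}$.

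Next, I would rationalize by multiplying numerator and denominator by $\lambda + it$:
\[
\frac{\lambda}{\lambda - it} = \frac{\lambda(\lambda + it)}{\lambda^2 + t^2} = \frac{\lambda^2}{\lambda^2 + t^2} + i \frac{\lambda t}{\lambda^2 + t^2}.
\]
Taking the real part and noting that $E(\cos(tX)) = \mathrm{Re}\, E(e^{itX})$ (justified since $|e^{itx}| = 1$ makes the integral absolutely convergent and $X$ is real-valued), the identity $E(\cos(tX)) = \frac{\lambda^2}{\lambda^2 + t^2}$ follows.

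There is essentially no obstacle here; the only minor point requiring care is the justification of evaluating the complex exponential integral, which is standard and follows from dominated convergence applied to $e^{-\lambda x}$ as a dominating function on $[0, \infty)$. Alternatively, one can split $e^{itx} = \cos(tx) + i\sin(tx)$ and compute the two real integrals $\int_0^\infty \cos(tx)\lambda e^{-\lambda x} \, dx$ and $\int_0^\infty \sin(tx)\lambda e^{-\lambda x}\, dx$ directly via integration by parts, avoiding complex analysis altogether.
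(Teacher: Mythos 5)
Your proposal is correct and follows essentially the same route as the paper: compute the characteristic function $E(e^{itX}) = \frac{\lambda}{\lambda - it}$, rationalize, and take the real part. You supply somewhat more justification for the complex integral, but the substance is identical.
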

    \begin{proof}
        Consider that \(E(\cos(tX))\) is the real part of the characteristic function \(E(e^{itX}) = \frac{\lambda}{\lambda - it}\). Calculating directly, 
        \begin{align*}
            \frac{\lambda}{\lambda-it} = \frac{\lambda}{\lambda + \frac{t^2}{\lambda}} + \frac{it\lambda}{\lambda^2+t^2}.
        \end{align*}
        Therefore, 
        \begin{equation*}
            E(\cos(tX)) = \frac{\lambda}{\lambda + \frac{t^2}{\lambda}} = \frac{\lambda^2}{\lambda^2 + t^2}. 
        \end{equation*}
    \end{proof}

    \begin{lemma}\label{lemma:k}
        If \(\tau > 0\) and \(n\) is a nonnegative integer, then 
        \begin{equation*}
            \int_{0}^{\tau} \frac{t^{2n+1}}{\tau^{2n+1}} \sin(tx) \,dt = - \frac{\cos(\tau x)}{x} + R(x)
        \end{equation*}
        with \(|R(x)| \leq \frac{C(1 \vee n)}{\tau x^2}\) where \(C > 0\) is a universal constant. 
    \end{lemma}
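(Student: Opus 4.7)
The plan is to derive the identity and the bound on $R(x)$ by applying integration by parts once or twice, depending on whether $n=0$ or $n \geq 1$.

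First I would integrate by parts in $\int_0^\tau t^{2n+1}\sin(tx)\,dt$ taking $u=t^{2n+1}$ and $dv=\sin(tx)\,dt$. The boundary term gives $-\tau^{2n+1}\cos(\tau x)/x$, and after dividing through by $\tau^{2n+1}$ this produces the asserted leading term $-\cos(\tau x)/x$. One thus identifies
\begin{equation*}
R(x) = \frac{2n+1}{x\tau^{2n+1}} \int_0^\tau t^{2n} \cos(tx)\,dt.
\end{equation*}

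For $n=0$, the remaining integral is $\int_0^\tau \cos(tx)\,dt = \sin(\tau x)/x$, giving immediately $|R(x)| \leq 1/(\tau x^2)$. For $n\geq 1$, I would integrate by parts a second time in $\int_0^\tau t^{2n}\cos(tx)\,dt$ with $u=t^{2n}$ and $dv=\cos(tx)\,dt$, producing a boundary term $\tau^{2n}\sin(\tau x)/x$ and a remaining integral of $t^{2n-1}\sin(tx)$. Substituting back into $R(x)$ yields
\begin{equation*}
R(x) = \frac{(2n+1)\sin(\tau x)}{\tau x^2} - \frac{2n(2n+1)}{x^2 \tau^{2n+1}} \int_0^\tau t^{2n-1} \sin(tx)\,dt.
\end{equation*}
Using the trivial bound $\bigl|\int_0^\tau t^{2n-1}\sin(tx)\,dt\bigr| \leq \int_0^\tau t^{2n-1}\,dt = \tau^{2n}/(2n)$, the two terms combine to $|R(x)| \leq 2(2n+1)/(\tau x^2)$, which fits the claimed bound $C(1\vee n)/(\tau x^2)$ with an absolute constant $C$ (taking, e.g., $C=6$).

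The calculation has no serious obstacle; the only thing to watch is the careful bookkeeping of the $\tau^{2n+1}$ denominator against the $t^{2n}$ and $t^{2n-1}$ factors to confirm that two integrations by parts suffice to bring the remainder down to order $(1\vee n)/(\tau x^2)$. No further use of the Riemann--Lebesgue lemma or oscillatory integral estimates is needed, since the crude absolute-value bound on the last integral already gives the correct $\tau^{-1}$ scaling once the linear factor $2n$ in the denominator cancels the $t^{2n}$ antiderivative.
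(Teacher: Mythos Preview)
Your proposal is correct and follows essentially the same approach as the paper's proof: one integration by parts for $n=0$ and two for $n\geq 1$, followed by the trivial absolute-value bound on the remaining integral. Your bookkeeping is in fact slightly more explicit than the paper's, as you track the constant to $C=6$ whereas the paper leaves it as a universal $C$.
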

    \begin{proof}
        The proof follows by applying integration by parts twice. Suppose \(n \geq 1\). Consider 
        \begin{align*}
            \int_{0}^{\tau} \frac{t^{2n+1}}{\tau^{2n+1}} \sin(tx) \, dt &= -\frac{\cos(\tau x)}{x} + \int_{0}^{\tau} \frac{(2n+1)t^{2n}}{\tau^{2n+1}} \cdot \frac{\cos(t x)}{x} \, dt \\
            &= -\frac{\cos(\tau x)}{x} + \frac{(2n+1)\sin(\tau x)}{\tau x^2} - \int_{0}^{\tau} \frac{2n(2n+1)t^{2n-1}}{\tau^{2n+1}} \cdot \frac{\sin(tx)}{x^2} \, dt.
        \end{align*}
        Clearly \(\left|\frac{(2n+1)\sin(\tau x)}{\tau x^2}\right| \leq \frac{Cn}{\tau x^2}\). Likewise, consider 
        \begin{align*}
            \left|\int_{0}^{\tau} \frac{2n(2n+1)t^{2n-1}}{\tau^{2n+1}} \cdot \frac{\sin(tx)}{x^2} \, dt\right| \leq \frac{2n(2n+1)}{x^2} \int_{0}^{\tau} \frac{t^{2n-1}}{\tau^{2n+1}} \, dt \leq \frac{Cn}{\tau x^2}
        \end{align*}
        and so we have the desired result. If \(n = 0\), the same calculation gives \(\int_{0}^{\tau} \frac{t}{\tau} \sin(tx) \, dt = -\frac{\cos(\tau x)}{x} + \frac{\sin(\tau x)}{\tau x^2}\), which yields the desired result. 
    \end{proof}

    \begin{lemma}\label{lemma:j}
        If \(\tau > 0\), then  
        \begin{equation*}
            \int_{\tau}^{2\tau} \frac{2\tau - t}{\tau} \sin(tx) \, dt = \frac{\cos(\tau x)}{x} + R(x)
        \end{equation*}
        where \(|R(x)| \leq \frac{2}{\tau x^2}\). 
    \end{lemma}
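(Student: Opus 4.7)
The plan is a direct single integration by parts, which will isolate the $\cos(\tau x)/x$ term exactly and leave a remainder integral that is easy to bound.

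First I would set $u = (2\tau - t)/\tau$ and $dv = \sin(tx)\,dt$, giving $du = -\tfrac{1}{\tau}\,dt$ and $v = -\cos(tx)/x$. Applying integration by parts on $[\tau, 2\tau]$, the boundary term evaluates to $\frac{\cos(\tau x)}{x}$: at $t = 2\tau$ the factor $(2\tau - t)/\tau$ vanishes, while at $t = \tau$ it equals $1$ and contributes $\frac{\cos(\tau x)}{x}$ after the sign from the lower limit. Thus
\begin{equation*}
    \int_{\tau}^{2\tau} \frac{2\tau - t}{\tau} \sin(tx)\,dt = \frac{\cos(\tau x)}{x} - \frac{1}{\tau x}\int_{\tau}^{2\tau} \cos(tx)\,dt.
\end{equation*}

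Next I would identify $R(x) = -\frac{1}{\tau x}\int_{\tau}^{2\tau} \cos(tx)\,dt$. Performing the remaining antiderivative yields $R(x) = -\frac{\sin(2\tau x) - \sin(\tau x)}{\tau x^2}$, and the bound $|R(x)| \leq \frac{2}{\tau x^2}$ follows immediately from $|\sin|\leq 1$ and the triangle inequality. There is no real obstacle here; the only thing to be careful about is tracking signs at the boundary so that the coefficient on $\cos(\tau x)/x$ comes out to $+1/x$ rather than $-1/x$, matching the stated identity (and, incidentally, the opposite sign from the analogous boundary term in Lemma \ref{lemma:k}, since here the linear factor vanishes at the upper rather than the lower endpoint).
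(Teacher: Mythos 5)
Your proof is correct and is essentially the same single integration by parts as the paper's, yielding the identical remainder $R(x) = -\frac{\sin(2\tau x) - \sin(\tau x)}{\tau x^2}$ and the same bound. Nothing to add.
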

    \begin{proof}
        The proof is a simple application of integration by parts. Consider 
        \begin{align*}
            \int_{\tau}^{2\tau} \frac{2\tau - t}{\tau} \sin(tx) \, dt = \frac{\cos(\tau x)}{x} - \int_{\tau}^{2\tau} \frac{\cos(tx)}{\tau x} \, dt = \frac{\cos(\tau x)}{x} - \left(\frac{\sin(2\tau x) - \sin(\tau x)}{\tau x^2}\right).
        \end{align*}
        Clearly \(\left|\frac{\sin(2\tau x) - \sin(\tau x)}{\tau x^2}\right| \leq \frac{2}{\tau x^2}\) and so we have the desired result.
    \end{proof}

    \begin{theorem}[Cramer's inequality \cite{indritz_inequality_1961}]
        Define the \(n\)th Hermite polynomial \(He_n(x) = (-1)^n e^{\frac{x^2}{2}} \cdot \frac{d^n}{dx^n} e^{-\frac{x^2}{2}}\). Further define the Hermite function 
        \begin{equation*}
            \psi_n(x) = (2^n n! \sqrt{\pi})^{-1/2} e^{-\frac{x^2}{2}}2^{n/2}He_n(\sqrt{2}x).
        \end{equation*}
        Then \(\sup_{x \in \R} |\psi_n(x)| \leq \pi^{-1/4}\). 
    \end{theorem}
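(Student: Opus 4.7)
The plan is to exploit the fact that the Hermite function $\psi_n$ satisfies the Schr\"odinger-type ODE $\psi_n''(x) + (\lambda_n - x^2)\psi_n(x) = 0$ with $\lambda_n := 2n+1$, and to apply an energy/Lyapunov argument together with the $L^2$ normalization $\int \psi_n^2 = 1$. Specifically, I would introduce the energy functional $F(x) := (\lambda_n - x^2)\psi_n(x)^2 + \psi_n'(x)^2$. A direct computation using the ODE gives the key identity $F'(x) = -2x\,\psi_n(x)^2$. Hence $F$ is increasing on $(-\infty, 0]$ and decreasing on $[0, \infty)$; since $\psi_n$ and $\psi_n'$ decay faster than any polynomial at infinity, $F(x) \to 0$ as $|x| \to \infty$, which already forces $F(x) \geq 0$ everywhere and $F(x) \leq F(0)$ for all $x \in \R$.

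Next, I would split $\R$ into the oscillatory region $|x| \leq \sqrt{\lambda_n}$ and the decay region $|x| > \sqrt{\lambda_n}$. In the oscillatory region, the bound $(\lambda_n - x^2)\psi_n(x)^2 \leq F(x) \leq F(0)$ pins down $|\psi_n|$ in terms of $F(0)$; in particular, at any local maximum $x^*$ of $\psi_n^2$, one has $\psi_n'(x^*) = 0$ and therefore $\psi_n(x^*)^2 = F(x^*)/(\lambda_n - (x^*)^2) \leq F(0)/(\lambda_n - (x^*)^2)$. In the decay region, the ODE gives $\psi_n''\psi_n \geq 0$, so $|\psi_n|$ is convex on each connected component where it does not vanish, and combined with the decay at infinity this forces $|\psi_n|$ to be non-increasing on $[\sqrt{\lambda_n}, \infty)$ (and similarly on $(-\infty, -\sqrt{\lambda_n}]$). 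Thus $\sup_{x \in \R}|\psi_n(x)|$ is attained at some $x^* \in [-\sqrt{\lambda_n}, \sqrt{\lambda_n}]$, where the oscillatory estimate applies.

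To extract the sharp constant $\pi^{-1/4}$, I would combine the energy bound with the $L^2$ normalization and the virial identity $\int x^2 \psi_n^2 \,dx = \lambda_n/2$ (which follows from integration by parts applied to the ODE), yielding $\int \psi_n'^2 \,dx = \lambda_n/2$ and hence $\int F(x)\,dx = \lambda_n$. Coupling this global integral identity with the pointwise monotonicity of $F$ and with the explicit values of $He_n(0)$ and $He_n'(0)$ (one of which vanishes by parity) allows one to bound $F(0)$ and so control $\psi_n(x^*)^2$. The sharp value $\pi^{-1/4}$ is consistent with the extremal case $n=0$, for which $\psi_0(x) = \pi^{-1/4}e^{-x^2/2}$ attains equality at $x=0$.

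The main obstacle will be obtaining the \emph{sharp} constant $\pi^{-1/4}$ rather than merely a uniform bound $C\pi^{-1/4}$ with some absolute $C \geq 1$. The energy method together with decay analysis gives a uniform bound fairly painlessly, but matching the $n=0$ extremal case requires no slack in any of the inequalities, which forces a delicate combination of: (i) the monotonicity of $F$ under the ODE, (ii) the $L^2$ and virial identities, and (iii) Stirling-type asymptotics applied to the closed-form expressions for $\psi_n(0)$ and $\psi_n'(0)$ in terms of central binomial coefficients. A clean alternative route, which I would also explore as a backup, is to employ Mehler's formula $\sum_{n\geq 0} \psi_n(x)^2 t^n = (\pi(1-t^2))^{-1/2}\exp\bigl(-x^2(1-t)/(1+t)\bigr)$: since every term is non-negative, each $\psi_n(x)^2 t^n$ is bounded by the right-hand side for all $t \in (0,1)$, and optimizing in $t$ yields pointwise bounds on $\psi_n(x)^2$ that can be reconciled with the sharp asymptotic constant $\pi^{-1/4}$.
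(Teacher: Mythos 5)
The paper does not prove this statement --- it is cited as a classical result from Indritz (1961) --- so there is no proof in the paper to compare against, and the review must assess your argument on its own terms. Your proposal has a genuine gap at its center. The energy functional $F(x) = (\lambda_n - x^2)\psi_n(x)^2 + \psi_n'(x)^2$ indeed satisfies $F'(x) = -2x\psi_n^2$ and so attains its max at $x=0$; but the global maximum of $|\psi_n(x)|$ itself does \emph{not} occur at $x=0$ (for any $n\geq 1$). At a critical point $x^*$ of $\psi_n$ inside the classically allowed region one gets only $\psi_n(x^*)^2 = F(x^*)/(\lambda_n - (x^*)^2) \leq F(0)/(\lambda_n - (x^*)^2)$, and the divisor degenerates as $x^*$ approaches the turning point $\sqrt{\lambda_n}$. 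In fact the opposite monotonicity holds for the relevant quantity: Sonin's functional $S(x) = \psi_n(x)^2 + \psi_n'(x)^2/(\lambda_n - x^2)$ satisfies $S'(x) = 2x\,\psi_n'(x)^2/(\lambda_n - x^2)^2 \geq 0$ on $(0,\sqrt{\lambda_n})$, so the successive local maxima of $\psi_n^2$ \emph{increase} as $|x|$ moves toward $\sqrt{\lambda_n}$, and the global supremum is attained at the critical point nearest the turning point. (For $n=2$ the supremum is at $x=\sqrt{5/2}$, not $0$, with $|\psi_2(\sqrt{5/2})| > |\psi_2(0)|$.) Controlling $F(0)$ therefore does not control $\sup_x|\psi_n(x)|$, and the proposed route through $He_n(0)$, $He_n'(0)$, the virial identity, and Stirling is aimed at the wrong point.

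Your backup via Mehler's formula also fails as stated. From $\sum_{n}\psi_n(x)^2 t^n = (\pi(1-t^2))^{-1/2}\exp(-x^2(1-t)/(1+t))$, dropping the exponential factor gives $\psi_n(x)^2 \leq t^{-n}/\sqrt{\pi(1-t^2)}$, and optimizing over $t$ yields $t^2 = n/(n+1)$, hence a bound of order $\sqrt{n}/\sqrt{\pi}$ that \emph{diverges} with $n$ --- useless here. The exponential factor cannot be discarded and does not help near the turning point, where the optimal $t$ is close to $1$ and the exponent is close to $0$. So this route needs a genuinely new idea to recover the uniform constant $\pi^{-1/4}$. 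You identify the difficulty of the sharp constant correctly, but the specific tools you propose do not overcome it; the known proofs (Indritz's original argument, and Plancherel--Rotach asymptotics for large $n$ combined with explicit small-$n$ checks) rely on different machinery --- in Indritz's case an inductive argument exploiting the Hermite recurrence rather than the ODE/energy pair you use.
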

    \begin{corollary}\label{corollary:cramer}
        Let \(h(x) = e^{-\frac{x^2}{2}}\). Then \(|h^{(n)}(x)|\leq e^{-\frac{x^2}{4}}\sqrt{n!}\) for all \(x \in \R\). 
    \end{corollary}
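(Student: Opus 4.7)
The plan is to deduce the corollary directly from Cramer's inequality by unwinding the definition of the Hermite polynomial $He_n$. The key observation is that the defining identity $He_n(x) = (-1)^n e^{x^2/2} \frac{d^n}{dx^n} e^{-x^2/2}$ can be rearranged as
\begin{equation*}
h^{(n)}(x) = \frac{d^n}{dx^n} e^{-x^2/2} = (-1)^n He_n(x) e^{-x^2/2},
\end{equation*}
so bounding $|h^{(n)}(x)|$ is equivalent to bounding the product $e^{-x^2/2}|He_n(x)|$. This reduces the task to extracting a bound on $|He_n(x)|$ from Cramer's inequality.

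First I would rewrite Cramer's bound $|\psi_n(y)| \leq \pi^{-1/4}$ for all $y \in \R$ by solving the defining formula of $\psi_n$ for $He_n(\sqrt{2}y)$. A direct computation gives
\begin{equation*}
|He_n(\sqrt{2}y)| \leq \pi^{-1/4}\cdot (2^n n! \sqrt{\pi})^{1/2} \cdot 2^{-n/2} \cdot e^{y^2/2} = \sqrt{n!}\, e^{y^2/2}.
\end{equation*}
Next I would substitute $y = x/\sqrt{2}$ to obtain the clean bound $|He_n(x)| \leq \sqrt{n!}\, e^{x^2/4}$ valid for all $x \in \R$.

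Finally, combining this with the identity $h^{(n)}(x) = (-1)^n He_n(x)e^{-x^2/2}$ yields
\begin{equation*}
|h^{(n)}(x)| = e^{-x^2/2}|He_n(x)| \leq \sqrt{n!}\, e^{-x^2/2} \cdot e^{x^2/4} = \sqrt{n!}\, e^{-x^2/4},
\end{equation*}
which is the claim. There is no real obstacle here; the whole argument is a bookkeeping exercise with the constants appearing in the definition of $\psi_n$. The only thing to be careful about is the change of variable $y = x/\sqrt{2}$, which is what converts the Gaussian weight $e^{-x^2/2}$ in the derivative formula into the $e^{-x^2/4}$ appearing in the conclusion.
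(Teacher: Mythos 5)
Your proof is correct and is exactly the computation the paper has in mind in stating this as a corollary of Cramer's inequality; the paper gives no separate proof because the derivation is precisely this unwinding of constants and the change of variable $y = x/\sqrt{2}$.
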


    \begin{lemma}\label{lemma:Laguerre}
        We have 
        \begin{equation*}
            \sum_{k=0}^{\infty} \frac{1}{2^k k!} \left(\sum_{\ell=0}^{k} \binom{k}{\ell} \sqrt{\ell!}\right)^2 < \infty. 
        \end{equation*}
        Further, for any fixed positive integer \(s\), we have 
        \begin{equation*}
            \sum_{k=0}^{\infty} \frac{1\vee k^s}{2^k k!} \left(\sum_{\ell=0}^{k} \binom{k}{\ell} \sqrt{\ell!}\right)^2 < \infty.
        \end{equation*}
    \end{lemma}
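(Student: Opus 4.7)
The plan is to bound the inner sum $\sum_{\ell=0}^{k} \binom{k}{\ell} \sqrt{\ell!}$ by roughly $\sqrt{k!} \cdot e^{\sqrt{k}}$, which then makes the series easily summable against the $\frac{1}{2^k k!}$ weight.

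First, I would rewrite each term by substituting $m = k - \ell$:
\begin{equation*}
\binom{k}{\ell}\sqrt{\ell!} = \frac{k!}{(k-\ell)!\,\sqrt{\ell!}} = \frac{k!}{m!\,\sqrt{(k-m)!}}.
\end{equation*}
Using the identity $(k-m)! = k!/[k(k-1)\cdots(k-m+1)]$, this becomes
\begin{equation*}
\frac{k!}{m!\,\sqrt{(k-m)!}} = \frac{\sqrt{k!}}{m!}\,\sqrt{k(k-1)\cdots(k-m+1)} \;\leq\; \frac{\sqrt{k!}\,k^{m/2}}{m!}.
\end{equation*}

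Summing over $m$ from $0$ to $k$ and extending to infinity:
\begin{equation*}
\sum_{\ell=0}^{k}\binom{k}{\ell}\sqrt{\ell!} \;\leq\; \sqrt{k!}\sum_{m=0}^{\infty}\frac{k^{m/2}}{m!} \;=\; \sqrt{k!}\;e^{\sqrt{k}}.
\end{equation*}

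Squaring and inserting into the series yields
\begin{equation*}
\frac{1}{2^k k!}\left(\sum_{\ell=0}^{k}\binom{k}{\ell}\sqrt{\ell!}\right)^2 \;\leq\; \frac{k!\,e^{2\sqrt{k}}}{2^k k!} \;=\; \frac{e^{2\sqrt{k}}}{2^k},
\end{equation*}
and since $e^{2\sqrt{k}}$ grows subexponentially while $2^k$ grows exponentially, a ratio test (or direct comparison) shows $\sum_{k=0}^{\infty} 2^{-k} e^{2\sqrt{k}} < \infty$. For the second statement, the same argument gives $\sum_{k=0}^{\infty} (1 \vee k^s) \cdot 2^{-k} e^{2\sqrt{k}} < \infty$ for every fixed integer $s \geq 1$, again by ratio test since the polynomial factor $k^s$ is dominated by the exponential decay $2^{-k}$.

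There is no real obstacle here; the only subtle step is recognizing that a naive Cauchy--Schwarz bound (which gives only $(\sum_{\ell}\binom{k}{\ell}\sqrt{\ell!})^2 \lesssim 2^k k!$ and hence a divergent series) is too crude. The sharper reindexing by $m = k-\ell$ exploits the fact that the terms are concentrated near $\ell = k$, so that the effective number of relevant terms is $O(\sqrt{k})$ rather than $k$, producing the saving factor $e^{2\sqrt{k}}$ rather than $e^k$.
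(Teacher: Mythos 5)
Your proof is correct, and it takes a genuinely different route from the paper's. The paper expands the square into a diagonal sum and a cross sum, recognizes the diagonal sum $\frac{1}{2^k k!}\sum_{\ell}\binom{k}{\ell}^2\ell!$ as $\frac{1}{2^k}L_k(-1)$ in terms of Laguerre polynomials, and then invokes the Laguerre generating function $\sum_k t^k L_k(x) = (1-t)^{-1}e^{-tx/(1-t)}$ to evaluate the resulting series in closed form; the cross term is handled by $ab \le a^2 \vee b^2$ and absorbed into the same expression with an extra factor of $k$ (killed by the geometric weight). Your approach instead bounds the inner sum termwise after the substitution $m = k-\ell$, exploiting that $\binom{k}{\ell}\sqrt{\ell!} = \sqrt{k!}\cdot\sqrt{k(k-1)\cdots(k-m+1)}/m! \le \sqrt{k!}\,k^{m/2}/m!$, giving $\sum_\ell \binom{k}{\ell}\sqrt{\ell!} \le \sqrt{k!}\,e^{\sqrt{k}}$ and hence a summand bounded by $e^{2\sqrt{k}}/2^k$. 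This is more elementary (no special-function identities), more transparent about where the summability comes from (the $e^{\sqrt{k}}$ versus $2^{k/2}$ comparison), and handles the $k^s$ weight for free; the paper's approach, in exchange, gives a clean closed-form constant $2e + 4e^3$ for the first series. Both proofs are valid; your observation that naive Cauchy--Schwarz only gives a non-summable constant bound, and that the gain comes from the $O(\sqrt{k})$ concentration near $\ell = k$, is the right diagnosis.
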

    \begin{proof}
        Expanding the square gives us 
        \begin{align*}
            \frac{1}{2^k k!}\left(\sum_{\ell=0}^{k} \binom{k}{\ell} \sqrt{\ell!}\right)^2 &= \frac{1}{2^k k!}\sum_{\ell=0}^{k} \binom{k}{\ell}^2 \ell! + \frac{1}{2^k k!} \sum_{0 \leq \ell \neq j \leq k} \binom{k}{\ell} \binom{k}{j} \sqrt{\ell! j!}. 
        \end{align*}
        Examining the first term, consider that 
        \begin{align*}
            \frac{1}{2^k k!} \sum_{\ell=0}^{k} \binom{k}{\ell}^2 \ell! = \frac{1}{2^k} \sum_{\ell=0}^{k} \binom{k}{\ell} \frac{1}{(k-\ell)!} = \frac{1}{2^k} \sum_{\ell=0}^{k} \binom{k}{\ell} \frac{1}{\ell!} = \frac{1}{2^k} L_k(-1) 
        \end{align*}
        where \(L_k\) is the \(k\)th Laguerre polynomial. By the generating function \(\frac{1}{1-t} e^{-\frac{tx}{1-t}} = \sum_{n=0}^{\infty} t^n L_n(x)\), we have     
        \begin{equation*}
            \sum_{k=0}^{\infty} \frac{1}{2^k} \sum_{\ell=0}^{k} \binom{k}{\ell} \frac{1}{\ell!} = \sum_{k=0}^{\infty} \frac{1}{2^k} L_k(-1) = 2e. 
        \end{equation*}
        Thus the first term is handled and it remains to bound the cross term. Consider that for any \(x, y \in \R\), we have \(xy \leq x^2 \vee y^2\). Therefore, 
        \begin{align*}
            \frac{1}{2^k k!} \sum_{0 \leq \ell \neq j \leq k} \binom{k}{\ell} \binom{k}{j} \sqrt{\ell! j!} \leq \frac{k}{2^k k!} \sum_{\ell=0}^{k} \binom{k}{\ell}^2 \ell! = \frac{k}{2^k} L_k(-1) \leq \left(\frac{3}{4}\right)^k L_k(-1) 
        \end{align*}
        where we have used \(k \leq \left(\frac{3}{2}\right)^k\) for all \(k \geq 0\). Employing the generating function as before, we have 
        \begin{equation*}
            \sum_{k=0}^{\infty} \frac{1}{2^k k!} \sum_{0 \leq \ell \neq j \leq k} \binom{k}{\ell} \binom{k}{j} \sqrt{\ell! j!} \leq \sum_{k=0}^{\infty} \left(\frac{3}{4}\right)^k L_k(-1) = 4e^{3}. 
        \end{equation*}
        Thus, we have \(\sum_{k=0}^{\infty} \frac{1}{2^k k!} \left(\sum_{\ell=0}^{k} \binom{k}{\ell} \sqrt{\ell!}\right)^2 \leq 2e + 4e^3 < \infty\) as desired. The second claim follows by a similar argument. 
    \end{proof}

    \begin{lemma}[\cite{tsybakov_introduction_2009}]
        For probability measures \(P_1,...,P_n, Q_1,...,Q_n\), we have 
        \begin{equation*}
            \chi^2\left(\left.\left. \bigotimes_{i=1}^{n} Q_i \,\right|\right|\, \bigotimes_{i=1}^{n} P_i\right) = \left(\prod_{i=1}^{n} (1 + \chi^2(Q_i \,||\, P_i))\right) - 1. 
        \end{equation*}
    \end{lemma}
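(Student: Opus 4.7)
The plan is to use the standard representation of the $\chi^2$-divergence in terms of the likelihood ratio, namely $1 + \chi^2(Q\,||\,P) = \int \left(\frac{dQ}{dP}\right)^2 dP$ whenever $Q \ll P$, and then exploit independence via Fubini's theorem. Under $\bigotimes_i Q_i \ll \bigotimes_i P_i$ (which holds iff $Q_i \ll P_i$ for each $i$), the Radon--Nikodym derivative of the product measures factors as a product of the marginal derivatives, so one simply squares and integrates.

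First, I would reduce to the absolutely-continuous case: if $Q_j \not\ll P_j$ for some $j$, then $\chi^2(Q_j\,||\,P_j) = \infty$ and likewise $\chi^2\bigl(\bigotimes Q_i\,||\,\bigotimes P_i\bigr) = \infty$, so both sides of the claimed identity equal $+\infty$ and there is nothing to prove. Otherwise, write
\begin{equation*}
\frac{d\bigotimes_{i=1}^n Q_i}{d\bigotimes_{i=1}^n P_i}(x_1,\ldots,x_n) = \prod_{i=1}^n \frac{dQ_i}{dP_i}(x_i).
\end{equation*}
Square this product and integrate against $\bigotimes_i P_i$. By Tonelli's theorem (the integrand is nonnegative), the integral of the product equals the product of the one-dimensional integrals, giving
\begin{equation*}
1 + \chi^2\!\left(\bigotimes_{i=1}^n Q_i \,\bigg|\!\bigg|\, \bigotimes_{i=1}^n P_i\right) = \prod_{i=1}^n \int \left(\frac{dQ_i}{dP_i}\right)^2 dP_i = \prod_{i=1}^n \bigl(1 + \chi^2(Q_i\,||\,P_i)\bigr).
\end{equation*}
Subtracting $1$ from both sides delivers the identity.

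No step is especially delicate here; the entire argument is essentially a Fubini computation once one recalls the likelihood-ratio form of $\chi^2$. The only minor bookkeeping is handling the degenerate case where some $\chi^2(Q_i\,||\,P_i) = \infty$, which is handled at the outset by the absolute-continuity dichotomy.
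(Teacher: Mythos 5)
Your argument is correct and is the standard tensorization proof that appears in the cited reference (Tsybakov): rewrite $1+\chi^2$ as $\int (dQ/dP)^2\,dP$, factor the Radon--Nikodym derivative of the product measures, and apply Tonelli. The paper itself gives no proof but defers to the reference, and your handling of the non-absolutely-continuous case at the outset is a reasonable way to make the identity hold unconditionally.
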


    \begin{theorem}[Bounded differences - Theorem 6.2 \cite{boucheron_concentration_2013}]\label{thm:bounded_differences}
        Suppose \(f : \mathcal{X}^n \to \R\) satisfies the bounded differences inequality for some nonnegative \(d_1,...,d_n\), that is 
        \begin{equation*}
            \sup_{\substack{x_1,...,x_n \in \mathcal{X}, \\ x_i' \in \mathcal{X}}} |f(x_1,...,x_{i-1},x_i,x_{i+1}...,x_n) - f(x_1,...,x_{i-1},x_{i}',x_{i+1},...,x_n)| \leq d_i
        \end{equation*}
        for all \(1 \leq i \leq n\). Let \(Z = f(X_1,...,X_n)\) where \(X_1,...,X_n\) are independent \(\mathcal{X}\)-valued random variables. If \(u \geq 0\), then 
        \begin{equation*}
            P\left\{|Z - E(Z)| > u\right\} \leq 2 \exp\left(-\frac{2u^2}{\sum_{i=1}^{n} d_i^2}\right).
        \end{equation*}
    \end{theorem}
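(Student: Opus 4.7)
The result is the classical bounded differences (McDiarmid) inequality, and the standard route is the Doob martingale plus Hoeffding's lemma. The plan is as follows.

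First, introduce the Doob martingale $Z_k := E[Z \mid X_1, \ldots, X_k]$ for $k = 0, 1, \ldots, n$, so that $Z_0 = E[Z]$, $Z_n = Z$, and define the martingale differences $D_k := Z_k - Z_{k-1}$. Writing $g_k(x_1, \ldots, x_k) := E[f(X_1, \ldots, X_n) \mid X_1 = x_1, \ldots, X_k = x_k]$, independence of $X_{k+1}, \ldots, X_n$ from $X_1, \ldots, X_k$ gives
\begin{equation*}
D_k = g_k(X_1, \ldots, X_k) - E_{X_k'}\bigl[ g_k(X_1, \ldots, X_{k-1}, X_k') \bigr],
\end{equation*}
where $X_k'$ is an independent copy of $X_k$. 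The bounded differences hypothesis implies $|g_k(X_1,\ldots,X_{k-1},x) - g_k(X_1,\ldots,X_{k-1},x')| \leq d_k$ for all $x, x' \in \mathcal{X}$ (integrating the pointwise inequality for $f$ over $X_{k+1}, \ldots, X_n$), so conditional on $\mathcal{F}_{k-1} := \sigma(X_1, \ldots, X_{k-1})$ the random variable $D_k$ is centered and lies in an interval of length at most $d_k$.

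Second, apply Hoeffding's lemma conditionally: since $E[D_k \mid \mathcal{F}_{k-1}] = 0$ and $D_k$ is supported in an $\mathcal{F}_{k-1}$-measurable interval of length $\leq d_k$, one has $E[e^{sD_k} \mid \mathcal{F}_{k-1}] \leq e^{s^2 d_k^2 / 8}$ for every $s \in \R$. Telescoping by the tower property,
\begin{equation*}
E\bigl[e^{s(Z - E[Z])}\bigr] = E\!\left[\prod_{k=1}^{n} e^{sD_k}\right] \leq \exp\!\left(\frac{s^2}{8}\sum_{k=1}^{n} d_k^2\right).
\end{equation*}
Markov's inequality then yields $P\{Z - E[Z] > u\} \leq \exp(-su + s^2 \sum_k d_k^2 / 8)$, and the choice $s = 4u / \sum_k d_k^2$ gives the one-sided tail $\exp(-2u^2 / \sum_k d_k^2)$. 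Applying the same argument to $-f$ and a union bound produces the two-sided bound.

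The main (and only nontrivial) obstacle is the conditional range estimate in the first step; this is precisely where independence of the $X_i$ is used, so that $D_k$ can be expressed as an average over an independent copy of the $k$-th coordinate alone, whereupon the pointwise hypothesis on $f$ transfers directly to $g_k$. Everything else is a routine Chernoff calculation once Hoeffding's lemma is in hand.
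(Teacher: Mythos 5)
The paper does not prove this result; it is stated as Theorem 6.2 of Boucheron, Lugosi, and Massart's book \emph{Concentration Inequalities} and cited without proof. Your proposal supplies the standard proof (Doob martingale decomposition, conditional range estimate via independence, Hoeffding's lemma, Chernoff bound), which is precisely the argument given in that reference, and it is correct in all its steps: the observation that $|g_k(X_1,\ldots,X_{k-1},x) - g_k(X_1,\ldots,X_{k-1},x')| \leq d_k$ follows from integrating the pointwise bound on $f$ over the remaining coordinates, the conditional centering of $D_k$ uses independence of $X_k$ from $\mathcal{F}_{k-1}$, and the choice $s = 4u/\sum_k d_k^2$ produces exactly the exponent $-2u^2/\sum_k d_k^2$. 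Since the paper offers no alternative argument, there is nothing to compare against; your proof is complete and is the canonical one.
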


    \begin{theorem}[Bernstein's inequality - Theorem 2.8.4 \cite{vershynin_high-dimensional_2018}]\label{thm:bernstein_bounded}
        Let \(Y_1,...,Y_n\) be independent mean zero random variables such that \(|Y_i| \leq 1\) for all \(i\). If \(u \geq 0\), then 
        \begin{equation*}
            P\left\{ \left|\sum_{i=1}^{n} Y_i\right| \geq u \right\} \leq 2 \exp\left(-\frac{u^2/2}{\tau^2 + u/3}\right)
        \end{equation*}
        where \(\tau^2 = \sum_{i=1}^{n} E(Y_i^2)\). 
    \end{theorem}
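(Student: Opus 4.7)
The plan is the standard Chernoff exponential tail argument for sums of bounded mean-zero random variables. Writing \(S = \sum_{i=1}^{n} Y_i\), I would first handle the upper tail by applying Markov's inequality to \(e^{\lambda S}\) for a free parameter \(\lambda > 0\) to be optimized at the end, and factor the joint moment generating function as \(\prod_{i} E(e^{\lambda Y_i})\) using independence. The symmetric argument applied to \(-S\) controls the lower tail, and a union bound over the two one-sided events produces the factor of \(2\) in the stated inequality.

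The technical heart is controlling each individual moment generating function \(E(e^{\lambda Y_i})\). Since \(E(Y_i) = 0\), the linear term in the Taylor expansion vanishes, and since \(|Y_i| \leq 1\), higher moments satisfy \(|E(Y_i^k)| \leq E(Y_i^2)\) for all \(k \geq 2\). Using the elementary factorial inequality \(k! \geq 2 \cdot 3^{k-2}\) for \(k \geq 2\) (easily proved by induction), the Taylor tail is majorized by a geometric series, giving
\[
E(e^{\lambda Y_i}) \;\leq\; 1 + \frac{\lambda^2 E(Y_i^2)/2}{1 - \lambda/3} \;\leq\; \exp\!\left(\frac{\lambda^2 E(Y_i^2)/2}{1 - \lambda/3}\right)
\]
for \(\lambda \in [0, 3)\), after applying \(1 + x \leq e^x\). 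Multiplying across \(i\) yields the analogous bound with \(\tau^2\) replacing \(E(Y_i^2)\).

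Plugging into Markov's inequality yields \(\log P\{S \geq u\} \leq -\lambda u + \frac{\lambda^2 \tau^2/2}{1 - \lambda/3}\), which I would minimize over \(\lambda \in (0, 3)\). The optimal choice is \(\lambda = u/(\tau^2 + u/3)\) (easily seen to lie in \((0,3)\)); a short calculation gives \(1 - \lambda/3 = \tau^2/(\tau^2 + u/3)\), so that \(\frac{\lambda^2 \tau^2/2}{1 - \lambda/3} = \frac{\lambda u}{2}\) and the exponent collapses to \(-\lambda u/2 = -u^2/[2(\tau^2 + u/3)]\), exactly as required. The only real obstacle is the bookkeeping in the moment bound: producing the sharp Bernstein denominator \(\tau^2 + u/3\) rather than some worse constant relies precisely on the factorial inequality \(k! \geq 2 \cdot 3^{k-2}\) and on the magic choice of \(\lambda\) that triggers the exact cancellation in the optimization step.
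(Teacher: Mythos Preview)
Your argument is correct and is essentially the classical proof of Bernstein's inequality as found in the cited reference. Note, however, that the paper does not actually prove this statement: it is listed among the auxiliary results with a citation to Vershynin's textbook and no proof is given. So there is nothing to compare against beyond confirming that your Chernoff-method argument with the factorial bound \(k! \geq 2 \cdot 3^{k-2}\) and the optimizing choice \(\lambda = u/(\tau^2 + u/3)\) reproduces the standard textbook proof. One minor point worth making explicit in a full write-up is the degenerate case \(\tau^2 = 0\), where your choice of \(\lambda\) would equal \(3\); but then each \(Y_i = 0\) almost surely and the bound is trivial, so this causes no difficulty.
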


    \section{Location estimation in Huber's contamination model}\label{section:huber}
Recall Huber's contamination model (\ref{model:Huber_deterministic}), which is the same, from the perspective of conclusions about in-probability minimax estimation rates, as the Bayesian version (\ref{model:Huber_mixture}). In \cite{chen_robust_2018}, it has been shown that if \(\varepsilon = \frac{k}{n} < \frac{1}{5}\), then the sample median achieves the optimal rate
\begin{equation*}
    \left|\median\left(X_1,...,X_n\right) - \theta\right|^2 \lesssim \sigma^2\left(\frac{1}{n} + \varepsilon^2\right) \asymp \sigma^2 \left(\frac{1}{n} + \frac{k^2}{n^2}\right). 
\end{equation*}
\noindent In Remark \ref{remark:gaussian_character} of Section \ref{section:main_contribution}, we claimed that for \(\frac{n}{5} \leq k < \frac{n}{2}\),
\begin{equation*}
    \left|\median\left(X_1,...,X_n\right) - \theta\right|^2 \lesssim \sigma^2 \log\left( \frac{1}{1-2\varepsilon}\right) \asymp \sigma^2\log\left(\frac{en}{n-2k}\right). 
\end{equation*}
and that this is the optimal rate. In this section, we give proofs for these two assertions. For convenience, the upper bound will be proved in the context of the frequentist model (\ref{model:Huber_deterministic}) while the lower bound will be proved in the context of the Bayes model (\ref{model:Huber_mixture}) using the modulus of continuity concept of \cite{chen_robust_2018,chen_general_2016}. 

The following proposition states the error achieved by sample median for \(\frac{n}{5} \leq k < \frac{n}{2}\). The proof is exactly the same as the proof of \cite{kotekal_sparsity_2023} in this regime, which we reproduce here with notational changes for the reader's convenience. 

\begin{proposition}
    Consider data \(X_1,...,X_n\) from the model (\ref{model:Huber_deterministic}). Suppose \(\frac{n}{5}\leq k < \frac{n}{2}\). If \(\delta \in (0, 1)\) and \(n\) sufficiently large depending only on \(\delta\), then  
    \begin{equation*}
        \sup_{\substack{\theta \in \R, \\ \eta \in \R^n, \\ \sigma > 0}} P_{\theta, \eta, \sigma}\left\{ \left|\median\left(X_1,...,X_n\right) - \theta\right| > C\sigma \sqrt{\log\left(\frac{en}{n-2k}\right)}\right\} \leq \delta 
    \end{equation*}
    where \(C > 0\) is a universal constant. 
\end{proposition}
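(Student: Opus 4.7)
The plan is to exploit the order-statistic characterization of the sample median, together with a Gaussian tail bound for the inliers and the union-bound form of the Chernoff inequality for binomial tails. Write $M := \median(X_1,\ldots,X_n)$ and fix a target deviation $t > 0$ to be chosen. By the symmetry of the Gaussian noise about $\theta$, it suffices to control the one-sided event $\{M > \theta + t\}$ and then combine with the analogous event $\{M < \theta - t\}$ via a union bound.

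The first reduction is combinatorial. The event $\{M > \theta + t\}$ forces at least $m := \lceil n/2 \rceil$ of the $X_j$ to exceed $\theta + t$. Since $|\mathcal{O}| \leq k$, at most $k$ of those exceedances can be attributed to outliers, so at least $a := m - k \geq 1$ of the inliers $j \in \mathcal{I}$ must satisfy $X_j > \theta + t$. Letting $B := \sum_{j \in \mathcal{I}} \mathbbm{1}_{\{X_j > \theta + t\}}$, the tail event is contained in $\{B \geq a\}$, with $B \sim \Binomial(|\mathcal{I}|, p)$, $|\mathcal{I}| \leq n$, and $p := 1 - \Phi(t/\sigma) \leq e^{-t^2/(2\sigma^2)}$ by the standard Gaussian tail bound.

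The second step is to apply $P\{B \geq a\} \leq \binom{|\mathcal{I}|}{a} p^a \leq \left(e|\mathcal{I}|p/a\right)^{a}$. Choosing $t^2/(2\sigma^2) = \log\!\left(C_\delta\, n/(n-2k)\right)$ for a large constant $C_\delta$ depending only on $\delta$ gives $|\mathcal{I}| p \leq (n-2k)/C_\delta$, and combining with the lower bound $a \geq (n-2k)/2$ produces $e|\mathcal{I}|p/a \leq 2e/C_\delta$. Taking $C_\delta$ so large that $2e/C_\delta \leq \delta/4$ yields $P\{B \geq a\} \leq (\delta/4)^{a} \leq \delta/4$ for every $a \geq 1$; the symmetric left tail contributes another $\delta/4$, and the resulting deviation is $t \leq C \sigma \sqrt{\log\!\left(en/(n-2k)\right)}$ for a constant $C$ depending only on $\delta$.

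The main technical point I expect to be mildly tricky is the corner where $n - 2k$ is a small constant (so $a = 1$): here the Chernoff exponent is unable to provide additional decay with $a$, and the entire $\delta$-dependence must be extracted from the prefactor $e|\mathcal{I}|p/a$, which is precisely why the constant $C_\delta$ must scale like $\delta^{-1}$. The remaining nuisance is bookkeeping: integer rounding between $\lfloor n/2 \rfloor$, $\lceil n/2 \rceil$, and $n/2$, and the simplification $n - k \asymp n$ valid for $k < n/2$ that collapses $\log\!\left((n-k)/(n-2k)\right)$ to $\log\!\left(n/(n-2k)\right)$, are absorbed into the hypothesis that $n$ is sufficiently large depending only on $\delta$.
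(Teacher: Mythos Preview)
Your argument is correct and follows the same blueprint as the paper's: the event $\{M > \theta + t\}$ forces at least $a = \lceil n/2\rceil - k$ inliers to exceed $\theta + t$, and one then controls the resulting binomial tail (the paper via Bernstein's inequality, you via the more elementary bound $P\{B\geq a\}\leq (e|\mathcal{I}|p/a)^a$).

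One small discrepancy worth flagging: the statement asserts a \emph{universal} constant $C$, with all $\delta$-dependence pushed into the threshold on $n$, whereas your unified argument as written produces $C$ depending on $\delta$ through $C_\delta\asymp\delta^{-1}$. The paper recovers the universal constant by splitting on whether $n-2k\gtrless\sqrt{n}$: in the large-$a$ regime a universal choice of $t$ already makes $(e|\mathcal{I}|p/a)^a$ exponentially small in $\sqrt{n}$, hence $\leq\delta$ once $n$ is large; in the small-$a$ regime (your ``tricky corner'') the paper simply bounds the median by $\max_{j\in\mathcal{I}}|X_j-\theta|$. Your own argument can be upgraded without this separate device: when $n-2k<\sqrt{n}$ one has $\log(en/(n-2k))\gtrsim\log n$, which dominates $\log C_\delta=O(\log(1/\delta))$ as soon as $n\gtrsim 1/\delta$, so your $t=\sigma\sqrt{2\log(C_\delta n/(n-2k))}$ is in fact bounded by a universal multiple of $\sigma\sqrt{\log(en/(n-2k))}$ after all.
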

\begin{proof}
    Without loss of generality we can assume \(\sigma = 1\) otherwise we can simply work with the normalized data \(\left\{X_j/\sigma\right\}_{j=1}^{n}\) because \(\median(X_1/\sigma,...,X_n/\sigma) = \frac{\median(X_1,...,X_n)}{\sigma}\). Let \(\hat{\theta} := \median(X_1,...,X_n)\) We split into two cases. \newline
    
    \noindent \textbf{Case 1:} Suppose \(n -2k < \sqrt{n}\). Since \(|\mathcal{I}| > \frac{n}{2}\), it follows that 
    \begin{align*}
        P_{\theta, \eta, \gamma} \left\{|\hat{\theta} - \theta| > \sqrt{4\log(en)}\right\} \leq P_{\theta, \eta, \gamma}\left\{ \max_{j \in \mathcal{I}} \left|X_j - \theta\right| > \sqrt{4\log(en)}\right\} \leq 2n\exp\left(-2\log(en)\right) \leq \frac{1}{n} < \delta 
    \end{align*}
    for \(n \geq \frac{1}{\delta}\). Since \(\log\left(\frac{en}{n-2k}\right) \asymp \log(en)\) because \(n-2k \leq \sqrt{n}\), we have the claimed result. \newline
    
    \noindent \textbf{Case 2:} Suppose \(n-2k \geq \sqrt{n}\). Define the interval 
    \begin{equation*}
        E := \left[ \theta - \sqrt{2\log\left(\frac{4|\mathcal{I}|}{\frac{n}{2} - |\mathcal{O}|}\right)}, \theta + \sqrt{2\log\left(\frac{4|\mathcal{I}|}{\frac{n}{2} - |\mathcal{O}|}\right)} \right].
    \end{equation*}
    Consider 
    \begin{align*}
        P_{\theta, \eta, \sigma}\left\{ \hat{\theta} \not \in E \right\} &\leq P_{\theta, \eta, \sigma}\left\{ \sum_{j=1}^{n} \mathbbm{1}_{\{X_j \not \in E\}} > \frac{n}{2} \right\} \\
        &\leq P_{\theta, \eta, \sigma}\left\{ \sum_{j \in \mathcal{I}} \mathbbm{1}_{\{X_j \not \in E\}} > \frac{n}{2} - |\mathcal{O}| \right\} \\
        &\leq P_{\theta, \eta, \sigma}\left\{ \sum_{j \in \mathcal{I}} \mathbbm{1}_{\{X_j \not \in E\}} - p > \frac{n}{2} - |\mathcal{O}| - |\mathcal{I}| p \right\}
    \end{align*}
    where \(p = P\left\{|Z| > \sqrt{2\log\left(\frac{4|\mathcal{I}|}{\frac{n}{2} - |\mathcal{O}|}\right)} \right\}\) with \(Z \sim N(0, 1)\). By Bernstein's inequality (Theorem \ref{thm:bernstein_bounded}), we have for a universal constant \(c > 0\)
    \begin{align*}
        P_{\theta, \eta, \sigma}\left\{ \sum_{j \in \mathcal{I}} \mathbbm{1}_{\{X_j \not \in E\}} - p > \frac{n}{2} - |\mathcal{O}| - |\mathcal{I}| p \right\} &\leq \exp\left(-c\min\left(\frac{\left(\frac{n}{2} - |\mathcal{O}| - |\mathcal{I}| p\right)^2}{|\mathcal{I}| p(1-p)}, \frac{n}{2} - |\mathcal{O}| - |\mathcal{I}| p\right)\right).
    \end{align*}
    Consider that \(p \leq 2\exp\left(-\frac{1}{2} \cdot 2\log\left(\frac{4|\mathcal{I}|}{\frac{n}{2} - |\mathcal{O}|}\right)\right) = \frac{\frac{n}{2} - |\mathcal{O}|}{2|\mathcal{I}|}\). Therefore, 
    \begin{align*}
        P_{\theta, \eta, \sigma}\left\{ \hat{\theta} \not \in E \right\} &\leq \exp\left(-\frac{c}{2} \cdot \left(\frac{n}{2}-|\mathcal{O}|\right)\right) \leq \exp\left(-\frac{c(n-2k)}{4}\right) \leq \exp\left(-\frac{c\sqrt{n}}{4}\right) \leq \delta  
    \end{align*}
    for \(n\) sufficiently large depending on \(\delta\). Consider that \(\log\left(\frac{4|\mathcal{I}|}{\frac{n}{2} - |\mathcal{O}|}\right) \lesssim \log\left(\frac{en}{n-2k}\right)\), and so the proof is complete. 
\end{proof}

\noindent For the discussion of the lower bound, let us consider the Bayesian model (\ref{model:Huber_mixture}). The lower bound is proved through the use of a modulus of continuity \cite{chen_general_2016,chen_robust_2018} which specializes in the context of our problem to 
\begin{equation*}
    \omega(\varepsilon) = \sup\left\{\frac{|\theta_1 - \theta_2|}{\sigma} : \dTV(N(\theta_1, \sigma^2), N(\theta_2, \sigma^2)) \leq \frac{\varepsilon}{1-\varepsilon} \text{ and } \theta_1,\theta_2 \in \R, \sigma > 0\right\}. 
\end{equation*}
\begin{theorem}[Theorem 5.1 \cite{chen_robust_2018}]
    If \(\varepsilon \in [0, 1]\), then 
    \begin{equation*}
        \inf_{\hat{\theta}} \sup_{\substack{\theta \in \R, \\ \sigma > 0, \\ Q}} P_{\varepsilon, \theta, Q, \sigma}\left\{\frac{|\hat{\theta} - \theta|}{\sigma} \geq \frac{1}{\sqrt{n}} \vee \omega(\varepsilon)\right\} \geq c
    \end{equation*}
    for some universal constant \(c > 0\).
\end{theorem}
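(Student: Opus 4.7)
The plan is to establish the two-point lower bound via Le Cam's method, treating the two terms $\frac{1}{\sqrt{n}}$ and $\omega(\varepsilon)$ separately and then taking the maximum. The parametric term $1/\sqrt{n}$ can be obtained by a textbook two-point argument: take $Q$ arbitrary (say a point mass) and simply compare $(1-\varepsilon)N(0,1)+\varepsilon Q$ with $(1-\varepsilon)N(c/\sqrt{n},1)+\varepsilon Q$ for a small universal constant $c$. Since the contamination component is the same on both sides and the two Gaussian components differ in location by $c/\sqrt{n}$, the product-measure total variation between the two marginals over $n$ samples is bounded by a constant strictly less than one via a standard KL computation, yielding the desired constant-probability lower bound at separation $\asymp 1/\sqrt{n}$.

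The interesting contribution is the $\omega(\varepsilon)$ term, for which I would use the modulus-of-continuity construction of Chen--Gao--Ren. By definition of $\omega(\varepsilon)$, for any $\eta>0$ we can pick $\theta_1,\theta_2\in\R$ and $\sigma>0$ with $|\theta_1-\theta_2|/\sigma\geq \omega(\varepsilon)-\eta$ and $\dTV(N(\theta_1,\sigma^2),N(\theta_2,\sigma^2))\leq \varepsilon/(1-\varepsilon)$. The claim is that we can choose probability measures $Q_0$ and $Q_1$ such that the two mixtures
\begin{equation*}
P_0=(1-\varepsilon)N(\theta_1,\sigma^2)+\varepsilon Q_0,\qquad P_1=(1-\varepsilon)N(\theta_2,\sigma^2)+\varepsilon Q_1
\end{equation*}
are \emph{identical} as distributions. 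To see this, consider the signed measure $\nu=(1-\varepsilon)(N(\theta_2,\sigma^2)-N(\theta_1,\sigma^2))$ with Jordan decomposition $\nu=\nu_+-\nu_-$. By the condition on the total variation distance, $\|\nu_+\|=\|\nu_-\|=(1-\varepsilon)\dTV(N(\theta_1,\sigma^2),N(\theta_2,\sigma^2))\leq \varepsilon$. Pick any nonnegative measure $h$ with total mass $\varepsilon-\|\nu_+\|$ (for instance a scaled point mass) and set $\varepsilon Q_0=\nu_++h$ and $\varepsilon Q_1=\nu_-+h$; these are probability measures of total mass $\varepsilon$, and by construction $\varepsilon(Q_0-Q_1)=\nu_+-\nu_-=\nu=(1-\varepsilon)(N(\theta_2,\sigma^2)-N(\theta_1,\sigma^2))$, which rearranges exactly to $P_0=P_1$.

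With $P_0=P_1$, the $n$-fold product measures coincide, so $\dTV(P_0^{\otimes n},P_1^{\otimes n})=0$ and Le Cam's two-point inequality gives
\begin{equation*}
\inf_{\hat\theta}\max_{i\in\{1,2\}}P_i^{\otimes n}\bigl\{|\hat\theta-\theta_i|\geq |\theta_1-\theta_2|/2\bigr\}\geq \tfrac{1}{2}.
\end{equation*}
Dividing by $\sigma$ and sending $\eta\downarrow 0$ yields the lower bound at separation $\omega(\varepsilon)/2$. Combining with the parametric bound and adjusting universal constants produces the stated $\frac{1}{\sqrt{n}}\vee \omega(\varepsilon)$ rate. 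The main technical obstacle is the construction in the second paragraph---verifying that the modulus-of-continuity condition $\dTV\leq \varepsilon/(1-\varepsilon)$ is \emph{exactly} the condition needed to absorb the Gaussian mismatch into the contamination budget $\varepsilon$ on each side---but the Jordan decomposition argument above makes this transparent, and the remaining steps are routine Le Cam.
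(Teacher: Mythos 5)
Your proof is correct, and it is essentially the argument of Chen and Gao in \cite{chen_robust_2018}, which this paper cites without reproving. The key step---using the Jordan decomposition of $\nu=(1-\varepsilon)\bigl(N(\theta_2,\sigma^2)-N(\theta_1,\sigma^2)\bigr)$ to absorb the Gaussian mismatch into the contamination budget $\varepsilon$ on each side, thereby making the two mixtures literally identical when $\dTV\le\varepsilon/(1-\varepsilon)$---is precisely the modulus-of-continuity lemma in that reference, and the $1/\sqrt{n}$ piece follows by the textbook two-point bound. Two cosmetic remarks so you are not misled by the paper's restatement: the Le Cam step produces separation $|\theta_1-\theta_2|/(2\sigma)\approx\omega(\varepsilon)/2$, so a universal multiplicative constant is required in the threshold, which the paper's version has silently dropped (you correctly flag this under ``adjusting universal constants''); and the stated range $\varepsilon\in[0,1]$ should really be $\varepsilon<1/2$ for $\omega(\varepsilon)$ to be finite, but that is a defect of the restated theorem, not of your argument.
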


\noindent A minimax lower bound thus follows by bounding the modulus of continuity from below. 

\begin{proposition}
    If \(\varepsilon \in [0, 1]\), then 
    \begin{equation*}
        \omega(\varepsilon) \geq \sqrt{2 \log\left(\frac{(1-\varepsilon)/2}{1-2\varepsilon}\right)}. 
    \end{equation*}
\end{proposition}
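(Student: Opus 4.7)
The plan is to exhibit an explicit pair $(\theta_1, \theta_2, \sigma)$ witnessing the lower bound. Taking $\sigma = 1$ and $\theta_1 = 0$, and writing $t := |\theta_2|$, a direct computation gives $\dTV(N(0,1), N(t,1)) = 1 - 2 P(Z > t/2)$ for $Z \sim N(0,1)$, so the defining constraint $\dTV(N(\theta_1,\sigma^2), N(\theta_2,\sigma^2)) \leq \varepsilon/(1-\varepsilon)$ becomes $P(Z > t/2) \geq q$, where $q := (1-2\varepsilon)/(2(1-\varepsilon))$. Note that for $\varepsilon \leq 1/3$ we have $A := (1-\varepsilon)/(2(1-2\varepsilon)) \leq 1$ and the advertised lower bound is vacuous, so I assume $\varepsilon > 1/3$ henceforth and thus $q < 1/4$.

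The natural candidate is $t := \sqrt{2 \log A}$. For this choice $(t/2)^2 = (\log A)/2$ and hence $\tfrac{1}{4}e^{-2(t/2)^2} = \tfrac{1}{4A} = q$, so admissibility of $t$ reduces to the Gaussian tail lower bound
\begin{equation*}
    P(Z > x) \geq \tfrac{1}{4}\, e^{-2x^2} \qquad \text{for every } x \geq 0.
\end{equation*}
To prove this inequality I would consider $g(x) := 4P(Z > x) - e^{-2x^2}$ and observe that $g(0) = 1$ and $g(x) \to 0$ as $x \to \infty$, and then show $g$ is nonincreasing on $[0, \infty)$. The monotonicity condition $g'(x) = -4\phi(x) + 4x e^{-2x^2} \leq 0$ rearranges to $\sqrt{2\pi}\, x\, e^{-3x^2/2} \leq 1$, and a one-line calculus argument shows the left-hand side is maximized at $x = 1/\sqrt{3}$ with value $\sqrt{2\pi/(3e)} < 1$, so $g$ is indeed nonincreasing and therefore nonnegative throughout.

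Combining the reduction with this tail bound yields $P(Z > t/2) \geq \tfrac{1}{4}e^{-2(t/2)^2} = q$, so this $t$ is admissible in the supremum defining $\omega(\varepsilon)$, giving $\omega(\varepsilon) \geq t = \sqrt{2 \log A}$ as claimed. The only real subtlety I anticipate is isolating the inequality with the correct constant $1/4$: standard Mills-ratio bounds of the form $P(Z > x) \gtrsim \tfrac{1}{x}e^{-x^2/2}$ are of the wrong exponential order and would produce only $\sqrt{2\log(1/q)}$, missing the factor $\tfrac{1}{2}$ inside the logarithm that is required to obtain the sharp $(n-2k)$-scaling advertised in Remark \ref{remark:gaussian_character}. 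The coefficient $1/4$ in the tail bound is tuned precisely to match the candidate $t = \sqrt{2\log A}$.
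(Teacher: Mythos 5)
Your argument is correct, and it takes a genuinely different route from the paper's. The paper invokes the general Bretagnolle--Huber inequality $\dTV(P,Q)\leq 1-\frac{1}{2}e^{-\KL(P\,||\,Q)}$ together with the identity $\KL\bigl(N(\theta_1,\sigma^2)\,||\,N(\theta_2,\sigma^2)\bigr)=|\theta_1-\theta_2|^2/(2\sigma^2)$ and then solves the resulting constraint for $|\theta_1-\theta_2|/\sigma$. You instead compute the total variation distance exactly, $\dTV(N(0,1),N(t,1))=1-2P(Z>t/2)$, and prove the tail bound $P(Z>x)\geq\frac{1}{4}e^{-2x^2}$ for all $x\geq 0$; substituting $x=t/2$ recovers $\dTV\leq 1-\frac{1}{2}e^{-t^2/2}$, which is precisely the Bretagnolle--Huber bound specialized to equal-variance Gaussians. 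The two routes therefore coincide in outcome and yield the identical constant: the paper's is shorter because it cites a textbook inequality, while yours is fully self-contained and makes explicit where the crucial factor of $\frac{1}{2}$ inside the logarithm comes from. Your monotonicity computation checks out: $g'(x)\leq 0$ reduces to $\sqrt{2\pi}\,x\,e^{-3x^2/2}\leq 1$, which is maximized at $x=1/\sqrt{3}$ with value $\sqrt{2\pi/(3e)}<1$, and together with $g(0)=1$ and $g(x)\to 0$ as $x\to\infty$ this forces $g\geq 0$ on $[0,\infty)$, completing the tail bound and hence the proof.
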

\begin{proof}
    From a well-known relationship between total variation and Kullback-Leibler divergence (e.g. \cite{tsybakov_introduction_2009}), we have that \(\dTV(N(\theta_1, \sigma^2), N(\theta_2, \sigma^2)) \leq 1 - \frac{1}{2} \exp\left(-\KL\left( N(\theta_1, \sigma^2) \,||\, N(\theta_2, \sigma^2) \right)\right) = 1 - \frac{1}{2}\exp\left(-\frac{|\theta_1 - \theta_2|^2}{2\sigma^2}\right)\) where \(\KL(\cdot \,||\, \cdot)\) denotes the Kullback-Leibler divergence. 
    \begin{align*}
        \omega(\varepsilon) &\geq \sup\left\{\frac{|\theta_1 - \theta_2|}{\sigma} : 1 - \frac{1}{2}\exp\left(-\frac{|\theta_1 - \theta_2|^2}{2\sigma^2}\right) \leq \frac{\varepsilon}{1-\varepsilon} \text{ and } \theta_1,\theta_2 \in \R, \sigma > 0\right\} \\
        &= \sup\left\{\frac{|\theta_1 - \theta_2|}{\sigma} : \frac{2(1-2\varepsilon)}{1-\varepsilon} \leq \exp\left(-\frac{|\theta_1 - \theta_2|^2}{2\sigma^2}\right) \text{ and } \theta_1,\theta_2 \in \R, \sigma > 0\right\} \\
        &= \sqrt{2 \log\left(\frac{(1-\varepsilon)/2}{1-2\varepsilon}\right)}.
    \end{align*}
    The proof is complete.
\end{proof}

\end{document}